\newcommand{\ds}{\displaystyle}
\def\nt{\noindent}
\newtheorem{de}{Definition}[section]
\newtheorem{theor}{Theorem}[section]
\newtheorem{pr}{Proposition}[section]
\newtheorem{lm}{Lemma}[section]
\begin{document}
\begin{center}
{\Large \textbf{On Description of Isomorphism Classes of \\ filiform Leibniz
algebras in dimensions 7 and 8}}\\

{ $^1$Isamiddin S. Rakhimov and $^2$Munther A. Hassan}

$ \ ^{{1},{2}}$Institute for Mathematical Research (INSPEM) $\&$

$^1$Department of Mathematics, FS, Universiti Putra Malaysia \\ 43400, UPM,
Serdang, Selangor Darul Ehsan, Malaysia,\\[0pt]

$^{1}$risamiddin@gmail.com \  $^{2}$munther\_abd@yahoo.com
\end{center}

\author{ Isamiddin S. Rakhimov$^{1}$, Munther A. Hassan $^{2},$ $}
\title{\mathbf{On low-dimensional filiform Leibniz
algebras \\ and their invariants}}

\begin{abstract} The paper concerns the classification problem of a subclass of complex filiform
Leibniz algebras in dimensions 7 and 8. This subclass arises from
naturally graded filiform Lie algebras. We give a complete list of
isomorphism classes of algebras including Lie case. In parametric families cases, the
corresponding orbit functions (invariants) are given. In discrete
orbits case, we show representatives of the orbits.
\end{abstract}

\medskip 2000 Mathematics Subject Classifications: primary: 17A32, 17A60, 17B30; secondary: 13A50

Key words: filiform Leibniz algebra, classification, invariant.

\thispagestyle{empty}

\section{Introduction}

 \qquad  Leibniz algebras were introduced by J. -L. Loday
\cite{L},\cite{LP}. A skew-symmetric Leibniz algebra is a Lie
algebra. The main motivation of J. -L. Loday to introduce this
class of algebras was the search of an ``obstruction'' to the periodicity in algebraic
$K-$theory. Besides this purely algebraic motivation, some
relationships with classical geometry, non-commutative geometry
and physics have been recently discovered. The present paper deals
with the low-dimensional case of subclass of filiform Leibniz
algebras. This subclass, arises from naturally graded filiform Lie
algebras.
\begin{de} An algebra L over a field K is called a Leibniz algebra, if its
bilinear operation $[\cdot,\cdot]$ satisfies the following Leibniz
identity:
 $\ \ds [x,[y,z]]=[[x,y],z]-[[x,z],y],$\ \ \ \ for
\ any $ x,y,z\in L. $
\end{de}

Onward, all algebras are assumed to be over the fields of complex
numbers $\mathbb{C}.$

Let $L$ be a Leibniz algebra. We put:
\begin{equation*}
L^{1}=L,\ L^{k+1}=[L^{k},L],\ k\geq 1.
\end{equation*}
\begin{de}
A Leibniz algebra L is said to be nilpotent, if there exists $s\in
\mathbb{N},$ such that
\begin{equation*}L^{1}\supset  L^{2}\supset ...\supset L^{s}=\{0\}.\end{equation*}
\end{de}
\begin{de}
A Leibniz algebra $L$ is said to be filiform, if $ \ds dimL^{i}=n-i,$ where $%
n=dimL,$ and $2\leq i\leq n.$
\end{de}

We denote by $Leib_n$ the set of all $n-$dimensional filiform
Leibniz algebras.

  The following theorem from \cite{GO} splits the set of
 fixed dimension filiform Liebniz algebras in to three disjoint
 subsets.
\begin{theor}\label{T1}
Any $(n+1)-$dimensional complex filiform Leibniz algebra L admits
a basis $\{e_0,e_1,...,e_n\}$ called adapted, such that the table
of multiplication of L has one of the following forms, where non
defined products are zero:
\end{theor}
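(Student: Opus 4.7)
The plan is to build the adapted basis step by step, exploiting the filiform filtration $L\supset L^2\supset\cdots\supset L^n\supset L^{n+1}=0$ together with the Leibniz identity. Since $\dim L/L^2=2$, I would first choose two elements $e_0,e_1\in L\setminus L^2$ whose images span $L/L^2$. Among all such choices, I would pick $e_1$ generically so that the right multiplication $R_{e_1}:x\mapsto[x,e_1]$ has maximal length of nilpotent Jordan chain on a suitable complement, which is possible precisely because $L$ is filiform (so some element must act as a single Jordan block of size $n-1$ on $L^2$, otherwise the dimensions $\dim L^i=n+1-i$ cannot drop by exactly one at each stage).

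Next I would define $e_{i+1}:=[e_i,e_1]$ for $i=1,\ldots,n-1$. The filiform property guarantees $e_i\in L^i\setminus L^{i+1}$, so $\{e_0,e_1,\ldots,e_n\}$ is a basis of $L$. At this point the products $[e_i,e_1]$ are determined by construction, and the remaining unknowns are the products $[e_i,e_0]$ for $i\geq 0$ and $[e_0,e_j]$ for $j\geq 1$. The central computation is then the systematic application of the Leibniz identity
\[
[x,[y,z]]=[[x,y],z]-[[x,z],y]
\]
with $z=e_1$ (and its iterates), which recursively expresses every $[e_i,e_0]$ in terms of $[e_1,e_0]$, and every $[e_0,e_j]$ in terms of $[e_0,e_0]$ and $[e_0,e_1]$, up to lower-order corrections that can be absorbed by elementary base changes within each $L^k/L^{k+1}$.

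The three possible forms arise from the choice of the "seed" products. When the associated graded algebra $\mathrm{gr}\,L=\bigoplus L^i/L^{i+1}$ is the naturally graded filiform Lie algebra of type $\mu_0^n$, the identity forces one canonical shape for the table; when $\mathrm{gr}\,L$ is of type $Q_n$ (only possible for $n$ odd, since $Q_n$ requires an additional bracket between middle components), a second shape appears; and the third (genuinely Leibniz, non-Lie) family emerges when $[e_0,e_0]\neq 0$ is admissible, which is impossible in the Lie case but consistent with the Leibniz identity. Normalizing the few remaining free coefficients by rescaling $e_0$ and $e_1$ yields the three tabulated forms.

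The main obstacle is the bookkeeping in the recursive step: each application of the Leibniz identity generates relations among the structure constants at different levels of the filtration, and one must show that these relations are compatible and that the surviving parameters fall into exactly three families rather than a continuum of inequivalent shapes. The care lies in verifying that base changes preserving the adapted form $e_{i+1}=[e_i,e_1]$ suffice to kill all non-canonical coefficients, which is where the filiform hypothesis \emph{$\dim L^i=n-i$} is used in its sharpest form.
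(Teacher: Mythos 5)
There is a genuine gap, and it is structural rather than computational. The theorem's trichotomy is \emph{not} the one you describe. In the statement, the first two classes $FLeib_{n+1}$ and $SLeib_{n+1}$ both have $[e_0,e_0]=e_2\neq 0$ already at the level of the associated graded algebra: they are exactly the algebras whose naturally graded algebra is one of the \emph{two non-Lie} naturally graded filiform Leibniz algebras (distinguished by whether $[e_1,e_0]=e_2$ modulo lower terms, as in $FLeib$, or $[e_1,L]\subset L^n$, as in $SLeib$ where the chain $[e_i,e_0]=e_{i+1}$ only starts at $i=2$ and $[e_1,e_1]=\gamma e_n$). The third class $TLeib_{n+1}$ is the one whose naturally graded algebra is a filiform \emph{Lie} algebra (of type $L_n$ or $Q_n$), and it is there that $[e_0,e_0]=b_{0,0}e_n$ may or may not vanish. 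Your proposed partition --- $\mathrm{gr}\,L\cong L_n$, $\mathrm{gr}\,L\cong Q_n$, and ``non-Lie with $[e_0,e_0]\neq 0$'' --- therefore produces the wrong three families: your first two cases are both subcases of $TLeib_{n+1}$, and your third case lumps $FLeib_{n+1}$ and $SLeib_{n+1}$ together, missing entirely the dichotomy that separates them. Carrying out your plan as written would not terminate in the three tables of the theorem.

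Two further points. First, note that the paper does not prove this theorem at all; it is imported from \cite{GO}, and the underlying classification of \emph{naturally graded} filiform Leibniz algebras (two non-Lie ones plus the graded filiform Lie algebras) is the key input that your sketch would need to quote or reprove --- the genericity/Jordan-block argument you give for the existence of a single element generating the whole chain $L^2\supset\cdots\supset L^n$ is the right idea but is only the first step of that classification. Second, in the paper's convention the chain is generated by right multiplication by $e_0$ (so $e_{i+1}=[e_i,e_0]$, cf.\ Proposition 1.1), and the products left undetermined after fixing the chain include all $[e_i,e_j]$ with $i,j\geq 1$, not only those with an argument equal to $e_0$ or $e_1$; the Leibniz identity does reduce them recursively, but that reduction, together with the verification that the surviving constraints are exactly those listed (e.g.\ $b_{i,n-i}=(-1)^i b$ with $b=0$ for even $n$ in $TLeib$), is the actual content of the proof and is not carried out in your sketch.
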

$\bigskip FLeib_{n+1}=\left\{
\begin{array}{ll}
\lbrack e_{0},e_{0}]=e_{2}, &  \\[1mm]
\lbrack e_{i},e_{0}]=e_{i+1}, & \ \ \ \ \ \ 1\leq i\leq {n-1}, \\[1mm]
\lbrack e_{0},e_{1}]=\alpha _{3}e_{3}+\alpha _{4}e_{4}+...+\alpha
_{n-1}e_{n-1}+\theta e_{n}, &  \\[1mm]
\lbrack e_{j},e_{1}]=\alpha _{3}e_{j+2}+\alpha
_{4}e_{j+3}+...+\alpha
_{n+1-j}e_{n}, & \ \ \ \ \ \ 1\leq j\leq {n-2}, \\
\qquad \alpha _{3},\alpha _{4},...,\alpha _{n},\theta \in
\mathbb{C}. &
\end{array}%
\right. $ \\[1mm]

$ SLeib_{n+1}=\left\{
\begin{array}{ll}
\lbrack e_{0},e_{0}]=e_{2}, &  \\[1mm]
\lbrack e_{i},e_{0}]=e_{i+1}, & \  \qquad \ \ \ 2\leq i\leq {n-1}, \\[1mm]
\lbrack e_{0},e_{1}]=\beta _{3}e_{3}+\beta _{4}e_{4}+...+\beta
_{n}e_{n}, &
\\[1mm]
\lbrack e_{1},e_{1}]=\gamma e_{n}, &  \\[1mm]
\lbrack e_{j},e_{1}]=\beta _{3}e_{j+2}+\beta _{4}e_{j+3}+...+\beta
_{n+1-j}e_{n}, & \  \qquad \quad 2\leq j\leq {n-2}, \\
\qquad \beta _{3},\beta _{4},...,\beta _{n},\gamma \in \mathbb{C}.
&
\end{array}%
\right. $

\bigskip

$ TLeib_{n+1}=\left\{
\begin{array}{lll}
\lbrack e_{i},e_{0}]=e_{i+1}, \qquad  \qquad  \qquad \qquad  \qquad  \qquad \qquad  \qquad  \quad 1\leq i\leq {n-1},  \\[1mm]
\lbrack e_{0},e_{i}]=-e_{i+1}, \qquad \qquad  \qquad  \qquad \qquad  \qquad  \qquad \qquad  \   2\leq i\leq {n-1}, \\[1mm]
\lbrack e_{0},e_{0}]=b_{0,0}e_{n},  \\[1mm]
\lbrack e_{0},e_{1}]=-e_{2}+b_{0,1}e_{n}, \\[1mm]
\lbrack e_{1},e_{1}]=b_{1,1}e_{n},  \\[1mm]
\lbrack e_{i},e_{j}]=a_{i,j}^{1}e_{i+j+1}+\dots
+a_{i,j}^{n-(i+j+1)}e_{n-1}+b_{i,j}e_{n}, \quad   1\leq i<j\leq {n-1},   \\[1mm]
\lbrack e_{i},e_{j}]=-[e_{j},e_{i}], \qquad  \qquad  \qquad \qquad  \qquad  \qquad \qquad  \quad \ \  1\leq i<j\leq n-1,  \\[1mm]
\lbrack e_{i},e_{n-i}]=-[e_{n-i},e_{i}]=(-1)^{i}b_{i,n-i} e_{n}, \\[1mm]
\emph{\rm {where} $a_{i,j}^k, b_{i,j} \in \mathbb{C},$\
$b_{i,n-i}=b$ whenever $1\leq i\leq {n-1},$ and $b=0$ for even
$n.$}
\end{array}%
\right. $

It should be mentioned that in the theorem above the structure constants $\alpha _{3},\alpha _{4},...,\alpha _{n},\theta$ and $\beta _{3},\beta _{4},...,\beta _{n},\gamma$ in the first two cases are free, however, in the third case there are relations among $a_{i,j}^k, b_{i,j}$ that are different in each fixed dimension (see Lemma 2.1 and Lemma 2.2). According to the theorem each class has the so-called adapted base change sending an adapted basis to adapted and they can be studied autonomously. The classes
$FLeib_{n},SLeib_{n}$ in low dimensional cases, have been considered in
\cite{RS1}, \cite{RS2}. The general methods of classification for
$Leib_{n}$ has been given in \cite{BR1}, \cite{BR2} and \cite{OR}.
This paper deals with the classification problem of
low-dimensional cases of $TLeib_{n}.$

Note that the class $TLeib_{n}$ contains all $n-$dimensional filiform Lie algebras.

The outline of the paper is as follows. Section 1 is an
introduction to a subclass of Leibniz algebras that we are going
to investigate. Section 2 presents the main results of the paper
consisting of a complete classification of a subclass of low
dimensional filiform Leibniz algebras. Here, for 7- and
8-dimensional cases we give complete classification. For
parametric family cases the corresponding invariant functions are
presented.

\begin{de}
Let $\{e_0,e_1,...,e_n\}$ be an adapted basis of $L$ from ${TLeib}_{n+1}.$ Then a nonsingular linear transformation
$f:L\rightarrow L$ is said to be adapted if the basis
 $ \{f(e_0),f(e_1),...,f(e_n)\} $ is adapted.
\end{de}

The subgroup of $GL_{n+1}$ consisting of all adapted transformations is denoted by $G_{ad}.$ The following proposition specifies
elements of $G_{ad}.$
\begin{pr}
Any adapted transformation $f$ in ${TLeib}_{n+1}$ can be represented as follows:\\
$\ds f(e_0)=e_{0}^{\prime }=\sum_{i=0}^{n}A_{i}e_{i},\ \ \ \
f(e_1)=e_{1}^{\prime }=\sum_{i=1}^{n}B_{i}e_{i}, \ \
f(e_i)=e_{i}^{\prime }=[f(e_{i-1}),f(e_0)], \ \ \ \ \ \ \ \ \
2\leq i \leq n, $

 $A_0, A_i, B_j,\ (i,j=1,...,n)$ {\em are complex
numbers and } $A_0\,B_1(A_0+A_1b)\neq0.$

\begin{proof}\emph{}

Since a filiform Leibniz algebra is 2-generated (see Theorem 1.1.)
it is sufficient to consider the adapted action of $f$ on the
generators $e_0, e_1:$

$$\ds f(e_0)=e_{0}^{\prime }=\sum_{i=0}^{n}A_{i}e_{i},\ \
f(e_1)=e_{1}^{\prime }=\sum_{i=0}^{n}B_{i}e_{i}.$$ \\ Then $\ds
f(e_i)=[f(e_{i-1}),f(e_0)]=A^{i-2}_0(A_1B_0-A_0B_1)e_i+\sum_{j=i+1}^{n}(*)e_j,\
\ \ 2\leq i \leq n.$ \\ Note that $A_0\neq0,\
(A_1B_0-A_0B_1)\neq0,$ otherwise $f(e_n)=0.$ The condition
$A_0\,B_1(A_0+A_1b)\neq0$ appears naturally since $f$ is not
singular.

 Let now consider $ \ds
[f(e_1),f(e_2)]=B_0\,(A_1\,B_0-A_0\,B_1)\,e_3+\sum_{j=4}^n
(*)e_j.$ Then for the basis $\{f(e_0),f(e_1),...,f(e_n)\}$ to be
adapted $B_0(A_1B_{0}-A_{0}B_1)=0.$ But according to the
observation above
  $(A_1B_0-A_0B_1)\neq0.$ Therefore $B_0=0.$
\end{proof}
\end{pr}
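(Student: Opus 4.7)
The plan is to use the fact that a filiform Leibniz algebra is $2$-generated (Theorem 1.1) to reduce $f$ to its action on the generators $e_0$ and $e_1$, and then extract the constraints imposed by adaptedness and nonsingularity.

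First I would observe that in the multiplication table for $TLeib_{n+1}$ the relation $[e_i,e_0]=e_{i+1}$ for $1\leq i\leq n-1$ is part of the defining structure of an adapted basis. Hence, for the image basis $\{f(e_0),\ldots,f(e_n)\}$ to be adapted, one must have $f(e_{i+1})=[f(e_i),f(e_0)]$ for $1\leq i\leq n-1$, so $f$ is completely determined by $f(e_0)$ and $f(e_1)$. I would write these as general linear combinations $f(e_0)=\sum_{i=0}^{n}A_ie_i$ and $f(e_1)=\sum_{i=0}^{n}B_ie_i$, and then compute $f(e_i)$ for $2\leq i\leq n$ inductively by expanding $[f(e_{i-1}),f(e_0)]$ using bilinearity and the $TLeib_{n+1}$ multiplication rules. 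A straightforward induction on $i$ shows that the leading (lowest-index) component of $f(e_i)$ is $A_0^{i-2}(A_1B_0-A_0B_1)e_i$, with corrections supported on $e_{i+1},\ldots,e_n$.

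From this leading term, non-vanishing of $f(e_n)$ forces $A_0\neq 0$ and $A_1B_0-A_0B_1\neq 0$. The remaining factor $(A_0+A_1b)$ in the stated nonsingularity condition arises when tracking the contribution of the $b$-terms $[e_i,e_{n-i}]=(-1)^{i}b\,e_n$ to the $e_n$-component of $f(e_n)$, which augments the leading coefficient and yields the full determinant condition $A_0B_1(A_0+A_1b)\neq 0$. This bookkeeping step — carefully isolating, among all brackets $[e_i,e_j]$ with $i+j+1>n$, the combinations whose contribution along $e_n$ sums to the factor $(A_0+A_1b)$ — is the main technical obstacle, though essentially mechanical once the inductive formula is in place.

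Finally, to derive $B_0=0$, I would exploit another defining relation of an adapted basis: in $TLeib_{n+1}$ the bracket $[e_1,e_2]$ starts at $a_{1,2}^{1}e_{1+2+1}=a_{1,2}^{1}e_4$, so its $e_3$-component must vanish. Substituting the expressions for $f(e_1)$ and the already computed $f(e_2)$ yields
\[
[f(e_1),f(e_2)]=B_0(A_1B_0-A_0B_1)e_3+\sum_{j=4}^{n}(*)e_j.
\]
Since $A_1B_0-A_0B_1\neq 0$, the vanishing of the $e_3$-coefficient forces $B_0=0$, which reduces the expansion of $f(e_1)$ to $\sum_{i=1}^{n}B_ie_i$ and, together with $A_0\neq 0$, replaces the earlier condition $A_1B_0-A_0B_1\neq 0$ by the cleaner $A_0B_1\neq 0$.
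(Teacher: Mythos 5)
Your proposal is correct and follows essentially the same route as the paper's proof: reduce to the action on the generators $e_0,e_1$ via $2$-generation, extract $A_0\neq 0$ and $A_1B_0-A_0B_1\neq 0$ from the leading term of $f(e_i)$, and force $B_0=0$ from the vanishing of the $e_3$-component of $[f(e_1),f(e_2)]$. Your extra remarks tracing the factor $(A_0+A_1b)$ to the bracket $[e_{n-1},e_1]=b\,e_n$ only make explicit what the paper dismisses as appearing ``naturally.''
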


In $G_{ad}$ we specify the following transformations called
elementary:
\begin{eqnarray*}
 \tau( a,b,c)&=&\left\{\begin{array}{lll} \tau(e_0)=a\,e_0+b\,e_1, &
\\[1mm]
\tau(e_1)=c\,e_1, &  \quad a\,c\neq0,\\[1mm]
\tau(e_{i+1})=[\tau(e_i), \tau(e_0)], &  \quad 1\leq i\leq n-1,  \\[1mm]
\end{array} \right.\\[1mm]
\sigma(a,k)&=&\left\{\begin{array}{lll} \sigma(e_0)=e_0+a\,e_k, &
 \quad 2\leq k \leq n, &
\\[1mm]
\sigma(e_1)=e_1, & \\[1mm]
\sigma(e_{i+1})=[\sigma(e_i), \sigma(e_0)], & \quad 1\leq i\leq n-1,  \\[1mm]
\end{array} \right.\\[1mm]
\phi(c,k)&=&\left\{\begin{array}{lll}\phi(e_0)=e_0, &
\\[1mm]
\phi(e_1)=e_1+c\,e_k, & \quad 2\leq k \leq n,
\\[1mm]
\phi(e_{i+1})=[\phi(e_i), \phi(e_0)], & \quad 1\leq i\leq n-1,
\end{array}\right.
\end{eqnarray*}
where $a,b,c \in \mathbb{C}$.

\begin{pr}\label{p} Let $L$ be an algebra from ${{TLeib}}_{n+1}$, then any adapted transformation $f$ can be represented
as the composition:
$$f=\phi(B_n,n)\circ\phi(B_{n-1},n-1)\circ...\circ\phi(B_2,2)\circ\sigma(A_n,n)\circ\sigma(A_{n-1},n-1)\circ...\circ\sigma(A_2,2)\circ\tau(A_0,A_1,B_1)
.$$

\end{pr}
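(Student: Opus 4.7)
The plan is to invoke the preceding proposition, which shows that an adapted transformation is uniquely determined by its values on the two generators $e_0$ and $e_1$ (all higher images being forced by the bracket recursion $f(e_i) = [f(e_{i-1}), f(e_0)]$). Denoting by $g$ the composition on the right-hand side of the statement, it therefore suffices to verify $g(e_0) = \sum_{i=0}^{n} A_i e_i$ and $g(e_1) = \sum_{i=1}^{n} B_i e_i$.

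I would then carry out this verification as a direct computation, processing the composition from right to left. The innermost factor $\tau(A_0, A_1, B_1)$ gives $\tau(e_0) = A_0 e_0 + A_1 e_1$ and $\tau(e_1) = B_1 e_1$, which installs the coefficients of $e_0, e_1$ in $f(e_0)$ and the coefficient of $e_1$ in $f(e_1)$. Next, applying $\sigma(A_k, k)$ in increasing order $k = 2, 3, \ldots, n$, each successive $\sigma$ attaches the $e_k$-contribution to $g(e_0)$; crucially, since $\sigma(a, k)$ fixes $e_1$ as a basis vector, the $e_1$-direction of $g(e_1) = B_1 e_1$ is untouched during the whole $\sigma$-phase. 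Finally, applying $\phi(B_k, k)$ in increasing order $k = 2, 3, \ldots, n$, each $\phi$ attaches the $e_k$-contribution to $g(e_1)$; since $\phi(c, k)$ fixes $e_0$ as a basis vector, the $e_0$-direction of $g(e_0)$ is preserved throughout the $\phi$-phase.

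The main obstacle is that each elementary transformation acts nontrivially on the higher basis vectors $e_i$ ($i \geq 2$) through the recursion $\sigma(e_{i+1}) = [\sigma(e_i), \sigma(e_0)]$, producing structure-constant-dependent correction terms that propagate into the directions $e_{k+1}, \ldots, e_n$. To control this, I would argue by induction on $k$: after stage $k$ of the composition, the image of $e_0$ should agree with $f(e_0)$ modulo $\langle e_{k+1}, \ldots, e_n\rangle$, and similarly for the $\phi$-phase and the image of $e_1$. The prescribed ordering -- $\tau$, then $\sigma$'s by increasing $k$, then $\phi$'s by increasing $k$ -- is precisely what makes these cross-interactions upper-triangular in the index $k$, so that the later elementary transformations can absorb the higher-index perturbations of the earlier ones, and all coefficients $A_i, B_j$ of $f$ match in sequence.
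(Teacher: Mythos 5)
Your proposal is correct and is exactly the filling-in of the argument the paper dismisses as ``straightforward'': reduce to the action on the generators $e_0,e_1$ via the preceding proposition, then peel off the coefficients one index at a time, using that $\sigma(\cdot,k)$ and $\phi(\cdot,k)$ disturb only coefficients of index $\geq k$ and fix $e_1$ and $e_0$ respectively. The one point worth making explicit is that the parameters in the composition are not literally the coefficients $A_i,B_j$ of $f(e_0),f(e_1)$ but nonzero rescalings of them (stage $k$ contributes its parameter times the leading coefficient, roughly $A_0^{k-1}B_1$, of the current $e_k$), a normalization your triangular induction already accommodates by solving for each parameter in turn.
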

\begin{proof} The proof is straightforward.
\end{proof}

\begin{pr}\label{p1} The transformations\\
$\ds
g=\phi(B_n,n)\circ\phi(B_{n-1},n-1)\circ\phi(B_{n-2},n-2)\circ\sigma(A_n,n)\circ\sigma(A_{n-1},n-1)\circ\sigma(A_{n-2},n-2)\circ\sigma(A_{n-3},{n-3})
,$ if $n$ even, and\\
$g=\phi(B_n,n)\circ\phi(B_{n-1},n-1)\circ\sigma(A_n,n)\circ\sigma(A_{n-1},n-1)\circ\sigma(A_{n-2},{n-2})
,$ for odd $n$ \\ does not change the structure constants of algebras from
$TLeib_{n+1}.$
\begin{proof} Let us prove the assertion when $n$ is even.
\begin{itemize}
\item Consider the transformation,
$$\sigma(A_{n-3},{n-3})=\left\{\begin{array}{lll} \sigma(e_0)=e_0+A_{n-3}\,e_{n-3}, &
  &
\\[1mm]
\sigma(e_1)=e_1, & \\[1mm]
\sigma(e_{i+1})=[\sigma(e_i), \sigma(e_0)], & \quad 1\leq i\leq
n-1.\end{array}\right.$$

 To show that the transformation
$\sigma(A_{n-3},n-3),$ does not change the structure constants,
note that $
\sigma\left(e_2\right)=e_2+(*)\,A_{n-3}e_{n-1}+(**)A_{n-3}e_{n},$
$\sigma\left(e_3\right)=e_3+(\star)\,A_{n-3}e_{n},$ and
$\sigma\left(e_i\right)=e_i,\ \forall\ i \geq 4,$ and by a simple
computation one can see that the transformation
$\sigma(A_{n-3},n-3)$ does not change the structure constants.

\item The transformation $\sigma(A_{n-2},n-2),$ does not change
the structure constants, because
$\sigma(e_2)=e_2-A_{n-2}\,(*)\,e_n,$ and $\sigma(e_i)=e_i,$ $
\forall\ i\geq 3.$

\item For the transformation $\sigma(A_{n-1},n-1),$ it is enough show that $\sigma(e_2)=e_2,$
note that
$\sigma(e_2)=\left[\sigma(e_1),\sigma(e_0)\right]=e_2+A_{n-1}[e_{n-1},e_1],$
since $n$ is even then $[e_{n-1},e_1]=0,$ and hence
$\sigma(e_2)=e_2$\\
\end{itemize}
It is easy to
see that $\sigma(A_n,n),$ does not change the structure constants.
\newpage
Analogously, we check that $\phi(B_k,k)$ does not change the
structure constants, when $n-2\leq k\leq n.$

Consider the transformation\\ \hspace*{-5pt}
\begin{center} {$\phi(B_k,k)=\left\{\begin{array}{lll}\phi(e_0)=e_0,&
\\[1mm]
\phi(e_1)=e_1+B_k\,e_k, & \quad n-2\leq k \leq n,
\\[1mm]
\phi(e_{i+1})=[\phi(e_i), \phi(e_0)], & \quad 1\leq i\leq n-1,
\end{array}\right.$}
\end{center}
\begin{itemize}
\item \quad If $k=n-2,$ then
$$\phi(e_2)=e_2+B_{n-2}e_{n-1},\ \phi(e_3)=e_3+B_{n-2}e_{n},\  \phi(e_i)=e_i,\ \mbox{where}\ i\geq4.$$
A simple computation shows that $\phi(B_{n-2},{n-2})$
  does not change the structure constants.

  Note that
  $\left[\phi(e_1),\phi(e_2)\right]=\left[e_1+B_{n-2}e_{n-2},e_2+B_{n-2}e_{n-1}\right]$\\
  $$=\left[e_1,e_2\right]
  +B_{n-2}\left[e_1,e_{n-1}\right]+B_{n-2}\left[e_{n-2},e_2\right]+B^2_{n-2}\left[e_{n-2},e_{n-1}\right]
=\left[e_1,e_2\right],$$ \\(here $
\left[e_1,e_{n-1}\right]=\left[e_{n-2},e_2\right]=0,$ since $n$ is
even).

\item \quad  If $k=n-1,$ then we get
$\phi(e_2)=e_2+B_{n-1}e_{n},\ \phi(e_i)=e_i,$ where $i\geq3.$

Consider the bracket
$$\left[\phi(e_0),\phi(e_1)\right]=-\phi(e_2)+b'_{0,1}\,e'_n, $$ and
then
$$\left[e_0,e_1+B_{n-1}e_{n-1}\right]=-e_2-B_{n-1}e_{n}+b'_{0,1}\,e_n,
$$ implies that
$$-e_2+b_{0,1}\,e_n-B_{n-1}e_{n}=-e_2-B_{n-1}e_{n}+b'_{0,1}\,e_n, $$
therefore  $b'_{0,1}=b_{0,1}.$

The chain of equalities
\begin{eqnarray*}\left[\phi(e_1),\phi(e_1)\right]&=&b'_{1,1}\,e'_n,\\[1mm]
\left[e_1+B_{n-1}e_{n-1},e_1+B_{n-1}e_{n-1}\right]&=&b'_{1,1}\,e_n,\\[1mm]
[e_1,e_1]+B_{n-1}[e_1,e_{n-1}]+B_{n-1}[e_{n-1},e_1]&=&b'_{1,1}\,e_n,\
{\rm{show \ that}}\ \ \ \ b'_{1,1}=b_{1,1}. \end{eqnarray*}

One easily can see that
$$\left[\phi(e_1),\phi(e_2)\right]=\left[e_1+B_{n-1}e_{n-1},e_2+B_{n-1}e_{n}\right]=[e_1,e_2]+B_{n-1}[e_1,e_{n}]+B_{n-1}[e_{n-1},e_2]=[e_1,e_2].
$$
\item \quad  If $k=n,$ it is obvious.
\end{itemize}

\end{proof}
\end{pr}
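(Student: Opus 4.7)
My plan is to reduce the statement to showing that each elementary factor in the composition preserves the structure constants, and then to verify each factor by direct inspection. Since a composition of structure-constant-preserving transformations is itself structure-constant-preserving (each such factor merely rewrites the adapted basis with the same defining coefficients), it suffices to treat the factors $\sigma(A_k,k)$ and $\phi(B_k,k)$ that appear in $g$ one at a time.

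The engine driving every individual verification is the degree bound: in $TLeib_{n+1}$ any bracket $[e_r,e_s]$ lies in the span of $e_{r+s+1},\dots,e_n$, so it vanishes whenever $r+s\geq n$. Because the indices $k$ appearing in $g$ are close to $n$ ($k\geq n-3$ in the even case and $k\geq n-2$ in the odd case), the recursion $\sigma(e_{i+1})=[\sigma(e_i),\sigma(e_0)]$ forces $\sigma(e_i)=e_i$ for all $i$ above a small threshold, and likewise for $\phi$. I would compute explicitly the few remaining corrections and observe that they land in $\mathrm{span}(e_{n-1},e_n)$; then for each defining relation of $TLeib_{n+1}$, namely the brackets $[e_i,e_0]$, $[e_0,e_i]$, $[e_0,e_0]$, $[e_0,e_1]$, $[e_1,e_1]$, and the $[e_i,e_j]$ with $1\leq i<j$, I would check that the perturbation contributes only in degrees which are subsequently killed, so that the coefficients $b_{0,0},b_{0,1},b_{1,1},a_{i,j}^k,b_{i,j}$ are preserved.

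The delicate points are the factors whose index $k$ is smallest, because there $\sigma(e_2)$ or $\phi(e_2)$ genuinely picks up a nontrivial correction. For $\sigma(A_{n-3},n-3)$ with $n$ even, I expect $\sigma(e_2)$ and $\sigma(e_3)$ to receive nonzero corrections in $e_{n-1}$ and $e_n$, and the verification that these do not alter any $a_{i,j}^k$ or $b_{i,j}$ will require the parity hypothesis. Specifically, the convention $b=0$ for even $n$ and the associated vanishings $[e_1,e_{n-1}]=[e_{n-2},e_2]=0$ should cancel exactly the spurious contributions that would otherwise appear in $[\phi(e_1),\phi(e_2)]$ and in $[\sigma(e_0),\sigma(e_1)]$. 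I expect this parity-dependent cancellation to be the main obstacle, and it also explains why the two cases list different sets of trivial factors: in the odd case, fewer antisymmetry relations $[e_i,e_{n-i}]$ vanish, so correspondingly one fewer $\sigma$ and one fewer $\phi$ lie in the trivial subgroup.
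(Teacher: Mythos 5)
Your proposal follows essentially the same route as the paper: both proceed factor by factor, compute the few nonzero corrections to $\sigma(e_i)$ and $\phi(e_i)$ (which land in $\mathrm{span}(e_{n-1},e_n)$), and kill the resulting perturbations using the vanishing of brackets of high-index basis vectors, with the parity-dependent relation $[e_i,e_{n-i}]=(-1)^i b\,e_n$, $b=0$ for even $n$, doing the decisive work. The only caution is that your opening degree bound ``$[e_r,e_s]=0$ whenever $r+s\geq n$'' holds unconditionally only for $r+s\geq n+1$; the boundary case $r+s=n$ is exactly the parity-sensitive one you correctly isolate later.
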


The following lemma from \cite{RH1} keeps track the behavior some of the structure constants under the adapted base change.

\begin{lm}\label{L1}   Let $\{e_0,e_1,...,e_n\}\longrightarrow \{e_0',e_1',...,e_n'\}$ be an adapted base change, $b_{0,0},b_{0,1},b_{1,1},...$ and $b^\prime_{0,0},b^\prime_{0,1},b^\prime_{1,1},...$ be the respective structure constants. Then for
$b^\prime_{0,0},b^\prime_{0,1}$\ and \ $b^\prime_{1,1}$ one has

$ \ds
b_{0,0}^\prime=\frac{A_0^2b_{0,0}+A_0A_1b_{0,1}+A_1^2b_{1,1}}{A_0^{n-2}B_1(A_0+A_1\,b)},\
\
b_{0,1}^\prime=\frac{A_0b_{0,1}+2A_1b_{1,1}}{A_0^{n-2}(A_0+A_1\,b)},\
\ \ b_{1,1}^\prime=\frac{B_1b_{1,1}}{A_0^{n-2}(A_0+A_1\,b)}.
$

\end{lm}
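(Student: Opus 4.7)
The plan is to compute directly the brackets $[e_0',e_0']$, $[e_0',e_1']+[e_1',e_0']$ and $[e_1',e_1']$ using the explicit form of an adapted transformation $f$ given by Proposition 1.1, extract the $e_n$-coefficient of each, and compare with the defining relations of $TLeib_{n+1}$ to read off $b'_{0,0}$, $b'_{0,1}$ and $b'_{1,1}$.

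First I would compute $f(e_n)$ explicitly from the recursion $f(e_{i+1})=[f(e_i),f(e_0)]$. An induction on $i$ shows that $f(e_i)=A_0^{i-1}B_1\,e_i+(\text{terms in }e_{i+1},\ldots,e_n)$ for $2\le i\le n-1$. Applying this at $i=n-1$, the contributions $A_l[e_{n-1},e_l]$ for $l\ge 2$ vanish (since $(n-1)+l>n$ puts them outside the filtration) and the $e_n$-correction of $f(e_{n-1})$ is annihilated because $[e_n,\cdot]=0$; hence
\begin{equation*}
f(e_n)=A_0^{n-2}B_1\bigl(A_0\,[e_{n-1},e_0]+A_1\,[e_{n-1},e_1]\bigr)=A_0^{n-2}B_1(A_0+A_1b)\,e_n,
\end{equation*}
using $[e_{n-1},e_0]=e_n$ and $[e_{n-1},e_1]=be_n$ (coming from $[e_i,e_{n-i}]=-[e_{n-i},e_i]=(-1)^ibe_n$). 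Set $\lambda:=A_0^{n-2}B_1(A_0+A_1b)$, which is nonzero by hypothesis.

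Next I would expand the three bilinear expressions (using $B_0=0$) and collect the $e_n$-coefficients. The key simplification is that the multiplication restricted to $\mathrm{span}(e_1,\ldots,e_{n-1})$ is antisymmetric, $[e_0,e_k]+[e_k,e_0]=0$ for all $k\ge 2$, and $[e_n,\cdot]=[\cdot,e_n]=0$. After these cancellations the only surviving brackets are the four involving $e_0$ and $e_1$: $[e_0,e_0]=b_{0,0}e_n$, $[e_0,e_1]=-e_2+b_{0,1}e_n$, $[e_1,e_0]=e_2$ and $[e_1,e_1]=b_{1,1}e_n$. This produces $e_n$-coefficients $A_0^2b_{0,0}+A_0A_1b_{0,1}+A_1^2b_{1,1}$ for $[f(e_0),f(e_0)]$, then $B_1(A_0b_{0,1}+2A_1b_{1,1})$ for $[f(e_0),f(e_1)]+[f(e_1),f(e_0)]$, and $B_1^2b_{1,1}$ for $[f(e_1),f(e_1)]$.

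Finally I would equate these with $b'_{0,0}\lambda$, $b'_{0,1}\lambda$ and $b'_{1,1}\lambda$ respectively; the middle identity follows from summing $[e_0',e_1']=-e_2'+b'_{0,1}e_n'$ and $[e_1',e_0']=e_2'$ to get $b'_{0,1}e_n'=b'_{0,1}\lambda e_n$. Dividing by $\lambda$ then yields the three stated formulas. The main technical point is the careful bookkeeping of the antisymmetric cancellations, including $[e_i,e_i]=0$ for $2\le i\le n-1$ (forced by antisymmetry over $\mathbb{C}$); once these are in place, the higher coefficients $A_2,\ldots,A_n$ and $B_2,\ldots,B_n$ drop out entirely, consistent with Proposition 1.3.
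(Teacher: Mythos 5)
Your computation is correct. Note that the paper itself offers no proof of this lemma --- it is imported verbatim from the reference [RH1] ("The following lemma from \cite{RH1} keeps track..."), so there is no in-paper argument to compare against; what you have written is exactly the routine direct verification one would expect, and it is consistent with the explicit instances the paper does compute ($e_6'=A_0^5B_1e_6$ in dimension $7$ and $e_7'=A_0^5B_1(A_0+A_1c_{3,4})e_7$ in dimension $8$, and with Theorem 2.1(1)--(3)). The three $e_n$-coefficients $A_0^2b_{0,0}+A_0A_1b_{0,1}+A_1^2b_{1,1}$, $B_1(A_0b_{0,1}+2A_1b_{1,1})$ and $B_1^2b_{1,1}$ all check out, as does the trick of adding $[e_0',e_1']$ and $[e_1',e_0']$ to isolate $b_{0,1}'$. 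One small point of hygiene: the vanishing of $[e_{n-1},e_l]$ for $l\ge 2$ is not literally a "filtration" statement --- the generic table would still allow an $e_n$-component $b_{l,n-1}e_n$ when $l+(n-1)>n$ --- but those products are simply not present in (i.e.\ are declared zero by) the multiplication tables the paper works with, so your conclusion $f(e_n)=A_0^{n-2}B_1(A_0+A_1b)e_n$ stands as used throughout the paper.
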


The next sections deal with the classification problem of ${TLeib}_n$ in dimensions 7 and 8.
Here, to classify algebras from ${TLeib}_{7}$ and  ${TLeib}_{8}$ we represent them as
a disjoint union of their subsets. Some of these subsets turn out to be  single
orbits, and the others contain infinitely many orbits. In the last
case, we give invariant functions to discern the orbits.

\section{ The description of ${TLeib}_{n}$,\ $n=7,8.$}
\subsection{Isomorphism criterion for ${TLeib}_{7}$}

\bigskip  Any algebra $L$ from ${TLeib}_{7}$ can be represented as one dimensional central extension ($C(L)=<e_6>$)
 of 6-dimensional filiform Lie algebra with adapted basis $\{e_0,e_1,...,e_5\}$ (see Theorem \ref{T1})
  and on the adapted basis $\{e_0,e_1,...,e_6\}$ the class ${TLeib}_{7}$ can be
represented as follows:

${TLeib}_{7}=\left\{
\begin{array}{lll}
\lbrack e_{i},e_{0}]=e_{i+1}, & 1\leq i\leq {5}, &  \\[1mm]
\lbrack e_{0},e_{i}]=-e_{i+1}, & 2\leq i\leq {5}, &  \\[1mm]
\lbrack e_{0},e_{0}]=b_{0,0}e_{6}, &  &  \\[1mm]
\lbrack e_{0},e_{1}]=-e_{2}+b_{0,1}e_{6}, &  &  \\[1mm]
\lbrack e_{1},e_{1}]=b _{1,1}e_{6}, &  &  \\[1mm]
\lbrack e_{1},e_{2}]=-[e_2,e_1]=a_{1,4}\,e_4+a_{1,5}\,e_5+b_{1,2}e_{6}, \\[1mm]
\lbrack e_{1},e_{3}]=-[e_{3},e_{1}]=a_{1,4}\,e_5+b_{1,3}e_{6}, \\[1mm]
 [e_{1},e_{4}]=-[e_{4},e_{1}]=-a_{2,5}\,e_5+b_{1,4}e_{6}, \\[1mm]
 [e_{2},e_{3}]=-[e_{3},e_{2}]=a_{2,5}\,e_5+b_{2,3}e_{6},\\[1mm]
 [e_{1},e_{5}]=-[e_{5},e_{1}]=b_{1,5}e_{6}, \\[1mm]
 [e_{2},e_{4}]=-[e_{4},e_{2}]=b_{2,4}e_{6}.\\[1mm]
\end{array}%
\right. $

The next lemma specifies the set of structure constants of algebras from ${TLeib}_{7}.$

\begin{lm} The structure constants of algebras from ${TLeib}_{7}$ satisfy the following constraints:
 $$1.\  \ds b_{1,3}=a_{1,5}, \ \  2.\ \ds b_{1,4}=a_{1,4}-b_{2,3}, \ \ \mbox{and}\ \ 3.\  \ds b_{1,5}=b_{2,4}=a_{2,5}=0.$$


\begin{proof}
The relations easily can be found by applying the Leibniz identity to
the triples of the basis vectors $\{e_0,e_1,e_2\},\
\{e_0,e_1,e_3\}$  $\{e_0,e_2,e_3\},\{e_0,e_1,e_4\},$ and $\{e_1,e_2,e_3\}. $
\end{proof}
\end{lm}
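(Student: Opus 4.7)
The approach is to apply the Leibniz identity
\[
[x,[y,z]] \;=\; [[x,y],z] - [[x,z],y]
\]
to each of the five prescribed triples and read off the $e_6$-coefficient on each side. Since $[e_0,e_i]=-e_{i+1}$ holds only for $i\ge 2$, any bracket involving $e_6$ vanishes, and brackets not listed in the multiplication table of ${TLeib}_7$ (notably $[e_3,e_4]$, $[e_3,e_5]$, $[e_2,e_5]$) are zero, so each identity collapses to a short linear (or, in one case, quadratic) equation in the parameters $a_{i,j},b_{i,j}$.

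For the triple $(e_0,e_1,e_2)$, the left side $[e_0,\,a_{1,4}e_4+a_{1,5}e_5+b_{1,2}e_6]$ has $e_6$-coefficient $-a_{1,5}$, while the right side reduces (via $[e_3,e_1]=-[e_1,e_3]$) to the coefficient $-b_{1,3}$, giving relation 1. For the triple $(e_0,e_1,e_3)$, the left side contributes $-a_{1,4}e_6$, while $[[e_0,e_1],e_3]=-[e_2,e_3]$ and $[[e_0,e_3],e_1]=[e_1,e_4]$ yield a right-hand side of $-(b_{2,3}+b_{1,4})e_6$; this produces relation 2.

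For relation 3 three triples must be combined. The triple $(e_0,e_2,e_3)$ gives $a_{2,5}=b_{2,4}$, and $(e_0,e_1,e_4)$ gives $a_{2,5}=-(b_{2,4}+b_{1,5})$; together they leave a one-parameter family $(a_{2,5},b_{2,4},b_{1,5})=(t,t,-2t)$. The triple $(e_1,e_2,e_3)$ supplies the missing quadratic constraint: its right side vanishes because $[e_3,e_4],[e_3,e_5],[e_2,e_5]$ are all zero, while the left side $[e_1,\,a_{2,5}e_5+b_{2,3}e_6]$ produces exactly $a_{2,5}b_{1,5}e_6$. Substituting the parametrization yields $-2t^{2}=0$, forcing $t=0$ and hence $b_{1,5}=b_{2,4}=a_{2,5}=0$.

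The main obstacle is recognizing that the first four triples alone are insufficient: they provide two linear relations on three unknowns, leaving a one-parameter indeterminacy, and only the quadratic relation extracted from $(e_1,e_2,e_3)$ collapses it. The rest of the work is careful sign bookkeeping arising from the asymmetric convention $[e_0,e_i]=-e_{i+1}$ versus $[e_i,e_0]=e_{i+1}$ and from antisymmetry whenever $[e_j,e_i]$ must be rewritten in terms of $[e_i,e_j]$.
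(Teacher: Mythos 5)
Your proposal is correct and follows exactly the route the paper indicates: it applies the Leibniz identity to the same five triples $\{e_0,e_1,e_2\}$, $\{e_0,e_1,e_3\}$, $\{e_0,e_2,e_3\}$, $\{e_0,e_1,e_4\}$, $\{e_1,e_2,e_3\}$ and extracts relations 1--3, with the sign bookkeeping and the final quadratic constraint $a_{2,5}b_{1,5}=0$ all checking out. The paper leaves these computations implicit; you have simply carried them out in full.
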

Further unifying the above table of multiplication we rewrite it via new parameters $c_{0,0},c_{0,1},c_{1,1},c_{1,2},c_{1,3},c_{1,4},c_{2,3}$ as follows:\\

${TLeib}_{7}=\left\{
\begin{array}{lll}
\lbrack e_{i},e_{0}]=e_{i+1}, & 1\leq i\leq {5}, &  \\[1mm]
\lbrack e_{0},e_{i}]=-e_{i+1}, & 2\leq i\leq {5}, &  \\[1mm]
\lbrack e_{0},e_{0}]=c_{0,0}e_{6}, &  &  \\[1mm]
\lbrack e_{0},e_{1}]=-e_{2}+c_{0,1}e_{6}, &  &  \\[1mm]
\lbrack e_{1},e_{1}]=c _{1,1}e_{6}, &  &  \\[1mm]
\lbrack e_{1},e_{2}]=-[e_2,e_1]=c_{1,2}\,e_4+c_{1,3}\,e_5+c_{1,4}e_{6}, \\[1mm]
\lbrack e_{1},e_{3}]=-[e_{3},e_{1}]=c_{1,2}\,e_5+c_{1,3}e_{6}, \\[1mm]
 [e_{1},e_{4}]=-[e_{4},e_{1}]=(c_{1,2}-c_{2,3})\,e_{6}, \\[1mm]
 [e_{2},e_{3}]=-[e_{3},e_{2}]=c_{2,3}\,e_{6}.
\end{array}%
\right. $

An algebra from ${TLeib}_{7}$ with parameters $c_{0,0},c_{0,1},c_{1,1},c_{1,2},c_{1,3},c_{1,4},c_{2,3}$ is denoted by $L(C),$ where $C=(c_{0,0},c_{0,1},c_{1,1},c_{1,2},c_{1,3},c_{1,4},c_{2,3}).$

The next theorem represents the action of the adapted base change to the parameters $c_{0,0},c_{0,1},c_{1,1},c_{1,2},c_{1,3},c_{1,4},c_{2,3}$ of an algebra from ${TLeib}_{7}.$

\begin{theor}\label{T2}\emph{(}Isomorphism criterion for ${TLeib}_{7}$\emph{)}

 Two filiform Leibniz algebras $L(C)$ and $L(C'),$ where
$C=(c_{0,0},c_{0,1},c_{1,1},c_{1,2},c_{1,3},c_{1,4},c_{2,3})$ and $C'=(c'_{0,0},c'_{0,1},c'_{1,1},c'_{1,2},c'_{1,3},c'_{1,4},c'_{2,3}),$
from ${TLeib}_7$ are isomorphic if and only if there exist $
A_0,A_1,B_1,B_2,B_3\in \mathbb{C}:$ such that $A_0B_1\neq 0$ and
the following equalities hold:

\begin{eqnarray}c_{0,0}^\prime&=&{\frac {{A_{{0}}}^{2}c_{{0,0}}+A_{{0}}A_{{1}}c_{{0,1}}+{A_{{1}}}^{2}c_
{{1,1}}}{{A_{{0}}}^{5}B_{{1}}}} ,
\\
c_{0,1}^\prime&=&{\frac
{2\,A_{{1}}c_{{1,1}}+A_{{0}}c_{{0,1}}}{{A_{{0}}}^{5}}}
,\\
c_{1,1}^\prime&=&\frac{B_1c_{1,1}}{{A_{{0}}}^{5}},\\
c_{1,2}^{\prime }&=&{\frac {B_{{1}}c_{{1,1}}}{{A_{{0}}}^{5}}},\\
c_{1,3}^{\prime }&=&{\frac {B_{{1}} \left(
A_{{0}}c_{{1,3}}+2\,A_{{1}}\,{c_{{1,2}}}^{2}
 \right) }{{A_{{0}}}^{4}}}
,\\
c_{1,4}^{\prime
}&=&\frac{{A_0}^2\,{B_1}^2c_{1,4}+{A_0}^2({B_2}^2-2B_1\,B_3)\,c_{2,3}+A_1\,{B_1}^2(5\,c_{1,2}-c_{2,3})\,(A_0\,c_{1,3}-B_1\,{c_{1,2}}^2)}{{A_{{0}}}^{6}B_{{1}}},\\
c_{2,3}^{\prime }&=& {\frac {B_{{1}}c_{{2,3}}}{{A_{{0}}}^{2}}}.
\end{eqnarray}

\begin{proof}``If'' part.\
 The equations (1)--(3) occur due to Lemma \ref{L1} (remind that in this case $n$ is even therefore $b=0$).

Notice that according  to the Proposition \ref{p} and \ref{p1} the
adapted transformation in ${TLeib}_{7}$ can be taken in the form

$$ \left\{\begin{array}{lll} e_0'=f(e_0)=A_0\,e_0 + A_1\,e_1, &
\\[1mm]
e_1'=f(e_1)=B_1\,e_1+B_2\,e_2+B_3\,e_3, & \\[1mm]
e_{i+1}'=f(e_{i+1})=[f(e_i), f(e_0)], & \quad 1\leq i\leq n-1,  \\[1mm]
\end{array} \right.$$
where $A_0\,B_1\neq0$ or more precisely as follows:
\begin{eqnarray}\label{bc.dim07}
e_{0}^{\prime }&=&A_{0}e_{0}+A_{1}e_{1},\nonumber \\[1mm]
e_{1}^{\prime
}&=&B_{1}\,e_{1}+B_{2}e_{2}+B_{3}e_{3},\nonumber\\[1mm]
e_{2}^{\prime
}&=&A_{0}B_1\,e_{2}+A_{0}B_2e_{3}+\left(A_{{0}}B_{{3}}-A_{{1}}B_{{2}}c_{{1,2}}\right)e_4-A_{{1}}
\left( B_{{2}}c_{{1,3}}+B_{{3}}c_{{1,2}} \right) e_5 \nonumber\\[1mm]
&&+A_{{1}} \left( B_{{1}}c_{{1,1}}-B_{{2}}c_{{1,4}}-B_{{3}}c_{{
1,3}} \right)
e_6,\nonumber \\[1mm]
e_{3}^{\prime
}&=&A_{0}^2B_1\,e_{3}+\left({A^{2}_{{0}}}B_{{2}}-A_{{0}}{A_{{1}}}B_1c_{{1,2}}\right)e_4+({A^{2}_{{0}}}B_{{3}}-2\,A_{{0}}A_{{1}}B
_{{2}}c_{{1,2}}-A_{{0}}A_{{1}}B_{{1}}c_{{1,3}} )e_5 \nonumber
\\&&+(-A_{{1}} \left( -A_{{1}}B_{{2}}{c^{2}_{{1,2}}}+A_{{1}}B_{{2}}c_{{1,2}}
c_{{2,3}}+A_{{0}}B_{{1}}c_{{1,4}}+2\,A_{{0}}B_{{2}}c_{{1,3}}+2\,A_{{0}
}B_{{3}}c_{{1,2}}-A_{{0}}B_{{3}}c_{{2,3}} \right)
)e_6,\\[1mm]
e_{4}^{\prime
}&=&A_{0}^3B_1\,e_{4}+({A^{3}_{{0}}}B_{{2}}-2\,{A^{2}_{{0}}}A_{{1}}B_{{1}}c_{{1,2}})e_5+\nonumber
\\&&({A^{3}_
{{0}}}B_{{3}}-2\,A_{{1}}{A^{2}_{{0}}}B_{{1}}c_{{1,3}}+A_{{0}}{A^{2}_{{1}}}B_{{1}}{b^{2}
_{{1,2}}}-A_{{0}}{A_{{1}}}^{2}B_{{1}}c_{{1,2}}c_{{2,3}}-3\,A_{{1}}
{A_{{0}}}^{2}B_{{2}}c_{{1,2}}+{A^{2}_{{0}}}A_{{1}}B_{{2}}c_{{2,3}}
)e_6,\nonumber\\[1mm]
e_{5}^{\prime
}&=&A^4_0\,B_1\,e_5+({A^{4}_{{0}}}B_{{2}}-3\,A_{{1}}{A^{3}_{{0}}}B_{{1}}c_{{1,2}}+A_{{1}}{A^{3}_{{0}}}B_{{1}}b_
{{2,3}}
)e_6,\nonumber\\[1mm]
e_{6}^{\prime }&=& A^5_0\,B_1\,e_6.\nonumber
\end{eqnarray}

Consider
\begin{eqnarray*}
[e_{1}^{\prime }, e_{2}^{\prime }]&=&{A_{{0}}\,B_{{1}}}^{2} \left(
c_{{1,2}}e_{{4}}+c_{{1,3}}e_{{5}}+c_{{1,4} }e_{{6}} \right)
+A_{{0}}B_{{1}}B_{{2}} \left( c_{{1,2}}e_{{5}}+c_{{1, 3}}e_{{6}}
\right) +B_{{1}} ( A_{{2}}B_{{1}}c_{{1,2}}-A_{{1}}B_{{
2}}c_{{1,2}}+\\[1mm]
&& A_{{0}}B_{{3}} ) \left( c_{{1,2}}-c_{{2,3}}
 \right) e_{{6}}+A_{{0}}\,{B_{{2}}}^{2}c_{{2,3}}e_{{6}}-A_{{0}}B_
{{1}}B_{{3}}c_{{2,3}}e_{{6}}=c^\prime_{1,2}e_{4}^{\prime
}+c^\prime_{1,3}e_{5}^{\prime }+c_{1,4}^\prime e_{6}^{\prime }.
\end{eqnarray*}

Now bearing in mind (8) and equating the coefficients
of $e_4, e_5$ and $e_6$ we get the equalities (4), (5) and (6), respectively. The last
equality follows from $$\ds [f(e_2),
f(e_3)]=c^\prime_{2,3} f(e_6)\ \Longrightarrow  \
{A_{{0}}}^{3}{B_{{1}}}^{2}c_{{2,3}}e_{{6}}=c'_{{2,3}}{A_{{0}}
}^{5}B_{{1}}e_{{6}} \Longrightarrow  \ c_{2,3}^{\prime }={\frac
{B_1\,c_{2,3}}{{A^2_0}}}. $$

``Only if'' part. Let the
equalities (1)--(7) hold. Then the above base change is adapted and
it transforms $L(C)$ to $L(C').$

Indeed,
\begin{eqnarray*}
[e'_0,e'_0]&=&\left[A_{0}e_{0}+A_{1}e_{1},\ A_{0}e_{0}+A_{1}e_{1}\right]\\[1mm]
&=&{A^{2}_{{0}}}[e_0,e_0]+A_{0}A_{1}[e_0,e_1]+A_{0}A_{1}[e_1,e_0]+A^2_{1}[e_1,e_1]\\[1mm]
&=& \left( {A^{2}_{{0}}}c_{0,0}+A_{{0}}A_{{1}}c_{{0,1}}+{A^{2}_{{
1}}}c_{{1,1}} \right)
e_{{6}}=c'_{0,0}{A^{5}_{{0}}}B_{{1}}e_6=c'_{0,0}e'_6.\\[1mm]
&&\\[1mm]
[e'_0,e'_1]&=&\left[A_{0}e_{0}+A_{1}e_{1},\ B_{1}e_{1}+B_{2}e_{2}+B_3e_3\right]\\[1mm]
&=&A_{{0}}B_{{1}} \left( -e_{{2}}+c_{{0,1}}e_{{6}} \right)
-A_{{0}}B_{{2}
}e_{{3}}-A_{{0}}B_{{3}}e_{{4}}+A_{{1}}B_{{1}}c_{{1,1}}e_{{6}}+A_{{1}}B
_{{2}} ( c_{{1,2}}e_{{4}}+c_{{1,3}}e_{{5}}+c_{{1,4}}e_{{6}}
 )+\\[1mm]
 && A_{{1}}B_{{3}} ( c_{{1,2}}e_{{5}}+c_{{1,3}}e_{{6}}
 )
\\[1mm]
 &=&-(A_{0}B_1\,e_{2}+A_{0}B_2e_{3}+\left(A_{{0}}B_{{3}}-A_{{1}}B_{{2}}c_{{1,2}}\right)e_4-A_{{1}}
\left( B_{{2}}c_{{1,3}}+B_{{3}}c_{{1,2}} \right) e_5+\\[1mm]
&& A_{{1}} \left( B_{{1}}c_{{1,1}}-B_{{2}}c_{{1,4}}-B_{{3}}c_{{
1,3}} \right)
e_6)+B_{{1}} \left( A_{{0}}c_{{0,1}}+2\,A_{{1}}c_{{1,1}} \right)e_{{6}}\\[1mm]
 &=& -e'_2+A^5_0 B_1c'_{0,1}e_5= -e'_2+c'_{0,1}e'_5.
\end{eqnarray*}

\begin{equation*}
[e'_1,e'_1]=\left[B_{1}e_{1}+B_{2}e_{2}+B_3e_3,B_{1}e_{1}+B_{2}e_{2}+B_3e_3\right]=\left[B_{{1}}e_1,B_{{1}}e_1\right]=B^2_1c_{1,1}e_6=A^5_0B_1c'_{1,1}e_6=c'_{1,1}e'_6.\\[1mm]
\end{equation*}

The brackets $[e_{1}',e_{2}'], [e_{1}',e_{3}'], [e_{1}',e_{4}']$ and $[e_{2}',e_{3}']$ can be gotten similarly.
\end{proof}
\end{theor}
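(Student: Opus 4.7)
The plan is to reduce a general adapted base change to a normal form, compute its effect on the basis iteratively, and then read the transformation rules (4)--(7) from the brackets that define $c_{1,2}',c_{1,3}',c_{1,4}',c_{2,3}'$. Equations (1)--(3) are obtained for free from the earlier lemma.

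First, I invoke Propositions \ref{p} and \ref{p1} to write any adapted $f$ as a composition in which the tail factors $\phi(B_k,k)$ for $k=n,n-1$ and $\sigma(A_k,k)$ for $k=n,n-1,n-2,n-3$ preserve the structure constants (here $n=6$ is even, so $b=0$ and the nondegeneracy condition collapses to $A_0 B_1 \neq 0$). This leaves a change of basis of the form $e_0'=A_0 e_0+A_1 e_1$, $e_1'=B_1 e_1+B_2 e_2+B_3 e_3$, with $e_{i+1}'=[e_i',e_0']$. Equations (1)--(3) are the specialization of Lemma \ref{L1} to $n=6$, $b=0$.

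Next, I recursively compute $e_2',\ldots,e_6'$ by applying $[\,\cdot\,,e_0']$ and using the multiplication table of $L(C)$ together with the constraints from Lemma 2.1 (namely $b_{1,3}=a_{1,5}$, $b_{1,4}=a_{1,4}-b_{2,3}$, and $b_{1,5}=b_{2,4}=a_{2,5}=0$); the closed-form expressions are the ones listed in display (8). Then I expand
\[
[e_1',e_2'] = c_{1,2}'\,e_4' + c_{1,3}'\,e_5' + c_{1,4}'\,e_6'
\]
on the left by bilinearity and collect terms of weight $e_4,e_5,e_6$; comparing with the known expressions for $e_4',e_5',e_6'$ yields (4), (5), (6) respectively. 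The bracket $[e_2',e_3']=c_{2,3}'e_6'$ collapses by the same method to $A_0^3 B_1^2 c_{2,3}\,e_6 = c_{2,3}'\,A_0^5 B_1\,e_6$, giving (7). The remaining brackets ($[e_0',e_0']$, $[e_0',e_1']$, $[e_1',e_1']$, and $[e_1',e_3'], [e_1',e_4']$) either reproduce (1)--(3) or reduce, via the Lemma 2.1 constraints, to identities that are automatically implied by (4)--(7).

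The converse is then a direct verification: assuming (1)--(7), the prescribed formulas define an adapted linear bijection (the matrix is lower-triangular with nonzero diagonal because $A_0 B_1\neq 0$), and running the above bracket computations in reverse shows that it intertwines the multiplications of $L(C)$ and $L(C')$. The main obstacle is the bookkeeping in equation (6): the coefficient $c_{1,4}'$ collects contributions from the $e_6$-parts of $e_1'$ and $e_2'$ (through $B_2,B_3$), from $[e_2,e_3]=c_{2,3}e_6$, and from the nontrivial cross term $[e_1,e_4]=(c_{1,2}-c_{2,3})e_6$; keeping track of these and verifying the cancellations that produce the $(5c_{1,2}-c_{2,3})(A_0 c_{1,3}-B_1 c_{1,2}^2)$ factor is the delicate part of the argument, while the other equations follow by cleaner computations.
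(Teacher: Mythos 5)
Your proposal follows essentially the same route as the paper's proof: reduce the adapted transformation to the normal form $e_0'=A_0e_0+A_1e_1$, $e_1'=B_1e_1+B_2e_2+B_3e_3$ via Propositions \ref{p} and \ref{p1}, obtain (1)--(3) from Lemma \ref{L1} with $b=0$, derive (4)--(6) by expanding $[e_1',e_2']$ and comparing coefficients of $e_4,e_5,e_6$ against the computed $e_4',e_5',e_6'$, get (7) from $[e_2',e_3']$, and verify the converse by direct computation of the brackets. The approach and the key steps coincide with the paper's argument.
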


The next section deals with the classification problem of ${TLeib}_7$.

\subsubsection{ Isomorphism classes in $TLeib_{7}$}
In this subsection we give a list of
all algebras from $TLeib_{7}.$

\nt Represent $TLeib_{7}$ as a union of the following subsets: \
\

$U_{7}^{1}=\{L(C)\in TLeib_{7}\ :c_{2,3}\neq 0,\ c_{1,1} \neq
0\};$

$U_{7}^{2}=\{L(C)\in TLeib_{7}\ : c_{2,3}\neq 0,\ c_{1,1}=0,\
c_{0,1}\neq0\};$

$U_{7}^{3}=\{L(C)\in TLeib_{7}\ : c_{2,3}\neq 0,\
c_{1,1}=c_{0,1}=0,\ c_{1,2}\neq0,\ c_{0,0}\neq0\};$

$U_{7}^{4}=\{L(C)\in TLeib_{7}\ : c_{2,3}\neq 0,\
c_{1,1}=c_{0,1}=0,\ c_{1,2}\neq0,\ c_{0,0}=0\};$

$U_{7}^{5}=\{L(C)\in TLeib_{7}\ : c_{2,3}\neq 0,\
c_{1,1}=c_{0,1}=c_{1,2}=0,\ c_{1,3}\neq0\};$

$U_{7}^{6}=\{L(C)\in TLeib_{7}\ : c_{2,3}\neq 0,\
c_{1,1}=c_{0,1}=c_{1,2}=c_{1,3}=0,\ c_{0,0}\neq0\};$

$U_{7}^{7}=\{L(C)\in TLeib_{7}\ : c_{2,3}\neq 0,\
c_{1,1}=c_{0,1}=c_{1,2}=c_{1,3}=c_{0,0}=0\};$

$U_{7}^{8}=\{L(C)\in TLeib_{7}\ :c_{2,3}=0,\ c_{1,2}\neq0,\
c_{1,1}\neq0\};$

$U_{7}^{9}=\{L(C)\in TLeib_{7}\ :c_{2,3}=0,\ c_{1,2}\neq0,\
c_{1,1}=0,\ c_{0,1}\neq0\};$

$U_{7}^{10}=\{L(C)\in TLeib_{7}\ :c_{2,3}=0,\ c_{1,2}\neq0,\
c_{1,1}=c_{0,1}=0,\ c_{0,0}\neq0\};$

$U_{7}^{11}=\{L(C)\in TLeib_{7}\ :c_{2,3}=0,\ c_{1,2}\neq0,\
c_{1,1}=c_{0,1}=c_{0,0}=0,\ 4\,c_{{1,4}}c_{{1,2}}-5\,c_{{1,3}}^{2}
\neq0\};$

$U_{7}^{12}=\{L(C)\in TLeib_{7}\ :c_{2,3}=0,\ c_{1,2}\neq0,\
c_{1,1}=c_{0,1}=c_{0,0}=4\,c_{{1,4}}c_{{1,2}}-5\,c_{{1,3}}^{2}
=0\};$

$U_{7}^{13}=\{L(C)\in TLeib_{7}\ :c_{2,3}=c_{1,2}=0,\
c_{1,1}\neq0,\ c_{1,4}\neq0\};$

$U_{7}^{14}=\{L(C)\in TLeib_{7}\ :c_{2,3}=c_{1,2}=0,\
c_{1,1}\neq0,\ c_{1,4}=0,\ c_{1,3}\neq0\};$

$U_{7}^{15}=\{L(C)\in TLeib_{7}\ :c_{2,3}=c_{1,2}=0,\
c_{1,1}\neq0,\ c_{1,4}=c_{1,3}=0,\ 4\,\
c_{{0,0}}c_{{1,1}}-c_{{0,1}}^{2}\neq0\};$

$U_{7}^{16}=\{L(C)\in TLeib_{7}\ :c_{2,3}=c_{1,2}=0,\
c_{1,1}\neq0,\
c_{1,4}=c_{1,3}=4\,c_{{0,0}}c_{{1,1}}-c_{{0,1}}^{2}=0\};$

$U_{7}^{17}=\{L(C)\in TLeib_{7}\ :c_{2,3}=c_{1,2}=c_{1,1}=0,\
c_{0,1}\neq0,\ c_{1,4}\neq0\};$

$U_{7}^{18}=\{L(C)\in TLeib_{7}\ :c_{2,3}=c_{1,2}=c_{1,1}=0,\
c_{0,1}\neq0,\ c_{1,4}=0,\ c_{1,3}\neq0\};$

$U_{7}^{19}=\{L(C)\in TLeib_{7}\ :c_{2,3}=c_{1,2}=c_{1,1}=0,\
c_{0,1}\neq0,\ c_{1,4}=c_{1,3}=0\};$

$U_{7}^{20}=\{L(C)\in TLeib_{7}\
:c_{2,3}=c_{1,2}=c_{1,1}=c_{0,1}=0,\ c_{0,0}\neq0,\
c_{1,4}\neq0\};$

$U_{7}^{21}=\{L(C)\in TLeib_{7}\
:c_{2,3}=c_{1,2}=c_{1,1}=c_{0,1}=0,\ c_{0,0}\neq0,\ c_{1,4}=0,\
c_{1,3}\neq0\};$

$U_{7}^{22}=\{L(C)\in TLeib_{7}\
:c_{2,3}=c_{1,2}=c_{1,1}=c_{0,1}=0,\ c_{0,0}\neq0,\
c_{1,4}=c_{1,3}=0\};$

$U_{7}^{23}=\{L(C)\in TLeib_{7}\
:c_{2,3}=c_{1,2}=c_{1,1}=c_{0,1}=c_{0,0}=0,\ c_{1,4}\neq0,\
c_{1,3}\neq0\};$

$U_{7}^{24}=\{L(C)\in TLeib_{7}\
:c_{2,3}=c_{1,2}=c_{1,1}=c_{0,1}=c_{0,0}=0,\ c_{1,4}\neq0,\
c_{1,3}=0\};$

$U_{7}^{25}=\{L(C)\in TLeib_{7}\
:c_{2,3}=c_{1,2}=c_{1,1}=c_{0,1}=c_{0,0}=c_{1,4}=0,\
c_{1,3}\neq0\};$

$U_{7}^{26}=\{L(C)\in TLeib_{7}\
:c_{2,3}=c_{1,2}=c_{1,1}=c_{0,1}=c_{0,0}=c_{1,4}=c_{1,3}=0\}.$

\begin{pr}\label{p2}\emph{}
\begin{enumerate}
\item Two algebras $L(C)$ and $L(C')$ from $U_{7}^{1}$ are
isomorphic, if and only if \\ $$ \ds \left(\frac
{c^\prime_{2,3}}{c'_{{1,1}}}\right)^8\,(4\,c'_{{0,0}}c'_{{1,1}}-{c'}_{{0,1}}^{2})=\left(\frac
{c_{2,3}}{c_{{1,1}}}\right)^8\,(4\,c_{{0,0}}c_{{1,1}}-c_{{0,1}}^{2})
 ,$$\ $$ {\frac {c^\prime_{{1,2}}}{c^\prime_{2,3}}}={\frac {c_{{1,2}}}{c_{2,3}}} \ \ \mbox{and}\ \
 {\frac { \left( c'_{{1,3}}c'_{{1,1}}-c'_{{0,1}}{{c'}^{2}_{{1,2}}} \right)
^{3}}{{{c'}^{2}_{{2,3}}}{{c'}^{4}_{{1,1}}}}} ={\frac { \left(
c_{{1,3}}c_{{1,1}}-c_{{0,1}}{c^{2}_{{1,2}}} \right)
^{3}}{{c^{2}_{{2,3}}}{c^{4}_{{1,1}}}}}. $$
 \item For any $\lambda_1,  \lambda_2,\lambda_3 \in \mathbb{C}, $  there exists $L(C)\in
U_{7}^{1}:$ \\ $$\ds \left(\frac
{c_{2,3}}{c_{{1,1}}}\right)^8\,(4\,c_{{0,0}}c_{{1,1}}-c_{{0,1}}^{2})=\lambda_1,
\ \ {\frac {c_{{1,2}}}{c_{2,3}}}=\lambda_2, \ \ {\frac { \left(
c_{{1,3}}c_{{1,1}}-c_{{0,1}}{c^{2}_{{1,2}}} \right)
^{3}}{{c^{2}_{{2,3}}}{c^{4}_{{1,1}}}}}=\lambda_3.$$

Then orbits in $U_{7}^{1}$ can be parameterized as
$L\left(\lambda_1, 0, 1, \lambda_2, \lambda_3, 0, 1\right), \
\lambda_1, \lambda_2,\lambda_3 \in \mathbb{C} .$
\end{enumerate}

\begin{proof}
\begin{enumerate}

\item\ ``If'' part due to Theorem  \ref{T2} if one substitutes the
expressions for $c_{0,0}',$ $c_{0,1}',$ $c_{1,1}',$ $c_{1,2}',$
$c_{1,3}',$ $c_{1,4}',$ $c_{2,3}':$
$$ \ds \left(\frac
{c^\prime_{2,3}}{c'_{{1,1}}}\right)^8\,(4\,c'_{{0,0}}c'_{{1,1}}-{c'}_{{0,1}}^{2})=\left(\frac
{c_{2,3}}{c_{{1,1}}}\right)^8\,(4\,c_{{0,0}}c_{{1,1}}-c_{{0,1}}^{2}),$$ $$
 \ \ {\frac {c^\prime_{{1,2}}}{c^\prime_{2,3}}}={\frac {c_{{1,2}}}{c_{2,3}}}, \ \
 {\frac { \left( c'_{{1,3}}c'_{{1,1}}-c'_{{0,1}}{{c'}^{2}_{{1,2}}} \right)
^{3}}{{{c'}^{2}_{{2,3}}}{{c'}^{4}_{{1,1}}}}} ={\frac { \left(
c_{{1,3}}c_{{1,1}}-c_{{0,1}}{c^{2}_{{1,2}}} \right)
^{3}}{{c^{2}_{{2,3}}}{c^{4}_{{1,1}}}}}. $$

`` Only if '' part.\ \ Let the equalities  $$ \ds \left(\frac
{c^\prime_{2,3}}{c'_{{1,1}}}\right)^8\,(4\,c'_{{0,0}}c'_{{1,1}}-{c'}_{{0,1}}^{2})=\left(\frac
{c_{2,3}}{c_{{1,1}}}\right)^8\,(4\,c_{{0,0}}c_{{1,1}}-c_{{0,1}}^{2}),$$
 $${\frac {c^\prime_{{1,2}}}{c^\prime_{2,3}}}={\frac
 {c_{{1,2}}}{c_{2,3}}},\
 {\frac { \left( c'_{{1,3}}c'_{{1,1}}-c'_{{0,1}}{{c'}^{2}_{{1,2}}} \right)
^{3}}{{{c'}^{2}_{{2,3}}}{{c'}^{4}_{{1,1}}}}} ={\frac { \left(
c_{{1,3}}c_{{1,1}}-c_{{0,1}}{c^{2}_{{1,2}}} \right)
^{3}}{{c^{2}_{{2,3}}}{c^{4}_{{1,1}}}}} $$ hold.

Consider the base change (\ref{bc.dim07}) in Theorem \ref{T2}
with $$\ds A_1={\frac {-A_{{0}}c_{{0,1}}}{2\,c_{{1,1}}}}, \ \ds
B_1={\frac {{A_{{0}}}^{2}}{c_{{2,3}}}},$$ and $$\ds
B_3=\frac{1}{8\,{A_{{0}}}^{2}{c_{{1,1}}}^{2}{c_{{2,3}}}^{ 2}}
(4\,{c_{{1,1}}}^{2} \left(
{A_{{0}}}^{4}c_{{1,4}}+4\,{B_{{2}}}^{2}{c_{{ 2,3}}}^{3} \right)
+{A_{{0}}}^{4} \left( 5\,c_{{1,2}}-c_{{2,3}}
 \right)  \left( {c_{{0,1}}}^{2}{c_{{1,2}}}^{2}-2\,c_{{0,1}}c_{{1,1}}c
_{{1,3}} \right) ).$$
\\ This changing leads $L(C)$ into
$$\ds L\left(\left(\frac
{c_{2,3}}{c_{{1,1}}}\right)^8\,(4\,c_{{0,0}}c_{{1,1}}-c_{{0,1}}^{2}),0,1,{\frac
{c_{{1,2}}}{c_{2,3}}},{\frac { \left(
c_{{1,3}}c_{{1,1}}-c_{{0,1}}{c^{2}_{{1,2}}} \right)
^{3}}{{c^{2}_{{2,3}}}{c^{4}_{{1,1}}}}},0,1\right).$$ An analogous
base change with ``dash'' transforms $L(C')$ into $$\ds
L\left(\left(\frac
{c'_{2,3}}{c'_{{1,1}}}\right)^8\,(4\,{c'}_{{0,0}}{c'}_{{1,1}}-{c'}_{{0,1}}^{2}),0,1,{\frac
{{c'}_{{1,2}}}{{c'}_{2,3}}},{\frac { \left(
{c'}_{{1,3}}{c'}_{{1,1}}-{c'}_{{0,1}}{{c'}^{2}_{{1,2}}} \right)
^{3}}{{{c'}^{2}_{{2,3}}}{{c'}^{4}_{{1,1}}}}},0,1\right).$$

Since  $$ \ds \left(\frac
{c^\prime_{2,3}}{c'_{{1,1}}}\right)^8\,(4\,c'_{{0,0}}c'_{{1,1}}-{c'}_{{0,1}}^{2})=\left(\frac
{c_{2,3}}{c_{{1,1}}}\right)^8\,(4\,c_{{0,0}}c_{{1,1}}-c_{{0,1}}^{2})
 ,\ \ {\frac {c^\prime_{{1,2}}}{c^\prime_{2,3}}}={\frac {c_{{1,2}}}{c_{2,3}}}$$ \ \ \mbox{and}\ \
$$ {\frac { \left( c'_{{1,3}}c'_{{1,1}}-c'_{{0,1}}{{c'}^{2}_{{1,2}}} \right)
^{3}}{{{c'}^{2}_{{2,3}}}{{c'}^{4}_{{1,1}}}}} ={\frac { \left(
c_{{1,3}}c_{{1,1}}-c_{{0,1}}{c^{2}_{{1,2}}} \right)
^{3}}{{c^{2}_{{2,3}}}{c^{4}_{{1,1}}}}},$$ the algebras are isomorphic.

 \item\qquad Obvious.
\end{enumerate}

\end{proof}

\end{pr}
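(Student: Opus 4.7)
The plan is to exploit the isomorphism criterion (Theorem \ref{T2}) together with the defining conditions $c_{1,1}\neq 0$ and $c_{2,3}\neq 0$ of $U_7^1$. Note first that $B_2$ and $B_3$ appear in the transformation formulas only through equation (6) for $c'_{1,4}$, so effectively only $A_0,A_1,B_1$ act on the remaining six structure constants. Three parameters acting on six coordinates suggests three independent invariants, matching the statement; moreover, each of the three given expressions involves only the six constants other than $c_{1,4}$.

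The bulk of the proof is the construction of a canonical form. With $c_{1,1}\neq 0$, set $A_1=-A_0c_{0,1}/(2c_{1,1})$ in (2) to obtain $c'_{0,1}=0$. With $c_{2,3}\neq 0$, pick $B_1=A_0^2/c_{2,3}$ in (7) to achieve $c'_{2,3}=1$; equation (3) then reduces to $c'_{1,1}=c_{1,1}/(A_0^3c_{2,3})$, so solving the cube $A_0^3=c_{1,1}/c_{2,3}$ also forces $c'_{1,1}=1$. Since $B_3$ appears linearly in (6) with coefficient $-2A_0^2B_1c_{2,3}\neq 0$, one may finally choose $B_3$ to kill $c'_{1,4}$, leaving $B_2$ free. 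Substituting these choices of $A_0,A_1,B_1,B_3$ into (1), (4), (5) expresses $c'_{0,0},c'_{1,2},c'_{1,3}$ purely in terms of $C$, and the resulting values are exactly the three invariants listed in the proposition. Thus every $L(C)\in U_7^1$ is isomorphic to $L(\lambda_1,0,1,\lambda_2,\lambda_3,0,1)$ with $(\lambda_1,\lambda_2,\lambda_3)$ its triple of invariants.

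Both implications of part (1) then follow. If the three invariants agree, the two canonical forms coincide and the algebras are isomorphic. Conversely, if $L(C)\cong L(C')$ then by Theorem \ref{T2} the primed constants are given by (1)--(7) for some $(A_0,A_1,B_1,B_2,B_3)$; substituting into each of the three expressions and simplifying produces the unprimed versions. Part (2) is immediate: plugging $c_{0,0}=\lambda_1,\ c_{0,1}=0,\ c_{1,1}=1,\ c_{1,2}=\lambda_2,\ c_{1,3}=\lambda_3,\ c_{1,4}=0,\ c_{2,3}=1$ into the three invariant expressions returns exactly $(\lambda_1,\lambda_2,\lambda_3)$.

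The real obstacle is the algebra behind identifying the correct invariant combinations in the first place. The factor $4c_{0,0}c_{1,1}-c_{0,1}^2$ is the discriminant of the binary form attached to $[e_0,e_0],[e_0,e_1],[e_1,e_1]$, and its weight under the $(A_0,A_1,B_1)$-action is what fixes the exponent $8$ on the ratio $c_{2,3}/c_{1,1}$; the third invariant is engineered from (5) so that the $A_1c_{1,2}^2$ contribution is absorbed by $c_{0,1}c_{1,2}^2$ once $A_1$ is replaced by its canonicalizing value. Organizing the proof around the canonical form, as above, avoids grinding out three separate weight counts and reduces the verification to a single normalization computation.
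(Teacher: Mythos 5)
Your proof is essentially the paper's own argument: the identical normalizations $A_1=-A_0c_{0,1}/(2c_{1,1})$, $B_1=A_0^2/c_{2,3}$, and a linear solve for $B_3$ (possible because its coefficient in the formula for $c'_{1,4}$ is $-2c_{2,3}/A_0^4\neq 0$) produce the canonical form $L(\lambda_1,0,1,\lambda_2,\lambda_3,0,1)$, while the converse is the direct substitution check via Theorem \ref{T2}. You are in fact slightly more careful than the paper in recording that $A_0$ is then forced to satisfy $A_0^3=c_{1,1}/c_{2,3}$ in order to get $c'_{1,1}=1$; with that value the canonical entries $c'_{0,0}$ and $c'_{1,3}$ come out as cube roots of (constant multiples of) the displayed expressions rather than the expressions themselves, so your claim that they ``are exactly the three invariants listed'' holds only after cubing --- an imprecision shared with the paper, whose first invariant is only genuinely preserved under the action of Theorem \ref{T2} if the factor $4c_{0,0}c_{1,1}-c_{0,1}^{2}$ is cubed.
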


\begin{pr}\emph{}
\begin{enumerate}
\item Two algebras $L(C)$ and $L(C')$ from $U_{7}^{2}$ are
isomorphic, if and only if  $$ \ds
 {\frac {c'_{{1,2}}}{c'_{{2,3}}}}
 ={\frac {c_{{1,2}}}{c_{{2,3}}}},\ \ {\frac { \left( c'_{{1,3}}c'_{{0,1}}-2\,c'_{{0,0}}{{c'}^{2}_{{1,2}}}
 \right) ^{4}}{{{c'}^{4}_{{2,3}}}{{c'}^{5}_{{0,1}}}}}
={\frac { \left( c_{{1,3}}c_{{0,1}}-2\,c_{{0,0}}{c^{2}_{{1,2}}}
 \right) ^{4}}{{c^{4}_{{2,3}}}{c^{5}_{{0,1}}}}}.$$
 \item For any $ \lambda_1, \lambda_2\in \mathbb{C},$ there exists $L(C)\in U_{7}^{2}:$ $$\ds{\frac
{c_{{1,2}}}{c_{{2,3}}}}=\lambda_1,\ {\frac { \left(
c_{{1,3}}c_{{0,1}}-2\,c_{{0,0}}{c^{2}_{{1,2}}}
 \right) ^{4}}{{c^{4}_{{2,3}}}{c^{5}_{{0,1}}}}}=\lambda_2.$$
\end{enumerate}
The orbits in  $U_{7}^{2}$ can be parameterized as $L\left(0, 1,
0,\lambda_1, \lambda_2, 0,1\right), \ \lambda_1,\lambda_2 \in
\mathbb{C}$.

\begin{proof} The proof is similar that of Proposition \ref{p2}, where we put $$\ds A_1=-{\frac
{A_{{0}}c_{{0,0}}}{c_{{0,1}}}},\ B_1={\frac
{{A_{{0}}}^{2}}{c_{{2,3}}}},$$ and \\
$$ \ds
B_3=\frac{({c_{{0,1}}}^{2} \left(
{A_{{0}}}^{4}c_{{1,4}}+{B_{{2}}}^{2}{c_{{2,3}}} ^{3}
\right)+{A_{{0}}}^{4}(c_{{0,0}} \left(
c_{{0,0}}{c_{{1,2}}}^{2}-c_{{1,3}}c_{{0,1}} \right)
 \left( 5\,c_{{1,2}}-c_{{2,3}} \right)
) )}{2\,{A_{{0}}}^{2}{c_{{0,1}}}^{2}{c_{{2,3}}}^{2}}.$$
\end{proof}

\end{pr}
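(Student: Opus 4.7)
The proof follows the template of Proposition \ref{p2}. Using Theorem \ref{T2} we will first verify that the two proposed rational expressions are genuine isomorphism invariants on $U_7^2$, and then construct an explicit adapted base change bringing every $L(C)\in U_7^2$ into the canonical form $L(0,1,0,\lambda_1,\lambda_2,0,1)$, where $\lambda_1,\lambda_2$ are the values of those invariants evaluated on $L(C)$. Existence of a representative for any $(\lambda_1,\lambda_2)\in\mathbb{C}^2$ will then follow at once.

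For the invariance direction, substitute the transformation formulas (1)--(7) from Theorem \ref{T2} into $c_{1,2}/c_{2,3}$ and into $(c_{1,3}c_{0,1}-2c_{0,0}c_{1,2}^2)^4/(c_{2,3}^4 c_{0,1}^5)$. On $U_7^2$ one has $c_{1,1}=0$, so (2) and (3) simplify to $c'_{0,1}=c_{0,1}/A_0^4$ and $c'_{1,1}=0$; in particular the conditions $c_{0,1}\neq 0$ and $c_{2,3}\neq 0$ are preserved under the $G_{ad}$-action, making the invariance question well posed. The ratio $c_{1,2}/c_{2,3}$ is invariant because both coefficients scale by the same factor. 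For the second expression, a direct calculation using (1), (2), (5) and (7) shows that $c'_{1,3}c'_{0,1}-2c'_{0,0}(c'_{1,2})^2$ differs from $c_{1,3}c_{0,1}-2c_{0,0}c_{1,2}^2$ by a specific product of powers of $A_0$ and $B_1$; the exponents $4$ and $5$ in the denominator $c_{2,3}^4 c_{0,1}^5$ are chosen precisely so that raising the entire quotient to the fourth power kills these scaling factors.

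For the reduction direction, pick any $A_0$ with $A_0^4=c_{0,1}$ and set $A_1=-A_0 c_{0,0}/c_{0,1}$ and $B_1=A_0^2/c_{2,3}$, as indicated in the statement. Equations (1)--(3) and (7) immediately give $c'_{0,0}=0$, $c'_{0,1}=1$, $c'_{1,1}=0$ and $c'_{2,3}=1$, while (4)--(5) evaluate to $c'_{1,2}=\lambda_1$ and $(c'_{1,3})^4=\lambda_2$ by construction. The only non-routine point, and the main obstacle, is arranging $c'_{1,4}=0$. Equation (6) is affine linear in $B_3$ with non-vanishing leading coefficient (a scalar multiple of $B_1$), so it can always be solved; substituting the chosen $A_0,A_1,B_1$ into that linear equation produces the explicit $B_3$ displayed in the statement, while $B_2$ may be taken arbitrarily. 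Applying the analogous base change to $L(C')$ yields the identical normal form precisely when the two invariants agree on $L(C)$ and $L(C')$, which completes the ``only if'' half; part~2 is then immediate, since $L(0,1,0,\lambda_1,\lambda_2,0,1)$ itself lies in $U_7^2$ and realizes any prescribed $(\lambda_1,\lambda_2)$.
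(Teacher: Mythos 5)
Your proof is correct and follows essentially the same route as the paper: the ``if'' direction by substituting the transformation formulas of Theorem \ref{T2}, and the ``only if'' direction by the explicit adapted base change with $A_1=-A_0c_{0,0}/c_{0,1}$, $B_1=A_0^2/c_{2,3}$ and $B_3$ solving the affine-linear equation $c'_{1,4}=0$. You actually supply details the paper leaves implicit — notably the normalization $A_0^4=c_{0,1}$ forcing $c'_{0,1}=1$, the preservation of the defining conditions of $U_7^2$, and the reason the exponents $4$ and $5$ make the second expression a genuine invariant.
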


Since the proving of the next coming Propositions 2.3 -- 2.13 are similar those of Propositions 2.1 and 2.2 we decided to omit the details of them. ``If'' parts of them follow from Theorem 1.2, for ``Only if'' part we just give the respective values of the coefficients $A_0, A_1, B_1, B_2$ and $B_3$ in the base change (\ref{bc.dim07}). Note that if no value is given then it is considered as an arbitrary.
\begin{pr}\emph{}
\begin{enumerate}
\item Two algebras $L(C)$ and $L(C')$ from $U_{7}^{3}$ are
isomorphic, if and only if \ $ \ds {\frac
{c'_{{1,2}}}{c'_{{2,3}}}}={\frac {c_{{1,2}}}{c_{{2,3}}}}.$ \item
For any $\lambda\in \mathbb{C^*},$ there exists $L(C)\
\mbox{from}\ U_{7}^{3}:\ \ds{\frac {c_{{1,2}}}{c_{{2,3}}}}=\lambda
.$
\end{enumerate}

Then orbits in  $U_{7}^{3}$ can be parameterized as $L\left(1, 0,
0, \lambda, 0, 0, 1\right),\ \lambda\in \mathbb{C^*}$.

\begin{proof} The respective values of coefficients are: $$\ds A_1={\frac {-A_{{0}}c_{{1,3}}}{2\,{c_{{1,2}}}^{2}}},
\ B_1={\frac {{A_{{0}}}^{2}}{c_{{2,3}}}}$$ and $$ \ds B_3={\frac
{{A_{{0}}}^{4}\,\left(
4\,c_{{1,4}}{c_{{1,2}}}^{2}-5\,{c_{{1,3}}}^{2}c_{{1,2}}+{c_{{1
,3}}}^{2}c_{{2,3}} \right) +4\,{B_{{2}}}^{2}{c_{{2,3}}}^{
3}{c_{{1,2}}}^{2} }{8\,{A_{{0}}}^{2}{c_{{1,2}}}^{2}{
c_{{2,3}}}^{2}}} .$$
\end{proof}

\end{pr}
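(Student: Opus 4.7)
The plan is to apply Theorem \ref{T2} exactly as in Propositions 2.1 and 2.2; the argument splits into an invariance direction and a canonical-form direction.

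For the ``If'' direction, I read off from equations (4) and (7) of Theorem \ref{T2} that $c_{1,2}'$ and $c_{2,3}'$ transform by the same scalar factor $B_1/A_0^2$, so the ratio $c_{1,2}/c_{2,3}$ (which is well defined on $U_7^3$ since $c_{2,3}\neq 0$) is invariant under every adapted base change. Equality of ratios is therefore necessary for isomorphism.

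For the ``Only if'' direction, it suffices to bring an arbitrary $L(C)\in U_7^3$ into the canonical form $L(1,0,0,\lambda,0,0,1)$ with $\lambda=c_{1,2}/c_{2,3}$. The hypotheses $c_{1,1}=c_{0,1}=0$ force $c_{1,1}'=c_{0,1}'=0$ automatically from equations (2)--(3), independently of $A_0,A_1,B_1$. I then specialize the remaining parameters of the base change (\ref{bc.dim07}) in the order in which the normalization decouples: first set $B_1=A_0^2/c_{2,3}$ so that (7) gives $c_{2,3}'=1$ and simultaneously (4) gives $c_{1,2}'=\lambda$; next set $A_1=-A_0c_{1,3}/(2c_{1,2}^2)$ so that (5) gives $c_{1,3}'=0$; then pick a fifth root $A_0$ of $c_{0,0}c_{2,3}$ (which is nonzero on $U_7^3$), so that (1) gives $c_{0,0}'=1$; and finally solve the affine-linear equation (6) for $B_3$ (with $B_2$ still free) to obtain $c_{1,4}'=0$, which yields the explicit expression for $B_3$ recorded in the proof. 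Applying the same procedure to $L(C')$ brings both algebras to the same canonical form, whence they are isomorphic.

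The main (in fact only) technical step is verifying that the displayed value of $B_3$ actually kills $c_{1,4}'$ in equation (6) once $A_0,A_1,B_1$ have been fixed as above; this is a routine but slightly tedious substitution, which I would confirm by direct computation rather than reproduce in full. Part (2) of the proposition is then immediate: for every $\lambda\in\mathbb{C}^*$ the algebra $L(1,0,0,\lambda,0,0,1)$ plainly lies in $U_7^3$ and realizes $c_{1,2}/c_{2,3}=\lambda$, so in conjunction with part (1) one obtains the stated parametrisation of orbits in $U_7^3$ by $\lambda\in\mathbb{C}^*$.
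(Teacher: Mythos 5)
Your proof is correct and takes essentially the same route as the paper: the ``if'' direction is the invariance of the ratio $c_{1,2}/c_{2,3}$ read off from Theorem 2.2, and the ``only if'' direction is the same normalization to the canonical form $L(1,0,0,\lambda,0,0,1)$ via exactly the choices of $A_1$, $B_1$ and (solving the affine equation for) $B_3$ that the paper records. You are in fact slightly more careful than the paper in pinning down $A_0^5=c_{0,0}c_{2,3}$, which is indeed needed to achieve $c_{0,0}'=1$ but which the paper leaves implicit by declaring unspecified coefficients ``arbitrary''.
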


\begin{pr}\emph{}
\begin{enumerate}
\item Two algebras $L(C)$ and $L(C')$ from $ U_{7}^{4}$ are
isomorphic, if and only if \ $ \ds {\frac
{c'_{{1,2}}}{c'_{{2,3}}}}={\frac {c_{{1,2}}}{c_{{2,3}}}}.$ \item
For any $\lambda\in \mathbb{C^*},$ there exists $L(C)\
\mbox{from}\  U_{7}^{4}:\ \ds{\frac
{c_{{1,2}}}{c_{{2,3}}}}=\lambda .$
\end{enumerate}

Then orbits in  $U_{7}^{4}$ can be parameterized as $L\left(0, 0,
0, \lambda, 0, 0, 1\right),\ \lambda\in \mathbb{C^*}$.

\begin{proof} Here $$\ds A_1={\frac {-A_{{0}}c_{{1,3}}}{2\,{c_{{1,2}}}^{2}}},
\ B_1={\frac {{A_{{0}}}^{2}}{c_{{2,3}}}}\ \ \mbox{and} \ \ \ds
B_3={\frac {{A_{{0}}}^{4}\,\left(
4\,c_{{1,4}}{c_{{1,2}}}^{2}-5\,{c_{{1,3}}}^{2}c_{{1,2}}+{c_{{1
,3}}}^{2}c_{{2,3}} \right) +4\,{B_{{2}}}^{2}{c_{{2,3}}}^{
3}{c_{{1,2}}}^{2} }{8\,{A_{{0}}}^{2}{c_{{1,2}}}^{2}{
c_{{2,3}}}^{2}}} .$$
\end{proof}

\end{pr}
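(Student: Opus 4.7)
The plan is to apply Theorem \ref{T2} within the subclass $U_7^4$, following the pattern established by Propositions 2.1--2.3. First I would verify that $U_7^4$ is invariant under the adapted action: formulas (1)--(3) of Theorem \ref{T2} express $c'_{0,0}, c'_{0,1}, c'_{1,1}$ as linear combinations of $c_{0,0}, c_{0,1}, c_{1,1}$, so the vanishing $c_{0,0}=c_{0,1}=c_{1,1}=0$ is preserved; and $c_{1,2}, c_{2,3}$ scale by the common nonzero factor $B_1/A_0^2$ (compare (4) and (7)), which both preserves their non-vanishing and exposes $c_{1,2}/c_{2,3}$ as the relevant invariant. The ``only if'' direction follows at once: any isomorphism forces $c'_{1,2}/c'_{2,3}=c_{1,2}/c_{2,3}$.

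For the ``if'' direction I would construct an explicit base change of the form (\ref{bc.dim07}) carrying $L(C)$ to the normal form $L(0,0,0,\lambda,0,0,1)$, where $\lambda=c_{1,2}/c_{2,3}$. The plan is to normalize the surviving structure constants sequentially: first, set $B_1=A_0^2/c_{2,3}$, which forces $c'_{2,3}=1$ and automatically $c'_{1,2}=\lambda$; second, observe that equation (5) is linear in $A_1$ with a nonzero coefficient proportional to $B_1 c_{1,2}^2/A_0^3$, so the choice $A_1=-A_0 c_{1,3}/(2 c_{1,2}^2)$ kills $c'_{1,3}$; finally, equation (6) is affine in $B_3$ with leading coefficient proportional to $c_{2,3}/A_0^3 \neq 0$, so $B_3$ may be solved uniquely to obtain $c'_{1,4}=0$. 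The explicit value of $B_3$ coincides with that exhibited in the proof of Proposition 2.3, since the hypothesis $c_{0,0}\neq 0$ played no role in that formula. The remaining parameters $A_0\in \mathbb{C}^*$ and $B_2\in \mathbb{C}$ stay free. Running the same reduction on $L(C')$ yields the identical normal form, completing the ``if'' direction.

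Part 2 is immediate because $L(0,0,0,\lambda,0,0,1)$ itself lies in $U_7^4$ for every $\lambda\in \mathbb{C}^*$ and manifestly has $c_{1,2}/c_{2,3}=\lambda$. The only point in the plan that requires care is making sure the sequential normalizations do not interfere: namely, that prescribing $B_1, A_1, B_3$ does not resurrect nonzero values for $c'_{0,0}, c'_{0,1}, c'_{1,1}$. This is automatic, since by (1)--(3) those three quantities depend only on $c_{0,0}, c_{0,1}, c_{1,1}$, all of which vanish by hypothesis. Thus the only real computation is the affine solve for $B_3$, which is structurally identical to the one performed in Proposition 2.3 --- the entire argument is a mild variation of that case, with the single simplification $c_{0,0}=0$.
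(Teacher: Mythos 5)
Your proposal is correct and follows essentially the same route as the paper: the paper's (omitted) argument is precisely the reduction to the normal form $L(0,0,0,\lambda,0,0,1)$ via the base change (\ref{bc.dim07}) with $B_1=A_0^2/c_{2,3}$, $A_1=-A_0c_{1,3}/(2c_{1,2}^2)$, and $B_3$ solved from the affine equation (6), which are exactly the coefficient values the paper records. Your observation that the formulas coincide with those of Proposition 2.3 because $c_{0,0}$ plays no role there is exactly the paper's point in treating $U_7^3$ and $U_7^4$ identically.
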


\begin{pr}\emph{}
\begin{enumerate}
\item Two algebras $L(C)$ and $L(C')$ from $U_{7}^{5}$ are
isomorphic, if and only if \ $$ \ds {\frac
{{{c'}_{{2,3}}}^{6}{c'}_{{0,0}}}{{{c'}_{{1,3}}}^{5}}}={\frac
{{c_{{2,3}}}^{6}c_{{0,0}}}{{c_{{1,3}}}^{5}}}.
 $$
 \item For any $\lambda\in \mathbb{C},$ there exists $L(C)\in U_{7}^{5}:\ \ds{\frac
{{c_{{2,3}}}^{6}c_{{0,0}}}{{c_{{1,3}}}^{5}}}=\lambda.$
\end{enumerate}

Then orbits in  $U_{7}^{5}$ can be parameterized as
$L\left(\lambda, 0, 0, 0, 1, 0, 1\right), \lambda\in \mathbb{C}$.

\begin{proof}For this case:
 $$\ds A_0={\frac {c_{{1,3}}}{c_{{2,3}}}},
\ B_1={\frac {{c_{{1,3}}}^{2}}{{c_{{2,3}}}^{3}}}\ \ \mbox{and}\ \
\ds B_3={\frac
{{c_{{1,3}}}^{4}c_{{1,4}}+{B_{{2}}}^{2}{c_{{2,3}}}^{7}-A_{{1}}{
c_{{1,3}}}^{4}{c_{{2,3}}}^{2}}{2\,{c_{{1,3}}}^{2}{c_{{2,3}}}^{4}}}
.$$
\end{proof}

\end{pr}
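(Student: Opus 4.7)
The plan is to specialize the adapted base change formulas of Theorem~\ref{T2} to the subclass $U_{7}^{5}$, use the resulting simplified transformations to identify and verify the claimed invariant, and then produce an explicit normalizing base change. First I would confirm that the conditions $c_{1,1}=c_{0,1}=c_{1,2}=0$ defining $U_{7}^{5}$ are stable under every adapted transformation. From formulas (3), (2), (4) one reads that $c_{1,1}'=B_{1}c_{1,1}/A_{0}^{5}=0$, then $c_{0,1}'=(A_{0}c_{0,1}+2A_{1}c_{1,1})/A_{0}^{5}=0$, and $c_{1,2}'=B_{1}c_{1,1}/A_{0}^{5}=0$. Thus $U_{7}^{5}$ is $G_{ad}$-invariant and the remaining formulas collapse to
\begin{equation*}
c_{0,0}' = \frac{c_{0,0}}{A_{0}^{3}B_{1}}, \qquad c_{1,3}' = \frac{B_{1}c_{1,3}}{A_{0}^{3}}, \qquad c_{2,3}' = \frac{B_{1}c_{2,3}}{A_{0}^{2}},
\end{equation*}
while $c_{1,4}'$, after setting $c_{1,2}=0$ in (6), becomes affine-linear in $B_{3}$ and quadratic in $B_{2}$, with $A_{1}$ still free.

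The necessity in part (1) then follows from the direct calculation
\begin{equation*}
\frac{(c_{2,3}')^{6}\,c_{0,0}'}{(c_{1,3}')^{5}} = \frac{B_{1}^{6}c_{2,3}^{6}/A_{0}^{12}\cdot c_{0,0}/(A_{0}^{3}B_{1})}{B_{1}^{5}c_{1,3}^{5}/A_{0}^{15}} = \frac{c_{2,3}^{6}\,c_{0,0}}{c_{1,3}^{5}},
\end{equation*}
so the quantity is an invariant of the $G_{ad}$-action on $U_{7}^{5}$. For sufficiency I would normalize $L(C)$ to the canonical form $L(\lambda,0,0,0,1,0,1)$ with $\lambda = c_{0,0}c_{2,3}^{6}/c_{1,3}^{5}$. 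Demanding $c_{2,3}'=1=c_{1,3}'$ forces the system $B_{1}c_{2,3}=A_{0}^{2}$, $B_{1}c_{1,3}=A_{0}^{3}$, uniquely solved by $A_{0}=c_{1,3}/c_{2,3}$ and $B_{1}=c_{1,3}^{2}/c_{2,3}^{3}$ (both nonzero by hypothesis). Substituting into the formula for $c_{0,0}'$ immediately gives $c_{0,0}'=\lambda$, and $c_{0,1}',c_{1,1}',c_{1,2}'$ are automatically zero. Finally, $A_{1}$ and $B_{2}$ remain free and $B_{3}$ is then solved from $c_{1,4}'=0$ to yield the value
\begin{equation*}
B_{3} = \frac{c_{1,3}^{4}c_{1,4} + B_{2}^{2}c_{2,3}^{7} - A_{1}c_{1,3}^{4}c_{2,3}^{2}}{2\,c_{1,3}^{2}c_{2,3}^{4}}
\end{equation*}
stated in the hint. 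Applying the same normalization to $L(C')$ produces the identical canonical algebra precisely when the invariants agree, so the two algebras are isomorphic. Part~(2) is immediate: for any $\lambda\in\mathbb{C}$ the algebra $L(\lambda,0,0,0,1,0,1)$ already realizes that value, hence all orbits of $U_{7}^{5}$ are parameterized by $\mathbb{C}$ as claimed.

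The only non-routine step is the verification that the prescribed $B_{3}$ really annihilates $c_{1,4}'$, which entails substituting $c_{1,2}=0$, the chosen $A_{0}$ and $B_{1}$, and the formula for $B_{3}$ into (6) of Theorem~\ref{T2}; this is a polynomial identity of modest size and the main (but purely computational) obstacle. The remaining isomorphism checks for $c_{1,1}',c_{0,1}',c_{1,2}',c_{1,3}',c_{2,3}',c_{0,0}'$ were already settled by the simplified formulas above.
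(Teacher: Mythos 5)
Your proposal is correct and follows essentially the same route as the paper: the paper omits the details and merely records the normalizing values $A_0=c_{1,3}/c_{2,3}$, $B_1=c_{1,3}^2/c_{2,3}^3$ and the expression for $B_3$, all of which your computation reproduces exactly (including the verification that this $B_3$ annihilates $c_{1,4}'$ once $c_{1,2}=0$). Your explicit check that the defining conditions of $U_7^5$ are $G_{ad}$-stable and your derivation of the invariant from formulas (1), (5), (7) are precisely the suppressed ``If'' part referred back to Theorem 1.2.
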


\begin{pr}\emph{}
\begin{enumerate}
\item Two algebras $L(C)$ and $L(C')$ from $U_{7}^{8}$ are
isomorphic, if and only if $$ \ds {\frac { \left(
4\,c'_{{0,0}}{{c'}_{{1,2}}}^{4}-2\,c'_{{1,3}}c'_{{0,1}}{{c'}_{{
1,2}}}^{2}+{{c'}_{{1,3}}}^{2}c'_{{1,1}} \right)
^{3}}{{{c'}_{{1,2}}}^{4}{{c'}_{{ 1,1}}}^{5}}}={\frac { \left(
4\,c_{{0,0}}{c_{{1,2}}}^{4}-2\,c_{{1,3}}c_{{0,1}}{c_{{
1,2}}}^{2}+{c_{{1,3}}}^{2}c_{{1,1}} \right)
^{3}}{{c_{{1,2}}}^{4}{c_{{ 1,1}}}^{5}}},$$
$${\frac { \left( c'_{{0,1}}{{c'}_{{1,2}}}^{2}-c_{{1,3}}c_{{1,1}} \right) ^{
3}}{{{c'}_{{1,2}}}^{2}{{c'}_{{1,1}}}^{4}}}={\frac { \left(
c_{{0,1}}{c_{{1,2}}}^{2}-c_{{1,3}}c_{{1,1}} \right) ^{
3}}{{c_{{1,2}}}^{2}{c_{{1,1}}}^{4}}}\  \ \mbox{and} \ \ {\frac {
\left( 4\,{c'}_{{1,4}}{c'}_{{1,2}}-5\,{{c'}_{{1,3}}}^{2} \right)
^{3}} {{{c'}_{{1,2}}}^{4}{{c'}_{{1,1}}}^{2}}} ={\frac { \left(
4\,c_{{1,4}}c_{{1,2}}-5\,{c_{{1,3}}}^{2} \right) ^{3}}
{{c_{{1,2}}}^{4}{c_{{1,1}}}^{2}}}. $$
 \item For any $\lambda_1, \lambda_2, \lambda_3\in
\mathbb{C},$ there exists $L(C)\in U_{7}^{8}:\ \ \ $

 \nt$${\frac {
\left( 4\,c_{{0,0}}{c_{{1,2}}}^{4}-2\,c_{{1,3}}c_{{0,1}}{c_{{
1,2}}}^{2}+{c_{{1,3}}}^{2}c_{{1,1}} \right)
^{3}}{{c_{{1,2}}}^{4}{c_{{ 1,1}}}^{5}}}=\lambda_1 , \ {\frac {
\left( c_{{0,1}}{c_{{1,2}}}^{2}-c_{{1,3}}c_{{1,1}} \right) ^{
3}}{{c_{{1,2}}}^{2}{c_{{1,1}}}^{4}}}=\lambda_2,\ {\frac { \left(
4\,c_{{1,4}}c_{{1,2}}-5\,{c_{{1,3}}}^{2} \right) ^{3}}
{{c_{{1,2}}}^{4}{c_{{1,1}}}^{2}}}=\lambda_3.
$$
\end{enumerate}

Then orbits in  $U_{7}^{8}$ can be parameterized as
$L\left(\lambda_1,\lambda_2,1,1,0,\lambda_3,0\right), \
\lambda_1,\lambda_2,\lambda_3 \in \mathbb{C}.$

\begin{proof} We put here,$$\ds A_1={\frac {-A_{{0}}c_{{1,3}}}{2\,{c_{{1,2}}}^{2}}},
\ B_1={\frac {{A_{{0}}}^{2}}{c_{{1,2}}}}.$$
\end{proof}

\end{pr}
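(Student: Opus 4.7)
The plan is to mimic the template used earlier for the subsets $U_7^1$ and $U_7^2$. For the ``if'' direction I would substitute the adapted-transformation formulas (1)--(7) from Theorem \ref{T2} into each of the three candidate rational expressions
$$ I_1 = \frac{(4 c_{0,0} c_{1,2}^4 - 2 c_{1,3} c_{0,1} c_{1,2}^2 + c_{1,3}^2 c_{1,1})^3}{c_{1,2}^4 c_{1,1}^5}, \quad I_2 = \frac{(c_{0,1} c_{1,2}^2 - c_{1,3} c_{1,1})^3}{c_{1,2}^2 c_{1,1}^4}, \quad I_3 = \frac{(4 c_{1,4} c_{1,2} - 5 c_{1,3}^2)^3}{c_{1,2}^4 c_{1,1}^2}, $$
and verify by direct algebraic simplification that all $A_0, A_1, B_1, B_2, B_3$ factors cancel. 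The cubes in numerator and denominator are present exactly to absorb the weights in $A_0$ and $B_1$ carried by the constituent $c_{i,j}$'s. On $U_7^8$ one has $c_{2,3}=0$, which substantially simplifies formula (6) for $c_{1,4}'$ and is what makes the expression $4 c_{1,4} c_{1,2}-5 c_{1,3}^2$ transform as a relative invariant.

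For the ``only if'' direction, my strategy is to normalize any $L(C)\in U_7^8$ to a canonical representative. In the adapted change (\ref{bc.dim07}) I would set $B_1 = A_0^2/c_{1,2}$ to normalize $c_{1,2}'$; choose $A_0$ to be a cube root of $c_{1,1}/c_{1,2}$, which together with the first choice gives $c_{1,1}' = c_{1,2}' = 1$; and take $A_1 = -A_0 c_{1,3}/(2 c_{1,2}^2)$ to kill the linear factor in formula (5), yielding $c_{1,3}' = 0$. Equation (7) with $c_{2,3}=0$ forces $c_{2,3}'=0$, and substituting the above into (1), (2), and (6) expresses $c_{0,0}', c_{0,1}', c_{1,4}'$ as explicit functions of $I_1, I_2, I_3$ evaluated at $C$. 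The parameters $B_2, B_3$ remain free and do not affect the canonical values once $c_{2,3}=0$. Applying the analogous normalization to $C'$ shows that $L(C)\cong L(C')$ exactly when $I_1, I_2, I_3$ agree on the two sides. Part (2) is then immediate by exhibiting a representative of the form $L(\lambda_1,\lambda_2,1,1,0,\lambda_3,0)$ and reading off its invariants, which sweep out a full triple in $\mathbb{C}^3$.

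The main obstacle is the cancellation check in the ``if'' direction: although the three expressions are engineered to be invariants (their homogeneity in $A_0$ and $B_1$ is perfectly balanced, hence the cubes), confirming this cleanly demands a patient substitution-and-simplification using the full formulas (1)--(7). A secondary subtlety is verifying that the choices of $A_0, A_1, B_1$ above do not introduce an unwanted $B_2, B_3$-dependence in $c_{1,4}'$ through the term $(B_2^2 - 2 B_1 B_3) c_{2,3}$ in formula (6); the defining condition $c_{2,3}=0$ of $U_7^8$ is precisely what zeroes out that contribution and legitimizes the free choice of $B_2, B_3$.
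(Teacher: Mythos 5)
Your proposal is correct and follows essentially the same route as the paper: the ``if'' part by direct substitution into the transformation formulas of Theorem \ref{T2}, and the ``only if'' part by the normalization $A_1=-A_0c_{1,3}/(2c_{1,2}^2)$, $B_1=A_0^2/c_{1,2}$, with $B_2,B_3$ free because the term $(B_2^2-2B_1B_3)c_{2,3}$ vanishes on $U_7^8$. In fact you are slightly more careful than the paper, which leaves $A_0$ ``arbitrary'': your additional requirement $A_0^3=c_{1,1}/c_{1,2}$ is what actually forces $c_{1,1}'=c_{1,2}'=1$ and explains why the invariants are cubed (to absorb the residual choice of cube root).
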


\begin{pr}\emph{}
\begin{enumerate}
\item Two algebras $L(C)$ and $L(C')$ from $U_{7}^{9}$ are
isomorphic, if and only if
$${\frac { \left( 2\,c'_{{0,0}}{{c'}_{{1,2}}}^{2}-c'_{{1,3}}c'_{{0,1}}
 \right) ^{4}}{{{c'}_{{1,2}}}^{4}{{c'}_{{0,1}}}^{5}}}={\frac { \left( 2\,c_{{0,0}}{b_{{1,2}}}^{2}-c_{{1,3}}c_{{0,1}}
 \right) ^{4}}{{c_{{1,2}}}^{4}{c_{{0,1}}}^{5}}}\ \ \mbox{and}\ \ {\frac { \left( 4\,c'_{{1,4}}c'_{{1,2}}-5\,{{c'}_{{1,3}}}^{2} \right) ^{2}}
{{{c'}_{{1,2}}}^{4}c'_{{0,1}}}}={\frac { \left(
4\,c_{{1,4}}c_{{1,2}}-5\,{c_{{1,3}}}^{2} \right) ^{2}}
{{c_{{1,2}}}^{4}c_{{0,1}}}}.
$$
 \item For any $\lambda_1,\lambda_2 \in \mathbb{C},$ there exists $L(C)\in U_{7}^{9}:$\ $$\ds {\frac { \left( 2\,c_{{0,0}}{b_{{1,2}}}^{2}-c_{{1,3}}c_{{0,1}}
 \right) ^{4}}{{c_{{1,2}}}^{4}{c_{{0,1}}}^{5}}}=\lambda_1 \ \ \mbox{and} \ \ {\frac { \left(
4\,c_{{1,4}}c_{{1,2}}-5\,{c_{{1,3}}}^{2} \right) ^{2}}
{{c_{{1,2}}}^{4}c_{{0,1}}}}=\lambda_2.$$
\end{enumerate}

Then orbits in  $U_{7}^{9}$ can be parameterized as
$L\left(\lambda_1,1,0,1,0,\lambda_2,0\right),\ \lambda_1,\lambda_2
\in \mathbb{C}.$

\begin{proof}For this case we put $$\ds A_1={\frac
{-A_{{0}}c_{{1,3}}}{2\,{c_{{1,2}}}^{2}}},\ {\rm{and}} \ B_1={\frac
{{A_{{0}}}^{2}}{c_{{1,2}}}}.$$
\end{proof}

\end{pr}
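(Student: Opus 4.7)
The plan is to follow the template established in Propositions 2.1--2.6: translate the isomorphism question into the equations (1)--(7) of Theorem \ref{T2} and exhibit an explicit choice of the parameters $A_0, A_1, B_1, B_2, B_3$ that puts a representative of each orbit into the canonical form $L(\lambda_1, 1, 0, 1, 0, \lambda_2, 0)$.

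For the ``if'' direction I would substitute the formulas (1)--(7) of Theorem \ref{T2}, specialized to $c_{1,1}=c_{2,3}=0$ and $c_{0,1}\neq 0$, into the two quotients displayed in statement (1), and verify that all occurrences of $A_0, A_1, B_1, B_2, B_3$ cancel so that both invariants are preserved under every adapted base change within $U_7^9$. This is a routine algebraic check of the same type carried out (implicitly) in Propositions 2.1 and 2.2.

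For the ``only if'' direction, the key is to choose the free parameters of the base change so as to kill as many structure constants as possible. A natural choice is $A_1 = -A_0 c_{1,3}/(2c_{1,2}^2)$ together with $B_1 = A_0^2/c_{1,2}$: the first is tailored so that (5) forces $c_{1,3}'=0$, and with it in place (4) forces $c_{1,2}'=1$; the hypotheses $c_{1,1}=c_{2,3}=0$ give $c_{1,1}'=c_{2,3}'=0$ for free via (3) and (7). The remaining freedom in $A_0$ is then used to scale $c_{0,1}' = c_{0,1}/A_0^4$ to the value $1$ by extracting a fourth root (possible over $\mathbb{C}$), while $B_2$ and $B_3$ can be left arbitrary. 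Substituting these choices into (1) and (6) expresses $c_{0,0}'$ and $c_{1,4}'$ as scalar multiples of precisely the two quotients displayed in (1), so two algebras of $U_7^9$ with matching invariants normalize to the same $L(\lambda_1,1,0,1,0,\lambda_2,0)$.

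Claim (2) is then immediate, since $L(\lambda_1,1,0,1,0,\lambda_2,0)$ manifestly belongs to $U_7^9$ and plugging its coefficients into the two invariant expressions returns $\lambda_1$ and $\lambda_2$ (up to the universal constants implicit in the normalization). The step I expect to be the main obstacle is the manipulation of formula (6) for $c_{1,4}'$: it is bilinear in $B_1, B_2, B_3$ and couples them to the combination $5c_{1,2}-c_{2,3}$ in a non-transparent way. Fortunately, the hypothesis $c_{2,3}=0$ annihilates the $(B_2^2-2B_1B_3)c_{2,3}$ contribution entirely, and the surviving terms depend only on the already-fixed $A_1$ and $B_1$; this is what lets the invariant $\bigl(4c_{1,4}c_{1,2}-5c_{1,3}^2\bigr)^2/(c_{1,2}^4 c_{0,1})$ emerge after a direct but careful simplification.
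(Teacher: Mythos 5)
Your proposal is correct and follows essentially the same route as the paper: the ``if'' part is the routine substitution into the relations of Theorem \ref{T2}, and for the ``only if'' part you choose exactly the same normalizing values $A_1=-A_0c_{1,3}/(2c_{1,2}^2)$ and $B_1=A_0^2/c_{1,2}$ that the paper records (with $A_0$ fixed by a fourth root of $c_{0,1}$ and $B_2,B_3$ arbitrary). The only difference is that you spell out the reasoning the paper leaves implicit.
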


\begin{pr}\emph{}
\begin{enumerate}
\item Two algebras $L(C)$ and $L(C')$ from $U_{7}^{10}$ are
isomorphic, if and only if
$${\frac { \left( 4\,c'_{{1,4}}c'_{{1,2}}-5\,{{c'}_{{1,3}}}^{2} \right) ^{5}}
{{{c'}_{{1,2}}}^{12}{{c'}_{{0,0}}}^{2}}}={\frac { \left(
4\,c_{{1,4}}c_{{1,2}}-5\,{c_{{1,3}}}^{2} \right)^{5}}
{{c_{{1,2}}}^{12}{c_{{0,0}}}^{2}}}.
 $$
 \item For any $\lambda\in
\mathbb{C},$ there exists $L(C)\in U_{7}^{10}: \ \ds{\frac {
\left( 4\,c_{{1,4}}c_{{1,2}}-5\,{c_{{1,3}}}^{2} \right)^{5}}
{{c_{{1,2}}}^{12}{c_{{0,0}}}^{2}}}=\lambda.$
\end{enumerate}

Then orbits in  $U_{7}^{10}$ can be parameterized as $L\left(1,0,0
,1,0,\lambda,0\right),\ \lambda\in \mathbb{C}$.

\begin{proof}Here, $$\ds A_1={\frac {-A_{{0}}c_{{1,3}}}{2\,{c_{{1,2}}}^{2}}}\ \ \mbox{and}\
\ B_1={\frac {{A_{{0}}}^{2}}{c_{{1,2}}}}.$$
\end{proof}

\end{pr}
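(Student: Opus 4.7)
The plan is to specialize the isomorphism criterion of Theorem \ref{T2} to the subset $U_7^{10}$, where $c_{2,3}=c_{1,1}=c_{0,1}=0$ and $c_{1,2},c_{0,0}\neq 0$. Formulas (2), (3), (7) of Theorem \ref{T2} show that these three zeros are preserved by every adapted base change, while (1) and (4) give $c'_{0,0}=c_{0,0}/(A_0^{3}B_1)$ and $c'_{1,2}=B_1 c_{1,2}/A_0^{2}$, preserving the non-vanishing hypotheses as well; hence $U_7^{10}$ is an invariant subset. Because $c_{2,3}=0$, the transformation laws for $c'_{1,3}$ and $c'_{1,4}$ lose their $B_2,B_3$ dependence entirely, so the effective freedom within $U_7^{10}$ is carried by $(A_0,A_1,B_1)$ with $A_0 B_1\neq 0$.

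For the \emph{if} direction, I would combine the specialized formulas (4), (5), (6) to compute $4c'_{1,4}c'_{1,2}-5(c'_{1,3})^{2}$. The quadratic dependence of $c'_{1,4}$ on $A_1$ coming from the term $5A_1B_1^{2}c_{1,2}(A_0c_{1,3}-B_1c_{1,2}^{2})$ in (6), once multiplied by $c'_{1,2}=B_1c_{1,2}/A_0^{2}$, is designed to absorb exactly the $A_1$- and $A_1^{2}$-contributions arising from the expansion of $(A_0 c_{1,3}+2A_1 c_{1,2}^{2})^{2}$ in $5(c'_{1,3})^{2}$, leaving
\[
4c'_{1,4}c'_{1,2}-5(c'_{1,3})^{2}\;=\;\frac{B_1^{2}}{A_0^{6}}\bigl(4c_{1,4}c_{1,2}-5c_{1,3}^{\,2}\bigr).
\]
Taking fifth powers and dividing by $(c'_{1,2})^{12}(c'_{0,0})^{2}=B_1^{10}c_{1,2}^{12}c_{0,0}^{\,2}/A_0^{30}$ then cancels all remaining $A_0,B_1$ and yields the claimed equality of invariants.

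For the \emph{only if} part, I would follow the authors' hint: set $A_1=-A_0 c_{1,3}/(2c_{1,2}^{2})$ in (5) to force $c'_{1,3}=0$, then choose $A_0$ with $A_0^{5}=c_{0,0}c_{1,2}$ and $B_1=A_0^{2}/c_{1,2}$ so that $c'_{0,0}=c'_{1,2}=1$; the parameters $B_2,B_3$ may be taken to be zero since they do not affect the preserved structure constants. Substituting these choices into (6) pins down $c'_{1,4}$ as a function of the invariant alone, so each $L(C)\in U_7^{10}$ is adaptively isomorphic to a unique canonical representative of the form $L(1,0,0,1,0,\lambda,0)$. If $L(C)$ and $L(C')$ share the same value of the invariant, then the two canonical forms coincide, hence $L(C)\cong L(C')$. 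Part (2) is then an immediate surjectivity check: evaluating the proposed invariant on $L(1,0,0,1,0,\lambda,0)$ gives $(4\lambda)^{5}$, which sweeps through all of $\mathbb{C}$ as $\lambda$ does.

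The main obstacle will be the polynomial identity in the \emph{if} part: the cancellation of $A_1$- and $A_1^{2}$-terms in $4c'_{1,4}c'_{1,2}-5(c'_{1,3})^{2}$ is delicate and hinges on the precise coefficient structure of formula (6); conceptually, $4XZ-5Y^{2}$ plays the role of a discriminant-type invariant for the quadratic action (in $A_1$) on the pair $(c_{1,4},c_{1,3})$, and signs must be tracked carefully to see the cancellation.
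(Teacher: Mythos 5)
Your argument is correct and is essentially the paper's: the \emph{if} part is the invariance check against the isomorphism criterion, and the \emph{only if} part uses precisely the substitutions $A_1=-A_0c_{1,3}/(2c_{1,2}^2)$ and $B_1=A_0^2/c_{1,2}$ recorded in the paper, supplemented by the normalization $A_0^5=c_{0,0}c_{1,2}$ (which the paper leaves implicit) and the observation that $B_2,B_3$ are irrelevant once $c_{2,3}=0$. One remark worth recording: the identity $4c'_{1,4}c'_{1,2}-5(c'_{1,3})^2=\frac{B_1^2}{A_0^6}\bigl(4c_{1,4}c_{1,2}-5c_{1,3}^2\bigr)$ on which your \emph{if} direction rests is true, but it cannot be obtained from formula (6) as printed, which for $c_{2,3}=0$ leaves an uncancelled term proportional to $A_1c_{1,2}^4(A_1+B_1)$; recomputing $[e_1',e_2']$ directly from the adapted base change gives $c'_{1,4}=B_1\bigl(A_0^2c_{1,4}+5A_0A_1c_{1,2}c_{1,3}+5A_1^2c_{1,2}^3\bigr)/A_0^6$ when $c_{2,3}=0$ (i.e.\ the printed factor $A_0c_{1,3}-B_1c_{1,2}^2$ should read $A_0c_{1,3}+A_1c_{1,2}^2$), and with this corrected law the dependence on $A_1$ is genuinely quadratic and your cancellation is exact.
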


\begin{pr}\emph{}
\begin{enumerate}
\item Two algebras $L(C)$ and $L(C')$ from $U_{7}^{13}$ are
isomorphic, if and only if  $$\ds \left(\frac {{c'_{{1,4}}}
}{{c'_{{1,1}}}}\right)^8 \left(
4\,c'_{{0,0}}c'_{{1,1}}-{{c'}_{{0,1}}}^{2}
 \right)=\left(\frac
{{c_{{1,4}}} }{{c_{{1,1}}}}\right)^8 \left(
4\,c_{{0,0}}c_{{1,1}}-{c_{{0,1}}}^{2}
 \right),\ \ {\frac {c'_{{1,1}}c'_{{1,3}}}{{{c'}_{{1,4}}}^{2}}}={\frac {c_{{1,1}}c_{{1,3}}}{{c_{{1,4}}}^{2}}}.$$
 \item For any $\lambda_1, \lambda_2 \in \mathbb{C},$ there exists $L(C)\in U_{7}^{13}:\ \ \ $
$\ds \left(\frac {{c_{{1,4}}} }{{c_{{1,1}}}}\right)^8 \left(
4\,c_{{0,0}}c_{{1,1}}-{c_{{0,1}}}^{2}
 \right) =\lambda_1, \ \
 {\frac {c_{{1,1}}c_{{1,3}}}{{c_{{1,4}}}^{2}}}=\lambda_2.$
\end{enumerate}

Then orbits in  $U_{7}^{13}$ can be parameterized as
$L\left(\lambda_1,0,1,0,\lambda_2,1,0\right),  \
\lambda_1,\lambda_2 \in \mathbb{C}$.

\begin{proof}We put $$\ds A_0={\frac {c_{{1,1}}}{c_{{1,4}}}},\
 A_1={\frac {-c_{{0,1}}}{2\,c_{{1,4}}}},\ {\rm{ and}}\
 B_1={\frac {{c_{{1,1}}}^{4}}{{c_{{1,4}}}^{5}}}.$$
\end{proof}

\end{pr}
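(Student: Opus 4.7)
The plan is to apply Theorem \ref{T2} specialized to the stratum $U_7^{13}$, where $c_{1,2}=c_{2,3}=0$ and $c_{1,1},c_{1,4}\neq 0$. First I would substitute these vanishing conditions into the transformation laws (1)--(7); many mixed terms in (4)--(7) collapse, and the relevant structure constants transform by the simple rules
$$c'_{1,1}=\frac{B_1 c_{1,1}}{A_0^5},\qquad c'_{1,3}=\frac{B_1 c_{1,3}}{A_0^3},\qquad c'_{1,4}=\frac{B_1 c_{1,4}}{A_0^4},$$
while (1) and (2) give $c'_{0,0}$ and $c'_{0,1}$ exactly as in Lemma \ref{L1}. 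Note that (6) also loses all of its $B_2, B_3$ dependence on this stratum, so these two parameters are free and play no role in what follows.

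For the ``if'' direction I would verify that the two proposed quantities are invariant. A direct computation gives $c'_{1,4}/c'_{1,1}=A_0\,c_{1,4}/c_{1,1}$, so the prefactor $(c'_{1,4}/c'_{1,1})^8$ supplies $A_0^8$. On the other side, expanding $4 c'_{0,0} c'_{1,1}-(c'_{0,1})^2$ using (1) and (2), the $4A_1^2 c_{1,1}^2$ and $4A_0 A_1 c_{0,1} c_{1,1}$ terms cancel between the two products, leaving $(4c_{0,0}c_{1,1}-c_{0,1}^2)/A_0^8$. The two $A_0^8$ factors annihilate, giving invariance of the first quantity. The second invariant is even simpler: the $B_1$ factors appear squared in both numerator and denominator, the $A_0$ powers balance, and one is left with $c_{1,1}c_{1,3}/c_{1,4}^2$.

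For the ``only if'' direction I would plug in the hinted values $A_0=c_{1,1}/c_{1,4}$, $A_1=-c_{0,1}/(2c_{1,4})$, $B_1=c_{1,1}^4/c_{1,4}^5$ (with $B_2,B_3$ arbitrary) into the base change. One checks directly that these produce $c'_{1,1}=1$, $c'_{1,4}=1$, $c'_{0,1}=0$ (the choice of $A_1$ is precisely what kills the numerator $2A_1c_{1,1}+A_0c_{0,1}$), $c'_{1,3}=c_{1,1}c_{1,3}/c_{1,4}^2$, and $c'_{0,0}$ equal (up to the universal scaling factor appearing in the invariant) to the value of the first invariant. Hence every algebra in $U_7^{13}$ reduces to a normal form $L(\lambda_1,0,1,0,\lambda_2,1,0)$, and two algebras sharing the same pair of invariants reduce to the same normal form, so they are isomorphic. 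Part (2) is then automatic: $L(\lambda_1,0,1,0,\lambda_2,1,0)\in U_7^{13}$ for every $(\lambda_1,\lambda_2)\in\mathbb{C}^2$ and realizes the prescribed invariant values.

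The main bookkeeping obstacle is the verification of invariance of the first quantity: one must keep careful track of the $B_1$ factor (which ultimately cancels between $c'_{0,0}c'_{1,1}$ and $(c'_{0,1})^2$) and of the three-term cancellation of the $A_1$-contributions inside $4c_{0,0}c_{1,1}-c_{0,1}^2$. Once this computation is performed, every remaining step is a direct substitution following the template already used in Propositions 2.1 and 2.2.
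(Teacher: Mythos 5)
Your proposal is correct and follows essentially the same route as the paper: the paper's proof consists only of recording the choice $A_0=c_{1,1}/c_{1,4}$, $A_1=-c_{0,1}/(2c_{1,4})$, $B_1=c_{1,1}^4/c_{1,4}^5$ and referring to the template of Propositions 2.1--2.2, which is exactly the base change you use, and your invariance computations (including the cancellation yielding $(4c_{0,0}c_{1,1}-c_{0,1}^2)/A_0^8$) fill in the details the paper omits. The only caveat, which you already flag, is the harmless factor of $4$ between the stated invariant and the entry $c_{0,0}$ of the normal form.
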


\begin{pr}\emph{}
\begin{enumerate}
\item Two algebras $L(C)$ and $L(C')$ from $U_{7}^{14}$ are
isomorphic, if and only if $$
\left({\frac{{c'}_{{1,3}}}{{c'}_{{1,1}}}}\right)^4 \left(
4\,c'_{{0,0}}c'_{{1,1}}-{{c'}_{{0,1}}}^{2}
 \right) =\left({\frac{{c}_{{1,3}}}{{c}_{{1,1}}}}\right)^4 \left( 4\,c_{{0,0}}c_{{1,1}}-{c_{{0,1}}}^{2}
 \right).
$$
 \item For any $\lambda\in\mathbb{C},$ there exists $L(C)\in U_{7}^{14}:\ \ \ $
$\ds \left({\frac{{c}_{{1,3}}}{{c}_{{1,1}}}}\right)^4 \left(
4\,c_{{0,0}}c_{{1,1}}-{c_{{0,1}}}^{2}
 \right)=\lambda.$
\end{enumerate}

Then orbits in  $U_{7}^{14}$ can be parameterized as
$L\left(\lambda,0,1,0,1,0,0\right), \ \lambda\in \mathbb{C}$.

\begin{proof}Here, we take $$\ds A_1={\frac
{-A_{{0}}c_{{0,1}}}{2\,c_{{1,1}}}},\ {\rm{ and}}\ B_1={\frac
{{A_{{0}}}^{5}}{c_{{1,1}}}}.$$
\end{proof}
\end{pr}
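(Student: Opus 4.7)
The plan mirrors the template of Propositions 2.1--2.2, with Theorem 2.2 playing the role of the engine. The \emph{if} direction is a direct substitution: plugging (1)--(7) into the candidate invariant and using the vanishings $c_{2,3}=c_{1,2}=c_{1,4}=0$ that define $U_{7}^{14}$, one checks that
\[
\Bigl(\tfrac{c_{1,3}}{c_{1,1}}\Bigr)^{4}\bigl(4c_{0,0}c_{1,1}-c_{0,1}^{2}\bigr)
\]
is unchanged by every adapted base change preserving this subset; the key simplifications are that (4) and (7) force $c'_{1,2}, c'_{2,3}$ to vanish on $U_{7}^{14}$ and that (5) collapses to $c'_{1,3} = B_{1}c_{1,3}/A_{0}^{3}$, which together with (1)--(3) gives the invariance after a short algebraic cancellation.

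For the \emph{only if} direction, I would construct, for each $L(C)\in U_{7}^{14}$, an explicit adapted transformation (\ref{bc.dim07}) bringing $L(C)$ to the normal form $L(\lambda,0,1,0,1,0,0)$. Following the hint, set $A_{1}=-A_{0}c_{0,1}/(2c_{1,1})$ and $B_{1}=A_{0}^{5}/c_{1,1}$. By (2) this forces $c'_{0,1}=0$, by (3) it forces $c'_{1,1}=1$, and since $c_{1,2}=0$, formula (5) simplifies to $c'_{1,3}=A_{0}^{2}c_{1,3}/c_{1,1}$; choose $A_{0}\in\mathbb{C}^{\times}$ with $A_{0}^{2}=c_{1,1}/c_{1,3}$ (possible because $c_{1,3}\neq 0$) to normalize $c'_{1,3}=1$. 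Formulas (4) and (7) then immediately yield $c'_{1,2}=0$ and $c'_{2,3}=0$. The still-free parameters $B_{2},B_{3}$ are selected to cancel the residual terms in (6) and produce $c'_{1,4}=0$; under the current vanishings this reduces to a scalar equation whose leading coefficient (proportional to $A_{0}^{2}B_{1}\neq 0$) is nonzero, so a solution exists over $\mathbb{C}$. The resulting $c'_{0,0}$ then necessarily equals $\lambda$ by the invariance already established. Statement (2) is witnessed by the representative $L(\lambda,0,1,0,1,0,0)$ itself, which lies in $U_{7}^{14}$ and has invariant value $\lambda$.

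The only technical point that needs real care is the bookkeeping for formula (6): one must write out $c'_{1,4}$ after imposing all of the $U_{7}^{14}$ vanishings and verify that the result is a polynomial in $(B_{2},B_{3})$ whose leading term in $B_{3}$ is nontrivial, so that the requisite cancellation of $c'_{1,4}$ is always achievable. Everything else amounts to a routine substitution into the isomorphism criterion of Theorem 2.2, precisely as in the earlier propositions in this section.
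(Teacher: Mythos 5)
Your proposal is correct and follows the paper's own route: the published proof (per the blanket remark preceding Proposition 2.3) consists precisely of exhibiting $A_1=-A_0c_{0,1}/(2c_{1,1})$ and $B_1=A_0^5/c_{1,1}$ in the adapted base change of Theorem 2.2 with the remaining coefficients free, and your verification via equalities (1)--(7) is the intended one. One small correction to your final paragraph: on $U_7^{14}$ no choice of $B_2,B_3$ is needed to kill $c'_{1,4}$, since every term in the numerator of (6) carries a factor of $c_{1,4}$, $c_{2,3}$, or $(5c_{1,2}-c_{2,3})$, all of which vanish here, so $c'_{1,4}=0$ automatically; in particular the $B_3$-coefficient you propose to solve against is $-2A_0^2B_1c_{2,3}=0$ rather than nonzero, though this does not affect the conclusion.
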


\begin{pr}\emph{}
\begin{enumerate}
\item Two algebras $L(C)$ and $L(C')$ from $U_{7}^{17}$ are
isomorphic, if and only if $ \ds \left(\frac
{{c'_{{1,3}}}}{{c'_{{1,4}}}}\right)^4 c'_{{0,1}}=\left(\frac
{{c_{{1,3}}}}{{c_{{1,4}}}}\right)^4 c_{{0,1}}.$
 \item For any $\lambda \in \mathbb{C},$ there exists $L(C)\in U_{7}^{17}:$
$ \ds \left(\frac {{c_{{1,3}}}}{{c_{{1,4}}}}\right)^4
c_{{0,1}}=\lambda.$
\end{enumerate}

Then orbits in  $U_{7}^{17}$ can be parameterized as
$L\left(0,1,0,0,\lambda,1,0\right), \ \lambda\in \mathbb{C}.$

\begin{proof}Take $$\ds A_1=-{\frac {A_{{0}}c_{{0,0}}}{c_{{0,1}}}},\ {\rm{ and}}\ B_1={\frac {{A_{{0}}}^{4}}{c_{{1,4}}}}.$$
\end{proof}

\end{pr}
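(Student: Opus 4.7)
The plan is to specialize the isomorphism criterion of Theorem~\ref{T2} to the defining constraints of $U_7^{17}$, namely $c_{2,3}=c_{1,2}=c_{1,1}=0$ with $c_{0,1}\,c_{1,4}\neq 0$. Substituting these into the seven transformation formulas of Theorem~\ref{T2} makes almost every term vanish: the formulas for $c'_{1,1}$, $c'_{1,2}$ and $c'_{2,3}$ return zero, confirming that $U_7^{17}$ is stable under adapted base change, while the remaining formulas collapse to
\[
c'_{0,0}=\frac{A_0 c_{0,0}+A_1 c_{0,1}}{A_0^{4} B_1},\quad c'_{0,1}=\frac{c_{0,1}}{A_0^{4}},\quad c'_{1,3}=\frac{B_1 c_{1,3}}{A_0^{3}},\quad c'_{1,4}=\frac{B_1 c_{1,4}}{A_0^{4}}.
\]

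For the ``if'' direction I check invariance of $(c_{1,3}/c_{1,4})^{4}\,c_{0,1}$ directly: from the formulas above, $c'_{1,3}/c'_{1,4}=(c_{1,3}/c_{1,4})\,A_0$, so
\[
(c'_{1,3}/c'_{1,4})^{4}\,c'_{0,1}=(c_{1,3}/c_{1,4})^{4}\,A_0^{4}\cdot\frac{c_{0,1}}{A_0^{4}}=(c_{1,3}/c_{1,4})^{4}\,c_{0,1}.
\]
For the ``only if'' direction I exhibit a canonical representative. Set $A_1=-A_0 c_{0,0}/c_{0,1}$ to force $c'_{0,0}=0$, choose $A_0$ satisfying $A_0^{4}=c_{0,1}$ to force $c'_{0,1}=1$, and set $B_1=A_0^{4}/c_{1,4}$ to force $c'_{1,4}=1$; the parameters $B_2$ and $B_3$ play no role. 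These choices reduce $L(C)$ to $L(0,1,0,0,\lambda,1,0)$ with $\lambda=A_0 c_{1,3}/c_{1,4}$, hence $\lambda^{4}=(c_{1,3}/c_{1,4})^{4}c_{0,1}$. Two algebras sharing the invariant therefore reduce to canonical forms differing at most by a fourth root of unity in the $c_{1,3}$-slot, and that residual ambiguity is itself realised by the base change $A_0\in\{1,i,-1,-i\}$, $B_1=A_0^{4}$, $A_1=0$, so the two algebras are isomorphic.

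Part~2 is then immediate: for any $\lambda\in\mathbb{C}$, choose a fourth root $\mu$ of $\lambda$ and take the algebra $L(0,1,0,0,\mu,1,0)$, which lies in $U_7^{17}$ and has invariant $\mu^{4}=\lambda$. The main obstacle is not conceptual but book-keeping: one must verify that each defining constraint is preserved by the base change and track which of the free parameters $A_0, A_1, B_1$ are consumed at each normalization step, which is what makes the terms in Theorem~\ref{T2} drop out cleanly and leaves the simple invariant visible.
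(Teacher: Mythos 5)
Your proposal is correct and follows essentially the same route as the paper: specialize the isomorphism criterion of Theorem 1.2 to $c_{2,3}=c_{1,2}=c_{1,1}=0$ and normalize with $A_1=-A_0c_{0,0}/c_{0,1}$ and $B_1=A_0^4/c_{1,4}$ (the paper leaves $A_0$ "arbitrary," while you make the needed choice $A_0^4=c_{0,1}$ explicit). Your extra observation that the residual fourth-root-of-unity ambiguity in $\lambda$ is itself realized by an adapted base change is a worthwhile detail the paper omits, and all your computations check out.
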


\begin{pr}\emph{}
\begin{enumerate}
\item Two algebras $L(C)$ and $L(C')$ from $U_{7}^{20}$ are
isomorphic, if and only if $ \ds {\frac
{c'_{{0,0}}{{c'}_{{1,3}}}^{7}}{{{c'}_{{1,4}}}^{6}}}={\frac
{c_{{0,0}}{c_{{1,3}}}^{7}}{{c_{{1,4}}}^{6}}}.$
 \item For any $\lambda \in
\mathbb{C},$ there exists $ L(C)\in U_{7}^{20}:\ \ds {\frac
{c_{{0,0}}{c_{{1,3}}}^{7}}{{c_{{1,4}}}^{6}}}=\lambda$ .
\end{enumerate}

Then orbits in  $U_{7}^{20}$ can be parameterized as
$L\left(1,0,0,0,\lambda,1,0\right),\ \lambda\in \mathbb{C}$.

\begin{proof}Here, $\ds B_1={\frac {c_{{0,0}}}{{A_{{0}}}^{3}}}.$
\end{proof}
\end{pr}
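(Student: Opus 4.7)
The plan is to specialize the isomorphism criterion of Theorem \ref{T2} to the subset $U_{7}^{20}$, where the constraints $c_{1,1}=c_{0,1}=c_{1,2}=c_{2,3}=0$ cause most of the formulas (1)--(7) to collapse. A direct substitution reduces the transformation law to the three nontrivial equations
\begin{align*}
c'_{0,0} = \frac{c_{0,0}}{A_0^3 B_1}, \qquad c'_{1,3} = \frac{B_1\,c_{1,3}}{A_0^3}, \qquad c'_{1,4} = \frac{B_1\,c_{1,4}}{A_0^4},
\end{align*}
while $c'_{0,1}=c'_{1,1}=c'_{1,2}=c'_{2,3}=0$; so $U_{7}^{20}$ is stable under the adapted group action and the parameters $A_1, B_2, B_3$ play no role.

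To verify that $c_{0,0}c_{1,3}^7/c_{1,4}^6$ is an invariant, I would substitute the three simplified expressions above into $c'_{0,0}(c'_{1,3})^7/(c'_{1,4})^6$ and check by exponent bookkeeping that the powers of $A_0$ and $B_1$ cancel, leaving exactly $c_{0,0}c_{1,3}^7/c_{1,4}^6$. Conversely, given two algebras in $U_{7}^{20}$ with a common value of the invariant, I would reduce each to the canonical form $L(1,0,0,0,\lambda,1,0)$ by choosing $A_0$ to be any seventh root of $c_{0,0}c_{1,4}$ and then setting $B_1=c_{0,0}/A_0^3$, as indicated by the hint. This forces $c'_{0,0}=c'_{1,4}=1$, while $c'_{1,3}$ is then pinned down (up to the relabelling $\lambda \leftrightarrow \lambda^7$) by the invariant. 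Both algebras therefore land on the same canonical form and hence are isomorphic.

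For part (2), given any $\lambda \in \mathbb{C}$, the algebra obtained by taking $c_{0,0}=c_{1,4}=1$ and choosing $c_{1,3}$ so that $c_{0,0}c_{1,3}^7/c_{1,4}^6=\lambda$ (which is solvable over $\mathbb{C}$) visibly lies in $U_{7}^{20}$ and realizes the prescribed invariant. There is no real obstacle here: because the defining equations of $U_{7}^{20}$ annihilate almost every term in the transformation formulas, the main work reduces to the exponent cancellation verifying invariance and a one-variable seventh-root extraction to normalize $A_0$, both entirely mechanical.
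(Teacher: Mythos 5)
Your proposal is correct and follows the same route as the paper: substitute the constraints of $U_{7}^{20}$ into the isomorphism criterion to reduce the action to $c'_{0,0}=c_{0,0}/(A_0^3B_1)$, $c'_{1,3}=B_1c_{1,3}/A_0^3$, $c'_{1,4}=B_1c_{1,4}/A_0^4$, verify the cancellation in $c_{0,0}c_{1,3}^7/c_{1,4}^6$, and normalize with $B_1=c_{0,0}/A_0^3$ and $A_0^7=c_{0,0}c_{1,4}$ (the paper records only the first of these choices). Your remark that the parameter of the representative is determined only up to the seventh-root ambiguity $\lambda\leftrightarrow\lambda^7$ is an accurate observation about the paper's parameterization and does not affect the correctness of the classification.
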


\begin{pr}\emph{}

The subsets  $ U_{7}^{6},\ U_{7}^{7},\ U_{7}^{11},\ U_{7}^{12},\
U_{7}^{15},\ U_{7}^{16},\ U_{7}^{18},\ U_{7}^{19},\ U_{7}^{21},\
U_{7}^{22},\ U_{7}^{23},\ U_{7}^{24},\ U_{7}^{25}$ and $
U_{7}^{26}$\\ are single orbits with representatives \ $
L(1,0,0,0,0,0,0,1),\ L(0,0,0,0,0,0,0,1),\\
L(0,0,0,1,0,1,0),\ L(0,0,0,1,0,0,0),\ L(1,0,1,0,0,0,0),\
L(0,0,1,0,0,0,0),\ L(0,1,0,0,1,0,0),\\ L(0,1,0,0,0,0,0),\
L(1,0,0,0,1,0,0),\ L(1,0,0,0,0,0,0),\ L(0,0,0,0,1,1,0),\
L(0,0,0,0,0,1,0),\\ L(0,0,0,0,1,0,0)$ and $L(0,0,0,0,0,0,0),$
respectively.

\begin{proof}

To prove it, we give the respective values of $A_0,A_1,B_1,B_2$
and $B_3$ in the base change (\ref{bc.dim07}) leading to the appropriate representatives.

For $ U_{7}^{6}\ \rm{and}\ U_{7}^{7} :$  $$\ds B_{1}={\frac
{{A_{{0}}}^{2}}{c_{{2,3}}}}\ \mbox{and} \ B_3={\frac
{{A_{{0}}}^{4}c_{{1,4}}+{B_{{2}}}^{2}{c_{{2,3}}}^{3}}{2\,{A_{
{0}}}^{2}{c_{{2,3}}}^{2}}} .$$

For $U_{7}^{11} \ \rm{and}\ U_{7}^{12}:$  $$A_1=\ds {\frac
{-A_{{0}}c_{{1,3}}}{2\,{c_{{1,2}}}^{2}}}\ \ \mbox{and} \ \ B_1=\ds {\frac
{{A_{{0}}}^{2}}{c_{{1,2}}}}.$$

For $U_{7}^{15} \ \rm{and}\ U_{7}^{16}:$ $$A_1=\ds {\frac
{-A_{{0}}c_{{0,1}}}{2\,c_{{1,1}}}}\ \ \mbox{and}\ \ B_1=\ds {\frac
{{A_{{0}}}^{5}}{c_{{1,1}}}}.$$

For $U_{7}^{18}:$ $$A_1=\ds-{\frac
{A_{{0}}c_{{0,0}}}{c_{{0,1}}}}\ \ \mbox{and}\ \ B_1=\ds{\frac
{{A_{{0}}}^{3}}{c_{{1,3}}}}.$$

For $U_{7}^{19}:$ $$\ds A_1=-{\frac
{A_{{0}}c_{{0,0}}}{c_{{0,1}}}}.$$

For $U_{7}^{21} \ \rm{and}\ U_{7}^{22}:$  $$B_1=\ds {\frac
{c_{{0,0}}}{{A_{{0}}}^{3}}}.$$

For $U_{7}^{23} \ \rm{and}\ U_{7}^{24}:$  $$B_1=\ds {\frac
{{A_{{0}}}^{4}}{c_{{1,4}}}}.$$

\end{proof}
\end{pr}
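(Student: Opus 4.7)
The plan is to treat the fourteen subsets uniformly: for each one, I would exhibit explicit values of $(A_0,A_1,B_1,B_2,B_3)$ in the adapted base change (\ref{bc.dim07}) and check, via the transformation rules (1)--(7) of Theorem \ref{T2}, that this base change sends an arbitrary $L(C)$ in the subset to the prescribed representative. Since Theorem \ref{T2} presents the isomorphism criterion as an explicit rational action on the 7-tuple $(c_{0,0},c_{0,1},c_{1,1},c_{1,2},c_{1,3},c_{1,4},c_{2,3})$, each case reduces to a routine substitution once the correct parameters are chosen.

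The strategy for selecting those parameters follows a uniform pattern dictated by which coordinate is the leading nonzero one in a given subset: use it to normalize $B_1$ (or, failing that, $A_0$) so the corresponding $c'_{i,j}$ becomes $1$; use $A_1$ to kill one of $c_{0,1}$ or $c_{1,3}$ by the linear formulas (2) or (5); and use $B_2$, $B_3$ to absorb the residual contribution to $c'_{1,4}$ coming from (6). For example, on $U_7^6$ and $U_7^7$ (where $c_{2,3}\neq 0$), setting $B_1=A_0^2/c_{2,3}$ forces $c'_{2,3}=1$ via (7); the defining vanishings $c_{1,1}=c_{0,1}=c_{1,2}=c_{1,3}=0$ automatically yield $c'_{1,1}=c'_{0,1}=c'_{1,2}=c'_{1,3}=0$ from (2)--(5); a suitable $B_3$ cancels $c'_{1,4}$ in (6); and $A_0$ is then scaled to bring $c_{0,0}$ to $1$ or $0$ via (1). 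Entirely analogous choices handle the strata where $c_{1,2}\neq 0$ (normalizing $B_1$ via (4), killing $c_{1,3}$ by the appropriate $A_1$), $c_{1,1}\neq 0$ (via (3), killing $c_{0,1}$), and so on down the chain of defining conditions.

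The main technical point is verifying that every denominator appearing in the prescribed parameter values is nonzero on the given subset; but this is precisely how the stratification $U_7^1,\ldots,U_7^{26}$ was constructed, so no additional argument is needed. The terminal case $U_7^{26}$, on which every structure constant vanishes, is trivially a single orbit consisting of the free nilpotent representative. Beyond careful bookkeeping of which coordinates survive in each stratum, I do not anticipate any serious obstruction: the whole argument is a finite verification driven entirely by the explicit rational formulas of Theorem \ref{T2}, and the routine algebraic manipulations can be omitted.
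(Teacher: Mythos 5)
Your proposal is correct and follows essentially the same route as the paper: for each stratum one writes down explicit values of $A_0,A_1,B_1,B_2,B_3$ in the base change (\ref{bc.dim07}) and verifies via the rational formulas (1)--(7) of Theorem \ref{T2} that the representative is reached; your concrete choices for $U_7^6,U_7^7$ (namely $B_1=A_0^2/c_{2,3}$ and the $B_3$ cancelling $c'_{1,4}$) coincide with the paper's. The uniform normalization scheme you describe for the remaining strata matches the paper's listed parameter values case by case.
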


Note that the orbits $U_7^6\  \rm{and} \ U_7^7$ can be included in the parametric family of orbits $U_7^3\
\rm{and}\  U_7^4,$ respectively at $\lambda =0.$

\subsection{Isomorphism criterion for $TLeib_8$}

Here, we have $n=7,$ means odd case, and any algebra $L$ from
${TLeib}_{8}$ can be represented as one dimensional central
extension ($C(L)=<e_7>$)  of 7-dimensional filiform Lie algebra
with adapted basis $\{e_0,e_1,...,e_6\}$ (see Theorem \ref{T1})
and on the adapted basis $\{e_0,e_1,...,e_7\}$ the class
${TLeib}_{8}$ can be represented as follows:

$TLeib_{8}=\left\{
\begin{array}{lll}
\lbrack e_{i},e_{0}]=e_{i+1}, & 1\leq i\leq {6}, &  \\[1mm]
\lbrack e_{0},e_{i}]=-e_{i+1}, & 2\leq i\leq {6}, &  \\[1mm]
\lbrack e_{0},e_{0}]=b_{0,0}e_{7}, &  &  \\[1mm]
\lbrack e_{0},e_{1}]=-e_{2}+b_{0,1}e_{7}, &  &  \\[1mm]
\lbrack e_{1},e_{1}]=b_{1,1}e_{7}, &  &  \\[1mm]
\lbrack e_{1},e_{2}]=-[e_2,e_1]=a_{1,4}\,e_4+a_{1,5}\,e_5+a_{1,6}\,e_6+b_{1,2}e_{7}, \\[1mm]
\lbrack e_{1},e_{3}]=-[e_{3},e_{1}]=a_{1,4}\,e_5+a_{1,5}\,e_6+b_{1,3}e_{7}, \\[1mm]
 [e_{1},e_{4}]=-[e_{4},e_{1}]=\left( a_{{1,4}}-a_{{2,6}} \right) e_{{6}}+b_{{1,4}}e_{{7}}, \\[1mm]
 [e_{1},e_{5}]=-[e_{5},e_{1}]=b_{{1,5}}e_{{7}}, \\[1mm]
 [e_{2},e_{3}]=-[e_{3},e_{2}]=a_{{2,6}}e_{{6}}+b_{{2,3}}e_{{7}},\\[1mm]
 [e_{1},e_{5}]=-[e_{5},e_{1}]=b_{1,5}e_{6}, \\[1mm]
 [e_{2},e_{4}]=-[e_{4},e_{2}]=b_{2,4}e_{7},\\[1mm]
[e_{i},e_{7-i}]=-[e_{7-i},e_{i}]=\left(-1\right)^{i}b_{3,4}e_{7},
\ \ \  1\leq i\leq n-1.
\end{array}%
\right. $

The next lemma specifies the set of structure constants of
algebras from  $TLeib_{8}.$


\begin{lm}\emph{}The structure constants of algebras from ${TLeib}_{8}$ satisfy the
following constraints:
 $$ \ds b_{1,3}=a_{1,6}, \ \ \  \ds b_{1,4}=a_{1,5}-b_{2,3},\ \ \  \ds b_{2,4}=a_{2,6},\ \ \ds b_{1,5}=a_{1,4}-2a_{2,6},
 \ \ and \ \ \ds
 b_{{1,6}} \left( a_{{2,6}}+2\,a_{{1,4}} \right)=0.$$

\begin{proof}
These relations come out from sequentially applications of the
Leibniz identity to the triples of the basis vectors
$\{e_0,e_1,e_2\}, \{e_0,e_1,e_3\},\{e_0,e_2,e_3\},\{e_0,e_1,e_4\}$
and
 $\{e_1,e_2,e_3\}$ respectively.
\end{proof}
\end{lm}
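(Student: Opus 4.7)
The plan is to apply the Leibniz identity $[x,[y,z]]=[[x,y],z]-[[x,z],y]$ to each of the five triples $\{e_0,e_1,e_2\}$, $\{e_0,e_1,e_3\}$, $\{e_0,e_2,e_3\}$, $\{e_0,e_1,e_4\}$ and $\{e_1,e_2,e_3\}$, in that order. For each triple one expands all three double brackets using the multiplication table of $TLeib_8$, collects everything on the basis $e_0,\ldots,e_7$, and reads off one new constraint by matching coefficients. The order is chosen so that the relations $b_{1,3}=a_{1,6}$, $b_{1,4}=a_{1,5}-b_{2,3}$ and $b_{2,4}=a_{2,6}$ are in hand before they are needed to simplify the fourth and fifth triples.

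The first four triples are routine. For $\{e_0,e_1,e_2\}$ the left side is $[e_0,[e_1,e_2]]=-a_{1,4}e_5-a_{1,5}e_6-a_{1,6}e_7$ (via $[e_0,e_i]=-e_{i+1}$), while on the right $[[e_0,e_1],e_2]=0$ because $e_7$ is central and $[e_2,e_2]=0$, leaving $-[[e_0,e_2],e_1]=[e_3,e_1]=-a_{1,4}e_5-a_{1,5}e_6-b_{1,3}e_7$; the $e_7$-coefficient gives $b_{1,3}=a_{1,6}$. The triple $\{e_0,e_1,e_3\}$ produces $b_{1,4}=a_{1,5}-b_{2,3}$, and $\{e_0,e_2,e_3\}$ gives $b_{2,4}=a_{2,6}$ by the same mechanism. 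For $\{e_0,e_1,e_4\}$ the left-hand side is $-(a_{1,4}-a_{2,6})e_7$ and the right collapses to $-b_{2,4}e_7-b_{1,5}e_7$; substituting the already-derived $b_{2,4}=a_{2,6}$ then yields $b_{1,5}=a_{1,4}-2a_{2,6}$.

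The step requiring care is the last triple $\{e_1,e_2,e_3\}$, which produces the quadratic relation. Here one must invoke the odd-$n$ convention from Theorem~\ref{T1}, namely $b_{1,6}=b_{2,5}=b_{3,4}=b$, and use that $[e_i,e_j]=0$ whenever $i+j+1>7$ to kill almost every term. What survives is $[e_1,[e_2,e_3]]=-a_{2,6}b_{1,6}e_7$ (from $a_{2,6}e_6$ in $[e_2,e_3]$ paired with $[e_1,e_6]=-b_{1,6}e_7$), $[[e_1,e_2],e_3]=a_{1,4}b_{1,6}e_7$ (only the $a_{1,4}e_4$ summand contributes, via $[e_4,e_3]=b_{3,4}e_7$), and $[[e_1,e_3],e_2]=-a_{1,4}b_{1,6}e_7$ (only the $a_{1,4}e_5$ summand contributes, via $[e_5,e_2]=-b_{2,5}e_7$). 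Leibniz then gives $-a_{2,6}b_{1,6}=2a_{1,4}b_{1,6}$, i.e., $b_{1,6}(a_{2,6}+2a_{1,4})=0$. The main obstacle is the bookkeeping of the skew-symmetry signs $(-1)^i$ in $[e_i,e_{n-i}]=(-1)^ib\,e_n$ and the repeated use of antisymmetry in the intermediate brackets, without which one easily drops a sign in the final step.
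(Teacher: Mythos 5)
Your proposal is correct and follows exactly the route the paper indicates: applying the Leibniz identity to the same five triples in the same order, with the first three relations feeding into the fourth and fifth. The detailed bookkeeping (in particular the signs $(-1)^i$ in $[e_i,e_{7-i}]=(-1)^i b\,e_7$ for the final quadratic relation $b_{1,6}(a_{2,6}+2a_{1,4})=0$) checks out against the multiplication table.
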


Further unifying the $TLeib_{8}$ table of multiplication we rewrite it via parameters

$c_{0,0},$ $c_{0,1},$ $c_{1,1},$ $c_{1,2},$ $c_{1,3},$ $c_{1,4},$ $c_{1,5},$ $c_{2,3},$ $c_{2,4}$ as follows:\\

$TLeib_{8}=\left\{
\begin{array}{lll}
\lbrack e_{i},e_{0}]=e_{i+1}, & 1\leq i\leq {6}, &  \\[1mm]
\lbrack e_{0},e_{i}]=-e_{i+1}, & 2\leq i\leq {6}, &  \\[1mm]
\lbrack e_{0},e_{0}]=c_{0,0}e_{7}, &  &  \\[1mm]
\lbrack e_{0},e_{1}]=-e_{2}+c_{0,1}e_{7}, &  &  \\[1mm]
\lbrack e_{1},e_{1}]=c _{1,1}e_{7}, &  &  \\[1mm]
\lbrack e_{1},e_{2}]=-[e_2,e_1]=c_{1,2}\,e_4+c_{1,3}\,e_5+c_{1,4}e_{6}+c_{1,5}e_{7}, \\[1mm]
\lbrack e_{1},e_{3}]=-[e_{3},e_{1}]=c_{1,2}\,e_5+c_{1,3}e_{6}+c_{1,4}e_{7}, \\[1mm]
[e_{1},e_{4}]=-[e_{4},e_{1}]=(c_{1,2}-c_{2,3})\,e_{6}+(c_{1,3}-c_{2,4})\,e_{7}, \\[1mm]
[e_{1},e_{5}]=-[e_{5},e_{1}]=(c_{1,2}-2c_{2,3})\,e_{7}, \\[1mm]
[e_{2},e_{3}]=-[e_{3},e_{2}]=c_{2,3}\,e_{6}+c_{2,4}\,e_{7},\\[1mm]
[e_{2},e_{4}]=-[e_{4},e_{2}]=c_{2,3}\,e_{7},\\[1mm]
[e_{i},e_{7-i}]=-[e_{7-i},e_{i}]=\left(-1\right)^{i}c_{3,4}e_{7}, \ \ \  1\leq i\leq n-1.\\[1mm]
\end{array}%
\right. $

\nt So an algebra from ${TLeib}_{8}$ with parameters
$c_{0,0},c_{0,1},c_{1,1},c_{1,2},c_{1,3},c_{1,4},c_{1,5},c_{2,3},c_{2,4},c_{3,4}$
  is denoted by $L(C),$ where $C=(c_{0,0},c_{0,1},c_{1,1},c_{1,2},c_{1,3},c_{1,4},c_{1,5},c_{2,3},c_{2,4},c_{3,4}).$

Note that  $L(C) \in TLeib_{8}$ if and only if \ $
 \ds
 c_{{3,4}} \left( c_{{2,3}}+2\,c_{{1,2}} \right)=0.$

Notice that according  to the Proposition \ref{p} and \ref{p1} the
adapted transformation in ${TLeib}_{8}$ can be taken in the form

$$ \left\{\begin{array}{lll} f(e_0)=A_0\,e_0 + A_1\,e_1 + A_2\,e_2 + A_3\,e_3+ A_4\,e_4, &
\\[1mm]
f(e_1)=B_1\,e_1+B_2\,e_2+B_3\,e_3+B_4\,e_4+B_5\,e_5, & \\[1mm]
f(e_{i+1})=[f(e_i), f(e_0)], & \quad 1\leq i\leq n-1,  \\[1mm]
\end{array} \right.$$
where $A_0\,B_1(A_0+A_1c_{3,4})\neq0.$

\nt The next theorem represents the action of the adapted base
change to the parameters
$c_{0,0},c_{0,1},c_{1,1},c_{1,2},c_{1,3},c_{1,4},c_{1,5},c_{2,3},c_{2,4},c_{3,4}$
of an algebra from ${TLeib}_{8}.$


\begin{theor}\emph{(Isomorphism criterion for $TLeib_{8}$)}
 Two filiform Leibniz algebras $L(C)$ and $L(C'),$ where
 $C=(c_{0,0},c_{0,1},c_{1,1},c_{1,2},c_{1,3},c_{1,4},c_{1,5},c_{2,3},c_{2,4},c_{3,4})$
 and
$C'=(c'_{0,0},c'_{0,1},c'_{1,1},c'_{1,2},c'_{1,3},c'_{1,4},c'_{1,5},c'_{2,3},c'_{2,4},
c'_{3,4}),$ from $TLeib_8$ are isomorphic, if and only if  there
exist $ A_0, A_1, A_2, A_3,A_4, B_1, B_2, B_3, B_4, B_5\in
\mathbb{C},$ such that $A_0B_1(A_0+A_1c_{3,4})\neq 0$ and the
following equalities hold:

\begin{eqnarray*}c_{0,0}^\prime&=&{\frac {{A_{{0}}}^{2}c_{{0,0}}+A_{{0}}A_{{1}}c_{{0,1}}+{A_{{1}}}^{2}c_
{{1,1}}}{c_{{1}}{A_{{0}}}^{5} \left( A_{{0}}+A_{{1}}c_{{3,4}}
\right) }},
\\
c_{0,1}^\prime&=&{\frac
{A_{{0}}c_{{0,1}}+2\,A_{{1}}c_{{1,1}}}{{A_{{0}}}^{5} \left( A_{
{0}}+A_{{1}}c_{{3,4}} \right) }}
,\\
c_{1,1}^\prime&=&{\frac {B_{{1}}c_{{1,1}}}{{A_{{0}}}^{5} \left(
A_{{0}}+A_{{1}}c_{{3,4} } \right) }}
,\\
c_{1,2}^{\prime }&=&{\frac {B_{{1}}c_{{1,2}}}{{A_{{0}}}^{2}}},\\
c_{1,3}^{\prime }&=&{\frac {B_{{1}} \left(
A_{{0}}c_{{1,3}}+2\,A_{{1}}{c_{{1,2}}}^{2}
 \right) }{{A_{{0}}}^{4}}},\\
c_{1,4}^{\prime
}&=&\frac{{A_0}^2\,{B_1}^2c_{1,4}+{A_0}^2({B_2}^2-2B_1\,B_3)\,c_{2,3}+A_1\,{B_1}^2(5\,c_{1,2}-c_{2,3})\,(A_0\,c_{1,3}-B_1\,{c_{1,2}}^2)}{{A_{{0}}}^{6}B_{{1}}},\\
c_{1,5}^{\prime}&=&-\frac{1}{{A_{{0 }}}^{8}{B_{{1}}}^{2} \left(
A_{{0}}+c_{{3,4}}A_{{1}} \right) }
({A_{{0}}}^{3}A_{{1}}{B_{{1}}}^{3}c_{{1,3}}c_{{2,4}}-3\,{A_{{0}}}^{3}A_{{2}}{B_{{1
}}}^{2}B_{{2}}c_{{1,2}}c_{{2,3}}-2\,{A_{{0}}}^{3}A_{{2}}{B_{{1}}}^{2}B_{{2}}c_{{3,4
}}c_{{1,3}}+\\
&&{A_
{{0}}}^{3}A_{{1}}B_{{1}}{B_{{2}}}^{2}c_{{3,4}}c_{{1,3}}-3\,A_{{0}}{A_{{
1}}}^{3}{B_{{1}}}^{3}{c_{{1,2}}}^{2}{c_{{2,3}}}^{2}-3\,{A_{{0}}}^{2}{A_{{1}}}^{2}{B_{{1}}}^
{3}c_{{1,3}}{c_{{2,3}}}^{2}-2\,{A_{{0}}}^{4}B_{
{1}}B_{{2}}B_{{4}}c_{{3,4}}-\\
&&6\,{A_{{0}}}^{3}A_{{1}}{B_{{1}}}^{3}c_{{1,2}}c_{{1,4}}-6\,{A_{{0}}}^{2}{A_{{1}}}^{2}{B_{{1}
}}^{3}c_{{1,2}}c_{{3,4}}c_{{1,4}}+4\,{A_{{0}}}^{2}{A_{{1}}}^{2}{B_{{1}
}}^{2}B_{{3}}c_{{1,2}}c_{{3,4}}c_{{2,3}}+\\
&&12
\,{A_{{0}}}^{2}{A_{{1}}}^{2}{B_{{1}}}^{3}c_{{1,2}}c_{{1,3}}c_{{2,3}}-{A_{{0}}}^{2}{A_{{2}}}^{2}{
B_{{1}}}^{3}{c_{{1,2}}}^{2}c_{{3,4}}-2\,{A_{{0}}}^{3}A_{{1}}{B_{
{2}}}^{3}c_{{1,2}}c_{{3,4}}+2\,{A_{{0
}}}^{3}A_{{3}}{B_{{1}}}^{3}c_{{1,3}}c_{{3,4}}+\\
&&6\,{A_{{0}}}^{2}{A_{{1}}
}^{2}B_{{1}}{B_{{2}}}^{2}{c_{{1,2}}}^{2}c_{{3,4}}+2\,{A_{{0}}}^{3}A_{{
2}}B_{{1}}{B_{{2}}}^{2}c_{{1,2}}c_{{3,4}}+2\,{A_{{0}}}^{2}A_{{1}}A_{{2}}{B_{{1}}}^{2}B_{{2}}{c_{{1,2}}}^
{2}c_{{3,4}}-\\
&&2\,{A_{{0}}}^{3}A_{{2}}{B_{{1}}}^{2}B_{{3}}
c_{{1,2}}c_{{3,4}}-14\,{A_{{0}}}^{2}{A_{{1}}}^{2}{B_{{1}}}^{2}B_{{3}}{c_{{1,2}}}^{2
}c_{{3,4}}+2\,{A_{{1}}}^{4}{B_{{1}}}^{3}{c_{{1,2}}
}^{3}c_{{3,4}}c_{{2,3}}-\\
&&2\,{A_{{0}}}^{2}{A_{{1}}}^{2}B_{{1}}{B_{{2}}}^{2}c_{{2,3}}c_{{3,4}
}c_{{1,2}}+3\,{A_{{0}}}^{3}A
_{{1}}{B_{{1}}}^{3}c_{{1,4}}c_{{2,3}}+2\,{A_{{0}}}^{3}A_{{4}}{B_{{1}}}^{3}c_{{1,2}}c_{{3,4}}
+\\
&&4\,{A_{{0}}}^{3}A_{{1}}B_{{1}}B_{{2}}B_{{3}}c_{{1,2}}c_{{3,4}}+9\,{A_{{0}}}^{3}A_{{1}}{B_
{{1}}}^{2}B_{{3}}c_{{1,2}}c_{{2,3}}+3\,{A_{{0}}}^{3}A_{{3}}{B_{{1}}}^{
3}c_{{1,2}}c_{{2,3}}-\\
&&21\,{A_{{0}}}^{2
}{A_{{1}}}^{2}{B_{{1}}}^{3}{c_{{1,2}}}^{2}c_{{1,3}}+{A_{{0}}}^{2}{A_{{1}}}^{2}{B_{{1}}}^{3}{c_{{
1,2}}}^{2}c_{{2,4}}+{A_{{0}}}^{4}B_
{{1}}{B_{{3}}}^{2}c_{{3,4}}-3\,{A_{{0}}}^{3}A_{{1}}{B_{{1}}}^{3}{c_{{1,3}}}^{2}-\\
&&14\,A_{{0}}{A_{{1}}}^{3}{B_{{1}}
}^{3}{c_{{1,2}}}^{4}-25\,{A_{{1}}}^{4
}{B_{{1}}}^{3}{c_{{1,2}}}^{4}c_{{3,4}}-2\,{A_{{0}}}^{3}A_{{3}}{B_{{1}}}^{2}B_{{2}}c
_{{1,2}}c_{{3,4}}-3\,{A_{{0}}}^{3}A_{{1}}B_{{1}}{B_{{2}}}^{2}c_{{2,3}}
c_{{1,2}}+\\
&&3\,{A_{{0}}}^{3}A_{{1}}B_{{1}}{B_{{2}}}^{2}{c_{{2,3}}}^{2}-2
\,{A_{{0}}}^{3}A_{{1}}{B_{{1}}}^{2}B_{{4}}c_{{1,2}}c_{{3,4}}-6\,{A_{{0
}}}^{3}A_{{1}}{B_{{1}}}^{2}B_{{3}}{c_{{2,3}}}^{2}-3\,{A_{{0}}}^{4}B_{{
3}}B_{{1}}B_{{2}}c_{{2,3}}-\\
&&{A_{{0}}}^{4}{B_{{1}}}^{3}c_{{1,5}}+{A_{{0}}}^{4}{B_{{2}
}}^{3}c_{{2,3}}-{A_{{0}}}^{4}B_{{1}
}{B_{{2}}}^{2}c_{{2,4}}+3\,{A_{{0}}}^{4}{B_{{1}}}^{2}B_{{4}}c_{{2,3}}+2\,{A_{{0}}}^{4
}{B_{{1}}}^{2}B_{{5}}c_{{3,4}}+\\
&&2\,{A_{{0}}}^{4}B_{{3}}{B_{{1}}}^{2}c_{{2,4}}-
2\,{A_{{0}}}^{2}{A_{{1}}}^{2}{B_{{1}}}^{3}{c_{{1,3}}}^{2}c_{{3,4}}-32
\,A_{{0}}{A_{{1}}}^{3}{B_{{1}}}^{3}c_{{1,3}}c_{{3,4}}{c_{{1,2}}}^{2}+\\
&&2
\,A_{{0}}{A_{{1}}}^{3}{B_{{1}}}^{3}c_{{1,3}}c_{{3,4}}c_{{1,2}}c_{{2,3}
}+9\,A_{{0}}{A_{{1}}}^{3}{B_{{1}}}^{3}{c_{{1,2}}}^{3}c_{{2,3}}),
\nonumber\\
c_{2,3}^{\prime }&=& {\frac {B_1\,c_{2,3}}{{A^2_0}}},\\[1mm]
c_{2,4}^{\prime }&=&\frac {1}{A_{{0}}^{4}B_{{1}} \left(
A_{{0}}+c_{{3, 4}}A_{{1}} \right)
}(A_{{0}}^{2}B_{{1}}^{2}c_{{2,4}}+ \left( -A_{{0}}^{2}B_{
{2}}^{2}+2\,A_{{0}}^{2}B_{{1}}B_{{3}}-A_{{0}}A_{{1}}B_{{1}}^{2}c_{{1,3}}
-10\,A_{{1}}^{2}B_{{1}}^{2}c_{{1,2}}^{2} \right) c_{{3,4}
}\\
&&+3\,A_{{0}}A_{{1}}B_{{1}}^{2}c_{{2,3}}c_{{1,2}}-3\,A_{{0}}A_{{1}}B_{{1}}^{2}c_
{{2,3}}^{2}),\nonumber\\[1mm]
c_{3,4}^{\prime }&=&{\frac
{B_{{1}}c_{{3,4}}}{A_{{0}}+c_{{3,4}}A_{{1}}}}.
\end{eqnarray*}
\end{theor}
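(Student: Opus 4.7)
The plan is to mirror the proof of Theorem 2.1 (the 7-dimensional isomorphism criterion), adapted to the odd-$n$ setting in which the parameter $c_{3,4}$ plays the role of the $b$ of Theorem 1.1 and is not automatically zero. First, I would invoke Propositions 1.1 and 1.2 to reduce an arbitrary adapted transformation to the normal form
\begin{eqnarray*}
f(e_0) &=& A_0 e_0 + A_1 e_1 + A_2 e_2 + A_3 e_3 + A_4 e_4, \\
f(e_1) &=& B_1 e_1 + B_2 e_2 + B_3 e_3 + B_4 e_4 + B_5 e_5, \\
f(e_{i+1}) &=& [f(e_i), f(e_0)], \qquad 1 \leq i \leq 6,
\end{eqnarray*}
with $A_0 B_1(A_0 + A_1 c_{3,4}) \neq 0$ enforcing non-singularity; Proposition 1.2 shows that the remaining $A_k, B_k$ for $k \geq 5$ and $k \geq 6$ respectively do not affect structure constants and may be set to zero without loss.

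Second, I would recursively expand $f(e_2), \ldots, f(e_7)$ by repeatedly applying $[-, f(e_0)]$ together with the multiplication table of $L(C)$, obtaining each $f(e_i)$ as an explicit linear combination of $e_i, e_{i+1}, \ldots, e_7$. The formulas for $c_{0,0}', c_{0,1}', c_{1,1}'$ then follow immediately from Lemma 1.1 with $n=7$ and $b = c_{3,4}$. The formula for $c_{3,4}'$ comes from computing $[f(e_i), f(e_{7-i})]$ and extracting the coefficient of $e_7$, which reduces to $B_1 c_{3,4} f(e_7)/[A_0^6 B_1(A_0 + A_1 c_{3,4})]$ after simplification.

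Third, the remaining rules for $c_{1,2}', c_{1,3}', c_{1,4}', c_{1,5}', c_{2,3}', c_{2,4}'$ are obtained by computing each of the brackets $[f(e_1), f(e_2)], [f(e_1), f(e_3)], [f(e_1), f(e_4)], [f(e_2), f(e_3)], [f(e_2), f(e_4)]$ in two ways: on one side by expanding via the multiplication of $L(C)$ on the already-computed $f(e_i)$, and on the other side by demanding the bracket have the prescribed form $c_{1,2}' f(e_4) + c_{1,3}' f(e_5) + c_{1,4}' f(e_6) + c_{1,5}' f(e_7)$ etc. Matching coefficients of $e_4, e_5, e_6, e_7$ in turn yields the seven transformation laws. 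For the converse direction, I would reverse the argument: given that the numerical equalities hold, I verify directly that the proposed linear map respects every bracket of the multiplication table, in the same style as the check of $[e_0',e_0']$, $[e_0',e_1']$, $[e_1',e_1']$ in the proof of Theorem 2.1.

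The main obstacle is purely computational: the coefficient $c_{1,5}'$ is a polynomial of more than thirty monomials in the $A_j, B_k, c_{\cdot,\cdot}$, because $e_7$ sits at the deepest level of the lower central series and every bracket of basis elements with indices summing to at most $7$ contributes through the nested commutators defining $f(e_i)$. I would manage this by organizing the computation filtration-by-filtration, working modulo $\langle e_{k+1}, \ldots, e_7\rangle$ at each stage so that each quantity need only be tracked to the accuracy required for the next step, and by relying on a computer algebra system to verify the final expression against the direct expansion of $[f(e_1), f(e_4)]$.
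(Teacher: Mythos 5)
Your proposal follows essentially the same route as the paper: reduce the adapted transformation to the normal form $f(e_0)=\sum_{i=0}^{4}A_ie_i$, $f(e_1)=\sum_{j=1}^{5}B_je_j$ via Propositions \ref{p} and \ref{p1} (odd $n=7$), read off $c_{0,0}',c_{0,1}',c_{1,1}'$ from Lemma \ref{L1} with $b=c_{3,4}$, and obtain the remaining transformation laws by expanding the brackets $[f(e_i),f(e_j)]$ against the required form and matching coefficients of $e_4,\dots,e_7$, exactly as in the proof of Theorem \ref{T2}; the paper itself records only the resulting base change (\ref{bc.dim08}) for dimension $8$ and leaves the verification to the same computation you describe. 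The one quibble is your intermediate expression for $c_{3,4}'$, which is garbled (a coefficient of $e_7$ cannot equal a multiple of $f(e_7)$, and the power of $A_0$ is off), though the method plainly yields the stated $c_{3,4}'=B_1c_{3,4}/(A_0+A_1c_{3,4})$.
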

\newpage

The following base change is adapted and it transforms $L(C)$ to
$L(C')$

\begin{eqnarray}\label{bc.dim08}
e_{0}^{\prime }&=&A_0\,e_0 + A_1\,e_1 + A_2\,e_2 + A_3\,e_3+ A_4\,e_4,\nonumber \\[1mm]
e_{1}^{\prime
}&=&B_1\,e_1+B_2\,e_2+B_3\,e_3+B_4\,e_4+B_5\,e_5,\nonumber \\[1mm]
e_{2}^{\prime
}&=&A_{0}B_1\,e_{2}+A_{0}B_2e_{3}+\left(A_{{0}}B_{{3}}+(A_{{2}}B_{{1}}-A_{{1}}B_{{2}})c_{{1,2}}\right)e_4+(A_{{0}}B_{{4}}+(A_{{3}}B_{{1}}-A_{{1}}B_{{3}})c_{{1,2}}+\nonumber\\
&&(A_{{2}}B_{{1}}-A_{{1}}B_{{2}})c_{{1,3}}) e_5+(\left(
A_{{3}}B_{{2}}-A_{{4}}B_{{1}}-A_{{2}}B_{{3}}+A_{{1}}B_{{4}}
 \right) c_{{2,3}}+ \left( A_{{2}}B_{{1}}-A_{{1}}B_{{2}} \right) c_{{1
,4}}+\nonumber \\[1mm]
&&\left( A_{{3}}B_{{1}}-A_{{1}}B_{{3}} \right) c_{{1,3}}+ \left( -
A_{{1}}B_{{4}}+A_{{4}}B_{{1}} \right) c_{{1,2}}+A_{{0}}B_{{5}}
\nonumber)
e_6+(\ast)e_7,\nonumber \\[1mm]
e_{3}^{\prime
}&=&A_{0}^2B_1\,e_{3}+\left({A^{2}_{{0}}}B_{{2}}-A_{{0}}{A_{{1}}}B_1c_{{1,2}}\right)e_4+(A_{{0}}^{2}B_{{3}}
-A_{{1}}A_{{0}}B_{{1}}c_{{1,3}}+A_{{0}} \left(
-2\,A_{{1}}B_{{2}}+A_{{ 2}}B_{{1}} \right) c_{{1,2}})e_5 \nonumber
\\&&+(\left( -A_{{1}}A_{{2}}B_{{1}}+{A_{{1}}}^{2}B_{{2}} \right) {c_{{1,2}}
}^{2}+ \left(  \left( A_{{1}}A_{{2}}B_{{1}}-{A_{{1}}}^{2}B_{{2}}
 \right) c_{{2,3}}+A_{{3}}A_{{0}}B_{{1}}-2\,A_{{1}}A_{{0}}B_{{3}}
 \right) c_{{1,2}}+\\
 && \left( -2\,A_{{1}}A_{{0}}B_{{2}}+A_{{0}}A_{{2}}B_{
{1}} \right) c_{{1,3}}+ \left(
A_{{3}}A_{{0}}B_{{1}}-A_{{2}}A_{{0}}B_{ {2}}+A_{{1}}A_{{0}}B_{{3}}
\right) c_{{2,3}}+A_{{1}}A_{{0}}B_{{1}}c_{{
1,4}}+\nonumber \\[1mm]
&& {A_{{0}}}^{2}B_{{4}}
)e_6+(\ast)e_7,\nonumber \\[1mm]
e_{4}^{\prime
}&=&A_{0}^3B_1\,e_{4}+({A^{3}_{{0}}}B_{{2}}-2\,{A^{2}_{{0}}}A_{{1}}B_{{1}}c_{{1,2}})e_5+A_{{0}}
( {A_{{0}}}^{2}B_{{3}}-2\,A_{{1}}A_{{0}}B_{{1}}c_{{1,3}}-
A_{{0}}A_{{2}}B_{{1}}c_{{2,3}}-
 \nonumber \\&&  3\,A_{{1}}A_{{0}}B_{{2}}c_{{1,2}}+A_{{0
}}A_{{1}}B_{{2}}c_{{2,3}}+A_{{0}}A_{{2}}B_{{1}}c_{{1,2}}+{A_{{1}}}^{2}
B_{{1}}{c_{{1,2}}}^{2}-{A_{{1}}}^{2}B_{{1}}c_{{1,2}}c_{{2,3}} )
e_6+(\ast)e_7,\nonumber \\[1mm]
e_{5}^{\prime
}&=&A^4_0\,B_1\,e_5+({A^{4}_{{0}}}B_{{2}}-3\,A_{{1}}{A^{3}_{{0}}}B_{{1}}c_{{1,2}}+A_{{1}}{A^{3}_{{0}}}B_{{1}}c_
{{2,3}}
)e_6+(\ast)e_7, \nonumber \\[1mm]
e_{6}^{\prime }&=& A^5_0\,B_1\,e_6+(\ast)e_7,\nonumber \\[1mm]
e_{7}^{\prime }&=&B_{{1}}{A_{{0}}}^{5} \left(
A_{{0}}+A_{{1}}c_{{3,4}} \right)e_7.\nonumber
\end{eqnarray}

The next section deals with the applications of the results of the
previous section to the classification problem of $TLeib_8$.

\subsubsection{  Isomorphism classes of $TLeib_{8}$} In this section we give a list of
all algebras from $TLeib_{8};$

\nt Represent $TLeib_{8}\ $as a union of the following subsets: \\

\nt $M_8^1=\{L(C)\in TLeib_{8}\ :c_{3,4}\neq 0\},$\\

$U_{8}^{1}=\{L(C)\in TLeib_{8}\ :c_{1,1}\neq 0\};$

$U_{8}^{2}=\{L(C)\in TLeib_{8}\ :c_{1,1}=0,\ c_{0,1}\neq 0\};$

$U_{8}^{3}=\{L(C)\in TLeib_{8}\ :c_{1,1}=c_{0,1}=0,\ c_{1,2}\neq
0\};$

$U_{8}^{4}=\{L(C)\in TLeib_{8}\ :c_{1,1}=c_{0,1}=c_{1,2}=0,\
c_{1,3}\neq 0,\ c_{1,4}\neq 0\};$

$U_{8}^{5}=\{L(C)\in TLeib_{8}\ :c_{1,1}=c_{0,1}=c_{1,2}=0,\
c_{1,3}\neq 0,\ c_{1,4}=0,\ c_{0,0}\neq 0\};$

$U_{8}^{6}=\{L(C)\in TLeib_{8}\ :c_{1,1}=c_{0,1}=c_{1,2}=0,\
c_{1,3}\neq 0,\ c_{1,4}=c_{0,0}=0\};$

$U_{8}^{7}=\{L(C)\in TLeib_{8}\
:c_{1,1}=c_{0,1}=c_{1,2}=c_{1,3}=0,\ c_{1,4}\neq 0,\ c_{0,0}\neq
0\};$

$U_{8}^{8}=\{L(C)\in TLeib_{8}\
:c_{1,1}=c_{0,1}=c_{1,2}=c_{1,3}=0,\ c_{1,4}\neq 0,\ c_{0,0}=
0\};$

$U_{8}^{9}=\{L(C)\in TLeib_{8}\
:c_{1,1}=c_{0,1}=c_{1,2}=c_{1,3}=c_{1,4}=0,\ c_{0,0}\neq 0\};$

$U_{8}^{10}=\{L(C)\in TLeib_{8}\
:c_{1,1}=c_{0,1}=c_{1,2}=c_{1,3}=c_{1,4}=0,\ c_{0,0}=0\};$\\

\nt $M_8^2=\{L(C)\in TLeib_{8}\ :c_{3,4}=0\}$\\

$U_{8}^{11}=\{L(C)\in TLeib_{8}\ :c_{2,3}\neq 0,\ c_{1,1}\neq
0\};$

$U_{8}^{12}=\{L(C)\in TLeib_{8}\ :c_{2,3}\neq 0,\ c_{1,1}=0,\
c_{0,1}\neq 0\};$

$U_{8}^{13}=\{L(C)\in TLeib_{8}\ :c_{2,3}\neq 0,\
c_{1,1}=c_{0,1}=0,\ c_{1,2}\neq0,\ $

$
2\,c_{{2,4}}c_{{1,2}}^{2}-3\,c_{{1,3}}c_{{2,3}}c_{{1,2}}+3\,c_{{1,3}
}c_{{2,3}}^{2}\neq0\};$

$U_{8}^{14}=\{L(C)\in TLeib_{8}\ :c_{2,3}\neq 0,\
c_{1,1}=c_{0,1}=0,\ c_{1,2}\neq0,$

$
2\,c_{{2,4}}c_{{1,2}}^{2}-3\,c_{{1,3}}c_{{2,3}}c_{{1,2}}+3\,c_{{1,3}
}c_{{2,3}}^{2}=0,\ c_{0,0}\neq0\};$

$U_{8}^{15}=\{L(C)\in TLeib_{8}\ :c_{2,3}\neq 0,\
c_{1,1}=c_{0,1}=0,\ c_{1,2}\neq0,$

$
2\,c_{{2,4}}{c_{{1,2}}}^{2}-3\,c_{{1,3}}c_{{2,3}}c_{{1,2}}+3\,c_{{1,3}
}{c_{{2,3}}}^{2}=0,\ c_{0,0}=0\};$

$U_{8}^{16}=\{L(C)\in TLeib_{8}\ :c_{2,3}\neq 0,\
c_{1,1}=c_{0,1}=c_{1,2}=0,\ c_{1,3}\neq0,\ c_{0,0}\neq0\};$

$U_{8}^{17}=\{L(C)\in TLeib_{8}\ :c_{2,3}\neq 0,\
c_{1,1}=c_{0,1}=c_{1,2}=c_{1,3}=0,\ c_{0,0}\neq0\};$

$U_{8}^{18}=\{L(C)\in TLeib_{8}\ :c_{2,3}\neq 0,\
c_{1,1}=c_{0,1}=c_{1,2}=c_{1,3}=c_{0,0}=0\};$

$U_{8}^{19}=\{L(C)\in TLeib_{8}\ :c_{2,3}=0,\  c_{2,4}\neq0,\
c_{1,2}\neq0\};$

$U_{8}^{20}=\{L(C)\in TLeib_{8}\ :c_{2,3}=0,\ c_{2,4}\neq0,\
c_{1,2}=0,\ c_{1,1}\neq0\};$

$U_{8}^{21}=\{L(C)\in TLeib_{8}\ :c_{2,3}=0,\ c_{2,4}\neq0,\
c_{1,2}=c_{1,1}=0,\ c_{0,1}\neq0\};$

$U_{8}^{22}=\{L(C)\in TLeib_{8}\ :c_{2,3}=0,\ c_{2,4}\neq0,\
c_{1,2}=c_{1,1}=c_{0,1}=0,\ c_{1,4}\neq0\};$

$U_{8}^{23}=\{L(C)\in TLeib_{8}\ :c_{2,3}=0,\ c_{2,4}\neq0,\
c_{1,2}=c_{1,1}=c_{0,1}=c_{1,4}=0,\ c_{0,0}\neq0\};$

$U_{8}^{24}=\{L(C)\in TLeib_{8}\ :c_{2,3}=0,\ c_{2,4}\neq0,\
c_{1,2}=c_{1,1}=c_{0,1}=c_{1,4}=c_{0,0}=0\};$

$U_{8}^{25}=\{L(C)\in TLeib_{8}\ :c_{2,3}=c_{2,4}=0,\
c_{1,2}\neq0,\ 4\,c_{{1,4}}c_{{1,2}}-5\,c_{{1,3}}^{2}\neq0\};$

$U_{8}^{26}=\{L(C)\in TLeib_{8}\ :c_{2,3}=c_{2,4}=0,\
c_{1,2}\neq0,\ 4\,c_{{1,4}}c_{{1,2}}-5\,c_{{1,3}}^{2}=0,\
-7\,c_{{1,3}}^{3}+4\,c_{{1,5}}c_{{1,2}}^{2}\neq0\};$

$U_{8}^{27}=\{L(C)\in TLeib_{8}\ :c_{2,3}=c_{2,4}=0,\
c_{1,2}\neq0,\ 4\,c_{{1,4}}c_{{1,2}}-5\,c_{{1,3}}^{2}=0,\ $

$ -7\,c_{{1,3}}^{3}+4\,c_{{1,5}}c_{{1,2}}^{2}=0,\ c_{1,1}\neq0\};$

$U_{8}^{28}=\{L(C)\in TLeib_{8}\ :c_{2,3}=c_{2,4}=0,\
c_{1,2}\neq0,\ 4\,c_{{1,4}}c_{{1,2}}-5\,c_{{1,3}}^{2}=0,\ $

$ -7\,c_{{1,3}}^{3}+4\,c_{{1,5}}c_{{1,2}}^{2}=0,\ c_{1,1}=0,\
c_{0,1}\neq0\};$

$U_{8}^{29}=\{L(C)\in TLeib_{8}\ :c_{2,3}=c_{2,4}=0,\
c_{1,2}\neq0,\ 4\,c_{{1,4}}c_{{1,2}}-5\,c_{{1,3}}^{2}=0,\ $

$ -7\,c_{{1,3}}^{3}+4\,c_{{1,5}}c_{{1,2}}^{2}=0,\
c_{1,1}=c_{0,1}=0,\ c_{0,0}\neq0\};$

$U_{8}^{30}=\{L(C)\in TLeib_{8}\ :c_{2,3}=c_{2,4}=0,\
c_{1,2}\neq0,\ 4\,c_{{1,4}}c_{{1,2}}-5\,c_{{1,3}}^{2}=0,\ $

$ -7\,c_{{1,3}}^{3}+4\,c_{{1,5}}c_{{1,2}}^{2}=0,\
c_{1,1}=c_{0,1}=c_{0,0}=0\};$

$U_{8}^{31}=\{L(C)\in TLeib_{8}\ :c_{2,3}=c_{2,4}=c_{1,2}=0,\
 c_{1,3}\neq 0,\ c_{1,4}\neq 0\};$

$U_{8}^{32}=\{L(C)\in TLeib_{8}\ :c_{2,3}=c_{2,4}=c_{1,2}=0,\
 c_{1,3}\neq 0,\ c_{1,4}=0,\ c_{1,1}\neq 0\};$

$U_{8}^{33}=\{L(C)\in TLeib_{8}\ :c_{2,3}=c_{2,4}=c_{1,2}=0,\
 c_{1,3}\neq 0,\ c_{1,4}= c_{1,1}=0,\ c_{0,1}\neq 0\};$

$U_{8}^{34}=\{L(C)\in TLeib_{8}\ :c_{2,3}=c_{2,4}=c_{1,2}=0,\
 c_{1,3}\neq 0,\ c_{1,4}= c_{1,1}=c_{0,1}=0,\ c_{0,0}\neq 0\};$

$U_{8}^{35}=\{L(C)\in TLeib_{8}\ :c_{2,3}=c_{2,4}=c_{1,2}=0,\
 c_{1,3}\neq 0,\ c_{1,4}= c_{1,1}=c_{0,1}=c_{0,0}=0\};$

$U_{8}^{36}=\{L(C)\in TLeib_{8}\
:c_{2,3}=c_{2,4}=c_{1,2}=c_{1,3}=0,\
 c_{1,1}\neq 0,\ c_{1,5}\neq0\};$

$U_{8}^{37}=\{L(C)\in TLeib_{8}\
:c_{2,3}=c_{2,4}=c_{1,2}=c_{1,3}=0,\
 c_{1,1}\neq 0,\ c_{1,5}=0,\ c_{1,4}\neq0\};$

$U_{8}^{38}=\{L(C)\in TLeib_{8}\
:c_{2,3}=c_{2,4}=c_{1,2}=c_{1,3}=0,\
 c_{1,1}\neq 0,\ c_{1,5}=c_{1,4}=0,\ $

 $4\,c_{{0,0}}c_{{1,1}}-c_{{0,1}}^{2}\neq0\};$

$U_{8}^{39}=\{L(C)\in TLeib_{8}\
:c_{2,3}=c_{2,4}=c_{1,2}=c_{1,3}=0,c_{1,1}\neq 0,\
c_{1,5}=c_{1,4}=0,\ $

$
 4\,c_{{0,0}}c_{{1,1}}-c_{{0,1}}^{2}=0\};$

$U_{8}^{40}=\{L(C)\in TLeib_{8}\
:c_{2,3}=c_{2,4}=c_{1,2}=c_{1,3}=c_{1,1}=0,\
  c_{0,1}\neq 0,\ c_{1,5}\neq 0\};$

$U_{8}^{41}=\{L(C)\in TLeib_{8}\
:c_{2,3}=c_{2,4}=c_{1,2}=c_{1,3}=c_{1,1}=0,\
  c_{0,1}\neq 0,\ c_{1,5}= 0,\ c_{1,4}\neq0\};$

$U_{8}^{42}=\{L(C)\in TLeib_{8}\
:c_{2,3}=c_{2,4}=c_{1,2}=c_{1,3}=c_{1,1}=0,\
  c_{0,1}\neq 0,\ c_{1,5}=c_{1,4}=0\};$

$U_{8}^{43}=\{L(C)\in TLeib_{8}\
:c_{2,3}=c_{2,4}=c_{1,2}=c_{1,3}=c_{1,1}=c_{0,1}=0,
 c_{1,4}\neq 0,\ c_{1,5}\neq 0\};$

$U_{8}^{44}=\{L(C)\in TLeib_{8}\
:c_{2,3}=c_{2,4}=c_{1,2}=c_{1,3}=c_{1,1}=c_{0,1}=0,\
 c_{1,4}\neq 0,\ c_{1,5}=0,\ c_{0,0}\neq0\};$

$U_{8}^{45}=\{L(C)\in TLeib_{8}\
:c_{2,3}=c_{2,4}=c_{1,2}=c_{1,3}=c_{1,1}=c_{0,1}=0,\
 c_{1,4}\neq 0,\ c_{1,5}=c_{0,0}=0\};$

$U_{8}^{46}=\{L(C)\in TLeib_{8}\
:c_{2,3}=c_{2,4}=c_{1,2}=c_{1,3}=c_{1,1}=c_{0,1}= c_{1,4}=0,\
 c_{1,5}\neq 0,\ c_{0,0}\neq 0\};$

$U_{8}^{47}=\{L(C)\in TLeib_{8}\
:c_{2,3}=c_{2,4}=c_{1,2}=c_{1,3}=c_{1,1}=c_{0,1}= c_{1,4}=0,\
 c_{1,5}\neq 0,\ c_{0,0}=0\};$

$U_{8}^{48}=\{L(C)\in TLeib_{8}\
:c_{2,3}=c_{2,4}=c_{1,2}=c_{1,3}=c_{1,1}=c_{0,1}= c_{1,4}=
c_{1,5}=0,\ c_{0,0}\neq0\};$

$U_{8}^{49}=\{L(C)\in TLeib_{8}\
:c_{2,3}=c_{2,4}=c_{1,2}=c_{1,3}=c_{1,1}=c_{0,1}= c_{1,4}=
c_{1,5}=c_{0,0}=0\}.$

\begin{pr}\emph{}
\begin{enumerate}
\item Two algebras $L(C)$ and $L(C')$ from $U_{8}^{1}$
are isomorphic, if and only if\\
 $ \ds {\frac { \left(
4\,c'_{{0,0}}c'_{{1,1}}-{c'}_{{0,1}}^{2} \right) {c'}_{{3,4
}}^{2}}{ \left( -2\,c'_{{1,1}}+c'_{{0,1}}c'_{{3,4}} \right)
^{2}}}= {\frac { \left( 4\,c_{{0,0}}c_{{1,1}}-c_{{0,1}}^{2}
\right) c_{{3,4 }}^{2}}{ \left( -2\,c_{{1,1}}+c_{{0,1}}c_{{3,4}}
\right) ^{2}}}
 ,\ \ {\frac { \left( -2\,c'_{{1,1}}+c'_{{0,1}}c'_{{3,4}} \right) ^{5}{c'}_{{1,2}
}^{5}}{{c'}_{{3,4}}^{4}{c'}_{{1,1}}^{6}}}={\frac { \left(
-2\,c_{{1,1}}+c_{{0,1}}c_{{3,4}} \right) ^{5}c_{{1,2}
}^{5}}{c_{{3,4}}^{4}c_{{1,1}}^{6}}},\\ {\frac { \left(
-2\,c'_{{1,1}}+c'_{{0,1}}c'_{{3,4}} \right) ^{5} \left( -
c'_{{1,3}}c'_{{1,1}}+{c'}_{{1,2}}^{2}c'_{{0,1}} \right)
^{5}}{{c'}_{{3,4}}^ {3}{c'}_{{1,1}}^{12}}}={\frac { \left(
-2\,c_{{1,1}}+c_{{0,1}}c_{{3,4}} \right) ^{5} \left( -
c_{{1,3}}c_{{1,1}}+c_{{1,2}}^{2}c_{{0,1}} \right) ^{5}}{c_{{3,4}}^
{3}c_{{1,1}}^{12}}},\\ \frac { \left(
-2\,c'_{{1,1}}+c'_{{0,1}}c'_{{3,4}} \right)
^{5}}{{c'}_{{1,1}}^{18}{c'}_{{3,4}}^{7}} (
27\,c'_{{3,4}}{c'}_{{1,2}}^{3}{c'}_{{0,1}}^{2}-18\,c'_{{3,4}}c'_{{1,2}}c'_{{0
,1}}c'_{{1,3}}c'_{{1,1}}+4\,c'_{{3,4}}c'_{{1,4}}{c'}_{{1,1}}^{2}-72\,{c'}_{{1
,2}}^{3}c'_{{0,1}}c'_{{1,1}}-\\
8\,c'_{{1,2}}c'_{{2,4}}{c'}_{{1,1}}^{2}
 ) ^{5}= \frac { \left(
-2\,c_{{1,1}}+c_{{0,1}}c_{{3,4}} \right)
^{5}}{c_{{1,1}}^{18}c_{{3,4}}^{7}} (
27\,c_{{3,4}}c_{{1,2}}^{3}c_{{0,1}}^{2}-18\,c_{{3,4}}c_{{1,2}}c_{{0
,1}}c_{{1,3}}c_{{1,1}}+4\,c_{{3,4}}c_{{1,4}}c_{{1,1}}^{2}\\
-72\,c_{{1
,2}}^{3}c_{{0,1}}c_{{1,1}}-8\,c_{{1,2}}c_{{2,4}}c_{{1,1}}^{2}
 ) ^{5}.
 $
 \item For any $\lambda_1, \lambda_2, \lambda_3, \lambda_4 \in \mathbb{C}, $  there exists $L(C)\in
U_{8}^{1}:$  $ \ds{\frac { \left(
4\,c_{{0,0}}c_{{1,1}}-c_{{0,1}}^{2} \right) c_{{3,4 }}^{2}}{
\left( -2\,c_{{1,1}}+c_{{0,1}}c_{{3,4}} \right) ^{2}}}=\lambda_1,
\\ {\frac { \left( -2\,c'_{{1,1}}+c'_{{0,1}}c'_{{3,4}} \right)
^{5}{c'}_{{1,2}
}^{5}}{{c'}_{{3,4}}^{4}{c'}_{{1,1}}^{6}}}=\lambda_2, \ \ {\frac {
\left( -2\,c_{{1,1}}+c_{{0,1}}c_{{3,4}} \right) ^{5} \left( -
c_{{1,3}}c_{{1,1}}+c_{{1,2}}^{2}c_{{0,1}} \right) ^{5}}{c_{{3,4}}^
{3}c_{{1,1}}^{12}}}=\lambda_3,\\
\frac { \left( -2\,c_{{1,1}}+c_{{0,1}}c_{{3,4}} \right)
^{5}}{c_{{1,1}}^{18}c_{{3,4}}^{7}} (
27\,c_{{3,4}}c_{{1,2}}^{3}c_{{0,1}}^{2}-18\,c_{{3,4}}c_{{1,2}}c_{{0
,1}}c_{{1,3}}c_{{1,1}}+4\,c_{{3,4}}c_{{1,4}}c_{{1,1}}^{2}
-72\,c_{{1
,2}}^{3}c_{{0,1}}c_{{1,1}}\\
-8\,c_{{1,2}}c_{{2,4}}c_{{1,1}}^{2}
 ) ^{5}=\lambda_4.$

Then orbits in $U_{8}^{1}$ can be parameterized as
$L\left(\lambda_1, 0, 1, \lambda_2, \lambda_3, \lambda_4, 0,
-32\lambda_2,0,1\right),$ $\lambda_1, \lambda_2, \lambda_3,
\lambda_4 \in \mathbb{C}.$
\end{enumerate}

\begin{proof}See proof of Proposition \ref{p2}, where we put $\ds A_1={\frac {-A_{{0}}c_{{0,1}}}{2\,c_{{1,1}}}},\ B_1={\frac {-A_{{0}} \left( -2\,c_{{1,1}}+c_{{0,1}}c_{{3,4}} \right)
}{2\,c_{{3,4}}c_{{1,1}}}}
,$ and \\
$ \ds B_3=-\frac{1}{2\,A_{{0}}{c_{{ 1,1}}}^{3}{c_{{3,4}}}^{2}
\left( -2\,c_{{1,1}}+c_{{0,1}}c_{{3,4}}
 \right) }(-8\,{A_{{0}}}^{2}c_{{2,4}}{c_{{1,1}}}^{4}+8\,{A_{{0}}}^{
2}c_{{2,4}}c_{{0,1}}c_{{3,4}}{c_{{1,1}}}^{3}-2\,{A_{{0}}}^{2}c_{{2,4}}
{c_{{0,1}}}^{2}{c_{{3,4}}}^{2}{c_{{1,1}}}^{2}-72\,{A_{{0
}}}^{2}{c_{{1,2}}}^{2}c_{{0,1}}{c_{{1,1}}}^{3}+92\,{A_{{0}}}^{2}{c_{{1,2}}}^{2}{c_{{0
,1}}}^{2}c_{{3,4}}{c_{{1,1}}}^{2}-38\,{A_{{0}}}^{2}{c_{{1,2}}}^{2}{c_{
{0,1}}}^{3}{c_{{3,4}}}^{2}c_{{1,1}}-4\,{A_{{0}}}^{2}c_{{0,1}}c_{{1,3}}
c_{{3,4}}{c_{{1,1}}}^{3}+\\
4\,{A_{{0}}}^{2}{c_{{0,1}}}^{2}c_{{1,3}}{c_{{
3,4}}}^{2}{c_{{1,1}}}^{2}-{A_{{0}}}^{2}{c_{{0,1}}}^{3}c_{{1,3}}{c_{{3,
4}}}^{3}c_{{1,1}}+8\,{B_{{2}}}^{2}{c_{{3,4}}}^{3}{c_{{1,1}}}^{4}+5\,{A_{{0}}}^{2}{c
_{{1,2}}}^{2}{c_{{0,1}}}^{4}{c_{{3,4}}}^{3}) ,$ in base change
(\ref{bc.dim08}).
\end{proof}

\end{pr}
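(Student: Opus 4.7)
The proof will mirror the template used in Propositions 2.1 and 2.2, but in the richer setting of $TLeib_8$ with $c_{3,4}\neq 0$, and will rest on the isomorphism criterion of Theorem 2.2. I plan to treat the two directions of the equivalence and the realization claim (2) in turn, using throughout the relation $c_{2,3}=-2c_{1,2}$ forced on $U_8^1\subset M_8^1$ by the constraint $c_{3,4}(c_{2,3}+2c_{1,2})=0$. Substituting this relation into every transformation formula of Theorem 2.2 at the outset should significantly shorten the bookkeeping.

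For the direction ``isomorphic implies invariants coincide'', I would substitute the transformation formulas from Theorem 2.2 into each of the four candidate expressions and verify cancellation. Each combination is engineered so that (i) its scaling exponents in the multiplicative parameters $A_0, B_1$ vanish, and (ii) the additive shifts introduced by $A_1, A_2, A_3, A_4, B_2, B_3, B_4, B_5$ in the transformation laws of $c'_{0,0}, c'_{0,1}, c'_{1,3}, c'_{1,4}, c'_{2,4}, c'_{1,5}$ are absorbed when the pieces are combined. For example, the first invariant is built from the discriminant $4c_{0,0}c_{1,1}-c_{0,1}^2$, which by Lemma 2.1 applied with $b=c_{3,4}$ transforms as a pure monomial in $A_0, B_1, (A_0+A_1c_{3,4})$, and the factor $c_{3,4}^2/(-2c_{1,1}+c_{0,1}c_{3,4})^2$ absorbs precisely the residual $(A_0+A_1c_{3,4})$ dependence.

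For the reverse direction, given $L(C), L(C')\in U_8^1$ whose four invariants agree, I would construct explicit adapted base changes carrying each of them to a common normal form $L(\lambda_1,0,1,\lambda_2,\lambda_3,\lambda_4,0,-32\lambda_2,0,1)$; composing one with the inverse of the other yields the sought isomorphism. Following the hint, the key choices are $A_1=-A_0c_{0,1}/(2c_{1,1})$ (which kills $c'_{0,1}$), $B_1=-A_0(-2c_{1,1}+c_{0,1}c_{3,4})/(2c_{3,4}c_{1,1})$ (which normalizes $c'_{3,4}=1$), and the displayed value of $B_3$ (which tunes $c'_{1,4}$ to the prescribed $\lambda_4$). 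The remaining parameters $A_0, A_2, A_3, A_4, B_2, B_4, B_5$ would then be used to set $c'_{1,1}=1$, $c'_{1,5}=0$, and $c'_{2,4}=0$; none of these secondary normalizations disturbs the invariants already matched. Part (2) is then immediate: substituting the canonical tuple into the four invariants recovers $(\lambda_1,\lambda_2,\lambda_3,\lambda_4)$.

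The main obstacle is algebraic bulk rather than conceptual difficulty. The transformation rule for $c'_{1,5}$ alone spans several dozen monomial terms, and the fourth invariant, quintic in $c_{2,4}, c_{1,4}, c_{1,3}, c_{1,2}, c_{0,1}$ and further entangled by the $(-2c_{1,1}+c_{0,1}c_{3,4})^5$ prefactor, is the most delicate to verify. I expect to control this by (a) eliminating $c_{2,3}$ using $c_{2,3}=-2c_{1,2}$ from the start, (b) grouping the transformed expressions by total degree in the nilpotent shifts $(A_2, A_3, A_4, B_2, B_4, B_5)$, and (c) corroborating the final rational identities with a symbolic computer-algebra check, exactly as is implicit in the presentation of Propositions 2.1--2.2.
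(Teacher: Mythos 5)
Your proposal follows essentially the same route as the paper: the ``if'' direction by substituting the transformation laws of Theorem 2.2 into the four rational expressions and checking cancellation, and the ``only if'' direction by normalizing both algebras to a common representative via exactly the choices $A_1=-A_0c_{0,1}/(2c_{1,1})$ and $B_1=-A_0(-2c_{1,1}+c_{0,1}c_{3,4})/(2c_{3,4}c_{1,1})$ (which indeed give $c'_{0,1}=0$ and $c'_{3,4}=1$), with $B_3$ and the remaining free parameters used to kill $c'_{2,4}$ and $c'_{1,5}$. Your preliminary elimination $c_{2,3}=-2c_{1,2}$, forced on $M_8^1$ by $c_{3,4}(c_{2,3}+2c_{1,2})=0$, is a sensible bookkeeping device that the paper leaves implicit but does not change the argument.
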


\begin{pr}\emph{}
\begin{enumerate}
\item Two algebras $L(C)$ and $L(C')$ from $U_{8}^{2}$
are isomorphic, if and only if\\
 $$ \ds {\frac { \left( -c'_{{0,1}}+c'_{{0,0}}c'_{{3,4}} \right)
^{6}{c'}_{{1,2}}^ {5}}{{c'}_{{3,4}}^{5}{c'}_{{0,1}}^{7}}}={\frac {
\left( -c_{{0,1}}+c_{{0,0}}c_{{3,4}} \right) ^{6}c_{{1,2}}^
{5}}{c_{{3,4}}^{5}c_{{0,1}}^{7}}}
 ,$$ $$ {\frac { \left( -c'_{{0,1}}+c'_{{0,0}}c'_{{3,4}} \right) ^{7} \left( -c'_{
{1,3}}c'_{{0,1}}+2\,{c'}_{{1,2}}^{2}c'_{{0,0}} \right)
^{5}}{{c'}_{{3,4}}^ {5}{c'}_{{0,1}}^{14}}}={\frac { \left(
-c_{{0,1}}+c_{{0,0}}c_{{3,4}} \right) ^{7} \left( -c_{
{1,3}}c_{{0,1}}+2\,c_{{1,2}}^{2}c_{{0,0}} \right) ^{5}}{c_{{3,4}}^
{5}c_{{0,1}}^{14}}} ,$$ $ \ds \frac { \left(
-c'_{{0,1}}+c'_{{0,0}}c'_{{3,4}} \right)
^{8}}{{c'}_{{0,1}}^{21}{c'}_{{3,4}}^{10}} ( 27\,
c'_{{3,4}}{c'}_{{1,2}}^{3}{c'}_{{0,0}}^{2}-9\,c'_{{3,4}}c'_{{1,2}}c'_{{0,0}}
c'_{{1,3}}c'_{{0,1}}+c'_{{3,4}}c'_{{1,4}}{c'}_{{0,1}}^{2}-36\,{c'}_{{1,2}}^{
3}c'_{{0,0}}c'_{{0,1}}-\\
2\,c'_{{1,2}}c'_{{2,4}}{c'}_{{0,1}}^{2} ) ^{5}=\frac { \left(
-c_{{0,1}}+c_{{0,0}}c_{{3,4}} \right)
^{8}}{c_{{0,1}}^{21}c_{{3,4}}^{10}} ( 27\,
c_{{3,4}}c_{{1,2}}^{3}c_{{0,0}}^{2}-9\,c_{{3,4}}c_{{1,2}}c_{{0,0}}
c_{{1,3}}c_{{0,1}}+c_{{3,4}}c_{{1,4}}c_{{0,1}}^{2}-\\
36\,c_{{1,2}}^{
3}c_{{0,0}}c_{{0,1}}-2\,c_{{1,2}}c_{{2,4}}c_{{0,1}}^{2} ) ^{5}.
 $
 \item For any $\lambda_1, \lambda_2, \lambda_3 \in \mathbb{C}, $  there exists $L(C)\in
U_{8}^{2}:$  $ \ds {\frac { \left( -c_{{0,1}}+c_{{0,0}}c_{{3,4}}
\right) ^{6}c_{{1,2}}^
{5}}{c_{{3,4}}^{5}c_{{0,1}}^{7}}}=\lambda_1,
\\ {\frac { \left(
-c_{{0,1}}+c_{{0,0}}c_{{3,4}} \right) ^{7} \left( -c_{
{1,3}}c_{{0,1}}+2\,c_{{1,2}}^{2}c_{{0,0}} \right) ^{5}}{c_{{3,4}}^
{5}c_{{0,1}}^{14}}}=\lambda_2, \ \ \frac { \left(
-c_{{0,1}}+c_{{0,0}}c_{{3,4}} \right)
^{8}}{c_{{0,1}}^{21}c_{{3,4}}^{10}} ( 27\,
c_{{3,4}}c_{{1,2}}^{3}c_{{0,0}}^{2}-\\
9\,c_{{3,4}}c_{{1,2}}c_{{0,0}}
c_{{1,3}}c_{{0,1}}+c_{{3,4}}c_{{1,4}}c_{{0,1}}^{2}-
36\,c_{{1,2}}^{
3}c_{{0,0}}c_{{0,1}}-2\,c_{{1,2}}c_{{2,4}}c_{{0,1}}^{2} )
^{5}=\lambda_3.$

Then orbits in $U_{8}^{2}$ can be parameterized as $L\left(0, 1,
0, \lambda_1, \lambda_2, \lambda_3, 0, -32\lambda_1,0,1\right), \
\lambda_1, \lambda_2, \lambda_3 \in \mathbb{C}.$
\end{enumerate}

\begin{proof}The respective value of coefficients in the base change (\ref{bc.dim08}) are: $$ A_1=-{\frac
{A_{{0}}c_{{0,0}}}{c_{{0,1}}}},\ \ B_1=-{\frac {A_{{0}} \left(
-c_{{0,1}}+c_{{0,0}}c_{{3,4}} \right) }{c_{{3, 4}}c_{{0,1}}}},$$
and \\ $ \ds B_3=-\frac{1}{2\,A_{{0}}{c_{{0,1}}}^{3}
{c_{{3,4}}}^{2} \left( -c_{{0,1}}+c_{{0,0}}c_{{3,4}} \right)
}(-{A_{{0}}}^{2}c_{{2,4}}{c_{{0,1}}}^{4}+2\,{A_{{0}}}^{2}c
_{{2,4}}c_{{0,0}}c_{{3,4}}{c_{{0,1}}}^{3}-{A_{{0}}}^{2}c_{{2,4}}{c_{{0
,0}}}^{2}{c_{{3,4}}}^{2}{c_{{0,1}}}^{2}-18\,{A_{{0}}}^{
2}{c_{{1,2}}}^{2}c_{{0,0}}{c_{{0,1}}}^{3}+46\,{A_{{0}}}^{2}{c_{{1,2}}}^{2}{c_{{0,0}}}
^{2}c_{{3,4}}{c_{{0,1}}}^{2}-38\,{A_{{0}}}^{2}{c_{{1,2}}}^{2}{c_{{0,0}
}}^{3}{c_{{3,4}}}^{2}c_{{0,1}}-
{A_{{0}}}^{2}c_{{0,0}}c_{{1,3}}c_{{3,4} }{c_{{0,1}}}^{3}+\\
2\,{A_{{0}}}^{2}{c_{{0,0}}}^{2}c_{{1,3}}{c_{{3,4}}}^{
2}{c_{{0,1}}}^{2}-{A_{{0}}}^{2}{c_{{0,0}}}^{3}c_{{1,3}}{c_{{3,4}}}^{3}
c_{{0,1}}+{B_{{2}}}^{2}{c_{{3,4}}}^{3}{c_{{0,1}}}^{4}+10\,{A_{{0}}}^{2}{c_{{1,2}}}^
{2}{c_{{0,0}}}^{4}{c_{{3,4}}}^{3})
 .$
\end{proof}
\end{pr}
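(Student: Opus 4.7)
The plan is to mimic the proofs of Propositions~2.1 and~2.2, using Theorem~2.2 (the isomorphism criterion for $TLeib_8$) as the algorithmic engine. For the ``if'' part, I would substitute the explicit rational expressions of $c'_{0,0},\ldots,c'_{3,4}$ in terms of $c_{i,j}$ and $A_0,A_1,A_2,A_3,A_4,B_1,B_2,B_3,B_4,B_5$ directly into each of the three candidate invariants. These invariants are built so that every occurrence of the ``gauge'' factors $A_0$, $B_1$, and $(A_0+A_1c_{3,4})$ cancels completely, reproducing the same rational expression in the unprimed $c_{i,j}$; this is a mechanical polynomial identity to verify, exactly analogous to the calculation carried out explicitly in Proposition~2.1.

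For the ``only if'' part I would specialize the adapted base change~(\ref{bc.dim08}) to $U_8^2$, where $c_{1,1}=0$, $c_{0,1}\neq 0$, $c_{3,4}\neq 0$. First take $A_1=-A_0 c_{0,0}/c_{0,1}$, so that the numerator $A_0^2c_{0,0}+A_0A_1c_{0,1}+A_1^2c_{1,1}$ in the formula for $c'_{0,0}$ vanishes (using $c_{1,1}=0$), forcing $c'_{0,0}=0$. Next choose $B_1=-A_0(-c_{0,1}+c_{0,0}c_{3,4})/(c_{3,4}c_{0,1})$, which by Theorem~2.2 simultaneously normalizes $c'_{3,4}=1$ and (combined with the prior choice of $A_1$) gives $c'_{0,1}=1$. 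Finally, the $B_3$ announced in the excerpt is tailored to enforce $c'_{1,5}=0$; the remaining parameters $A_0,A_2,A_3,A_4,B_2,B_4,B_5$ stay free and can be used to eliminate $c'_{2,4}$ as well. Reading the resulting values of $c'_{1,2}, c'_{1,3}, c'_{1,4}$ and comparing with the three invariant expressions then identifies them as $\lambda_1,\lambda_2,\lambda_3$, and unwinding $c'_{2,3}$ through the transformation formula yields $-32\lambda_1$, producing the canonical form $L(0,1,0,\lambda_1,\lambda_2,\lambda_3,0,-32\lambda_1,0,1)$.

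Part~(2) is then immediate: the canonical representative displays $\lambda_1,\lambda_2,\lambda_3$ as free parameters of $\mathbb{C}$, and plugging its structure constants into the three invariant formulas reproduces $\lambda_1,\lambda_2,\lambda_3$, so every triple is realized.

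The principal obstacle is the algebraic bookkeeping in the ``only if'' direction: after substituting the chosen $A_1$, $B_1$ and the long expression for $B_3$, the formula for $c'_{1,5}$ from Theorem~2.2 collapses via a delicate cancellation that must be verified term by term, and one then has to check that the remaining transformation rules for $c'_{1,4}$ and $c'_{2,4}$ admit enough freedom in $B_2,B_4,B_5$ to reach the announced normal form. Once these simplifications are in hand, each individual equality reduces to elementary manipulation of rational functions in the $c_{i,j}$, and the identification with the stated invariants is then forced by the structure of the isomorphism criterion.
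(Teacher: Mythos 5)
Your proposal takes essentially the same route as the paper: the ``if'' direction by substituting the transformation formulas of Theorem 2.2 into the three invariants and checking that the gauge factors cancel, and the ``only if'' direction by specializing the adapted base change with exactly the values $A_1=-A_0c_{0,0}/c_{0,1}$, $B_1=-A_0(-c_{0,1}+c_{0,0}c_{3,4})/(c_{3,4}c_{0,1})$ and a tailored $B_3$, which are precisely the coefficients the paper records. The only imprecision is that $c'_{0,1}=1$ is not achieved by the choice of $B_1$ (which does not appear in the formula for $c'_{0,1}$) but by additionally fixing $A_0$ so that $A_0^4(A_0+A_1c_{3,4})=c_{0,1}$, a normalization the paper also leaves implicit.
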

\newpage

\begin{pr}\emph{}
\begin{enumerate}
\item Two algebras $L(C)$ and $L(C')$ from $U_{8}^{3}$
are isomorphic, if and only if\\
 $$ \frac {c'_{{0,0}}{c'}_{{1,2}}^{9}{c'}_{{3,4}}^{6}}{ \left( -2\,{c'}_{{1,2}
}^{2}+c'_{{1,3}}c'_{{3,4}} \right) ^{7}}=\frac
{c_{{0,0}}c_{{1,2}}^{9}c_{{3,4}}^{6}}{ \left( -2\,c_{{1,2}
}^{2}+c_{{1,3}}c_{{3,4}} \right) ^{7}}
 ,$$ $$\frac {c'_{{3,4}}}{ \left( -2\,{b}_{{1,2}}^{2}+c'_{{1,3}}c'_{{3,4}} \right) ^{2}
} \left( 9\,c'_{{3,4}}{c'}_{{1,3}}^{2}+4\,c'_{{3,4}}c'_{{1
,4}}c'_{{1,2}}-72\,{c'}_{{1,2}}^{2}c'_{{1,3}}-8\,{c'}_{{1,2}}^{2}c'_{{2,4}}
 \right)=$$ $$ \frac {c_{{3,4}}}{ \left( -2\,c_{{1,2}}^{2}+c_{{1,3}}c_{{3,4}} \right) ^{2}
} \\ \left( 9\,c_{{3,4}}c_{{1,3}}^{2}+4\,c_{{3,4}}c_{{1
,4}}c_{{1,2}}-72\,c_{{1,2}}^{2}c_{{1,3}}-8\,c_{{1,2}}^{2}c_{{2,4}}
 \right)  .
 $$
 \item For any $\lambda_1, \lambda_2\in \mathbb{C} , $  there exists $L(C)\in
U_{8}^{3}:$  $ \ds \frac {c_{{0,0}}c_{{1,2}}^{9}c_{{3,4}}^{6}}{
\left( -2\,c_{{1,2} }^{2}+c_{{1,3}}c_{{3,4}} \right)
^{7}}=\lambda_1,$ $$
 \frac {c_{{3,4}}}{ \left( -2\,c_{{1,2}}^{2}+c_{{1,3}}c_{{3,4}} \right) ^{2}
}  \left( 9\,c_{{3,4}}c_{{1,3}}^{2}+4\,c_{{3,4}}c_{{1
,4}}c_{{1,2}}-72\,c_{{1,2}}^{2}c_{{1,3}}-8\,c_{{1,2}}^{2}c_{{2,4}}
 \right)  =\lambda_2.$$

Then orbits in $U_{8}^{3}$ can be parameterized as
$L\left(\lambda_1,0,0,1,0, \lambda_2,0,-2,0,1\right), \ \lambda_1,
\lambda_2\in \mathbb{C}.$
\end{enumerate}

\begin{proof}Here, $$ A_0={\frac {2\,{c_{{1,2}}}^{2}
-c_{{1,3}}c_{{3,4}}}{2\,c_{{3,4}}c_{{1,2}} }},\
 A_1=-{\frac { \left( 2\,{c_{{1,2}}}^{2}-c_{{1,3}}c_{{3,4}} \right) c_
{{1,3}}}{4\,c_{{3,4}}{c_{{1,2}}}^{3}}} ,\ B_1={\frac {
\left( 2\,{c_{{1,2}}}^{2}-c_{{1,3}}c_{{3,4}} \right) ^{2
}}{4\,{c_{{3,4}}}^{2}{c_{{1,2}}}^{3}}} ,$$ and \\ $ \ds
B_3=\frac{1}{8\,{c_{{3,4}}}^{3}{c_{{1,2}}}^{5} \left(
2\,{c_{{1,2}}}^{2} -c_{{1,3}}c_{{3,4}} \right) ^{2}}
(-16\,c_{{2,4}}{c_{{1,2}}}^{10}+32\,c_{{2,4}}{c_{{1,2}}}^{
8}c_{{1,3}}c_{{3,4}}-24\,c_{{2,4}}{c_{{1,2}}}^{6}{c_{{1,3}}}^{2}{c_{{3
,4}}}^{2}+8\,c_{{2,4}}{c_{{1,3}}}^{3}{c_{{3,4}}}^{3}{c_{{1,2}}}^{4}-c_
{{2,4}}{c_{{1,3}}}^{4}{c_{{3,4}}}^{4}{c_{{1,2}}}^{2}-144\,c_{{1,3}}{c_
{{1,2}}}^{10}+320\,{c_{{1,3}}}^{2}{c_{{1,2}}}^{8}c_{{3,4}}-280\,{c_{{1
,3}}}^{3}{c_{{1,2}}}^{6}{c_{{3,4}}}^{2}+120\,{c_{{1,3}}}^{4}{c_{{3,4}}
}^{3}{c_{{1,2}}}^{4}-25\,{c_{{1,3}}}^{5}{c_{{3,4}}}^{4}{c_{{1,2}}}^{2}
+2\,{c_{{1,3}}}^{6}{c_{{3,4}}}^{5}+16\,{B_{{2}}}^{2}{c_{{3,4}}}^{5}{c_
{{1,2}}}^{8}) .$
\end{proof}
\end{pr}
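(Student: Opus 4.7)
The plan is to follow the same two-step template used in Propositions 2.14 and 2.15: first establish the invariance of the two proposed rational functions under the adapted base change from Theorem 2.2, then exhibit an explicit change of basis that reduces any $L(C)\in U_8^3$ to the claimed normal form. Before starting, note that the general constraint $c_{3,4}(c_{2,3}+2c_{1,2})=0$ for $TLeib_8$ together with $c_{3,4}\neq 0$ in $M_8^1$ forces $c_{2,3}=-2c_{1,2}$ throughout $U_8^3$; this explains the ``$-2$'' in the eighth slot of the normal form and will simplify several of the transformation formulas of Theorem 2.2. Since $c_{1,1}=c_{0,1}=0$ on $U_8^3$, the formulas $c'_{0,1}=0$, $c'_{1,1}=0$ are automatic, so $U_8^3$ is invariant under the adapted action provided $c'_{1,2}\neq 0$, which is clear from $c'_{1,2}=B_1c_{1,2}/A_0^2$.

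For the ``if'' part I would substitute the formulas for $c'_{0,0},c'_{1,2},c'_{1,3},c'_{1,4},c'_{2,3},c'_{2,4},c'_{3,4}$ from Theorem 2.2 into the two proposed invariants
\[
I_1=\frac{c_{0,0}\,c_{1,2}^{9}\,c_{3,4}^{6}}{(-2c_{1,2}^{2}+c_{1,3}c_{3,4})^{7}},\qquad
I_2=\frac{c_{3,4}\bigl(9c_{3,4}c_{1,3}^{2}+4c_{3,4}c_{1,4}c_{1,2}-72c_{1,2}^{2}c_{1,3}-8c_{1,2}^{2}c_{2,4}\bigr)}{(-2c_{1,2}^{2}+c_{1,3}c_{3,4})^{2}},
\]
and verify that the $A_i,B_j$ cancel completely. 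The key auxiliary identity one needs is
\[
-2(c'_{1,2})^{2}+c'_{1,3}c'_{3,4}=\frac{B_1^{2}\bigl(-2c_{1,2}^{2}+c_{1,3}c_{3,4}\bigr)}{A_0^{3}(A_0+A_1c_{3,4})}\cdot(\text{correction from }A_1\text{ terms}),
\]
which is why this particular denominator appears; showing this identity is the only non-mechanical check in the ``if'' direction.

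For the ``only if'' direction I would use the base change (10) with $B_2,B_4,B_5,A_2,A_3,A_4$ left free and the specific choices
\[
A_0=\frac{2c_{1,2}^{2}-c_{1,3}c_{3,4}}{2c_{3,4}c_{1,2}},\quad
A_1=-\frac{(2c_{1,2}^{2}-c_{1,3}c_{3,4})\,c_{1,3}}{4c_{3,4}c_{1,2}^{3}},\quad
B_1=\frac{(2c_{1,2}^{2}-c_{1,3}c_{3,4})^{2}}{4c_{3,4}^{2}c_{1,2}^{3}}
\]
(all nonzero because $c_{1,2}\neq 0$, $c_{3,4}\neq 0$, and on $U_8^3$ a generic $c_{1,3}$ still makes $-2c_{1,2}^{2}+c_{1,3}c_{3,4}\neq 0$; the degenerate stratum where this denominator vanishes is moved into $U_8^4$ or later). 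These values are engineered so that $c'_{3,4}=1$, $c'_{1,2}=1$, and $c'_{1,3}=0$; then the displayed $B_3$ is the unique value that forces $c'_{1,4}=I_2$, while any residual freedom in $B_2,B_4,B_5,A_2,A_3,A_4$ is absorbed to kill $c'_{1,5}$ and to make $c'_{2,4}=0$. The remaining slot $c'_{0,0}$ then automatically equals $I_1$ by the ``if'' part.

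The main obstacle will be the algebraic bookkeeping in verifying that the given $B_3$ produces exactly the stated $I_2$ after all the cross-terms in the $c_{1,4}$ transformation formula are combined; the $c_{1,5}'$ formula is enormous, but it is linear in $B_5$ with nonzero coefficient $2A_0^4B_1^2c_{3,4}$, so one solves for $B_5$ uniquely and independence from the other normalizations falls out. Part (2) is immediate: given $\lambda_1,\lambda_2$, the algebra $L(\lambda_1,0,0,1,0,\lambda_2,0,-2,0,1)$ lies in $U_8^3$ and realizes the prescribed invariants by direct substitution into $I_1,I_2$.
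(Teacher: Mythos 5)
Your proposal is correct and follows essentially the same route as the paper: the choices of $A_0$, $A_1$, $B_1$ (and the role of $B_3$, which is fixed by the normalization $c'_{2,4}=0$, equivalently $c'_{1,4}=I_2$) are exactly those in the paper's proof, and the ``if'' direction is the same substitution into the transformation formulas of Theorem 2.2. The one inaccurate aside is your claim that the degenerate stratum $2c_{1,2}^{2}=c_{1,3}c_{3,4}$ (where $A_0$ and both invariants become singular) ``is moved into $U_8^4$ or later'' --- $U_8^4$ requires $c_{1,2}=0$, so it is not --- but this case is equally unaddressed in the paper's own proof, so it is not a defect of your approach relative to the paper's.
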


\begin{pr}\emph{}
\begin{enumerate}
\item Two algebras $L(C)$ and $L(C')$ from $U_{8}^{4}$ are
isomorphic, if and only if\ \
 $ \ds {\frac {{c'}_{{1,3}}^{11}c'_{{0,0}}}{{c'}_{{1,4}}^{9}c'_{{3,4}}}}={\frac {c_{{1,3}}^{11}c_{{0,0}}}{c_{{1,4}}^{9}c_{{3,4}}}}
  .
 $
 \item For any $\lambda\in \mathbb{C}, $  there exists $L(C)\in
U_{8}^{4}:$  $ \ds{\frac
{c_{{1,3}}^{11}c_{{0,0}}}{c_{{1,4}}^{9}c_{{3,4}}}}=\lambda.$

Then orbits in $U_{8}^{4}$ can be parameterized as
$L\left(\lambda,0,0,0,1,1,0,0,0,1\right), \ \lambda\in
\mathbb{C}.$
\end{enumerate}

\begin{proof}For this case, we put in the base change (\ref{bc.dim08}) the following coefficients, $$ A_0={\frac {c_{{1,4}}}{c_{{1,3}}}},\ B_1={\frac
{c_{{1,4}}+A_{{1}}c_{{3,4}}c_{{1,3}}}{c_{{3,4}}c_{{1,3}}}},$$ and\\ $
\ds B_3=\frac{1}{2\,c_{{1,3}}{c_{{1,4}}}^{2}{c_{{3,4}}}^{2} \left(
c_{{1,4}}+A_{ {1}}c_{{3,4}}c_{{1,3}} \right)
}(-{c_{{1,4}}}^{4}c_{{2,4}}-2\,A_{{
1}}{c_{{1,4}}}^{3}c_{{2,4}}c_{{3,4}}c_{{1,3}}-{A_{{1}}}^{2}{c_{{1,4}}}^{2}c_{{2,4}}{c_{{3,4}}}
^{2}{c_{{1,3}}}^{2}+18\,A_{{1}}{c_{{1,2}}}^{2}{c_{{1,4}}}^{3}c_{{1,3}}
+46\,{A_{{1}}}^{2}{c_{{1,2}}}^{2}{c_{{1,4}}}^{2}c_{{3,4}}{c_{{1,3}}}^{
2}+38\,{A_{{1}}}^{3}{c_{{1,2}}}^{2}c_{{1,4}}{c_{{3,4}}}^{2}{c_{{1,3}}}
^{3}+A_{{1}}{c_{{1,4}}}^{3}c_{{3,4}}{c_{{1,3}}}^{2}+2\,{A_{{1}}}^{2}{c
_{{1,4}}}^{2}{c_{{3,4}}}^{2}{c_{{1,3}}}^{3}+{A_{{1}}}^{3}c_{{1,4}}{c_{
{3,4}}}^{3}{c_{{1,3}}}^{4}+{B_{{2}}}^{2}{c_{{1,4}}}^{2}{c_{{3,4}}}^{3}
{c_{{1,3}}}^{2}+10\,{A_{{1}}}^{4}{c_{{1,2}}}^{2}{c_{{3,4}}}^{3}{c_{{1,
3}}}^{4}).$
\end{proof}
\end{pr}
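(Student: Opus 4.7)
The plan is to follow the pattern of Propositions 2.14 and 2.15, specializing Theorem 2.2 to the hypotheses defining $U_8^4$. Since $U_8^4 \subset M_8^1$ we have $c_{3,4}\neq 0$; combined with $c_{1,2}=0$, the admissibility relation $c_{3,4}(c_{2,3}+2c_{1,2})=0$ forces $c_{2,3}=0$. Under the standing hypotheses $c_{1,1}=c_{0,1}=c_{1,2}=c_{2,3}=0$, the lengthy formulas of Theorem 2.2 collapse, for the four structure constants entering the invariant, to
$$c_{0,0}' = \frac{c_{0,0}}{A_0^{3}B_1(A_0+A_1 c_{3,4})}, \quad c_{1,3}' = \frac{B_1 c_{1,3}}{A_0^{3}}, \quad c_{1,4}' = \frac{B_1 c_{1,4}}{A_0^{4}}, \quad c_{3,4}' = \frac{B_1 c_{3,4}}{A_0+A_1 c_{3,4}}.$$

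For the ``if'' direction, a direct substitution shows that $(c_{1,3}')^{11}c_{0,0}'$ and $(c_{1,4}')^{9}c_{3,4}'$ both acquire the common factor $B_1^{10}/(A_0^{36}(A_0+A_1 c_{3,4}))$, which cancels in the ratio:
$$\frac{{c_{1,3}'}^{11} c_{0,0}'}{{c_{1,4}'}^{9} c_{3,4}'}=\frac{c_{1,3}^{11} c_{0,0}}{c_{1,4}^{9} c_{3,4}}.$$
This confirms that $\lambda=c_{1,3}^{11}c_{0,0}/(c_{1,4}^{9}c_{3,4})$ is a well-defined orbit function on $U_8^4$.

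For the ``only if'' direction I would exhibit an explicit adapted transformation carrying $L(C)$ to the canonical form $L(\lambda,0,0,0,1,1,0,0,0,1)$. The choice $A_0=c_{1,4}/c_{1,3}$ equalizes $c_{1,3}'$ and $c_{1,4}'$, while $B_1=(c_{1,4}+A_1 c_{3,4}c_{1,3})/(c_{3,4}c_{1,3})$ normalizes $c_{3,4}'=1$ for any $A_1$; a specific admissible $A_1$ is then pinned down by demanding $c_{1,3}'=c_{1,4}'=1$, after which $c_{0,0}'$ automatically equals $\lambda$. The equalities $c_{0,1}'=c_{1,1}'=c_{1,2}'=c_{2,3}'=0$ come for free from the vanishing of the corresponding inputs. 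The final step is to arrange $c_{1,5}'=c_{2,4}'=0$; from Theorem 2.2 one sees that $c_{2,4}'$ depends linearly on $B_3$ with coefficient proportional to $c_{3,4}/(A_0^{2}(A_0+A_1 c_{3,4}))$, and $c_{1,5}'$ depends linearly on $B_5$ with coefficient proportional to $c_{3,4}$, both nonzero under the $U_8^4$ hypotheses, so these vanishings can be solved successively by the $B_3$ displayed in the statement and a suitable $B_5$, with the remaining free parameters $A_2,A_3,A_4,B_2,B_4$ absorbing auxiliary terms. The analogous normalization applied to $L(C')$ yields $L(\lambda',0,0,0,1,1,0,0,0,1)$, and the hypothesis $\lambda=\lambda'$ completes the isomorphism; part (2) is then immediate since $L(\lambda,0,0,0,1,1,0,0,0,1)$ plainly lies in $U_8^4$ and evaluates the invariant to $\lambda$. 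The main technical obstacle is the bookkeeping required to kill $c_{1,5}'$ and $c_{2,4}'$ without disturbing the earlier normalizations, but the observed linear dependence on $B_3,B_5$ reduces this to a routine triangular solve.
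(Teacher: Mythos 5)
Your proposal is correct and follows essentially the same route as the paper: specialize the isomorphism criterion of Theorem 2.2 under the $U_8^4$ hypotheses (noting $c_{2,3}=0$ from the admissibility relation), check invariance of the ratio directly, and then normalize to the canonical form via the adapted base change with $A_0=c_{1,4}/c_{1,3}$ and $B_1=(c_{1,4}+A_1c_{3,4}c_{1,3})/(c_{3,4}c_{1,3})$, solving triangularly for $B_3$ and $B_5$ to kill $c_{2,4}'$ and $c_{1,5}'$. The paper simply records the explicit coefficient values; your existence argument for $B_3,B_5$ (nonzero linear coefficients $\propto c_{3,4}$) is the justification the paper leaves implicit.
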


\begin{pr}\emph{}
\begin{enumerate}
\item Two algebras $L(C)$ and $L(C')$ from $U_{8}^{11}$
are isomorphic, if and only if\\
 $$ {\frac { \left( 4\,c'_{{0,0}}c'_{{1,1}}-{c'}_{{0,1}}^{2} \right) ^{2}{c'}_{
{2,3}}^{5}}{{c'}_{{1,1}}^{5}}}= {\frac { \left(
4\,c_{{0,0}}c_{{1,1}}-c_{{0,1}}^{2} \right) ^{2}c_{
{2,3}}^{5}}{c_{{1,1}}^{5}}},$$ $${\frac
{c'_{{1,2}}}{c'_{{2,3}}}}={\frac {c_{{1,2}}}{c_{{2,3}}}},\ \
{\frac { \left( -c'_{{1,3}}c'_{{1,1}}+{c'}_{{1,2}}^{2}c'_{{0,1}}
\right) ^ {4}}{{c'}_{{2,3}}^{3}{c'}_{{1,1}}^{5}}}={\frac { \left(
-c_{{1,3}}c_{{1,1}}+c_{{1,2}}^{2}c_{{0,1}} \right) ^
{4}}{c_{{2,3}}^{3}c_{{1,1}}^{5}}},$$ $$ {\frac { \left(
2\,c'_{{2,4}}c'_{{1,1}}-3\,c'_{{2,3}}c'_{{0,1}}c'_{{1,2}}+3
\,{c'}_{{2,3}}^{2}c'_{{0,1}} \right)
^{4}}{{c'}_{{2,3}}^{3}{c'}_{{1,1}}^{5 }}}={\frac { \left(
2\,c_{{2,4}}c_{{1,1}}-3\,c_{{2,3}}c_{{0,1}}c_{{1,2}}+3
\,c_{{2,3}}^{2}c_{{0,1}} \right) ^{4}}{c_{{2,3}}^{3}c_{{1,1}}^{5
}}}
 .
 $$
 \item For any $\lambda_1, \lambda_2, \lambda_3, \lambda_4 \in \mathbb{C}, $  there exists $L(C)\in
U_{8}^{11}:$  $$ {\frac { \left(
4\,c_{{0,0}}c_{{1,1}}-c_{{0,1}}^{2} \right) ^{2}c_{
{2,3}}^{5}}{c_{{1,1}}^{5}}}=\lambda_1, \ \ {\frac
{c_{{1,2}}}{c_{{2,3}}}}=\lambda_2, $$ $$  {\frac { \left(
-c_{{1,3}}c_{{1,1}}+c_{{1,2}}^{2}c_{{0,1}} \right) ^
{4}}{c_{{2,3}}^{3}c_{{1,1}}^{5}}}=\lambda_3,\ {\frac { \left(
2\,c_{{2,4}}c_{{1,1}}-3\,c_{{2,3}}c_{{0,1}}c_{{1,2}}+3
\,c_{{2,3}}^{2}c_{{0,1}} \right) ^{4}}{c_{{2,3}}^{3}c_{{1,1}}^{5
}}}=\lambda_4.$$

Then orbits in $U_{8}^{11}$ can be parameterized as
$L\left(\lambda_1, 0, 1, \lambda_2, \lambda_3,0,0,1, \lambda_4,
0\right),\ \ \lambda_1, \lambda_2, \lambda_3, \lambda_4 \in
\mathbb{C}.$
\end{enumerate}

\begin{proof}For this case we put, $\ds A_1={\frac
{-A_{{0}}c_{{0,1}}}{2\,c_{{1,1}}}},$
$\ds B_1={\frac {{A_{{0}}}^{2}}{c_{{2,3}}}},$ $\ds
B_3=\frac{1}{8\,{A_{{0}}}^{2}{c_{{2,3}}}^{2}{c_{{1,1}}}^ {2}}
(4\,{B_{{2}}}^{2}{c_{{2,3}}}^{3}{c_{{1,1}}}^{2}+4\,{A_{{0}
}}^{4}c_{{1,4}}{c_{{1,1}}}^{2}+2\,{A_{{0}}}^{4}c_{{0,1}}c_{{1,3}}c_{{2
,3}}c_{{1,1}}-10\,{A_{{0}}}^{4}c_{{1,2}}c_{{0,1}}c_{{1,3}}c_{{1,1}}+5
\,{A_{{0}}}^{4}{c_{{1,2}}}^{3}{c_{{0,1}}}^{2}-{A_{{0}}}^{4}{c_{{1,2}}}
^{2}{c_{{0,1}}}^{2}c_{{2,3}}) $ and $ \ds
B_4=\frac{1}{48\,{A_{{0}}}^{4}{c_{{2,3}}}^{3}{c_{{1,1}} }^{3}}
(8\,{B_{{2}}}^{3}{c_{{2,3}}}^{5}{c_{{1,1}}}^{3}-20\,{A_{{0
}}}^{6}c_{{2,4}}{c_{{1,2}}}^{3}{c_{{0,1}}}^{2}c_{{1,1}}+16\,{A_{{0}}}^
{6}c_{{1,5}}c_{{2,3}}{c_{{1,1}}}^{3}-16\,{A_{{0}}}^{6}c_{{2,4}}c_{{1,4
}}{c_{{1,1}}}^{3}-24\,{A_{{0}}}^{6}c_{{0,1}}{c_{{1,3}}}^{2}c_{{2,3}}{c
_{{1,1}}}^{2}+17\,{A_{{0}}}^{6}{c_{{0,1}}}^{3}{c_{{1,2}}}^{4}c_{{2,3}}
+12\,{B_{{2}}}^{2}{c_{{2,3}}}^{4}{A_{{0}}}^{2}c_{{0,1}}c_{{1,2}}{c_{{1
,1}}}^{2}-48\,A_{{3}}{A_{{0}}}^{5}c_{{1,2}}{c_{{2,3}}}^{2}{c_{{1,1}}}^
{3}+48\,B_{{2}}A_{{2}}{A_{{0}}}^{3}c_{{1,2}}{c_{{2,3}}}^{3}{c_{{1,1}}}
^{3}-21\,{A_{{0}}}^{6}{c_{{0,1}}}^{3}{c_{{1,2}}}^{3}{c_{{2,3}}}^{2}-12
\,{A_{{0}}}^{6}c_{{1,2}}c_{{0,1}}c_{{1,4}}c_{{2,3}}{c_{{1,1}}}^{2}-6\,
{A_{{0}}}^{6}{c_{{1,2}}}^{2}{c_{{0,1}}}^{2}c_{{1,3}}c_{{2,3}}c_{{1,1}}
+30\,{A_{{0}}}^{6}c_{{1,2}}{c_{{0,1}}}^{2}c_{{1,3}}{c_{{2,3}}}^{2}c_{{
1,1}}+40\,{A_{{0}}}^{6}c_{{2,4}}c_{{1,2}}c_{{0,1}}c_{{1,3}}{c_{{1,1}}}
^{2}+30\,B_{{2}}{A_{{0}}}^{4}{c_{{1,2}}}^{3}{c_{{0,1}}}^{2}{c_{{2,3}}}
^{2}c_{{1,1}}+24\,B_{{2}}{A_{{0}}}^{4}c_{{1,4}}{c_{{2,3}}}^{2}{c_{{1,1
}}}^{3}+12\,B_{{2}}{A_{{0}}}^{4}c_{{0,1}}c_{{1,3}}{c_{{2,3}}}^{3}{c_{{
1,1}}}^{2}-60\,B_{{2}}{A_{{0}}}^{4}c_{{1,2}}c_{{0,1}}c_{{1,3}}{c_{{2,3
}}}^{2}{c_{{1,1}}}^{2}-6\,B_{{2}}{A_{{0}}}^{4}{c_{{1,2}}}^{2}{c_{{0,1}
}}^{2}{c_{{2,3}}}^{3}c_{{1,1}}) .$
\end{proof}

\end{pr}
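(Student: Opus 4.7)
The plan is to mimic the argument used for Propositions 2.1--2.2, specialized to the odd-dimensional setting with $c_{3,4}=0$. Throughout, I will rely on the isomorphism criterion (Theorem 2.2) and the explicit form of the adapted base change (\ref{bc.dim08}).

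For the ``if'' direction, I would simply substitute the transformation formulas of Theorem 2.2 (with $c_{3,4}=0=c_{3,4}'$, which is forced here since $U_8^{11}\subset M_8^2$) into each of the four proposed invariants and verify the equalities. Because $c_{3,4}=0$, the formulas for $c'_{0,0}, c'_{0,1}, c'_{1,1}, c'_{2,3}, c'_{2,4}$ collapse to simple rational monomials in $A_0, A_1, B_1$, and the combinations $4c_{0,0}c_{1,1}-c_{0,1}^{2}$ and $c_{1,3}c_{1,1}-c_{0,1}c_{1,2}^{2}$ transform cleanly as covariants (compare the analogous computation for $U_7^{1}$ in Proposition 2.1), as does the quartic combination involving $c_{2,4}$. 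The powers $5, 4, 3, 4$ in the denominators are chosen precisely to cancel the weights in $A_0$ and $B_1$.

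For the ``only if'' direction, given that the four invariants coincide for $C$ and $C'$, I would exhibit an adapted base change of the form (\ref{bc.dim08}) sending $L(C)$ to the normal form $L(\lambda_1,0,1,\lambda_2,\lambda_3,0,0,1,\lambda_4,0)$. The choices are dictated step by step: first set $A_1=-A_0 c_{0,1}/(2c_{1,1})$ to force $c'_{0,1}=0$; next choose $B_1=A_0^{2}/c_{2,3}$ to force $c'_{2,3}=1$; then $A_0$ is uniquely fixed (up to a fifth root of unity, absorbable into the remaining freedom) by demanding $c'_{1,1}=1$. The remaining parameters $B_3$ and $B_4$ are used to kill $c'_{1,4}$ and $c'_{1,5}$ respectively, yielding the displayed expressions. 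Finally, $c'_{2,4}$ becomes the fourth invariant $\lambda_4$, and $c'_{0,0},c'_{1,3}$ reduce to $\lambda_1,\lambda_3$ after the specified substitutions. Applying the same recipe with dashes to $L(C')$ lands in the same normal form, giving the isomorphism. Part (2) is immediate once part (1) is established, since the normal form $L(\lambda_1,0,1,\lambda_2,\lambda_3,0,0,1,\lambda_4,0)$ evidently lies in $U_8^{11}$ and realizes any prescribed quadruple $(\lambda_1,\lambda_2,\lambda_3,\lambda_4)\in\mathbb{C}^{4}$.

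The main obstacle is purely computational: verifying that the prescribed $B_3$ (a rational expression in the $A_i,B_2,c_{i,j}$) really does kill $c'_{1,4}$ via formula (6) of Theorem 2.2, and that the even longer expression for $B_4$ kills the $c'_{1,5}$ formula, whose numerator has over thirty terms. Because these expressions are linear in $B_3$ and $B_4$ respectively (after fixing $A_0, A_1, B_1, B_2$), each can in principle be solved explicitly, and the stated values are exactly the solutions; the rest is bookkeeping. I would therefore omit the brute-force verification, as the authors do for Propositions 2.3--2.13, and simply record the key substitutions $A_1,B_1,B_3,B_4$ leaving $A_0$ and $B_2$ free (the extra freedom reflects the stabilizer of the normal form).
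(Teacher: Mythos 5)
Your proposal follows essentially the same route as the paper: the ``if'' part by substitution into the isomorphism criterion, and the ``only if'' part by recording the normalizing choices $A_1=-A_0c_{0,1}/(2c_{1,1})$, $B_1=A_0^2/c_{2,3}$, and then solving the (linear) conditions $c'_{1,4}=0$, $c'_{1,5}=0$ for $B_3$ and $B_4$, which is exactly what the paper's proof consists of. The only quibbles are cosmetic: normalizing $c'_{1,1}=1$ fixes $A_0$ by $A_0^4=c_{1,1}/c_{2,3}$ (a fourth root, not a fifth), and your closing remark that $A_0$ is left free contradicts that earlier normalization, though this mirrors the paper's own omission and does not affect the argument.
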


\begin{pr}\emph{}
\begin{enumerate}
\item Two algebras $L(C)$ and $L(C')$ from $U_{8}^{12}$
are isomorphic, if and only if\\
 $$ {\frac
{c'_{{1,2}}}{c'_{{2,3}}}}={\frac {c_{{1,2}}}{c_{{2,3}}}},\ \
{\frac { \left(
-c'_{{1,3}}c'_{{0,1}}+2\,{c'}_{{1,2}}^{2}c'_{{0,0}}
 \right) ^{5}}{{c'}_{{2,3}}^{5}{c'}_{{0,1}}^{6}}}={\frac { \left( -c_{{1,3}}c_{{0,1}}+2\,c_{{1,2}}^{2}c_{{0,0}}
 \right) ^{5}}{c_{{2,3}}^{5}c_{{0,1}}^{6}}},$$ $$ {\frac { \left( c'_{{2,4}}c'_{{0,1}}-3\,c'_{{2,3}}c'_{{0,0}}c'_{{1,2}}+3\,{c'}_{{2,3}}^{2}c'_{{0,0}} \right)
^{5}}{{c'}_{{2,3}}^{5}{c'}_{{0,1}}^{6}}}={\frac { \left(
c_{{2,4}}c_{{0,1}}-3\,c_{{2,3}}c_{{0,0}}c_{{1,2}}+3\,c_{{2,3}}^{2}c_{{0,0}}
\right) ^{5}}{c_{{2,3}}^{5}c_{{0,1}}^{6}}}.
 $$
 \item For any $\lambda_1, \lambda_2, \lambda_3 \in \mathbb{C},  $  there exists $L(C)\in
U_{8}^{12}:$  $$  {\frac {c_{{1,2}}}{c_{{2,3}}}}=\lambda_1, \ \
{\frac { \left( -c_{{1,3}}c_{{0,1}}+2\,c_{{1,2}}^{2}c_{{0,0}}
 \right) ^{5}}{c_{{2,3}}^{5}c_{{0,1}}^{6}}}=\lambda_2, \ \ {\frac { \left(
c_{{2,4}}c_{{0,1}}-3\,c_{{2,3}}c_{{0,0}}c_{{1,2}}+3\,c_{{2,3}}^{2}c_{{0,0}}
\right) ^{5}}{c_{{2,3}}^{5}c_{{0,1}}^{6}}}=\lambda_3.$$

Then orbits in $U_{8}^{12}$ can be parameterized as $L\left(0, 1,
0, \lambda_1, \lambda_2,0,0,1, \lambda_3, 0\right), \ \lambda_1,
\lambda_2, \lambda_3 \in \mathbb{C}.$
\end{enumerate}
\begin{proof}Here, $\ds A_1=-{\frac {A_{{0}}c_{{0,0}}}{c_{{0,1}}}},$
$\ds B_1={\frac {{A_{{0}}}^{2}}{c_{{2,3}}}},$ $\ds
B_3=\frac{1}{2\,{A_{{0}}}^{2}{c_{{2,3}}}^{2}{c_{{0,1}}}^{2}}
({B_{{2}}}^{2}{c_{{2,3}}}^{3}{c_{{0,1}}}^{2}+{A_{{0}}}^{4}
c_{{1,4}}{c_{{0,1}}}^{2}+{A_{{0}}}^{4}c_{{0,0}}c_{{1,3}}c_{{2,3}}c_{{0
,1}}-5\,{A_{{0}}}^{4}c_{{1,2}}c_{{0,0}}c_{{1,3}}c_{{0,1}}+5\,{A_{{0}}}
^{4}{c_{{1,2}}}^{3}{c_{{0,0}}}^{2}-{A_{{0}}}^{4}{c_{{1,2}}}^{2}{c_{{0,0
}}}^{2}c_{{2,3}})
 $ and \\ $ \ds
B_4=\frac{1}{6\,{A_{{0}}}^{4}{c_{{2,3}}}^{3}{c_{{0,1}}}^{3}}
({B_{{2}}}^{3}{c_{{2,3}}}^{5}{c_{{0,1}}}^{3}-10\,{A_{{0}}}
^{6}c_{{2,4}}{c_{{1,2}}}^{3}{c_{{0,0}}}^{2}c_{{0,1}}+2\,{A_{{0}}}^{6}c
_{{1,5}}c_{{2,3}}{c_{{0,1}}}^{3}-2\,{A_{{0}}}^{6}c_{{2,4}}c_{{1,4}}{c_
{{0,1}}}^{3}-6\,{A_{{0}}}^{6}c_{{0,0}}{c_{{1,3}}}^{2}c_{{2,3}}{c_{{0,1
}}}^{2}+17\,{A_{{0}}}^{6}{c_{{0,0}}}^{3}{c_{{1,2}}}^{4}c_{{2,3}}+3\,{B
_{{2}}}^{2}{c_{{2,3}}}^{4}{A_{{0}}}^{2}c_{{0,0}}c_{{1,2}}{c_{{0,1}}}^{
2}-6\,A_{{3}}{A_{{0}}}^{5}c_{{1,2}}{c_{{2,3}}}^{2}{c_{{0,1}}}^{3}+\\ 6\,B
_{{2}}\,A_{{2}}{A_{{0}}}^{3}c_{{1,2}}{c_{{2,3}}}^{3}{c_{{0,1}}}^{3}-21\,
{A_{{0}}}^{6}{c_{{0,0}}}^{3}{c_{{1,2}}}^{3}{c_{{2,3}}}^{2}-3\,{A_{{0}}
}^{6}c_{{1,2}}c_{{0,0}}c_{{1,4}}c_{{2,3}}{c_{{0,1}}}^{2}-3\,{A_{{0}}}^
{6}{c_{{1,2}}}^{2}{c_{{0,0}}}^{2}c_{{1,3}}c_{{2,3}}c_{{0,1}}+15\,{A_{{0
}}}^{6}c_{{1,2}}{c_{{0,0}}}^{2}c_{{1,3}}{c_{{2,3}}}^{2}c_{{0,1}}+10\,{
A_{{0}}}^{6}c_{{2,4}}c_{{1,2}}c_{{0,0}}c_{{1,3}}{c_{{0,1}}}^{2}+15\,B_
{{2}}{A_{{0}}}^{4}{c_{{1,2}}}^{3}{c_{{0,0}}}^{2}{c_{{2,3}}}^{2}c_{{0,1
}}+3\,B_{{2}}{A_{{0}}}^{4}c_{{1,4}}{c_{{2,3}}}^{2}{c_{{0,1}}}^{3}+3\,B
_{{2}}{A_{{0}}}^{4}c_{{0,0}}c_{{1,3}}{c_{{2,3}}}^{3}{c_{{0,1}}}^{2}-15
\,B_{{2}}{A_{{0}}}^{4}c_{{1,2}}c_{{0,0}}c_{{1,3}}{c_{{2,3}}}^{2}{c_{{0
,1}}}^{2}-3\,B_{{2}}{A_{{0}}}^{4}{c_{{1,2}}}^{2}{c_{{0,0}}}^{2}{c_{{2,
3}}}^{3}c_{{0,1}}) .$
\end{proof}

\end{pr}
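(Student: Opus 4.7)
The plan is to follow the template established by the earlier propositions in this section, specialised to the defining conditions $c_{2,3}\neq 0$, $c_{1,1}=0$, $c_{0,1}\neq 0$ of $U_{8}^{12}$. The ``if'' direction is a direct substitution exercise: one inserts the transformation formulas for $c'_{0,0}$, $c'_{0,1}$, $c'_{1,2}$, $c'_{1,3}$, $c'_{2,3}$, $c'_{2,4}$ from Theorem 2.2 into each of the three candidate invariants and checks that the powers of $A_0$, $B_1$, and $(A_0+A_1 c_{3,4})$ cancel. Because $c_{1,1}=0$ the numerator of $c'_{0,0}$ collapses to $A_0^{2}c_{0,0}+A_0 A_1 c_{0,1}$, which shortens the verification considerably.

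For the ``only if'' direction I would exhibit a specific adapted base change carrying $L(C)$ to the asserted representative $L(0,1,0,\lambda_1,\lambda_2,0,0,1,\lambda_3,0)$. The choices are made in stages, read off the formulas in Theorem 2.2: first set $A_1=-A_0 c_{0,0}/c_{0,1}$ so that the numerator of $c'_{0,0}$ vanishes (this uses $c_{1,1}=0$ and $c_{0,1}\neq 0$); next set $B_1=A_0^{2}/c_{2,3}$ so that $c'_{2,3}=1$; then solve the linear equation $c'_{1,4}=0$ for $B_3$ and the linear equation $c'_{1,5}=0$ for $B_4$. With these choices, after simplification using $c_{1,1}=0$, the remaining coordinates $c'_{0,1}$, $c'_{1,2}$, $c'_{1,3}$, $c'_{2,4}$ reduce to the three asserted invariant combinations, up to a final rescaling of $A_0$ to normalise $c'_{0,1}$ to $1$.

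Part (2) is then immediate: for any triple $(\lambda_1,\lambda_2,\lambda_3)\in\mathbb{C}^{3}$ the algebra $L(0,1,0,\lambda_1,\lambda_2,0,0,1,\lambda_3,0)$ plainly lies in $U_{8}^{12}$ (the defining inequalities $c_{2,3}=1\neq 0$, $c_{0,1}=1\neq 0$ and equality $c_{1,1}=0$ all hold), and evaluating the three invariants at this representative recovers the triple $(\lambda_1,\lambda_2,\lambda_3)$, the mild polynomial relationship between the representative's coordinates and the invariant values being inverted by taking appropriate complex roots.

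The main obstacle will be the third invariant involving $c_{2,4}$, whose transformation law is the most intricate of the ten formulas in Theorem 2.2. Rather than attempting a symbolic invariance check in full generality, I would first specialise to $c_{1,1}=0$ and $c_{3,4}=0$ (the latter follows from the Lemma 2.2 constraint $c_{3,4}(c_{2,3}+2c_{1,2})=0$ combined with generic choices of $c_{1,2}$, so must be handled case-by-case otherwise) to shrink the $c'_{2,4}$ expression substantially, then apply the staged substitutions above and verify that the free parameters $A_0$, $A_2$, $A_3$, $A_4$, $B_2$, $B_5$ drop out of the relevant quotient, as they must since the stabiliser of the canonical form inside the adapted group has codimension precisely three.
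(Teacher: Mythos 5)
Your proposal is correct and follows essentially the same route as the paper: the ``if'' part is substitution into the transformation laws of Theorem 2.2, and the ``only if'' part is the staged normalisation $A_1=-A_0c_{0,0}/c_{0,1}$, $B_1=A_0^{2}/c_{2,3}$, then solving linearly for $B_3$ and $B_4$ to kill $c'_{1,4}$ and $c'_{1,5}$ — exactly the coefficient choices the paper records. The only quibble is that you need not derive $c_{3,4}=0$ from the constraint $c_{3,4}(c_{2,3}+2c_{1,2})=0$ with a case split: it holds outright because $U_{8}^{12}$ is one of the subsets listed under $M_{8}^{2}=\{c_{3,4}=0\}$.
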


\begin{pr}\emph{}
\begin{enumerate}
\item Two algebras $L(C)$ and $L(C')$ from $U_{8}^{13}$
are isomorphic, if and only if\\
 $$ \ds {\frac {{c'}_{{2,3}}^{7}{c'}_{{1,2}}^{12}c'_{{0,0}}}{ \left( -3\,c'_{{1,3}
}c'_{{2,3}}c'_{{1,2}}+3\,c'_{{1,3}}{c'}_{{2,3}}^{2}+2\,{c'}_{{1,2}}^{2}c'_{{
2,4}} \right) ^{6}}}={\frac
{c_{{2,3}}^{7}c_{{1,2}}^{12}c_{{0,0}}}{ \left( -3\,c_{{1,3}
}c_{{2,3}}c_{{1,2}}+3\,c_{{1,3}}c_{{2,3}}^{2}+2\,c_{{1,2}}^{2}c_{{
2,4}} \right) ^{6}}}
 ,\ \   {\frac {c'_{{1,2}}}{c'_{{2,3}}}}={\frac {c_{{1,2}}}{c_{{2,3}}}}.
 $$
 \item For any $\lambda_1\in \mathbb{C} \ \rm{and} \ \lambda_2\in \mathbb{C^*} , $  there exists $L(C)\in
U_{8}^{13}:$  $$ \ds {\frac
{c_{{2,3}}^{7}c_{{1,2}}^{12}c_{{0,0}}}{ \left( -3\,c_{{1,3}
}c_{{2,3}}c_{{1,2}}+3\,c_{{1,3}}c_{{2,3}}^{2}+2\,c_{{1,2}}^{2}c_{{
2,4}} \right) ^{6}}}=\lambda_1, \ \ {\frac
{c_{{1,2}}}{c_{{2,3}}}}=\lambda_2.$$

Then orbits in $U_{8}^{13}$ can be parameterized as
$L\left(\lambda_1,0,0,\lambda_2,0,0,0,1,1,0\right), \ \lambda_1\in
\mathbb{C} \ \rm{and} \ \lambda_2\in \mathbb{C^*}.$
\end{enumerate}

\begin{proof}Here, we take $\ds A_{{0}}={\frac {-3\,c_{{1,3}}c_{{2,3}}c_{{1,2}}+3\,c_{{1,3}}{c_{{
2,3}}}^{2}+2\,{c_{{1,2}}}^{2}c_{{2,4}}}{2\,c_{{2,3}}{c_{{1,2}}}^{2}}}
,$ $\ds A_1={\frac {-A_{{0}}c_{{1,3}}}{2\,{c_{{1,2}}}^{2}}},$ $\ds
B_1={\frac {{A_{{0}}}^{2}}{c_{{2,3}}}},$\\ $\ds
B_3=\frac{1}{{A_{{0}}}^{2}{c_{{2,3}}}^{2}{ c_{{1,2}}}^{2}}
(4\,{B_{{2}}}^{2}{c_{{2,3}}}^{3}{c_{{1,2}}}^{2}+4\,{A_{{0}
}}^{4}c_{{1,4}}{c_{{1,2}}}^{2}+{A_{{0}}}^{4}{c_{{1,3}}}^{2}c_{{2,3}}-5
\,{A_{{0}}}^{4}{c_{{1,3}}}^{2}c_{{1,2}})
 ,$ and\\ $
\ds B_4=-\frac{1}{48\,{A_{{0
}}}^{4}{c_{{2,3}}}^{3}{c_{{1,2}}}^{3}}
(-8\,{B_{{2}}}^{3}{c_{{2,3}}}^{5}{c_{{1,2}}}^{3}-20\,{A_
{{0}}}^{6}c_{{2,4}}{c_{{1,3}}}^{2}{c_{{1,2}}}^{2}-16\,{A_{{0}}}^{6}c_{
{1,5}}c_{{2,3}}{c_{{1,2}}}^{3}+16\,{A_{{0}}}^{6}c_{{2,4}}c_{{1,4}}{c_{
{1,2}}}^{3}+13\,{A_{{0}}}^{6}{c_{{1,3}}}^{3}c_{{2,3}}c_{{1,2}}-12\,{B_
{{2}}}^{2}{c_{{2,3}}}^{4}{A_{{0}}}^{2}c_{{1,3}}{c_{{1,2}}}^{2}+48\,A_{
{3}}{A_{{0}}}^{5}{c_{{1,2}}}^{4}{c_{{2,3}}}^{2}-48\,B_{{2}}A_{{2}}{A_{
{0}}}^{3}{c_{{1,2}}}^{4}{c_{{2,3}}}^{3}-9\,{A_{{0}}}^{6}{c_{{1,3}}}^{3
}{c_{{2,3}}}^{2}+12\,{A_{{0}}}^{6}c_{{1,3}}c_{{1,4}}c_{{2,3}}{c_{{1,2}
}}^{2}+30\,B_{{2}}{A_{{0}}}^{4}{c_{{1,3}}}^{2}{c_{{2,3}}}^{2}{c_{{1,2}
}}^{2}-24\,B_{{2}}{A_{{0}}}^{4}c_{{1,4}}{c_{{2,3}}}^{2}{c_{{1,2}}}^{3}
-6\,B_{{2}}{A_{{0}}}^{4}{c_{{1,3}}}^{2}{c_{{2,3}}}^{3}c_{{1,2}}).$
\end{proof}

\end{pr}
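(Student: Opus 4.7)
The plan is to follow the template used for Propositions 2.1--2.6: apply the isomorphism criterion (Theorem 2.2) specialized to the regime that defines $U_8^{13}$, namely $c_{3,4}=0$, $c_{2,3}\neq 0$, $c_{1,1}=c_{0,1}=0$, $c_{1,2}\neq 0$, and $D:=2c_{2,4}c_{1,2}^2-3c_{1,3}c_{2,3}c_{1,2}+3c_{1,3}c_{2,3}^2\neq 0$. Because $c_{3,4}=0$, every factor $A_0+A_1c_{3,4}$ in the formulas of Theorem 2.2 collapses to $A_0$, which trims the transformation laws considerably and makes the computation tractable.

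For the \emph{if} direction I would substitute the transformation rules for $c'_{1,2},c'_{2,3},c'_{0,0},c'_{1,3},c'_{2,4}$ from Theorem 2.2 into the two candidate invariants and verify cancellation of the base-change parameters. The invariance of $c_{1,2}/c_{2,3}$ is immediate from $c'_{1,2}=B_1c_{1,2}/A_0^2$ and $c'_{2,3}=B_1c_{2,3}/A_0^2$. For the first invariant the key computation is to show that $D$ itself transforms covariantly as $D'=B_1^3D/A_0^6$, after which balancing the weights of $c_{0,0},c_{1,2},c_{2,3}$ against the sixth power of $D$ in the denominator is a short bookkeeping check.

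For the \emph{only if} direction I would apply the base change \eqref{bc.dim08} with
\[ A_0=\frac{-3c_{1,3}c_{2,3}c_{1,2}+3c_{1,3}c_{2,3}^2+2c_{1,2}^2c_{2,4}}{2c_{2,3}c_{1,2}^2},\quad A_1=\frac{-A_0c_{1,3}}{2c_{1,2}^2},\quad B_1=\frac{A_0^2}{c_{2,3}}, \]
together with the explicit $B_3,B_4$ recorded in the statement (and the auxiliary $A_2,A_3,A_4,B_2,B_5$ left free). Reading Theorem 2.2 with these values forces $c'_{2,3}=1$, $c'_{1,2}=c_{1,2}/c_{2,3}$, $c'_{1,3}=0$, $c'_{2,4}=1$, $c'_{1,1}=c'_{0,1}=0$, $c'_{1,4}=c'_{1,5}=0$, $c'_{3,4}=0$, while $c'_{0,0}$ reduces precisely to the first invariant. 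Applying the same transformation (with primes) to $L(C')$ produces the identical normal form $L(\lambda_1,0,0,\lambda_2,0,0,0,1,1,0)$, so their composition provides the sought isomorphism. Part~2 is then clear, since for any admissible $(\lambda_1,\lambda_2)$ this normal form itself belongs to $U_8^{13}$.

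The main obstacle is not conceptual but organisational: both $c'_{1,5}$ and $c'_{2,4}$ in Theorem 2.2 are long polynomials in the $A_i,B_j$, so the simplifications must be cascaded in the correct order---first use $A_1$ to kill $c_{1,3}$, then fix $A_0$ to normalise $c_{2,4}$, then pick $B_3$ to kill $c_{1,4}$, and finally $B_4$ to kill $c_{1,5}$---otherwise cross-terms reintroduce structure constants that were already eliminated. That ordering is precisely what dictates the particular shape of the chosen $A_0,A_1,B_1,B_3,B_4$; the remaining work is to verify by direct substitution into Theorem 2.2 that these values indeed land on the announced representative.
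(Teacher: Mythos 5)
Your proposal is correct and is essentially the paper's own proof: for this proposition the paper does nothing more than record the values of $A_0,A_1,B_1,B_3,B_4$ in the base change (\ref{bc.dim08}) that normalize an arbitrary $L(C)\in U_8^{13}$ to the stated representative (killing $c_{1,3}$ with $A_1$, normalizing $c_{2,4}$ and $c_{2,3}$ with $A_0,B_1$, then $c_{1,4}$ and $c_{1,5}$ with $B_3,B_4$), the ``if'' part being direct substitution into the isomorphism criterion for $TLeib_8$. One small correction to an intermediate claim: with $c_{3,4}=c_{1,1}=c_{0,1}=0$ the quantity $D=2c_{2,4}c_{1,2}^{2}-3c_{1,3}c_{2,3}c_{1,2}+3c_{1,3}c_{2,3}^{2}$ transforms as $D'=B_1^{3}D/A_0^{7}$, not $B_1^{3}D/A_0^{6}$; it is this weight, combined with $c'_{0,0}=c_{0,0}/(B_1A_0^{4})$ and $c'_{1,2}=B_1c_{1,2}/A_0^{2}$, $c'_{2,3}=B_1c_{2,3}/A_0^{2}$, that makes $c_{2,3}^{7}c_{1,2}^{12}c_{0,0}/D^{6}$ invariant (with your exponent it would not be).
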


\begin{pr}\emph{}
\begin{enumerate}
\item Two algebras $L(C)$ and $L(C')$ from $U_{8}^{14}$ are
isomorphic, if and only if\ \
 $ \ds  {\frac {c'_{{1,2}}}{c'_{{2,3}}}}={\frac {c_{{1,2}}}{c_{{2,3}}}}.
 $
 \item For any $\lambda\in \mathbb{C^*}, $  there exists $L(C)\in
U_{8}^{14}:$ \ \  $ \ds{\frac {c_{{1,2}}}{c_{{2,3}}}}=\lambda.$

Then orbits in $U_{8}^{14}$ can be parameterized as
$L\left(1,0,0,\lambda,0,0,0,1,0,0\right) , \ \lambda\in
\mathbb{C^*}.$
\end{enumerate}
\begin{proof}Here,
 $\ds A_1=-{\frac {A_{{0}}c_{{1,3}}}{2\,{c_{{1,2}}}^{2}}},$ $\ds
B_1={\frac {{A_{{0}}}^{2}}{c_{{2,3}}}},$ $\ds B_3={\frac
{4\,{B_{{2}}}^{2}{c_{{2,3}}}^{3}{c_{{1,2}}}^{2}+4\,{A_{{0}
}}^{4}c_{{1,4}}{c_{{1,2}}}^{2}+{A_{{0}}}^{4}{c_{{1,3}}}^{2}c_{{2,3}}-5
\,{A_{{0}}}^{4}{c_{{1,3}}}^{2}c_{{1,2}}}{8\,{A_{{0}}}^{2}{c_{{2,3}}}^{2}{
c_{{1,2}}}^{2}}}
 ,$ and $
\ds B_4=\frac{-1}{48\,{A_{{0
}}}^{4}{c_{{2,3}}}^{3}{c_{{1,2}}}^{3}}(-8\,{B_{{2}}}^{3}{c_{{2,3}}}^{5}{c_{{1,2}}}^{3}-20\,{A_
{{0}}}^{6}c_{{2,4}}{c_{{1,3}}}^{2}{c_{{1,2}}}^{2}-16\,{A_{{0}}}^{6}c_{
{1,5}}c_{{2,3}}{c_{{1,2}}}^{3}+16\,{A_{{0}}}^{6}c_{{2,4}}c_{{1,4}}{c_{
{1,2}}}^{3}+13\,{A_{{0}}}^{6}{c_{{1,3}}}^{3}c_{{2,3}}c_{{1,2}}-12\,{B_
{{2}}}^{2}{c_{{2,3}}}^{4}{A_{{0}}}^{2}c_{{1,3}}{c_{{1,2}}}^{2}+48\,A_{
{3}}{A_{{0}}}^{5}{c_{{1,2}}}^{4}{c_{{2,3}}}^{2}-48\,B_{{2}}A_{{2}}{A_{
{0}}}^{3}{c_{{1,2}}}^{4}{c_{{2,3}}}^{3}-9\,{A_{{0}}}^{6}{c_{{1,3}}}^{3
}{c_{{2,3}}}^{2}+12\,{A_{{0}}}^{6}c_{{1,3}}c_{{1,4}}c_{{2,3}}{c_{{1,2}
}}^{2}+30\,B_{{2}}{A_{{0}}}^{4}{c_{{1,3}}}^{2}{c_{{2,3}}}^{2}{c_{{1,2}
}}^{2}-24\,B_{{2}}{A_{{0}}}^{4}c_{{1,4}}{c_{{2,3}}}^{2}{c_{{1,2}}}^{3}
-6\,B_{{2}}{A_{{0}}}^{4}{c_{{1,3}}}^{2}{c_{{2,3}}}^{3}c_{{1,2}})
.$
\end{proof}
\end{pr}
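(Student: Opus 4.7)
The plan is to follow the pattern of Propositions 2.11--2.20, applying the isomorphism criterion of the preceding Theorem for $TLeib_8$ to the defining constraints of $U_8^{14}$: $c_{3,4}=c_{1,1}=c_{0,1}=0$, $c_{0,0}\neq 0$, $c_{2,3}\neq 0$, $c_{1,2}\neq 0$, together with the algebraic relation $2c_{2,4}c_{1,2}^{2}-3c_{1,3}c_{2,3}c_{1,2}+3c_{1,3}c_{2,3}^{2}=0$.

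For the ``if'' direction I would simply read off from the criterion that, when $c_{3,4}=0$, the transformation rules specialize to $c_{1,2}'=B_1 c_{1,2}/A_0^{2}$ and $c_{2,3}'=B_1 c_{2,3}/A_0^{2}$. Hence the ratio $c_{1,2}/c_{2,3}$ is preserved under every adapted base change, giving one direction of the criterion and simultaneously showing that $\lambda$ is a genuine orbit invariant. Part (2) is immediate: the canonical tuple $(1,0,0,\lambda,0,0,0,1,0,0)$ lies in $U_8^{14}$ and has $c_{1,2}/c_{2,3}=\lambda$.

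For the ``only if'' direction and the normal form, I would exhibit an explicit adapted transformation of the shape (\ref{bc.dim08}) mapping an arbitrary $L(C)\in U_8^{14}$ to $L(1,0,0,\lambda,0,0,0,1,0,0)$. The successive choices are: $B_1=A_0^{2}/c_{2,3}$ to normalize $c_{2,3}'=1$; $A_1=-A_0 c_{1,3}/(2c_{1,2}^{2})$ to annihilate $c_{1,3}'$; $A_0$ determined by the requirement $c_{0,0}'=1$ (using $c_{0,0}\neq 0$); then $B_3$ chosen to kill $c_{1,4}'$ and $B_4$ chosen to kill $c_{1,5}'$; the remaining parameters $A_2,A_3,A_4,B_2,B_5$ stay free. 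The automatic vanishing $c_{1,1}'=c_{0,1}'=c_{3,4}'=0$ follows from the criterion formulas since $c_{1,1}=c_{0,1}=c_{3,4}=0$ in $U_8^{14}$.

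The main obstacle I expect is verifying that the algebraic constraint $2c_{2,4}c_{1,2}^{2}-3c_{1,3}c_{2,3}c_{1,2}+3c_{1,3}c_{2,3}^{2}=0$ is exactly what is needed to force $c_{2,4}'=0$ after the above substitutions. Substituting the chosen $A_0,A_1,B_1,B_3$ into the $c_{2,4}'$ formula and collecting terms should reveal the left-hand side of this relation as a factor, so that the constraint produces the desired cancellation; this is what distinguishes $U_8^{14}$ from the more generic $U_8^{13}$, where $c_{2,4}'$ cannot be normalized to $0$ and an additional invariant survives. Once that verification is in hand, the image of $L(C)$ is precisely $L(1,0,0,\lambda,0,0,0,1,0,0)$ with $\lambda=c_{1,2}/c_{2,3}$, so two algebras sharing this invariant are mutually isomorphic via the composed transformation, which simultaneously completes the criterion and the description of the orbits.
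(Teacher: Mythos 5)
Your proposal is correct and follows essentially the same route as the paper: the same choices $A_1=-A_0c_{1,3}/(2c_{1,2}^2)$ and $B_1=A_0^2/c_{2,3}$, with $B_3,B_4$ solving linearly for $c_{1,4}'=c_{1,5}'=0$ and $A_0$ fixed by $c_{0,0}'=1$. The cancellation you flag does check out: with $c_{3,4}=0$ and your $A_1$, the criterion gives $c_{2,4}'=\frac{B_1}{A_0^3}\cdot\frac{2c_{2,4}c_{1,2}^2-3c_{1,3}c_{2,3}c_{1,2}+3c_{1,3}c_{2,3}^2}{2c_{1,2}^2}$, whose numerator is exactly the defining constraint of $U_8^{14}$.
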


\begin{pr}\emph{}
\begin{enumerate}
\item Two algebras $L(C)$ and $L(C')$ from $ U_{8}^{15}$ are
isomorphic, if and only if\ \
 $ \ds  {\frac {c'_{{1,2}}}{c'_{{2,3}}}}={\frac {c_{{1,2}}}{c_{{2,3}}}}.
 $
 \item For any $\lambda\in \mathbb{C^*}, $  there exists $L(C)\in
 U_{8}^{15}:$ \ \  $ \ds{\frac
{c_{{1,2}}}{c_{{2,3}}}}=\lambda.$

Then orbits in $ U_{8}^{15}$ can be parameterized as
$L\left(0,0,0,\lambda,0,0,0,1,0,0\right),\ \lambda\in
\mathbb{C^*}.$
\end{enumerate}
\begin{proof}Put, $\ds A_1=-{\frac {A_{{0}}c_{{1,3}}}{2\,{c_{{1,2}}}^{2}}},$ $\ds
B_1={\frac {{A_{{0}}}^{2}}{c_{{2,3}}}},$ $\ds B_3={\frac
{4\,{B_{{2}}}^{2}{c_{{2,3}}}^{3}{c_{{1,2}}}^{2}+4\,{A_{{0}
}}^{4}c_{{1,4}}{c_{{1,2}}}^{2}+{A_{{0}}}^{4}{c_{{1,3}}}^{2}c_{{2,3}}-5
\,{A_{{0}}}^{4}{c_{{1,3}}}^{2}c_{{1,2}}}{8\,{A_{{0}}}^{2}{c_{{2,3}}}^{2}{
c_{{1,2}}}^{2}}}
 ,$ and $
\ds B_4=\frac{-1}{48\,{A_{{0
}}}^{4}{c_{{2,3}}}^{3}{c_{{1,2}}}^{3}}(-8\,{B_{{2}}}^{3}{c_{{2,3}}}^{5}{c_{{1,2}}}^{3}-20\,{A_
{{0}}}^{6}c_{{2,4}}{c_{{1,3}}}^{2}{c_{{1,2}}}^{2}-16\,{A_{{0}}}^{6}c_{
{1,5}}c_{{2,3}}{c_{{1,2}}}^{3}+16\,{A_{{0}}}^{6}c_{{2,4}}c_{{1,4}}{c_{
{1,2}}}^{3}+13\,{A_{{0}}}^{6}{c_{{1,3}}}^{3}c_{{2,3}}c_{{1,2}}-12\,{B_
{{2}}}^{2}{c_{{2,3}}}^{4}{A_{{0}}}^{2}c_{{1,3}}{c_{{1,2}}}^{2}+48\,A_{
{3}}{A_{{0}}}^{5}{c_{{1,2}}}^{4}{c_{{2,3}}}^{2}-48\,B_{{2}}A_{{2}}{A_{
{0}}}^{3}{c_{{1,2}}}^{4}{c_{{2,3}}}^{3}-9\,{A_{{0}}}^{6}{c_{{1,3}}}^{3
}{c_{{2,3}}}^{2}+12\,{A_{{0}}}^{6}c_{{1,3}}c_{{1,4}}c_{{2,3}}{c_{{1,2}
}}^{2}+30\,B_{{2}}{A_{{0}}}^{4}{c_{{1,3}}}^{2}{c_{{2,3}}}^{2}{c_{{1,2}
}}^{2}-24\,B_{{2}}{A_{{0}}}^{4}c_{{1,4}}{c_{{2,3}}}^{2}{c_{{1,2}}}^{3}
-6\,B_{{2}}{A_{{0}}}^{4}{c_{{1,3}}}^{2}{c_{{2,3}}}^{3}c_{{1,2}})
.$
\end{proof}
\end{pr}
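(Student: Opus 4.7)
The plan is to follow the template established by Propositions 2.13 and 2.14, since the defining set $U_8^{15}$ differs from $U_8^{14}$ only by requiring $c_{0,0}=0$ instead of $c_{0,0}\neq 0$. For the ``if'' direction I would specialize the formulas of Theorem 2.2 to the subset $U_8^{15}$, where $c_{3,4}=0$. In that case the transformation laws collapse to $c'_{1,2}=B_1 c_{1,2}/A_0^2$ and $c'_{2,3}=B_1 c_{2,3}/A_0^2$, so the quotient $c_{1,2}/c_{2,3}$ is visibly preserved by every adapted base change; this yields the invariant equality.

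For the ``only if'' direction I would exhibit an explicit base change of the form (\ref{bc.dim08}) sending $L(C)$ to $L(0,0,0,\lambda,0,0,0,1,0,0)$, where $\lambda=c_{1,2}/c_{2,3}$. Choosing $B_1=A_0^2/c_{2,3}$ forces $c'_{2,3}=1$ and $c'_{1,2}=\lambda$; the defining vanishings $c_{1,1}=c_{0,1}=c_{0,0}=0$ together with the three fractional formulas in Theorem 2.2 immediately yield $c'_{0,0}=c'_{0,1}=c'_{1,1}=0$. Next, picking $A_1=-A_0 c_{1,3}/(2c_{1,2}^2)$ annihilates $c'_{1,3}$ (this is legal because $c_{1,2}\neq 0$ on $U_8^{15}$). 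Finally, $B_3$ and $B_4$ are chosen so that the remaining two expressions in Theorem 2.2 give $c'_{1,4}=0$ and $c'_{1,5}=0$; the needed closed forms are identical to those recorded in Propositions 2.13 and 2.14.

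Part (2) is immediate: the representative $L(0,0,0,\lambda,0,0,0,1,0,0)$ lies in $U_8^{15}$ because $c_{2,3}=1\neq 0$, $c_{1,2}=\lambda\neq 0$, $c_{1,1}=c_{0,1}=c_{0,0}=c_{3,4}=0$, and the constraint $2c_{2,4}c_{1,2}^2-3c_{1,3}c_{2,3}c_{1,2}+3c_{1,3}c_{2,3}^2=0$ holds trivially since $c_{2,4}=c_{1,3}=0$.

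The main bookkeeping obstacle is verifying that the single remaining coefficient $c'_{2,4}$ vanishes automatically after the above choices. Substituting $A_1=-A_0c_{1,3}/(2c_{1,2}^2)$ and $B_1=A_0^2/c_{2,3}$ into the $c_{3,4}=0$ specialization of the $c'_{2,4}$ formula from Theorem 2.2 reduces $c'_{2,4}$ to $\frac{B_1}{2A_0^3 c_{1,2}^2}(2c_{2,4}c_{1,2}^2-3c_{1,3}c_{2,3}c_{1,2}+3c_{1,3}c_{2,3}^2)$, and the parenthesized expression is precisely the $U_8^{15}$ constraint. This collapse is the one place where the hypothesis distinguishing $U_8^{15}$ from generic algebras is actually used, but it is a routine calculation rather than a conceptual difficulty.
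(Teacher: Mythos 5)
Your proposal is correct and follows essentially the same route as the paper: the authors likewise take $A_1=-A_0c_{1,3}/(2c_{1,2}^2)$ and $B_1=A_0^2/c_{2,3}$ and then solve for $B_3$ and $B_4$ to kill $c'_{1,4}$ and $c'_{1,5}$, which is possible since $c_{2,3}\neq0$ makes the relevant coefficients nonzero. Your explicit check that the defining relation $2c_{2,4}c_{1,2}^2-3c_{1,3}c_{2,3}c_{1,2}+3c_{1,3}c_{2,3}^2=0$ forces $c'_{2,4}=0$ is a correct and welcome addition that the paper leaves implicit.
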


\begin{pr}\emph{}
\begin{enumerate}
\item Two algebras $L(C)$ and $L(C')$ from $U_{8}^{16}$ are
isomorphic, if and only if $ \ds {\frac
{{c'_{{2,3}}}^{7}c'_{{0,0}}}{{c'_{{1,3}}}^{6}}}={\frac
{{c_{{2,3}}}^{7}c_{{0,0}}}{{c_{{1,3}}}^{6}}}.
 $
 \item For any $\lambda_1\in \mathbb{C}, $  there exists $L(C)\in
U_{8}^{16}:$  $ \ds {\frac
{{c_{{2,3}}}^{7}c_{{0,0}}}{{c_{{1,3}}}^{6}}}=\lambda_1.$

Then orbits in $U_{8}^{16}$ can be parameterized as
$L\left(\lambda_1,0,0,0,1,0,0,1,0,0\right), \ \lambda_1\in
\mathbb{C}.$
\end{enumerate}

\begin{proof}For this case we put
 $\ds A_{{0}}={\frac {c_{{1,3}}}{c_{{2,3}}}}
,$ $\ds A_1={\frac {c_{{1,3}}c_{{2,4}}}{3\,{c_{{2,3}}}^{3}}},$
$\ds B_1={\frac {{c_{{1,3}}}^{2}}{{c_{{2,3}}}^{3}}},$ $\ds
B_3={\frac
{3\,{B_{{2}}}^{2}{c_{{2,3}}}^{8}+3\,{c_{{1,3}}}^{4}c_{{1,4
}}c_{{2,3}}-{c_{{1,3}}}^{5}c_{{2,4}}}{6\,{c_{{1,3}}}^{2}{c_{{2,3}}}^{5}}}
 ,$ and $
\ds B_4=\frac{1}{6\,{c_{{1,3}}}^{4}{c_{{2, 3}}}^{6}}
({B_{{2}}}^{3}{c_{{2,3}}}^{12}+2\,{c_{{1,3}}}^{6}c_{{1,5}}
{c_{{2,3}}}^{2}-2\,{c_{{1,3}}}^{6}c_{{2,4}}c_{{1,4}}c_{{2,3}}+2\,{c_{{
1,3}}}^{8}c_{{2,4}}+3\,B_{{2}}{c_{{1,3}}}^{4}c_{{1,4}}{c_{{2,3}}}^{5}-
B_{{2}}{c_{{1,3}}}^{5}c_{{2,4}}{c_{{2,3}}}^{4}) .$
\end{proof}

\end{pr}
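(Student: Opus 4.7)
The plan is to apply Theorem 2.2 after imposing the constraints defining $U_{8}^{16}$, namely $c_{3,4}=c_{1,1}=c_{0,1}=c_{1,2}=0$ with $c_{2,3},c_{1,3},c_{0,0}\neq 0$. Under these specializations the three relevant transformation formulas collapse to
\[
c'_{2,3}=\frac{B_1 c_{2,3}}{A_0^2},\qquad c'_{1,3}=\frac{B_1 c_{1,3}}{A_0^3},\qquad c'_{0,0}=\frac{c_{0,0}}{A_0^4 B_1},
\]
and one verifies by direct substitution that the combination $c_{2,3}^7 c_{0,0}/c_{1,3}^6$ is preserved. This settles the ``If'' direction.

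For the ``Only if'' direction, set $\lambda=c_{2,3}^7 c_{0,0}/c_{1,3}^6$ and construct an adapted transformation of the form \eqref{bc.dim08} sending $L(C)$ to $L(\lambda,0,0,0,1,0,0,1,0,0)$ by successive normalization. First, imposing $c'_{2,3}=c'_{1,3}=1$ determines $A_0=c_{1,3}/c_{2,3}$ and $B_1=c_{1,3}^2/c_{2,3}^3$, and invariance then forces $c'_{0,0}=\lambda$ automatically. Next, the condition $c'_{2,4}=0$ becomes linear in $A_1$ with leading coefficient a nonzero multiple of $c_{2,3}^2$, yielding $A_1=c_{1,3}c_{2,4}/(3c_{2,3}^3)$. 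Then $c'_{1,4}=0$ is linear in $B_3$ with a coefficient involving $c_{2,3}\neq 0$, giving the announced formula for $B_3$. Finally $c'_{1,5}=0$ is linear in $B_4$ once $A_0,A_1,B_1,B_2,B_3$ are fixed, the coefficient of $B_4$ being again a nonzero multiple of $c_{2,3}$.

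The remaining parameters $A_2,A_3,A_4,B_2,B_5$ stay free and can be set to zero, while $c'_{0,1},c'_{1,1},c'_{1,2},c'_{3,4}$ vanish automatically since the source constants vanish. The second assertion is immediate: the algebra $L(\lambda,0,0,0,1,0,0,1,0,0)$ itself belongs to $U_{8}^{16}$ and realizes any prescribed $\lambda\in\mathbb{C}$.

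The only genuine obstacle is bookkeeping: one must check that the successive normalization steps decouple, i.e.\ that solving for $B_3$ does not spoil $c'_{2,4}=0$, and that fixing $B_4$ leaves $c'_{1,4}=0$ untouched. This decoupling is guaranteed because the vanishing of $c_{1,1},c_{0,1},c_{1,2},c_{3,4}$ kills the cross-terms in the Theorem 2.2 formulas that would otherwise couple the equations, so each equation contains the designated unknown at first order with nonzero coefficient and can be solved independently.
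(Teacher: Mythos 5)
Your proposal is correct and follows essentially the same route as the paper: both use the isomorphism criterion (Theorem 2.2) specialized to $c_{3,4}=c_{1,1}=c_{0,1}=c_{1,2}=0$ for the ``if'' direction, and for the ``only if'' direction both normalize to the representative $L(\lambda,0,0,0,1,0,0,1,0,0)$ by solving $c'_{2,3}=c'_{1,3}=1$ and $c'_{2,4}=c'_{1,4}=c'_{1,5}=0$, arriving at exactly the coefficients $A_0=c_{1,3}/c_{2,3}$, $B_1=c_{1,3}^2/c_{2,3}^3$, $A_1=c_{1,3}c_{2,4}/(3c_{2,3}^3)$ and the stated $B_3$, $B_4$. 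Your added observation that the normalization equations decouple (each being linear in the new unknown with coefficient a nonzero multiple of $c_{2,3}$) is a useful justification that the paper leaves implicit.
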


\begin{pr}\emph{}
\begin{enumerate}
\item Two algebras $L(C)$ and $L(C')$ from $U_{8}^{19}$
are isomorphic, if and only if\\
 $${\frac { \left(
4\,c'_{{0,0}}{c'}_{{1,2}}^{4}-2\,c'_{{1,3}}c'_{{0,1}}{c'}_{{
1,2}}^{2}+{c'}_{{1,3}}^{2}c'_{{1,1}} \right)
{c'}_{{1,2}}^{3}}{{c'}_{{2,4} }^{6}}} ={\frac { \left(
4\,c_{{0,0}}c_{{1,2}}^{4}-2\,c_{{1,3}}c_{{0,1}}c_{{
1,2}}^{2}+c_{{1,3}}^{2}c_{{1,1}} \right) c_{{1,2}}^{3}}{c_{{2,4}
}^{6}}} ,$$ $$
 {\frac { \left(
-c'_{{1,3}}c'_{{1,1}}+c'_{{0,1}}{c'}_{{1,2}}^{2} \right)
{c'}_{{1,2}}^{3}}{{c'}_{{2,4}}^{5}}} ={\frac { \left(
-c_{{1,3}}c_{{1,1}}+c_{{0,1}}c_{{1,2}}^{2} \right)
c_{{1,2}}^{3}}{c_{{2,4}}^{5}}} ,\ \ {\frac
{{c'}_{{1,2}}^{3}c'_{{1,1}}}{{c'}_{{2,4}}^{4}}}={\frac
{c_{{1,2}}^{3}c_{{1,1}}}{c_{{2,4}}^{4}}},$$ $$  {\frac
{4\,c'_{{1,4}}c'_{{1,2}}-5\,{c'}_{{1,3}}^{2}}{{c'}_{{2,4}}^{2}}}={\frac
{4\,c_{{1,4}}c_{{1,2}}-5\,c_{{1,3}}^{2}}{c_{{2,4}}^{2}}} .
 $$
 \item For any $\lambda_1, \lambda_2, \lambda_3, \lambda_4 \in \mathbb{C}, $  there exists $L(C)\in
U_{8}^{19}:$ \ $ \ds {\frac { \left(
4\,c_{{0,0}}c_{{1,2}}^{4}-2\,c_{{1,3}}c_{{0,1}}c_{{
1,2}}^{2}+c_{{1,3}}^{2}c_{{1,1}} \right) c_{{1,2}}^{3}}{c_{{2,4}
}^{6}}} =\lambda_1,$ $$ {\frac { \left(
-c_{{1,3}}c_{{1,1}}+c_{{0,1}}c_{{1,2}}^{2} \right)
c_{{1,2}}^{3}}{c_{{2,4}}^{5}}}=\lambda_2, \ \ {\frac
{c_{{1,2}}^{3}c_{{1,1}}}{c_{{2,4}}^{4}}}=\lambda_3,\ \ {\frac
{4\,c_{{1,4}}c_{{1,2}}-5\,c_{{1,3}}^{2}}{c_{{2,4}}^{2}}}=\lambda_4.$$

Then orbits in $U_{8}^{19}$ can be parameterized as
$L\left(\lambda_1, \lambda_2, \lambda_3,1,0,\lambda_4,
0,0,1,0\right),\ \ \lambda_1, \lambda_2, \lambda_3, \lambda_4 \in
\mathbb{C}.$
\end{enumerate}
\begin{proof}Put
 $\ds A_0={\frac {c_{{2,4}}}{c_{{1,2}}}},$ $\ds A_1=-{\frac {c_{{1,3}}c_{{2,4}}}{2\,{c_{{1,2}}}^{3}}},$ $\ds
B_1={\frac {{c_{{2,4}}}^{2}}{{c_{{1,2}}}^{3}}},$  and $$
B_3=\frac{1}{8\,{c_{{1,2} }}^{5}{c_{{2,4}}}^{2}}
(8\,{c_{{2,4}}}^{3}{c_{{1,3}}}^{3}+4\,{B_{{2}}}^{2}{c_{{1,
2}}}^{8}-12\,{c_{{2,4}}}^{3}c_{{1,3}}c_{{1,4}}c_{{1,2}}+4\,{c_{{2,4}}}
^{3}c_{{1,5}}{c_{{1,2}}}^{2}+{c_{{2,4}}}^{4}{c_{{1,3}}}^{2}).$$
\end{proof}
\end{pr}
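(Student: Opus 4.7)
The plan is to follow the template already used in Proposition~2.1, invoking the isomorphism criterion of Theorem~2.2. The first observation is that in $U_8^{19}$ the Leibniz constraint $c_{3,4}(c_{2,3}+2c_{1,2})=0$, combined with $c_{2,3}=0$ and $c_{1,2}\neq 0$, forces $c_{3,4}=0$. Consequently every factor of the form $A_0+A_1 c_{3,4}$ in Theorem~2.2 collapses to $A_0$, and $c'_{2,3}=B_1 c_{2,3}/A_0^2=0$ and $c'_{3,4}=0$ are preserved automatically. Under these simplifications the transformation laws for the surviving structure constants $c'_{0,0},c'_{0,1},c'_{1,1},c'_{1,2},c'_{1,3},c'_{1,4},c'_{2,4}$ reduce to compact monomial expressions in $A_0, A_1, B_1, B_2, B_3$. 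The ``if'' direction then follows by direct substitution, since the four ratios in assertion~(1) have been designed so that all powers of $A_0, A_1, B_1$ cancel, making them $G_{ad}$-invariants.

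For the ``only if'' direction, I will exhibit an adapted base change of the form~(\ref{bc.dim08}) sending $L(C)$ to $L(\lambda_1,\lambda_2,\lambda_3,1,0,\lambda_4,0,0,1,0)$ by normalizing successively the structure constants that must take fixed values in the representative. Taking $A_0=c_{2,4}/c_{1,2}$ together with $B_1=A_0^2/c_{1,2}=c_{2,4}^2/c_{1,2}^3$ normalizes $c'_{2,4}=1$ and $c'_{1,2}=1$, using the reduced formula $c'_{2,4}=B_1 c_{2,4}/A_0^3$ valid when $c_{2,3}=c_{3,4}=0$. Next, setting $A_1=-A_0 c_{1,3}/(2c_{1,2}^2)=-c_{1,3}c_{2,4}/(2c_{1,2}^3)$ annihilates $c'_{1,3}$ via the formula $c'_{1,3}=B_1(A_0 c_{1,3}+2A_1 c_{1,2}^2)/A_0^4$. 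Finally, $c'_{1,5}=0$ becomes a linear equation in $B_3$ once the previous choices are substituted, and its unique solution is the displayed expression for $B_3$. The remaining parameters $A_2,A_3,A_4,B_2,B_4,B_5$ do not interfere with the normalization and may be set freely. After this reduction, the four invariants of assertion~(1) evaluate to fixed nonzero multiples of $c'_{0,0},c'_{0,1},c'_{1,1},c'_{1,4}$, yielding the claimed representative upon relabeling.

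Assertion~(2) is then immediate: for any $\lambda_1,\lambda_2,\lambda_3,\lambda_4\in\mathbb{C}$ the algebra $L(\lambda_1,\lambda_2,\lambda_3,1,0,\lambda_4,0,0,1,0)$ lies in $U_8^{19}$ because $c_{1,2}=c_{2,4}=1\neq 0$, $c_{2,3}=c_{3,4}=0$, and the Leibniz constraint $c_{3,4}(c_{2,3}+2c_{1,2})=0$ holds trivially; evaluating the four invariants at this point returns the $\lambda_i$'s, up to the built-in numerical factors inherent in the ratios.

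The only genuine obstacle is computational: one must simplify the lengthy expression for $c'_{1,5}$ from Theorem~2.2 under the hypotheses $c_{2,3}=c_{3,4}=0$ and the chosen values of $A_0, A_1, B_1$, then isolate $B_3$. The vast majority of the thirty-odd terms vanish once $c_{2,3}=c_{3,4}=0$, and what remains is affine in $B_3$, so inversion is elementary; no conceptually new step is required beyond what was already executed in the earlier propositions of this section.
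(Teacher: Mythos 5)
Your proposal is correct and follows essentially the same route as the paper: the same normalizing values $A_0=c_{2,4}/c_{1,2}$, $A_1=-c_{1,3}c_{2,4}/(2c_{1,2}^3)$, $B_1=c_{2,4}^2/c_{1,2}^3$ and the same linear solve for $B_3$ from $c'_{1,5}=0$, with the ``if'' part obtained by substitution into the isomorphism criterion. You merely spell out the intermediate reductions (e.g.\ $c'_{2,4}=B_1c_{2,4}/A_0^3$ when $c_{2,3}=c_{3,4}=0$, and that the only surviving $B_3$-term in $c'_{1,5}$ is $2A_0^4B_1^2B_3c_{2,4}$ with $c_{2,4}\neq0$) that the paper leaves implicit.
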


\newpage

\begin{pr}\emph{}
\begin{enumerate}
\item Two algebras $L(C)$ and $L(C')$ from $U_{8}^{20}$
are isomorphic, if and only if
 $$ {\frac { \left( 4\,c'_{{0,0}}c'_{{1,1}}-{c'}_{{0,1}}^{2} \right) ^{3}{c'}_{
{2,4}}^{10}}{{c'}_{{1,1}}^{10}}}= {\frac { \left(
4\,c_{{0,0}}c_{{1,1}}-c_{{0,1}}^{2} \right) ^{3}c_{
{2,4}}^{10}}{c_{{1,1}}^{10}}}
 ,\ \
 {\frac {c'_{{1,3}}}{c'_{{2,4}}}}={\frac {c_{{1,3}}}{c_{{2,4}}}},\ \ {\frac {{c'}_{{1,4}}^{3}}{{c'}_{{2,4}}^{2}c'_{{1,1}}}}
 ={\frac {c_{{1,4}}^{3}}{c_{{2,4}}^{2}c_{{1,1}}}}.$$
 \item For any $\lambda_1, \lambda_2, \lambda_3\in \mathbb{C}, $  there exists $L(C)\in
U_{8}^{20}:$  $${\frac { \left(
4\,c_{{0,0}}c_{{1,1}}-c_{{0,1}}^{2} \right) ^{3}c_{
{2,4}}^{10}}{c_{{1,1}}^{10}}}=\lambda_1, \ \ {\frac
{c_{{1,3}}}{c_{{2,4}}}}=\lambda_2, \ \ {\frac
{c_{{1,4}}^{3}}{c_{{2,4}}^{2}c_{{1,1}}}}=\lambda_3.$$

Then orbits in $U_{8}^{20}$ can be parameterized as
$L\left(\lambda_1,0,1,0 ,\lambda_2, \lambda_3,0,0,1, 0\right),$ $
\lambda_1, \lambda_2, \lambda_3 \in \mathbb{C}.$
\end{enumerate}
\begin{proof}Here,
 $$ A_1=-{\frac {A_{{0}}c_{{0,1}}}{2\,c_{{1,1}}}},\ \
B_1={\frac {{A_{{0}}}^{3}}{c_{{2,4}}}},\ {\rm{and}}\  B_3={\frac
{2\,{B_{{2}}}^{2}{c_{{2,4}}}^{3}c_{{1,1}}-3\,{A_{{0}}}^{
6}c_{{0,1}}{c_{{1,3}}}^{2}+2\,{A_{{0}}}^{6}c_{{1,5}}c_{{1,1}}+{A_{{0}}
}^{6}c_{{0,1}}c_{{1,3}}c_{{2,4}}}{4\,{A_{{0}}}^{3}{c_{{2,4}}}^{2}c_{{1,1}
}}} .$$
\end{proof}
\end{pr}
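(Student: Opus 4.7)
The plan is to follow exactly the same scheme used in Propositions 2.15 through 2.20: use Theorem 2.2 (the isomorphism criterion for $TLeib_8$) specialized to the defining constraints of $U_8^{20}$, namely $c_{3,4}=c_{2,3}=c_{1,2}=c_{0,1}\cdot 0=\cdots$ with $c_{1,1}\neq 0$ and $c_{2,4}\neq 0$. Since $c_{3,4}=0$, every denominator of the form $A_0+A_1c_{3,4}$ collapses to $A_0$, which simplifies the formulas significantly. First I would verify the ``If'' part: assume the equalities in statement 1 hold, produce an explicit base change sending both $L(C)$ and $L(C')$ to the common representative $L(\lambda_1,0,1,0,\lambda_2,\lambda_3,0,0,1,0)$, and conclude $L(C)\cong L(C')$ by composition.

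For the ``Only if'' direction I would construct the canonical base change in three normalizing steps. Step one: set $A_1=-\tfrac{A_0 c_{0,1}}{2c_{1,1}}$, which by the formula for $c'_{0,1}$ in Theorem 2.2 kills $c'_{0,1}$ (this uses $c_{1,1}\neq 0$). Step two: because $c_{1,2}=c_{2,3}=c_{3,4}=0$ the formula for $c'_{2,4}$ reduces to $B_1c_{2,4}/A_0^3$, so choose $B_1=A_0^3/c_{2,4}$ to obtain $c'_{2,4}=1$; the hypothesis $c_{2,4}\neq 0$ is needed here. Step three: solve for $A_0$ using the residual equation $c'_{1,1}=1$, i.e. $B_1c_{1,1}/A_0^6=1$, which together with the previous choice determines $A_0$ as a cube root of $c_{1,1}/c_{2,4}$; any branch works over $\mathbb{C}$. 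Finally, pick $B_3$ (the ugly rational expression stated by the author) so that $c'_{1,5}$ vanishes in the transformed algebra. Under these assignments, the remaining parameters $A_2,A_3,A_4,B_2,B_4,B_5$ can be chosen freely, and $c'_{1,2}=c'_{2,3}=c'_{3,4}=0$ automatically because the corresponding input constants vanish.

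It then remains to compute the three surviving coordinates. A direct substitution into the formulas for $c'_{0,0}$, $c'_{1,3}$ and $c'_{1,4}$ of Theorem 2.2 gives closed-form expressions in the structure constants of $L(C)$ that, after the normalization $c'_{1,1}=c'_{2,4}=1$, coincide (up to multiplicative constants absorbed into the definition of the $\lambda_i$) with the quantities
$\tfrac{(4c_{0,0}c_{1,1}-c_{0,1}^2)^3 c_{2,4}^{10}}{c_{1,1}^{10}}$,
$\tfrac{c_{1,3}}{c_{2,4}}$ and
$\tfrac{c_{1,4}^3}{c_{2,4}^2 c_{1,1}}$,
which are therefore invariants. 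To see that these are the \emph{only} obstructions to isomorphism, I would check that two algebras with the same triple $(\lambda_1,\lambda_2,\lambda_3)$ are transformed into the same canonical form. Statement 2, the surjectivity of the invariants, is then immediate: given any $(\lambda_1,\lambda_2,\lambda_3)\in\mathbb{C}^3$, the explicit algebra $L(\lambda_1,0,1,0,\lambda_2,\lambda_3,0,0,1,0)$ lies in $U_8^{20}$ and realizes the prescribed values (here one should also verify the admissibility constraint $c_{3,4}(c_{2,3}+2c_{1,2})=0$, which is trivial since $c_{3,4}=0$).

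The main obstacle is bookkeeping rather than conceptual: the formula for $c'_{1,5}$ in Theorem 2.2 involves roughly thirty monomials, and choosing $B_3$ to force $c'_{1,5}=0$ requires solving a linear equation in $B_3$ whose coefficient $-3A_0^4 B_1 B_2 c_{2,3}/\cdots$ survives only through interactions I must carefully track. Once that substitution is written down (the author displays the answer), verifying invariance of $c_{1,4}^3/(c_{2,4}^2c_{1,1})$ reduces to checking that the homogeneity weights match, which is a mechanical but error-prone check.
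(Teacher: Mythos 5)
Your proposal matches the paper's argument essentially verbatim: the paper's proof consists exactly of the normalizing choices $A_1=-\frac{A_0c_{0,1}}{2c_{1,1}}$ (killing $c'_{0,1}$), $B_1=\frac{A_0^3}{c_{2,4}}$ (giving $c'_{2,4}=1$, with $A_0$ then fixed by $c'_{1,1}=1$), and the displayed $B_3$ annihilating $c'_{1,5}$, after which the three stated quantities are read off from Theorem 2.2 as the complete invariants. Your route and the paper's are the same; the only quibble is your parenthetical identification of the surviving $B_3$-coefficient in $c'_{1,5}$ (on $U_8^{20}$ it is the $2A_0^4B_1^2c_{2,4}$ term, nonzero since $c_{2,4}\neq0$, while the $c_{2,3}$-term you name vanishes), which does not affect the argument.
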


\begin{pr}\emph{}
\begin{enumerate}
\item Two algebras $L(C)$ and $L(C')$ from $U_{8}^{21}$
are isomorphic, if and only if \\
 $$
 {\frac {c'_{{1,3}}}{c'_{{2,4}}}}={\frac {c_{{1,3}}}{c_{{2,4}}}},\ \ {\frac {{c'}_{{1,4}}^{5}}{{c'}_{{2,4}}^{5}c'_{{0,1}}}}
 ={\frac {c_{{1,4}}^{5}}{c_{{2,4}}^{5}c_{{0,1}}}}.
 $$
 \item For any $\lambda_1, \lambda_2\in \mathbb{C} ,$  there exists $L(C)\in
U_{8}^{21}:$ \ $ \ds {\frac {c_{{1,3}}}{c_{{2,4}}}}=\lambda_1, \ \
{\frac {c_{{1,4}}^{5}}{c_{{2,4}}^{5}c_{{0,1}}}}=\lambda_2.$

Then orbits in $U_{8}^{21}$ can be parameterized as
$L\left(0,0,1,0 ,0,\lambda_1, \lambda_2,0,0,1, 0\right),$ $
\lambda_1, \lambda_2\in \mathbb{C}.$
\end{enumerate}
\begin{proof}We put,
 $$ A_1=-{\frac {A_{{0}}c_{{0,0}}}{c_{{0,1}}}},\ \
B_1={\frac {{A_{{0}}}^{3}}{c_{{2,4}}}},\ {\rm{and}}\ B_3={\frac
{{B_{{2}}}^{2}{c_{{2,4}}}^{3}c_{{0,1}}-3\,{A_{{0}}}^{6}c
_{{0,0}}{c_{{1,3}}}^{2}+{A_{{0}}}^{6}c_{{1,5}}c_{{0,1}}+{A_{{0}}}^{6}c
_{{0,0}}c_{{1,3}}c_{{2,4}}}{2\,{A_{{0}}}^{3}{c_{{2,4}}}^{2}c_{{0,1}}}}
.$$
\end{proof}
\end{pr}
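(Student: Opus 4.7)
The plan is to specialize the Isomorphism Criterion for $TLeib_8$ to the defining conditions of $U_8^{21}$: namely $c_{3,4}=0$ (since $U_8^{21}\subset M_8^2$) together with $c_{1,1}=c_{1,2}=c_{2,3}=0$, $c_{0,1}\ne 0$, $c_{2,4}\ne 0$. Under these vanishings the denominator $A_0+A_1 c_{3,4}$ reduces to $A_0$ and most of the cross-terms in the transformation formulas disappear. In particular, Lemma \ref{L1} together with the formulas for $c'_{1,3}$, $c'_{1,4}$, $c'_{2,4}$ collapse to
\begin{equation*}
c'_{0,1}=\frac{c_{0,1}}{A_0^5},\ \ c'_{1,3}=\frac{B_1 c_{1,3}}{A_0^3},\ \ c'_{1,4}=\frac{B_1 c_{1,4}}{A_0^4},\ \ c'_{2,4}=\frac{B_1 c_{2,4}}{A_0^3}.
\end{equation*}
The ``if'' direction then reduces to direct verification that $c'_{1,3}/c'_{2,4}=c_{1,3}/c_{2,4}$ and $(c'_{1,4})^5/((c'_{2,4})^5 c'_{0,1})=c_{1,4}^5/(c_{2,4}^5 c_{0,1})$, both of which are immediate from the displayed formulas.

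For the ``only if'' direction, I would construct an explicit adapted base change of the form (\ref{bc.dim08}) carrying a given $L(C)\in U_8^{21}$ to the claimed representative, in which $c_{0,1}=c_{2,4}=1$, $c_{1,3}=\lambda_1$, $c_{1,4}=\lambda_2$, and all other structure constants are zero. The choices are dictated by the four equations above together with the desired vanishing. Using Lemma \ref{L1} I set $A_1=-A_0 c_{0,0}/c_{0,1}$, which forces $c'_{0,0}=0$. Next I fix $A_0$ by $A_0^5=c_{0,1}$ (possible since $c_{0,1}\ne 0$ and we work over $\mathbb{C}$), producing $c'_{0,1}=1$. Then $B_1=A_0^3/c_{2,4}$ gives $c'_{2,4}=1$, whence $c'_{1,3}=c_{1,3}/c_{2,4}=\lambda_1$ automatically, and $c'_{1,4}=c_{1,4}/(A_0 c_{2,4})$, a fifth root of $c_{1,4}^5/(c_{0,1} c_{2,4}^5)$, which I call $\lambda_2$. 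The higher coefficients $A_2,A_3,A_4,B_2,B_4,B_5$ can be set to $0$, leaving only $B_3$ to be used for the single remaining equation $c'_{1,5}=0$.

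The main computational obstacle is extracting $c'_{1,5}$ from the long expression in the criterion. After deleting every term containing a factor of $c_{3,4}$, $c_{1,1}$, $c_{1,2}$ or $c_{2,3}$, only a handful of terms survive, and they assemble into a single linear equation in $B_3$ whose leading coefficient $2A_0^4 B_1^2 c_{2,4}$ is nonzero; hence $B_3$ is uniquely determined and can be written in closed form in terms of the original structure constants (and the arbitrary $B_2$, which may be set to zero for simplicity). Part (2) of the proposition is then immediate: for any $(\lambda_1,\lambda_2)\in\mathbb{C}^2$ the tuple defining the claimed representative visibly satisfies the $U_8^{21}$ constraints and realizes the prescribed invariants.
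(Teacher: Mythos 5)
Your proposal is correct and follows essentially the same route as the paper: specialize the isomorphism criterion for $TLeib_8$ to the vanishings defining $U_8^{21}$, read off the two invariants for the ``if'' part, and for the ``only if'' part normalize via an adapted base change with $A_1=-A_0c_{0,0}/c_{0,1}$, $B_1=A_0^3/c_{2,4}$, and $B_3$ determined by the linear equation $c'_{1,5}=0$ (your surviving terms reproduce exactly the paper's closed form for $B_3$). The only cosmetic difference is that you fix $A_0^5=c_{0,1}$ and $B_2=0$ explicitly, while the paper leaves them arbitrary.
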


\begin{pr}\emph{}
\begin{enumerate}
\item Two algebras $L(C)$ and $L(C')$ from $U_{8}^{22}$
are isomorphic, if and only if
 $$ {\frac {{c'}_{{2,4}}^{8}c'_{{0,0}}}{{c'}_{{1,4}}^{7}}}= {\frac {c_{{2,4}}^{8}c_{{0,0}}}{c_{{1,4}}^{7}}},\ \
 {\frac {c'_{{1,3}}}{c'_{{2,4}}}}={\frac {c_{{1,3}}}{c_{{2,4}}}}.
 $$
 \item For any $\lambda_1, \lambda_2\in \mathbb{C} $  there exists $L(C)\in
U_{8}^{22}:$ \ $ \ds {\frac
{c_{{2,4}}^{8}c_{{0,0}}}{c_{{1,4}}^{7}}} =\lambda_1, \ {\frac
{c_{{1,3}}}{c_{{2,4}}}}=\lambda_2.$

Then orbits in $U_{8}^{22}$ can be parameterized as
$L\left(\lambda_1,0,0,0, \lambda_2,1,0,0,1, 0\right),$ $
\lambda_1, \lambda_2\in \mathbb{C}.$
\end{enumerate}
\begin{proof}We put here,
 $$ A_0={\frac {c_{{1,4}}}{c_{{2,4}}}},\ \
B_1={\frac {{c_{{1,4}}}^{3}}{{c_{{2,4}}}^{4}}},\ {\rm{and}}\
B_3={\frac
{{B_{{2}}}^{2}{c_{{2,4}}}^{9}+3\,{c_{{1,4}}}^{5}A_{{1}}{c_
{{1,3}}}^{2}c_{{2,4}}+{c_{{1,4}}}^{6}c_{{1,5}}-{c_{{1,4}}}^{5}A_{{1}}c
_{{1,3}}{c_{{2,4}}}^{2}}{2\,{c_{{2,4}}}^{5}{c_{{1,4}}}^{3}}} .$$
\end{proof}
\end{pr}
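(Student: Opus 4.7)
The argument follows the same template as Propositions 2.4--2.13 in this section, namely a direct specialization of the isomorphism criterion Theorem~2.2 to the stratum $U_8^{22}$. First, I would record the constraints that define the stratum and that it sits inside $M_8^2$, namely $c_{3,4}=c_{2,3}=c_{1,2}=c_{1,1}=c_{0,1}=0$, together with $c_{2,4}\neq 0$ and $c_{1,4}\neq 0$. Substituting these zeros into the ten transformation equations of Theorem~2.2 drastically simplifies them: the formulas for $c_{0,0}^{\prime}$, $c_{1,3}^{\prime}$, $c_{1,4}^{\prime}$, $c_{2,4}^{\prime}$ collapse to monomials, and the long sum defining $c_{1,5}^{\prime}$ keeps only those summands not involving any of the factors $c_{3,4}, c_{2,3}, c_{1,2}$ (a short scan leaves five surviving terms, linear in $B_3$ with nonzero coefficient).

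For the ``only if'' direction, I would substitute the vanishing assumptions into Theorem~2.2 to obtain
\[
c_{0,0}^{\prime}=\frac{c_{0,0}}{A_0^4 B_1},\quad c_{1,3}^{\prime}=\frac{B_1 c_{1,3}}{A_0^3},\quad c_{1,4}^{\prime}=\frac{B_1 c_{1,4}}{A_0^4},\quad c_{2,4}^{\prime}=\frac{B_1 c_{2,4}}{A_0^3},
\]
and then check by direct substitution that the two rational combinations $c_{2,4}^{8}c_{0,0}/c_{1,4}^{7}$ and $c_{1,3}/c_{2,4}$ are preserved under any adapted change of basis: the exponents are calibrated so that the powers of $A_0$ and $B_1$ cancel (for the first invariant the $A_0$-exponent is $8(-3)+(-4)+7(4)=0$ and the $B_1$-exponent is $8+(-1)-7=0$).

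For the ``if'' direction, I would exhibit an explicit normalizing adapted change of basis, taking $A_0=c_{1,4}/c_{2,4}$ and $B_1=c_{1,4}^3/c_{2,4}^4$, which simultaneously force $c_{1,4}^{\prime}=c_{2,4}^{\prime}=1$. The monomial formulas above then give $c_{1,3}^{\prime}=c_{1,3}/c_{2,4}=\lambda_2$ and $c_{0,0}^{\prime}=c_{2,4}^{8}c_{0,0}/c_{1,4}^{7}=\lambda_1$. The remaining normalization $c_{1,5}^{\prime}=0$ becomes a single linear equation in $B_3$ with leading coefficient $-2A_0^4 B_1^2 c_{2,4}\neq 0$, whose unique solution is exactly the $B_3$ written in the statement. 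The free auxiliary coefficients $A_1, A_2, A_3, A_4, B_2, B_4, B_5$ can be set arbitrarily, and a short inspection of the simplified formulas for $c_{1,1}^{\prime}, c_{0,1}^{\prime}, c_{1,2}^{\prime}, c_{2,3}^{\prime}, c_{3,4}^{\prime}$ shows that they remain zero automatically. Transitivity of isomorphism finishes the proof, and Part~(2) of the proposition is immediate because $A_0$ and $B_1$ can realize any target pair $(\lambda_1,\lambda_2)\in\mathbb{C}^2$.

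The only genuine obstacle here is bookkeeping: tracking which of the many summands in the $c_{1,5}^{\prime}$-formula survive the simultaneous vanishing of five structure constants, and verifying that the resulting equation for $B_3$ is unconditionally solvable under the standing hypothesis $A_0 B_1 c_{2,4}\neq 0$. Both are finite checks, so the proof amounts essentially to a careful substitution into Theorem~2.2.
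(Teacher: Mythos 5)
Your proposal is correct and follows essentially the same route as the paper, which for this proposition only records the normalizing values $A_0=c_{1,4}/c_{2,4}$, $B_1=c_{1,4}^3/c_{2,4}^4$ and the resulting $B_3$; your derivation of the monomial transformation laws, the exponent check for the two invariants, and the linear equation in $B_3$ (whose solution matches the paper's displayed $B_3$) fills in exactly the omitted bookkeeping. The only cosmetic difference is that you label the two directions of the biconditional in the standard way, opposite to the paper's loose usage.
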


\begin{pr}\emph{}
\begin{enumerate}
\item Two algebras $L(C)$ and $L(C')$ from $U_{8}^{23}$ are
isomorphic, if and only if\ \
 $ \ds   {\frac {c'_{{1,3}}}{c'_{{2,4}}}}={\frac {c_{{1,3}}}{c_{{2,4}}}}.
 $
 \item For any $\lambda\in \mathbb{C^*} , $  there exists $L(C)\in
U_{8}^{23}:$ \ \  $ \ds{\frac {c_{{1,3}}}{c_{{2,4}}}}=\lambda.$

Then orbits in $U_{8}^{23}$ can be parameterized as
$L\left(1,0,0,0,\lambda,0,0,0,1,0\right),$ $\lambda\in
\mathbb{C^*}.$
\end{enumerate}
\begin{proof}Put $$\ds B_1={\frac {{A_{{0}}}^{3}}{c_{{2,4}}}},\ {\rm{and}}\
B_3=\frac
{3\,{A_{{0}}}^{5}A_{{1}}{c_{{1,3}}}^{2}+{B_{{2}}}^{2}{c_{{
2,4}}}^{3}+{A_{{0}}}^{6}c_{{1,5}}-{A_{{0}}}^{5}A_{{1}}c_{{1,3}}c_{{2,4
}}}{2\,{A_{{0}}}^{3}{c_{{2,4}}}^{2}}.$$
\end{proof}
\end{pr}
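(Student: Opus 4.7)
The plan is to specialize Theorem 2.2 (the isomorphism criterion for $TLeib_8$) to the defining constraints of $U_8^{23}$, namely $c_{3,4}=c_{2,3}=c_{1,2}=c_{1,1}=c_{0,1}=c_{1,4}=0$ with $c_{0,0}\neq 0$ and $c_{2,4}\neq 0$. Under these constraints the transformation laws collapse substantially: a direct substitution gives $c_{1,3}'=B_1 c_{1,3}/A_0^3$ and $c_{2,4}'=B_1 c_{2,4}/A_0^3$, so the ratio $c_{1,3}/c_{2,4}$ is preserved by any adapted change of basis. This proves the ``only if'' direction of (1) and identifies $\lambda=c_{1,3}/c_{2,4}$ as the sole invariant.

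For the ``if'' direction I would exhibit an explicit adapted transformation of the form (\ref{bc.dim08}) sending $L(C)$ to the representative $L(1,0,0,0,\lambda,0,0,0,1,0)$. Setting $B_1=A_0^3/c_{2,4}$ forces $c_{2,4}'=1$ and $c_{1,3}'=\lambda$. Next, choosing $A_0$ to satisfy $A_0^7=c_{0,0}c_{2,4}$ yields $c_{0,0}'=c_{0,0}/(A_0^4 B_1)=1$. The remaining vanishing conditions $c_{0,1}'=c_{1,1}'=c_{1,2}'=c_{2,3}'=c_{3,4}'=c_{1,4}'=0$ are automatic because each corresponding transformation law contains a factor that already vanishes in $U_8^{23}$.

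The only nontrivial step is handling $c_{1,5}'$, whose transformation formula in Theorem 2.2 has roughly forty monomials. After discarding every summand carrying a factor of $c_{1,1}$, $c_{0,1}$, $c_{1,2}$, $c_{2,3}$, $c_{1,4}$ or $c_{3,4}$, only a handful of terms survive, and these collapse to a short linear equation for $B_3$. Solving it recovers precisely the prescription $B_3=(3A_0^5A_1 c_{1,3}^2+B_2^2 c_{2,4}^3+A_0^6 c_{1,5}-A_0^5A_1 c_{1,3} c_{2,4})/(2A_0^3 c_{2,4}^2)$, leaving $A_1$ and $B_2$ free (they do not affect the representative). Part (2) of the proposition is then immediate, since the representative $L(1,0,0,0,\lambda,0,0,0,1,0)$ visibly lies in $U_8^{23}$ and realizes any chosen $\lambda$. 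The only genuine obstacle is the bookkeeping needed to reduce the long $c_{1,5}'$ expression to the short surviving polynomial in $B_3$.
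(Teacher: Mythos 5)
Your proof is correct and follows essentially the same route as the paper: the paper's entire proof consists of prescribing $B_1=A_0^3/c_{2,4}$ and the same value of $B_3$ that you derive from annihilating $c_{1,5}'$, with the "if" direction read off from Theorem 2.2 exactly as you do via the laws $c_{1,3}'=B_1c_{1,3}/A_0^3$ and $c_{2,4}'=B_1c_{2,4}/A_0^3$. Your explicit normalization $A_0^7=c_{0,0}c_{2,4}$ to force $c_{0,0}'=1$ is a detail the paper leaves implicit, and it is the right one.
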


\begin{pr}\emph{}
\begin{enumerate}
\item Two algebras $L(C)$ and $L(C')$ from $ U_{8}^{24}$ are
isomorphic, if and only if\ \
 $ \ds   {\frac {c'_{{1,3}}}{c'_{{2,4}}}}={\frac {c_{{1,3}}}{c_{{2,4}}}}.
 $
 \item For any $\lambda\in \mathbb{C^*} , $  there exists $L(C)\in
 U_{8}^{24}:$ \ \  $ \ds{\frac
{c_{{1,3}}}{c_{{2,4}}}}=\lambda.$

Then orbits in $U_{8}^{24}$ can be parameterized as
$L\left(0,0,0,0,\lambda,0,0,0,1,0\right),  \ \lambda\in
\mathbb{C^*}.$
\end{enumerate}
\begin{proof}Here, $$\ds B_1={\frac {{A_{{0}}}^{3}}{c_{{2,4}}}},\ {\rm{and}}\ \ds
B_3=\frac
{3\,{A_{{0}}}^{5}A_{{1}}{c_{{1,3}}}^{2}+{B_{{2}}}^{2}{c_{{
2,4}}}^{3}+{A_{{0}}}^{6}c_{{1,5}}-{A_{{0}}}^{5}A_{{1}}c_{{1,3}}c_{{2,4
}}}{2\,{A_{{0}}}^{3}{c_{{2,4}}}^{2}}.$$
\end{proof}
\end{pr}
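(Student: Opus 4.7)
The plan is to apply the isomorphism criterion of Theorem~2.2 under the seven vanishing conditions defining $U_{8}^{24}$, namely $c_{3,4}=c_{2,3}=c_{1,2}=c_{1,1}=c_{0,1}=c_{1,4}=c_{0,0}=0$ with $c_{2,4}\neq 0$, so that only $c_{1,3}$, $c_{1,5}$, and $c_{2,4}$ can be nonzero. I would first specialize the transformation formulas of Theorem~2.2 under these constraints. Because $c_{3,4}=0$, the denominator $A_0+c_{3,4}A_1$ collapses to $A_0$, and every bracket in the formula for $c_{2,4}'$ that carries a factor $c_{3,4}$, $c_{2,3}$, or $c_{1,2}$ disappears, leaving $c_{2,4}' = B_1 c_{2,4}/A_0^3$; similarly $c_{1,3}' = B_1(A_0 c_{1,3}+2A_1 c_{1,2}^2)/A_0^4 = B_1 c_{1,3}/A_0^3$. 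Dividing yields the invariant identity $c_{1,3}'/c_{2,4}' = c_{1,3}/c_{2,4}$, which gives the necessity direction of part~(i) and ensures that the representatives $L(0,0,0,0,\lambda,0,0,0,1,0)$ belong to distinct orbits for distinct values of $\lambda\in\mathbb{C}^*$.

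For the sufficiency of part~(i) and for part~(ii), I would construct an explicit adapted base change of the form (\ref{bc.dim08}) sending $L(C)\in U_8^{24}$ to the canonical representative $L(0,0,0,0,\lambda,0,0,0,1,0)$ with $\lambda = c_{1,3}/c_{2,4}$. The choice $B_1 = A_0^3/c_{2,4}$ normalizes $c_{2,4}'=1$ and transports $c_{1,3}$ to $\lambda$. All other parameters specified in the defining vanishing set of $U_8^{24}$ are inherited automatically from their transformation laws, since each surviving monomial carries a factor that is already zero. The remaining task is to annihilate $c_{1,5}'$ by an appropriate choice of $B_3$, which is exactly the value of $B_3$ quoted in the proposition.

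The main obstacle is the bookkeeping needed to extract the handful of surviving monomials from the lengthy formula for $c_{1,5}'$ in Theorem~2.2. Under our vanishing conditions the expression reduces to a linear function of $B_3$ with nonzero coefficient proportional to $A_0^4 B_1^2 c_{2,4}$; the constant term involves only $c_{1,3}$, $c_{1,5}$, $c_{2,4}$, together with $A_0$, $A_1$, and $B_2$. Solving this linear equation for $B_3$ and substituting the normalization $B_1 = A_0^3/c_{2,4}$ reproduces the stated expression
\[
B_3 \;=\; \frac{3A_0^{5}A_1\,c_{1,3}^{2}+B_2^{2}c_{2,4}^{3}+A_0^{6}c_{1,5}-A_0^{5}A_1\,c_{1,3}\,c_{2,4}}{2A_0^{3}c_{2,4}^{2}}.
\]
The remaining parameters $A_0\neq 0$, $A_1$, $A_2$, $A_3$, $A_4$, $B_2$, $B_4$, $B_5$ stay free, and the integrability constraint $c_{3,4}(c_{2,3}+2c_{1,2})=0$ is satisfied trivially on both sides since $c_{3,4}=0$, so the constructed base change is adapted and yields the parameterization $L(0,0,0,0,\lambda,0,0,0,1,0)$ of the orbits in $U_8^{24}$.
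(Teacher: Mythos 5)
Your proposal is correct and follows essentially the same route as the paper: specialize the isomorphism criterion for $TLeib_{8}$ under the vanishing conditions of $U_{8}^{24}$ to read off the invariant $c_{1,3}/c_{2,4}$ (both $c_{1,3}'$ and $c_{2,4}'$ reduce to $B_{1}c_{\cdot}/A_{0}^{3}$), then normalize with $B_{1}=A_{0}^{3}/c_{2,4}$ and kill $c_{1,5}'$ by solving the linear equation in $B_{3}$, which reproduces exactly the value of $B_{3}$ the paper records. The paper merely states these two coefficient choices and leaves the verification implicit, so your write-up is the same argument with the omitted bookkeeping filled in.
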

\begin{pr}\emph{}
\begin{enumerate}
\item Two algebras $L(C)$ and $L(C')$ from $U_{8}^{25}$
are isomorphic, if and only if
 $$ {\frac { \left( 4\,c'_{{0,0}}{c'}_{{1,2}}^{4}-2\,c'_{{1,3}}c'_{{0,1}}
{c'}_{{1,2}}^{2}+{c'}_{{1,3}}^{2}{c'}_{{1,1}} \right)
{c'}_{{1,2}}^{3}}{
 \left( 4\,c'_{{1,4}}c'_{{1,2}}-5\,{c'}_{{1,3}}^{2} \right) ^{3}}}={\frac { \left( 4\,c_{{0,0}}c_{{1,2}}^{4}-2\,c_{{1,3}}c_{{0,1}}
c_{{1,2}}^{2}+c_{{1,3}}^{2}c_{{1,1}} \right) c_{{1,2}}^{3}}{
 \left( 4\,c_{{1,4}}c_{{1,2}}-5\,c_{{1,3}}^{2} \right) ^{3}}}
 ,$$ $$
 {\frac { \left( -c'_{{1,3}}c'_{{1,1}}+c'_{{0,1}}{c'}_{{1,2}}^{2} \right) ^
{2}{c'}_{{1,2}}^{6}}{ \left(
4\,c'_{{1,4}}c'_{{1,2}}-5\,{c'}_{{1,3}}^{2}
 \right) ^{5}}}={\frac { \left( -c_{{1,3}}c_{{1,1}}+c_{{0,1}}c_{{1,2}}^{2} \right) ^
{2}c_{{1,2}}^{6}}{ \left( 4\,c_{{1,4}}c_{{1,2}}-5\,c_{{1,3}}^{2}
 \right) ^{5}}}
,$$ $$ {\frac {{c'}_{{1,2}}^{3}c'_{{1,1}}}{ \left(
4\,c'_{{1,4}}c'_{{1,2}}-5 \,{c'}_{{1,3}}^{2} \right) ^{2}}}
={\frac {c_{{1,2}}^{3}c_{{1,1}}}{ \left(
4\,c_{{1,4}}c_{{1,2}}-5 \,c_{{1,3}}^{2} \right) ^{2}}} ,$$ $$
{\frac { \left(
2\,{c'}_{{1,3}}^{3}+c'_{{1,5}}{c'}_{{1,2}}^{2}-3\,c'_{
{1,3}}c'_{{1,4}}c'_{{1,2}} \right) ^{2}}{ \left(
4\,c'_{{1,4}}c'_{{1,2}}-5 \,{c'}_{{1,3}}^{2} \right) ^{3}}}={\frac
{ \left( 2\,c_{{1,3}}^{3}+c_{{1,5}}c_{{1,2}}^{2}-3\,c_{
{1,3}}c_{{1,4}}c_{{1,2}} \right) ^{2}}{ \left(
4\,c_{{1,4}}c_{{1,2}}-5 \,c_{{1,3}}^{2} \right) ^{3}}}
 .
 $$
 \item For any $\lambda_1, \lambda_2, \lambda_3, \lambda_4 \in \mathbb{C},$  there exists $L(C)\in
U_{8}^{25}:$  $$   {\frac { \left(
4\,c_{{0,0}}c_{{1,2}}^{4}-2\,c_{{1,3}}c_{{0,1}}
c_{{1,2}}^{2}+c_{{1,3}}^{2}c_{{1,1}} \right) c_{{1,2}}^{3}}{
 \left( 4\,c_{{1,4}}c_{{1,2}}-5\,c_{{1,3}}^{2} \right) ^{3}}} =\lambda_1, \ \  {\frac { \left( -c_{{1,3}}c_{{1,1}}+c_{{0,1}}c_{{1,2}}^{2} \right) ^
{2}c_{{1,2}}^{6}}{ \left( 4\,c_{{1,4}}c_{{1,2}}-5\,c_{{1,3}}^{2}
 \right) ^{5}}}=\lambda_2,$$ $$ {\frac {c_{{1,2}}^{3}c_{{1,1}}}{ \left(
4\,c_{{1,4}}c_{{1,2}}-5 \,c_{{1,3}}^{2} \right) ^{2}}}
=\lambda_3,\ \ \ \ \ \ \ \ \ \ \ \ \ \ \ \ {\frac { \left(
2\,c_{{1,3}}^{3}+c_{{1,5}}c_{{1,2}}^{2}-3\,c_{
{1,3}}c_{{1,4}}c_{{1,2}} \right) ^{2}}{ \left(
4\,c_{{1,4}}c_{{1,2}}-5 \,c_{{1,3}}^{2} \right) ^{3}}}=\lambda_4.$$

Then orbits in $U_{8}^{25}$ can be parameterized as
$L\left(\lambda_1, \lambda_2, \lambda_3,1,0,1,\lambda_4,
0,0,0\right),$ $\lambda_1, \lambda_2, \lambda_3, \lambda_4 \in
\mathbb{C}.$
\end{enumerate}
\begin{proof}Here,
$$ A_1=-{\frac {A_{{0}}c_{{1,3}}}{2\,{c_{{1,2}}}^{2}}},\ {\rm{and}}\
\ds B_1={\frac {{A_{{0}}}^{2}}{c_{{1,2}}}}.$$
\end{proof}
\end{pr}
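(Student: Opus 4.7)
The plan is to follow the uniform methodology employed throughout Section 2. For the ``if'' direction I will substitute the transformation formulas of Theorem 2.2 into each of the four candidate invariants. Because on $U_{8}^{25}$ we have $c_{2,3}=c_{2,4}=c_{3,4}=0$, the general expressions for $c'_{0,0}$, $c'_{0,1}$, $c'_{1,1}$, $c'_{1,2}$, $c'_{1,3}$, $c'_{1,4}$, $c'_{1,5}$ collapse to much shorter forms, and the denominators and integer powers appearing in the quotients $\lambda_1,\lambda_2,\lambda_3,\lambda_4$ are calibrated so that every remaining occurrence of $A_0$, $A_1$, $B_1$ cancels identically. A direct monomial-by-monomial comparison will then yield the four claimed equalities.

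For the ``only if'' direction I would specialize the adapted base change (\ref{bc.dim08}) by taking
\[
A_1=-\frac{A_0 c_{1,3}}{2 c_{1,2}^{2}}, \qquad B_1=\frac{A_0^{2}}{c_{1,2}},
\]
keeping $A_0, A_2, A_3, A_4$ and $B_2, B_3, B_4, B_5$ still available. The first choice kills the term $2A_1 c_{1,2}^{2}$ in the formula for $c'_{1,3}$ and thereby forces $c'_{1,3}=0$; the second normalizes $c'_{1,2}$ to $1$. A suitable subsequent choice of $A_0$, possible exactly because $4 c_{1,4} c_{1,2}-5 c_{1,3}^{2}\neq 0$ on $U_{8}^{25}$, then normalizes $c'_{1,4}$ to $1$. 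The remaining free parameters $A_2, A_3, A_4$ and $B_2, B_3, B_4, B_5$ are deployed, one at a time and in a triangular manner, to annihilate the auxiliary contributions arising in $c'_{1,5}$ and to leave $c'_{2,3}, c'_{2,4}, c'_{3,4}$ at their already-vanishing values.

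After these normalizations the four surviving structure constants are precisely $\lambda_1, \lambda_2, \lambda_3, \lambda_4$ as displayed in item $(2)$; in particular $L(\lambda_1, \lambda_2, \lambda_3, 1, 0, 1, \lambda_4, 0, 0, 0)$ realizes any prescribed quadruple, which proves part $(2)$. I expect the main obstacle to be the bookkeeping required to verify the invariance of the $c'_{1,5}$-based quantity $\lambda_4$, since the general transformation rule for $c'_{1,5}$ spans roughly thirty monomials in Theorem 2.2. However, the specialization $c_{2,3}=c_{2,4}=c_{3,4}=0$ kills the large majority of these, and the residual contribution can be absorbed by the $B_5$-degree of freedom in (\ref{bc.dim08}), matching the pattern established for the earlier subsets of $TLeib_8$.
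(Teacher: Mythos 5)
Your proposal follows exactly the paper's method: the ``if'' part by substituting the isomorphism-criterion formulas of Theorem~2.2 into the four rational expressions, and the ``only if'' part by a normalizing adapted base change with $A_1=-\frac{A_0c_{1,3}}{2c_{1,2}^{2}}$ and $B_1=\frac{A_0^{2}}{c_{1,2}}$ --- precisely the two coefficient values the paper records as its entire proof. The additional detail you supply (choosing $A_0$ to normalize $c'_{1,4}$, and deploying the remaining free parameters to control $c'_{1,5}$) is consistent with, and somewhat more explicit than, the paper's own presentation.
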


\begin{pr}\emph{}
\begin{enumerate}
\item Two algebras $L(C)$ and $L(C')$ from $U_{8}^{26}$
are isomorphic, if and only if
 $$ {\frac { \left(
 4\,c'_{{0,0}}{c'}_{{1,2}}^{4}-2\,c'_{{1,3}}c'_{{0,1}}{c'}
_{{1,2}}^{2}+{c'}_{{1,3}}^{2}{c'}_{{1,1}} \right)
{c'}_{{1,2}}^{3}}{
 \left( -7\,{c'}_{{1,3}}^{3}+4\,{c'}_{{1,5}}{c'}_{{1,2}}^{2} \right) ^{2}}}={\frac { \left(
 4\,c_{{0,0}}c_{{1,2}}^{4}-2\,c_{{1,3}}c_{{0,1}}c
_{{1,2}}^{2}+c_{{1,3}}^{2}c_{{1,1}} \right) c_{{1,2}}^{3}}{
 \left( -7\,c_{{1,3}}^{3}+4\,c_{{1,5}}c_{{1,2}}^{2} \right) ^{2}}}
 ,$$ $$
{\frac { \left( -c'_{{1,3}}c'_{{1,1}}+c'_{{0,1}}{c'}_{{1,2}}^{2}
 \right) ^{3}{c'}_{{1,2}}^{9}}{ \left( -7\,{c'}_{{1,3}}^{3}+4\,c'_{{1,5}}
{c'}_{{1,2}}^{2} \right) ^{5}}} ={\frac { \left(
-c_{{1,3}}c_{{1,1}}+c_{{0,1}}c_{{1,2}}^{2}
 \right) ^{3}c_{{1,2}}^{9}}{ \left( -7\,c_{{1,3}}^{3}+4\,c_{{1,5}}
c_{{1,2}}^{2} \right) ^{5}}} ,$$ $$ {\frac
{{c'}_{{1,2}}^{9}{c'}_{{1,1}}^{3}}{ \left( -7\,{c'}_{{1,3}}^{
3}+4\,c'_{{1,5}}{c'}_{{1,2}}^{2} \right) ^{4}}}={\frac
{c_{{1,2}}^{9}c_{{1,1}}^{3}}{ \left( -7\,c_{{1,3}}^{
3}+4\,c_{{1,5}}c_{{1,2}}^{2} \right) ^{4}}}
 .
 $$
 \item For any $\lambda_1, \lambda_2, \lambda_3 \in \mathbb{C}, $  there exists $L(C)\in
U_{8}^{26}:$ \ $ \ds  {\frac { \left(
4\,c_{{0,0}}c_{{1,2}}^{4}-2\,c_{{1,3}}c_{{0,1}}c
_{{1,2}}^{2}+c_{{1,3}}^{2}c_{{1,1}} \right) c_{{1,2}}^{3}}{
 \left( -7\,c_{{1,3}}^{3}+4\,c_{{1,5}}c_{{1,2}}^{2} \right) ^{2}}} =\lambda_1,$ $$ {\frac { \left( -c_{{1,3}}c_{{1,1}}+c_{{0,1}}c_{{1,2}}^{2}
 \right) ^{3}c_{{1,2}}^{9}}{ \left( -7\,c_{{1,3}}^{3}+4\,c_{{1,5}}
c_{{1,2}}^{2} \right) ^{5}}}=\lambda_2, \ \ {\frac
{c_{{1,2}}^{9}c_{{1,1}}^{3}}{ \left( -7\,c_{{1,3}}^{
3}+4\,c_{{1,5}}c_{{1,2}}^{2} \right) ^{4}}}=\lambda_3.$$

Then orbits in $U_{8}^{26}$ can be parameterized as
$L\left(\lambda_1, \lambda_2, \lambda_3,1,0,0,1, 0,0,0\right),\ \
\lambda_1, \lambda_2, \lambda_3 \in \mathbb{C}.$
\end{enumerate}
\begin{proof}For this case, put in the base change (\ref{bc.dim08}),
$$ A_1=-{\frac {A_{{0}}c_{{1,3}}}{2\,{c_{{1,2}}}^{2}}},\ {\rm{and}}\
\ds B_1={\frac {{A_{{0}}}^{2}}{c_{{1,2}}}}.$$
\end{proof}
\end{pr}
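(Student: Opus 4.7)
The plan is to follow the template of the earlier propositions in this section (in particular Proposition 2.25, which shares the same ambient locus $c_{3,4}=c_{2,3}=c_{2,4}=0$, $c_{1,2}\neq 0$). The starting point is the isomorphism criterion of Theorem 2.2 specialized to $U_{8}^{26}$, where the defining constraints cause the general transformation formulas to collapse substantially: the factor $A_0+A_1c_{3,4}$ reduces to $A_0$, and the expressions for $c'_{1,2}$, $c'_{1,3}$, $c'_{1,4}$, $c'_{1,5}$ become low-degree rational functions of $A_0,A_1,B_1,B_2,B_3$ subject to $A_0B_1\neq 0$.

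For the ``if'' direction, I would substitute the simplified transformation laws into each of the three candidate invariants and verify by direct computation that the $A_i$, $B_j$ cancel out. The quantities have been constructed so that their numerators and denominators are weight-homogeneous under the action of $\tau(A_0,A_1,B_1)$, and the extra shifts produced by $A_1,B_2,B_3$ preserve the defining relations $4c_{1,4}c_{1,2}-5c_{1,3}^2=0$, $-7c_{1,3}^3+4c_{1,5}c_{1,2}^2\neq 0$ and $c_{2,3}=c_{2,4}=c_{3,4}=0$, so the three ratios are genuinely constant on orbits.

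For the ``only if'' direction, I would take the adapted base change~(\ref{bc.dim08}) with
\[
A_1=-\frac{A_0\,c_{1,3}}{2\,c_{1,2}^2},\qquad B_1=\frac{A_0^2}{c_{1,2}},
\]
which immediately yields $c'_{1,3}=0$ and $c'_{1,2}=1$. The remaining free parameter $A_0$ is then fixed (up to a root of unity) by demanding $c'_{1,5}=1$; this is possible precisely because the defining inequality $-7c_{1,3}^3+4c_{1,5}c_{1,2}^2\neq 0$ guarantees a nonvanishing leading coefficient in the resulting expression for $c'_{1,5}$. The auxiliary parameters $B_2,B_3$ are then tuned to make $c'_{1,4}=0$, consistent with the preserved relation $4c'_{1,4}c'_{1,2}-5{c'_{1,3}}^{2}=0$. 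Once these normalizations are in place, the three isomorphism invariants force the remaining coefficients $c'_{0,0},c'_{0,1},c'_{1,1}$ to take the values $\lambda_1,\lambda_2,\lambda_3$, so $L(C)\cong L(\lambda_1,\lambda_2,\lambda_3,1,0,0,1,0,0,0)$.

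The main obstacle, as in the preceding propositions, is verifying that the three listed scalar quantities form a \emph{complete} system of invariants, i.e.\ that the residual stabilizer of the normal form in $G_{\mathrm{ad}}$ acts trivially on $(c'_{0,0},c'_{0,1},c'_{1,1})$. This should reduce to a finite count once the number of free transformation parameters has been matched against the normalization conditions just imposed, exactly as in the analogous cases $U_8^{25}$ and $U_7^{12}$.
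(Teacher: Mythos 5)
Your proposal follows essentially the same route as the paper: the paper's entire argument consists of taking the adapted base change (\ref{bc.dim08}) with $A_1=-\frac{A_0c_{1,3}}{2c_{1,2}^2}$ and $B_1=\frac{A_0^2}{c_{1,2}}$, exactly as you do, the ``if'' part being direct substitution into the isomorphism criterion of Theorem 2.2. Two small remarks: you are right that $A_0$ must additionally be fixed by requiring $c'_{1,5}=1$ (one finds $c'_{1,5}=\frac{4c_{1,5}c_{1,2}^2-7c_{1,3}^3}{4A_0^3c_{1,2}^3}$, nonzero on $U_8^{26}$, a detail the paper leaves implicit); on the other hand, since $c_{2,3}=0$ on $U_8^{26}$ the parameters $B_2,B_3$ drop out of the transformation rule for $c_{1,4}$, so $c'_{1,4}=0$ is not ``tuned'' by them but follows automatically from $c'_{1,3}=0$ together with the preserved relation $4c'_{1,4}c'_{1,2}=5{c'}_{1,3}^{2}$, which you also invoke.
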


\begin{pr}\emph{}
\begin{enumerate}
\item Two algebras $L(C)$ and $L(C')$ from $U_{8}^{27}$
are isomorphic, if and only if
 $${\frac { \left( 4\,c'_{{0,0}}{c'}_{{1,2}}^{4}-2\,c'_{{1,3}}c'_{{0,1}}{c'}_{{
1,2}}^{2}+{c'}_{{1,3}}^{2}c'_{{1,1}} \right)
^{2}}{{c'}_{{1,2}}^{3}{c'}_{{ 1,1}}^{3}}}={\frac { \left(
4\,c_{{0,0}}c_{{1,2}}^{4}-2\,c_{{1,3}}c_{{0,1}}c_{{
1,2}}^{2}+c_{{1,3}}^{2}c_{{1,1}} \right) ^{2}}{c_{{1,2}}^{3}c_{{
1,1}}^{3}}}
 ,$$ $$
{\frac { \left( -c'_{{1,3}}c'_{{1,1}}+c'_{{0,1}}{c'}_{{1,2}}^{2}
\right) ^ {4}}{{c'}_{{1,2}}^{3}{c'}_{{1,1}}^{5}}}={\frac { \left(
-c_{{1,3}}c_{{1,1}}+c_{{0,1}}c_{{1,2}}^{2} \right) ^
{4}}{c_{{1,2}}^{3}c_{{1,1}}^{5}}}
 .
 $$
 \item For any $\lambda_1, \lambda_2 \in \mathbb{C}, $  there exists $L(C)\in
U_{8}^{27}:$ \ $$ {\frac { \left(
4\,c_{{0,0}}c_{{1,2}}^{4}-2\,c_{{1,3}}c_{{0,1}}c_{{
1,2}}^{2}+c_{{1,3}}^{2}c_{{1,1}} \right) ^{2}}{c_{{1,2}}^{3}c_{{
1,1}}^{3}}} =\lambda_1, \ \ {\frac { \left(
-c_{{1,3}}c_{{1,1}}+c_{{0,1}}c_{{1,2}}^{2} \right) ^
{4}}{c_{{1,2}}^{3}c_{{1,1}}^{5}}}=\lambda_2.$$

Then orbits in $U_{8}^{27}$ can be parameterized as
$L\left(\lambda_1, \lambda_2, 1,1,0,0,0, 0,0,0\right),$ $
\lambda_1, \lambda_2 \in \mathbb{C}.$
\end{enumerate}
\begin{proof}Put in the base change (\ref{bc.dim08}),
$$ A_1=-{\frac {A_{{0}}c_{{1,3}}}{2\,{c_{{1,2}}}^{2}}},\ {\rm{and}}\
\ds B_1={\frac {{A_{{0}}}^{2}}{c_{{1,2}}}}.$$
\end{proof}
\end{pr}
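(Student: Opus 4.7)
The strategy mirrors the arguments for the preceding subsets $U_8^{25}$ and $U_8^{26}$: exploit the explicit isomorphism criterion of Theorem 2.2, specialized to the defining constraints of $U_8^{27}$ (namely $c_{3,4}=c_{2,3}=c_{2,4}=0$, $c_{1,2}\neq 0$, $c_{1,1}\neq 0$, together with $4c_{1,4}c_{1,2}-5c_{1,3}^2=0$ and $-7c_{1,3}^3+4c_{1,5}c_{1,2}^2=0$), and reduce every algebra in this subset to a canonical form depending on only two free parameters.

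For the ``if'' direction I would apply the adapted change of basis (\ref{bc.dim08}) with $A_1=-A_0c_{1,3}/(2c_{1,2}^2)$ and $B_1=A_0^2/c_{1,2}$. The choice of $A_1$ is designed to kill $c'_{1,3}=B_1(A_0c_{1,3}+2A_1c_{1,2}^2)/A_0^4$, while the choice of $B_1$ gives $c'_{1,2}=B_1c_{1,2}/A_0^2=1$. Because the defining identities of $U_8^{27}$ let one replace $c_{1,4}$ and $c_{1,5}$ by $5c_{1,3}^2/(4c_{1,2})$ and $7c_{1,3}^3/(4c_{1,2}^2)$ respectively, a direct substitution into the formulas for $c'_{1,4}$ and $c'_{1,5}$ from Theorem 2.2 yields $c'_{1,4}=c'_{1,5}=0$ after an appropriate choice of the auxiliary parameters $B_3$ and $B_4$. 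The remaining free parameters $A_2,A_3,A_4,B_2,B_5$ then leave $c'_{2,3},c'_{2,4},c'_{3,4}$ untouched at zero, and picking $A_0$ with $A_0^4=c_{1,1}/c_{1,2}$ normalizes $c'_{1,1}=B_1c_{1,1}/A_0^6$ to $1$. Substituting these specializations into the formulas for $c'_{0,0}$ and $c'_{0,1}$ in Theorem 2.2, then rewriting the resulting rational functions of $(c_{0,0},c_{0,1},c_{1,1},c_{1,2},c_{1,3})$, produces exactly the two expressions $\lambda_1$ and $\lambda_2$ stated in the proposition.

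The ``only if'' direction is then a direct consequence: if $L(C)\cong L(C')$ inside $U_8^{27}$, both reduce to a common representative $L(\lambda_1,\lambda_2,1,1,0,0,0,0,0,0)$, so the values of $\lambda_1$ and $\lambda_2$ computed from $C$ and from $C'$ must coincide. This is precisely the equality of the two displayed invariants. Conversely, if the invariants agree, the two algebras reduce to the same canonical form and are therefore isomorphic.

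The main obstacle is purely computational: one has to verify the cancellations that turn $c'_{1,4}$ and $c'_{1,5}$ into zero after using the two polynomial constraints, and then recognize in the simplified numerator of $c'_{0,0}$ the square $(4c_{0,0}c_{1,2}^4-2c_{1,3}c_{0,1}c_{1,2}^2+c_{1,3}^2c_{1,1})^2$ and in $c'_{0,1}$ the fourth power $(-c_{1,3}c_{1,1}+c_{0,1}c_{1,2}^2)^4$, after dividing by the correct powers of $c_{1,1}$ and $c_{1,2}$. Since Propositions 2.14 and 2.15 followed the same template with identical choices of $A_1$ and $B_1$, this verification is mechanical, and in the spirit of the author's note before Proposition 2.3 the detailed algebra can be omitted once the auxiliary values of $A_0, A_1, B_1$ (and the induced $B_3, B_4$) are displayed.
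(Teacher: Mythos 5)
Your proposal is correct and follows exactly the paper's route: the paper's entire proof for this proposition consists of specifying $A_1=-A_0c_{1,3}/(2c_{1,2}^2)$ and $B_1=A_0^2/c_{1,2}$ in the base change (\ref{bc.dim08}), which is precisely your choice, with your normalization $A_0^4=c_{1,1}/c_{1,2}$ being the implicit one that fixes $c'_{1,1}=c'_{1,2}=1$. One minor inaccuracy: since $c_{2,3}=c_{3,4}=0$ on this stratum, the parameters $B_3$ and $B_4$ actually drop out of the formulas for $c'_{1,4}$ and $c'_{1,5}$ entirely, so their vanishing must come from the two polynomial constraints $4c_{1,4}c_{1,2}-5c_{1,3}^2=0$ and $4c_{1,5}c_{1,2}^2-7c_{1,3}^3=0$ together with the chosen $A_1$, not from a choice of those auxiliary parameters (consistent with the paper leaving them arbitrary here).
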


\begin{pr}\emph{}
\begin{enumerate}
\item Two algebras $L(C)$ and $L(C')$ from $U_{8}^{28}$
are isomorphic, if and only if
 $$  {\frac { \left( 2\,c'_{{0,0}}{c'}_{{1,2}}^{2}-c'_{{1,3}}c'_{{0,1}}
 \right) ^{5}}{{c'}_{{1,2}}^{5}{c'}_{{0,1}}^{6}}}={\frac { \left( 2\,c_{{0,0}}c_{{1,2}}^{2}-c_{{1,3}}c_{{0,1}}
 \right) ^{5}}{c_{{1,2}}^{5}c_{{0,1}}^{6}}}.
 $$
 \item For any $\lambda_1 \in \mathbb{C} ,$  there exists $L(C)\in
U_{8}^{28}:$ \ $ \ds {\frac { \left(
2\,c_{{0,0}}c_{{1,2}}^{2}-c_{{1,3}}c_{{0,1}}
 \right) ^{5}}{c_{{1,2}}^{5}c_{{0,1}}^{6}}}=\lambda_1.$

Then orbits in $U_{8}^{28}$ can be parameterized as
$L\left(\lambda_1, 1, 0,1,0,0,0, 0,0,0\right),\ \ \lambda_1 \in
\mathbb{C}.$
\end{enumerate}
\begin{proof}For this case, we put
$$\ds A_1=-{\frac {A_{{0}}c_{{1,3}}}{2\,{c_{{1,2}}}^{2}}},\ {\rm{and}}\
\ds B_1={\frac {{A_{{0}}}^{2}}{c_{{1,2}}}}.$$
\end{proof}
\end{pr}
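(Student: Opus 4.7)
The proof follows the template of Propositions 2.3--2.22: invoke the Isomorphism Criterion for $TLeib_8$ together with the adapted base change (\ref{bc.dim08}).

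\emph{If} direction. I would substitute the transformation formulas from the criterion into the stated rational expression, exploiting that on $U_8^{28}$ one has $c_{1,1}=c_{2,3}=c_{2,4}=c_{3,4}=0$. Under these vanishings the relevant formulas collapse to
$$c'_{0,1}=\frac{c_{0,1}}{A_0^5},\quad c'_{1,2}=\frac{B_1 c_{1,2}}{A_0^2},\quad c'_{1,3}=\frac{B_1(A_0 c_{1,3}+2A_1 c_{1,2}^2)}{A_0^4},\quad c'_{0,0}=\frac{A_0 c_{0,0}+A_1 c_{0,1}}{A_0^5 B_1}.$$
A short manipulation then yields the key identity
$$2 c'_{0,0} (c'_{1,2})^2 - c'_{1,3} c'_{0,1} = \frac{B_1(2 c_{0,0} c_{1,2}^2 - c_{1,3} c_{0,1})}{A_0^8},$$
and raising to the fifth power and dividing by $(c'_{1,2})^5 (c'_{0,1})^6$ cancels every occurrence of $A_0$ and $B_1$, recovering exactly the unprimed expression in the statement.

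\emph{Only if} direction. I would adopt the author's choices $A_1=-A_0 c_{1,3}/(2 c_{1,2}^2)$ and $B_1=A_0^2/c_{1,2}$, which by the collapsed formulas above immediately give $c'_{1,3}=0$ and $c'_{1,2}=1$. Since $U_8^{28}$ is cut out by $4 c_{1,4} c_{1,2}=5 c_{1,3}^2$ and $4 c_{1,5} c_{1,2}^2=7 c_{1,3}^3$, once $c'_{1,3}$ is killed the analogous relations in the primed variables reduce to $c'_{1,4}=c'_{1,5}=0$; any residual contributions are absorbed by appropriate choices of the still-free higher parameters $A_2,A_3,A_4,B_2,B_3,B_4,B_5$ appearing in the long $c'_{1,5}$ formula. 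The last remaining gauge freedom $A_0$ is then fixed by $A_0^5=c_{0,1}$, producing $c'_{0,1}=1$ and pinning $c'_{0,0}$ down to the value $\lambda_1$ prescribed by the invariant.

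The main obstacle is purely bookkeeping: one must verify that the higher-index parameters $A_i, B_j$ with $i,j\ge 2$ truly suffice to clear every coefficient outside the canonical shape $L(\lambda_1,1,0,1,0,0,0,0,0,0)$ without disturbing the earlier normalizations of $c'_{1,2}$, $c'_{1,3}$, $c'_{0,1}$. Part (2) is then immediate: the algebra $L(\lambda_1,1,0,1,0,0,0,0,0,0)$ itself lies in $U_8^{28}$ and realizes any prescribed $\lambda_1\in\mathbb{C}$, with invariant value $(2\lambda_1)^5$.
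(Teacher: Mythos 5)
Your proposal is correct and follows the paper's own route: the paper's entire proof consists of the same two substitutions $A_1=-A_0c_{1,3}/(2\,c_{1,2}^2)$ and $B_1=A_0^2/c_{1,2}$ into the base change (\ref{bc.dim08}), with the ``if'' part delegated to the isomorphism criterion exactly as you do. Your additional observations --- fixing $A_0^5=c_{0,1}$, invoking the orbit-invariance of the defining conditions of $U_8^{28}$ to conclude $c'_{1,4}=c'_{1,5}=0$ once $c'_{1,3}=0$ and $c'_{1,2}=1$, and noting that the representative $L(\lambda_1,1,0,1,0,0,0,0,0,0)$ actually carries invariant value $(2\lambda_1)^5$ rather than $\lambda_1$ --- supply details the paper leaves implicit.
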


\begin{pr}\emph{}
\begin{enumerate}
\item Two algebras $L(C)$ and $L(C')$ from $U_{8}^{31}$
are isomorphic, if and only if\\
 $ \ds {\frac {{c'}_{{1,3}}^{4} \left( -9\,c'_{{0,0}}{c'}_{{1,3}}^{4}+3\,c'_{{1,5
}}c'_{{0,1}}{c'}_{{1,3}}^{2}-{c'}_{{1,5}}^{2}c'_{{1,1}} \right)
}{{c'}_{{1,4 }}^{7}}}={\frac {c_{{1,3}}^{4} \left(
-9\,c_{{0,0}}c_{{1,3}}^{4}+3\,c_{{1,5
}}c_{{0,1}}c_{{1,3}}^{2}-c_{{1,5}}^{2}c_{{1,1}} \right) }{c_{{1,4
}}^{7}}}
 ,\\
{\frac {{c'}_{{1,3}}^{3} \left(
-2\,c'_{{1,5}}c'_{{1,1}}+3\,c'_{{0,1}}{c'}_{ {1,3}}^{2} \right)
}{{c'}_{{1,4}}^{5}}} ={\frac {c_{{1,3}}^{3} \left(
-2\,c_{{1,5}}c_{{1,1}}+3\,c_{{0,1}}c_{ {1,3}}^{2} \right)
}{c_{{1,4}}^{5}}} ,\ \ {\frac
{{c'}_{{1,3}}^{2}c'_{{1,1}}}{{c'}_{{1,4}}^{3}}}={\frac
{c_{{1,3}}^{2}c_{{1,1}}}{c_{{1,4}}^{3}}}.
 $
 \item For any $\lambda_1, \lambda_2, \lambda_3 \in \mathbb{C}, $  there exists $L(C)\in
U_{8}^{31}:$ \ $ \ds {\frac {c_{{1,3}}^{4} \left(
-9\,c_{{0,0}}c_{{1,3}}^{4}+3\,c_{{1,5
}}c_{{0,1}}c_{{1,3}}^{2}-c_{{1,5}}^{2}c_{{1,1}} \right) }{c_{{1,4
}}^{7}}}=\lambda_1, \\ {\frac {c_{{1,3}}^{3} \left(
-2\,c_{{1,5}}c_{{1,1}}+3\,c_{{0,1}}c_{ {1,3}}^{2} \right)
}{c_{{1,4}}^{5}}}=\lambda_2, \ \ {\frac
{c_{{1,3}}^{2}c_{{1,1}}}{c_{{1,4}}^{3}}}=\lambda_3.$

Then orbits in $U_{8}^{31}$ can be parameterized as
$L\left(\lambda_1, \lambda_2, \lambda_3,0,1,1,0,0,0,0\right),\ \
\lambda_1, \lambda_2, \lambda_3 \in \mathbb{C}.$
\end{enumerate}
\begin{proof}Here, we put in the base change (\ref{bc.dim08}) the following
coefficients:
$$ A_0={\frac {c_{{1,4}}}{c_{{1,3}}}},\ \  A_1=-{\frac
{c_{{1,4}}c_{{1,5}}}{3\,{c_{{1,3}}}^{3}}},\ {\rm{and}} \
B_1={\frac {{c_{{1,4}}}^{3}}{{c_{{1,3}}}^{4}}}.$$
\end{proof}
\end{pr}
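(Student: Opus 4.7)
The plan is to follow the two-step template established in Propositions~2.15 through~2.27: first specialize the isomorphism criterion (Theorem~2.2) to the defining conditions of $U_8^{31}$, namely $c_{2,3}=c_{2,4}=c_{1,2}=c_{3,4}=0$ together with $c_{1,3}\neq 0$ and $c_{1,4}\neq 0$, and then exhibit an explicit adapted base change of the form (\ref{bc.dim08}) that carries any $L(C)\in U_8^{31}$ into the claimed normal form.

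On $U_8^{31}$ the general transformation rules collapse substantially. The vanishing of $c_{2,3},c_{2,4},c_{1,2},c_{3,4}$ forces $c'_{1,2}=c'_{2,3}=c'_{2,4}=c'_{3,4}=0$ (and $c'_{1,3},c'_{1,4}\neq 0$ is preserved since $A_0 B_1\neq 0$), so the orbit of any point of $U_8^{31}$ stays in $U_8^{31}$. The surviving formulas reduce to
\[
c'_{1,3}=\frac{B_1 c_{1,3}}{A_0^3},\qquad c'_{1,4}=\frac{B_1 c_{1,4}}{A_0^4},\qquad c'_{1,1}=\frac{B_1 c_{1,1}}{A_0^6},
\]
\[
c'_{0,1}=\frac{A_0 c_{0,1}+2 A_1 c_{1,1}}{A_0^6},\qquad c'_{0,0}=\frac{A_0^2 c_{0,0}+A_0 A_1 c_{0,1}+A_1^2 c_{1,1}}{A_0^6 B_1},
\]
and the otherwise long formula for $c'_{1,5}$ reduces, after discarding every monomial that carries at least one factor of $c_{1,2},c_{2,3},c_{2,4}$ or $c_{3,4}$, to
\[
c'_{1,5}=\frac{B_1\bigl(A_0 c_{1,5}+3 A_1 c_{1,3}^2\bigr)}{A_0^6}.
\]
Substituting these into the three ratios appearing in the statement shows that each numerator and denominator is scaled by the same factor, so the ratios are invariant; this yields the ``if'' implication.

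For the ``only if'' direction I take
\[
A_0=\frac{c_{1,4}}{c_{1,3}},\qquad A_1=-\frac{c_{1,4} c_{1,5}}{3 c_{1,3}^3},\qquad B_1=\frac{c_{1,4}^3}{c_{1,3}^4},
\]
and leave the remaining parameters $A_2,A_3,A_4,B_2,B_3,B_4,B_5$ free (one may set them to zero). A direct substitution into the simplified formulas gives $c'_{1,3}=1$, $c'_{1,4}=1$, and (by the defining choice of $A_1$) $c'_{1,5}=0$; the transformed $c'_{0,0},c'_{0,1},c'_{1,1}$ then depend on the original $c_{i,j}$ exactly through the three rational expressions of the statement, so whenever these invariants coincide for $L(C)$ and $L(C')$ both algebras are taken to the same representative. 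The existence part is immediate, since $L(\lambda_1,\lambda_2,\lambda_3,0,1,1,0,0,0,0)$ itself lies in $U_8^{31}$ and realizes the prescribed $(\lambda_1,\lambda_2,\lambda_3)$.

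The only delicate bookkeeping step is the reduction of the very long expression for $c'_{1,5}$ to the two-term form above: one must verify that every monomial other than $-A_0^4 B_1^3 c_{1,5}$ and $-3 A_0^3 A_1 B_1^3 c_{1,3}^2$ contains at least one of $c_{1,2},c_{2,3},c_{2,4},c_{3,4}$, after which the overall sign and the collapsed denominator $A_0^9 B_1^2$ give the stated rule. This is the main obstacle; the rest is a routine sequence of substitutions and comparisons of the resulting rational expressions.
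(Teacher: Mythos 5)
Your proposal is correct and follows essentially the same route as the paper: the ``if'' direction is the specialization of the isomorphism criterion (Theorem 2.2) to $c_{1,2}=c_{2,3}=c_{2,4}=c_{3,4}=0$, and the ``only if'' direction uses exactly the base change with $A_0=c_{1,4}/c_{1,3}$, $A_1=-c_{1,4}c_{1,5}/(3c_{1,3}^3)$, $B_1=c_{1,4}^3/c_{1,3}^4$, which is all the paper records. Your reductions (in particular $c'_{1,5}=B_1(A_0c_{1,5}+3A_1c_{1,3}^2)/A_0^6$) check out, and you are rightly careful to conclude only that the transformed $c'_{0,0},c'_{0,1},c'_{1,1}$ are determined by the three invariants (they come out as $-\lambda_1/9$ and $\lambda_2/3$ up to the paper's normalization), which suffices for the isomorphism claim.
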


\begin{pr}\emph{}
\begin{enumerate}
\item Two algebras $L(C)$ and $L(C')$ from $U_{8}^{32}$ are
isomorphic, if and only if
 $$ {\frac {\left( -9\,c'_{{0,0}}{c'}_{{1,3}}^{4}+3\,c'_{{1,5
}}c'_{{0,1}}{c'}_{{1,3}}^{2}-{c'}_{{1,5}}^{2}c'_{{1,1}} \right)^3
}{{c'}_{{1,3 }}^{2}{c'}_{{1,1 }}^{7}}}={\frac {\left(
-9\,c_{{0,0}}c_{{1,3}}^{4}+3\,c_{{1,5
}}c_{{0,1}}c_{{1,3}}^{2}-c_{{1,5}}^{2}c_{{1,1}} \right)^3
}{c_{{1,3 }}^{2}c_{{1,1 }}^{7}}}
 ,$$ $$
{\frac {\left( -2\,c'_{{1,5}}c'_{{1,1}}+3\,c'_{{0,1}}{c'}_{
{1,3}}^{2} \right)^3 }{{c'}_{{1,3 }}{c'}_{{1,1 }}^{5}}} ={\frac
{\left( -2\,c_{{1,5}}c_{{1,1}}+3\,c_{{0,1}}c_{ {1,3}}^{2}
\right)^3 }{c_{{1,3 }}c_{{1,1 }}^{5}}}.
 $$
 \item For any $\lambda_1, \lambda_2\in \mathbb{C}, $  there exists $L(C)\in
U_{8}^{32}:$ $$ {\frac {\left(
-9\,c_{{0,0}}c_{{1,3}}^{4}+3\,c_{{1,5
}}c_{{0,1}}c_{{1,3}}^{2}-c_{{1,5}}^{2}c_{{1,1}} \right)^3
}{c_{{1,3 }}^{2}c_{{1,1 }}^{7}}}=\lambda_1, \\ {\frac {\left(
-2\,c_{{1,5}}c_{{1,1}}+3\,c_{{0,1}}c_{ {1,3}}^{2} \right)^3
}{c_{{1,3 }}c_{{1,1 }}^{5}}}=\lambda_2.$$

Then orbits in $U_{8}^{32}$ can be parameterized as
$L\left(\lambda_1, \lambda_2,1,0,1,0,0,0,0,0\right),$ $\lambda_1,
\lambda_2 \in \mathbb{C}.$
\end{enumerate}
\begin{proof} We put $$\ds A_1=-{\frac {A_{{0}}c_{{1,5}}}{3\,{c_{{1,3}}}^{2}}},\ {\rm{and}}\
\ds B_1={\frac {{A_{{0}}}^{3}}{c_{{1,3}}}}.$$
\end{proof}
\end{pr}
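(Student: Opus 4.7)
The plan is to follow the template established by the previous propositions in this section. On $U_8^{32}$ the defining constraints force $c_{1,2}=c_{1,4}=c_{2,3}=c_{2,4}=c_{3,4}=0$ together with $c_{1,1}\neq 0$ and $c_{1,3}\neq 0$, so the only free parameters are $c_{0,0},c_{0,1},c_{1,1},c_{1,3},c_{1,5}$ and only these five transformation formulas from the Isomorphism Criterion for $TLeib_8$ are relevant.

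For the ``if'' direction, I would substitute the transformation laws for $c'_{0,0},c'_{0,1},c'_{1,1},c'_{1,3},c'_{1,5}$ (specialised to the vanishings above) into the two proposed expressions and verify invariance. The cancellations are built in by design: the cubic powers in the numerators together with the specific monomial denominators $c_{1,3}^{2}c_{1,1}^{7}$ and $c_{1,3}c_{1,1}^{5}$ absorb the $A_0$-weights and the overall $B_1$-factor coming from $c'_{1,1}=B_1 c_{1,1}/A_0^{6}$ and $c'_{1,3}=B_1 c_{1,3}/A_0^{3}$, while the dependence on $A_1$ drops out of each cubed linear combination because of the particular coefficients $-9,\,3,\,-1$ and $-2,\,3$ occurring in the numerators.

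For the ``only if'' direction, I would exhibit the explicit adapted base change (\ref{bc.dim08}) with the values suggested in the statement, namely $A_1=-A_0 c_{1,5}/(3c_{1,3}^{2})$ and $B_1=A_0^{3}/c_{1,3}$, and take $A_0$ so that $A_0^{3}=c_{1,1}/c_{1,3}$ (in order that $c'_{1,1}=1$). With $B_1=A_0^{3}/c_{1,3}$ we immediately get $c'_{1,3}=B_1 c_{1,3}/A_0^{3}=1$, and the five constrained parameters $c'_{1,2},c'_{1,4},c'_{2,3},c'_{2,4},c'_{3,4}$ vanish automatically from their transformation formulas once $c_{1,2}=c_{2,3}=c_{2,4}=c_{3,4}=0$. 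The remaining free parameters $A_2,A_3,A_4,B_2,B_3,B_4,B_5$ play no role in the final normal form and can be set to zero. The resulting image is $L(\lambda_1,\lambda_2,1,0,1,0,0,0,0,0)$ with $\lambda_1,\lambda_2$ the values the transformation rules assign to $c'_{0,0},c'_{0,1}$.

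The main obstacle is the collapse of the unwieldy expression for $c'_{1,5}$. In $U_8^{32}$ every summand of that formula containing any of $c_{1,2},c_{1,4},c_{2,3},c_{2,4},c_{3,4}$ vanishes, leaving inside the bracket only the two terms $-3A_0^{3}A_1 B_1^{3}c_{1,3}^{2}-A_0^{4}B_1^{3}c_{1,5}$. After the global minus sign and the denominator $A_0^{8}B_1^{2}(A_0+c_{3,4}A_1)=A_0^{9}B_1^{2}$, this simplifies to $c'_{1,5}=B_1(3A_1 c_{1,3}^{2}+A_0 c_{1,5})/A_0^{6}$, which is killed precisely by the prescribed value of $A_1$. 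Once this reduction is in hand, the computations for $c'_{0,0}$ and $c'_{0,1}$ are short and show that they equal the claimed invariants (up to a harmless global constant absorbed into the parameter), and assertion (2) is then immediate by taking the representative $L(\lambda_1,\lambda_2,1,0,1,0,0,0,0,0)$ itself.
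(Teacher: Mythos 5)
Your proposal is correct and follows essentially the same route as the paper: the paper's entire proof consists of specifying $A_1=-A_0c_{1,5}/(3c_{1,3}^{2})$ and $B_1=A_0^{3}/c_{1,3}$ in the adapted base change (\ref{bc.dim08}), with the ``if'' part delegated to the isomorphism criterion, exactly as you do. Your additional details --- the normalizing choice $A_0^{3}=c_{1,1}/c_{1,3}$, the collapse of the $c'_{1,5}$ formula to $B_1(3A_1c_{1,3}^{2}+A_0c_{1,5})/A_0^{6}$ on $U_8^{32}$, and the observation that the stated representative matches the invariants only up to harmless constant factors --- are all accurate and simply make explicit what the paper leaves to the reader.
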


\begin{pr}\emph{}
\begin{enumerate}
\item Two algebras $L(C)$ and $L(C')$ from $U_{8}^{33}$ are
isomorphic, if and only if
 $$ {\frac { \left( 3\,c'_{{0,0}}{c'}_{{1,3}}^{2}-c'_{{1,5}}c'_{{0,1}}
 \right) ^{5}}{{c'}_{{1,3}}^{5}{c'}_{{0,1}}^{7}}}={\frac { \left( 3\,c_{{0,0}}c_{{1,3}}^{2}-c_{{1,5}}c_{{0,1}}
 \right) ^{5}}{c_{{1,3}}^{5}c_{{0,1}}^{7}}}.$$
 \item For any $\lambda_1\in \mathbb{C}, $  there exists $L(C)\in
U_{8}^{33}:$ \ $ \ds {\frac { \left(
3\,c_{{0,0}}c_{{1,3}}^{2}-c_{{1,5}}c_{{0,1}}
 \right) ^{5}}{c_{{1,3}}^{5}c_{{0,1}}^{7}}}=\lambda_1.$

Then orbits in $U_{8}^{33}$ can be parameterized as
$L\left(\lambda_1, 1,0,0,1,0,0,0,0,0\right),\ \ \lambda_1 \in
\mathbb{C}.$
\end{enumerate}
\begin{proof}Here,
$$\ds A_1=-{\frac {A_{{0}}c_{{1,5}}}{3\,{c_{{1,3}}}^{2}}},\ {\rm{and}}\
\ds B_1={\frac {{A_{{0}}}^{3}}{c_{{1,3}}}}.$$
\end{proof}
\end{pr}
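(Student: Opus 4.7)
The plan is to follow the same template as Propositions 2.1 and 2.2 (and the preceding propositions for the other subsets $U_8^k$): split into an ``if'' direction established by direct substitution into the transformation formulas of Theorem 2.2, and an ``only if'' direction established by exhibiting an explicit adapted base change that sends $L(C)$ to the claimed canonical form.

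For the ``if'' part, I would exploit the fact that on $U_8^{33}$ one has $c_{1,1}=c_{1,2}=c_{1,4}=c_{2,3}=c_{2,4}=c_{3,4}=0$, so the formulas of Theorem 2.2 collapse drastically, yielding $c_{0,1}' = c_{0,1}/A_0^5$, $c_{1,3}' = B_1 c_{1,3}/A_0^3$, $c_{0,0}' = (c_{0,0} + (A_1/A_0)c_{0,1})/A_0^4$, and, after cancellation of the many vanishing terms in the long formula for $c_{1,5}'$, the compact expression $c_{1,5}' = B_1(3 A_1 c_{1,3}^2 + A_0 c_{1,5})/A_0^6$. Direct substitution then shows that the rational function $(3 c_{0,0} c_{1,3}^2 - c_{1,5} c_{0,1})^5/(c_{1,3}^5 c_{0,1}^7)$ is preserved by any adapted base change, which establishes the ``if'' direction.

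For the ``only if'' part, I would apply the base change (\ref{bc.dim08}) with the hinted values $A_1 = -A_0 c_{1,5}/(3 c_{1,3}^2)$ and $B_1 = A_0^3/c_{1,3}$, together with a choice of $A_0$ satisfying $A_0^5 = c_{0,1}$ (possible since $c_{0,1}\neq 0$ and we work over $\mathbb{C}$). The simplified rules above then give $c_{0,1}' = c_{1,3}' = 1$ and $c_{1,5}' = 0$ (the choice of $A_1$ is precisely engineered so that $3A_1 c_{1,3}^2 + A_0 c_{1,5} = 0$), while all other low-index constants stay zero and $c_{0,0}' = (3 c_{0,0} c_{1,3}^2 - c_{1,5} c_{0,1})/(3 A_0^4 c_{1,3}^2)$. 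Hence $L(C)$ is isomorphic to $L(c_{0,0}',1,0,0,1,0,0,0,0,0)$, so two algebras with the same invariant reduce to the same canonical form (up to the fivefold ambiguity in the choice of $A_0$, which is absorbed into the invariant's fifth power). Part 2 is then immediate: for any $\lambda_1 \in \mathbb{C}$ the algebra $L(\lambda_1, 1, 0, 0, 1, 0, 0, 0, 0, 0)$ lies in $U_8^{33}$, and the stated rational expression evaluates on it to $243\lambda_1^5$, whose image is all of $\mathbb{C}$.

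The main obstacle will be the careful bookkeeping needed to reduce the forty-odd-term numerator of $c_{1,5}'$ in Theorem 2.2 to the clean expression $B_1(3 A_1 c_{1,3}^2 + A_0 c_{1,5})/A_0^6$: one must verify that every monomial containing a factor from $\{c_{1,1}, c_{1,2}, c_{1,4}, c_{2,3}, c_{2,4}, c_{3,4}\}$ vanishes on $U_8^{33}$, leaving only the two surviving terms $-3A_0^3 A_1 B_1^3 c_{1,3}^2$ and $-A_0^4 B_1^3 c_{1,5}$. Once this reduction is in hand, both directions of the argument amount to routine algebra matching the pattern of Propositions 2.1--2.22.
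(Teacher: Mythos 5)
Your proposal follows exactly the route the paper intends (and only hints at): reduce the formulas of Theorem 2.2 using the vanishing constraints defining $U_8^{33}$, check that the stated rational function is invariant, and then normalize with the base change $A_1=-A_0c_{1,5}/(3c_{1,3}^2)$, $B_1=A_0^3/c_{1,3}$ together with a choice $A_0^5=c_{0,1}$ (which the paper leaves implicit but which is indeed needed to get $c_{0,1}'=1$). Your reduction of the long $c_{1,5}'$ formula to $B_1(3A_1c_{1,3}^2+A_0c_{1,5})/A_0^6$ is correct, as are $c_{0,1}'=c_{0,1}/A_0^5$ and $c_{1,3}'=B_1c_{1,3}/A_0^3$, and the evaluation of the invariant on the canonical form as $243\lambda_1^5$ is right.

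There is one concrete slip: your collapsed formula $c_{0,0}'=\bigl(c_{0,0}+(A_1/A_0)c_{0,1}\bigr)/A_0^4$ drops the factor $B_1$ from the denominator. By Lemma 1.1 (and Theorem 2.2, where the ``$c_1$'' in the denominator of $c_{0,0}'$ is a typo for $B_1$), the correct reduction on $U_8^{33}$ is $c_{0,0}'=(A_0c_{0,0}+A_1c_{0,1})/(B_1A_0^5)$. This matters: with your version the quantity $3c_{0,0}'{c_{1,3}'}^2-c_{1,5}'c_{0,1}'$ does not factor as a multiple of $3c_{0,0}c_{1,3}^2-c_{1,5}c_{0,1}$, so the invariance check fails, and the normalized $c_{0,0}'$ you obtain, $(3c_{0,0}c_{1,3}^2-c_{1,5}c_{0,1})/(3A_0^4c_{1,3}^2)$, is not the fifth root of $\lambda_1/243$. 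With the $B_1$ restored one gets $3c_{0,0}'{c_{1,3}'}^2-c_{1,5}'c_{0,1}'=\frac{B_1}{A_0^{10}}\bigl(3c_{0,0}c_{1,3}^2-c_{1,5}c_{0,1}\bigr)$ against ${c_{1,3}'}^5{c_{0,1}'}^7=\frac{B_1^5}{A_0^{50}}c_{1,3}^5c_{0,1}^7$, so the $B_1$'s and $A_0$'s cancel and the invariant is preserved, and the normalized value becomes $c_{0,0}'=(3c_{0,0}c_{1,3}^2-c_{1,5}c_{0,1})/(3A_0^7c_{1,3})$, which is consistent with $243{c_{0,0}'}^5=\lambda_1$. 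Once this is corrected the argument is complete and coincides with the paper's.
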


\begin{pr}\emph{}
\begin{enumerate}
\item Two algebras $L(C)$ and $L(C')$ from $U_{8}^{36}$ are
isomorphic, if and only if
 $$ {\frac {{c'}_{{1,5}}^{10} \left( 4\,{c'}_{{0,0}}c'_{{1,1}}-{c'}_{{0,1}}^{2}
 \right) }{{c'}_{{1,1}}^{10}}}={\frac {c_{{1,5}}^{10} \left( 4\,c_{{0,0}}c_{{1,1}}-c_{{0,1}}^{2}
 \right) }{c_{{1,1}}^{10}}}
 ,\ \
{\frac {c'_{{1,1}}c'_{{1,4}}}{{c'}_{{1,5}}^{2}}}={\frac
{c_{{1,1}}c_{{1,4}}}{c_{{1,5}}^{2}}}.
 $$
 \item For any $\lambda_1, \lambda_2\in \mathbb{C}, $  there exists $L(C)\in
U_{8}^{36}:$ \ $ \ds {\frac {c_{{1,5}}^{10} \left(
4\,c_{{0,0}}c_{{1,1}}-c_{{0,1}}^{2}
 \right) }{c_{{1,1}}^{10}}}=\lambda_1, \ \ {\frac {c_{{1,1}}c_{{1,4}}}{c_{{1,5}}^{2}}}=\lambda_2.$

Then orbits in $U_{8}^{36}$ can be parameterized as
$L\left(\lambda_1,0,1,0,0, \lambda_2,1,0,0,0\right),\ \ \lambda_1,
\lambda_2 \in \mathbb{C}.$
\end{enumerate}
\begin{proof}Her, we take
$$\ds A_0={\frac {c_{{1,1}}}{c_{{1,5}}}},\ A_1=-{\frac
{c_{{0,1}}}{2\,c_{{1,5}}}},\ {\rm{and}}\ B_1={\frac
{{c_{{1,1}}}^{5}}{{c_{{1,5}}}^{6}}}.$$
\end{proof}
\end{pr}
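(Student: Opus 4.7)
The plan is to exploit the fact that on $U_8^{36}$ most structure constants vanish ($c_{2,3}=c_{2,4}=c_{1,2}=c_{1,3}=c_{3,4}=0$), so the formulas of Theorem~2.2 collapse dramatically. In particular the transformation rules reduce to
\begin{eqnarray*}
c'_{1,1}&=&\tfrac{B_1\,c_{1,1}}{A_0^{6}},\quad
c'_{0,1}=\tfrac{A_0\,c_{0,1}+2A_1\,c_{1,1}}{A_0^{6}},\quad
c'_{0,0}=\tfrac{A_0^{2}c_{0,0}+A_0A_1c_{0,1}+A_1^{2}c_{1,1}}{A_0^{6}B_1},\\
c'_{1,4}&=&\tfrac{B_1\,c_{1,4}}{A_0^{4}},\quad
c'_{1,5}=\text{(a polynomial in the surviving parameters)}/A_0^{9}B_1^{2},
\end{eqnarray*}
while $c'_{1,2}=c'_{1,3}=c'_{2,3}=c'_{2,4}=c'_{3,4}=0$ automatically. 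Thus $U_8^{36}$ is invariant under the adapted action, and one only has to track how $(c_{0,0},c_{0,1},c_{1,1},c_{1,4},c_{1,5})$ transforms under the five parameters $A_0,A_1,B_1,B_2,B_3$ (the remaining $A_i,B_j$ shift only the irrelevant terms that were already zero).

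For the ``only if'' direction I would first compute, by a direct substitution of the three displays above,
\[
4\,c'_{0,0}c'_{1,1}-{c'}_{0,1}^{2}=\frac{4\,c_{0,0}c_{1,1}-c_{0,1}^{2}}{A_0^{10}},
\]
combining this with $c'_{1,1}=B_1c_{1,1}/A_0^{6}$ and with the analogous analysis of $c'_{1,5}$, which after cancellation takes the form $c'_{1,5}=B_1c_{1,5}/A_0^{q}$ for the appropriate power $q$, so that the ratios
\[
\frac{c_{1,5}^{10}(4c_{0,0}c_{1,1}-c_{0,1}^{2})}{c_{1,1}^{10}}
\quad\text{and}\quad
\frac{c_{1,1}c_{1,4}}{c_{1,5}^{2}}
\]
are seen to be absolute invariants (the $A_0, B_1$ factors cancel by a direct power count). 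Verifying the $c'_{1,5}$-rule is the main technical obstacle, since that is the long formula in Theorem~2.2; but all terms that survive contain only $c_{1,5},c_{1,4},c_{1,3},c_{1,2},c_{2,3},c_{2,4},c_{3,4}$, and after killing the last six as zero the formula becomes manageable.

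For the ``if'' direction, I would apply the explicit base change given at the end of Proposition~2.36, namely
\[
A_0=\tfrac{c_{1,1}}{c_{1,5}},\qquad A_1=-\tfrac{c_{0,1}}{2c_{1,5}},\qquad B_1=\tfrac{c_{1,1}^{5}}{c_{1,5}^{6}},
\]
with $B_2,B_3$ chosen later to kill $c'_{1,5}$'s freedom. A straightforward substitution into the simplified formulas above yields $c'_{1,1}=1$, $c'_{0,1}=0$, $c'_{0,0}=\lambda_1/4$, $c'_{1,4}=\lambda_2$, where $\lambda_1,\lambda_2$ are the claimed invariants; one then chooses $B_2,B_3$ (freely, since they do not affect the already normalized constants) so that $c'_{1,5}=1$. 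Applying the same normalization to $L(C')$ and using the hypothesis that the invariants agree, both algebras land on the common representative $L(\lambda_1,0,1,0,0,\lambda_2,1,0,0,0)$, hence are isomorphic.

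Part~(2) (surjectivity onto $\mathbb{C}^{2}$) is then immediate: plugging $(\lambda_1,\lambda_2)$ into the representative $L(\lambda_1,0,1,0,0,\lambda_2,1,0,0,0)$ gives an algebra in $U_8^{36}$ realising those invariant values. The hardest step throughout is the bookkeeping needed to isolate the non-vanishing terms in the $c'_{1,5}$ formula; once that calculation is performed once, everything else is routine rational algebra, entirely parallel to the proofs of Propositions~2.1--2.2.
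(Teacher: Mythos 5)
Your proposal is correct and follows essentially the same route as the paper: restrict the formulas of the isomorphism criterion to the subset where $c_{1,2}=c_{1,3}=c_{2,3}=c_{2,4}=c_{3,4}=0$ (so that $c'_{1,1}=B_1c_{1,1}/A_0^{6}$, $c'_{1,4}=B_1c_{1,4}/A_0^{4}$, $c'_{1,5}=B_1c_{1,5}/A_0^{5}$, and $4c'_{0,0}c'_{1,1}-{c'}_{0,1}^{2}=(4c_{0,0}c_{1,1}-c_{0,1}^{2})/A_0^{10}$), check the two ratios are invariant, and normalize with $A_0=c_{1,1}/c_{1,5}$, $A_1=-c_{0,1}/(2c_{1,5})$, $B_1=c_{1,1}^{5}/c_{1,5}^{6}$ exactly as the paper does. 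Two harmless remarks: $B_2,B_3$ are not actually needed to normalize $c'_{1,5}$ (it is already $1$ under the above choice, since every $B_2,B_3$-term in the $c'_{1,5}$ formula is multiplied by a vanishing structure constant), and your value $c'_{0,0}=\lambda_1/4$ is the honest one — the paper's stated representative $L(\lambda_1,0,1,0,0,\lambda_2,1,0,0,0)$ carries the same factor-of-$4$ discrepancy, which only amounts to a rescaling of the parameter $\lambda_1$.
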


\begin{pr}\emph{}
\begin{enumerate}
\item Two algebras $L(C)$ and $L(C')$ from $U_{8}^{37}$ are
isomorphic, if and only if
 $$ {\frac {c_{{1,4}}^{5} \left( 4\,c_{{0,0}}c_{{1,1}}-c_{{0,1}}^
{2} \right) }{c_{{1,1}}^{5}}}={\frac {c_{{1,4}}^{5} \left(
4\,c_{{0,0}}c_{{1,1}}-c_{{0,1}}^ {2} \right) }{c_{{1,1}}^{5}}} .
 $$
 \item For any $\lambda_1\in \mathbb{C} ,$  there exists $L(C)\in
U_{8}^{37}:$ \ $ \ds {\frac {c_{{1,4}}^{5} \left(
4\,c_{{0,0}}c_{{1,1}}-c_{{0,1}}^ {2} \right)
}{c_{{1,1}}^{5}}}=\lambda_1.$

Then orbits in $U_{8}^{37}$ can be parameterized as
$L\left(\lambda_1,0, 1,0,0,1,0,0,0,0\right),\ \ \lambda_1 \in
\mathbb{C}.$
\end{enumerate}
\begin{proof}We put in the base change
(\ref{bc.dim08}), $$A_0=\sqrt{\ds \frac { {c_{{1,1}}}}{
{c_{{1,4}}}}},\ A_1=-{\frac
{A_{{0}}c_{{0,1}}}{2\,c_{{1,1}}}},\ {\rm{and}}\ \ds B_1={\frac
{{c_{{1,1}}}^{5}}{{c_{{1,5}}}^{6}}}.$$
\end{proof}
\end{pr}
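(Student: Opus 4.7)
The plan is to specialize Theorem 2.2 to the defining conditions of $U_8^{37}$, namely $c_{3,4}=c_{2,3}=c_{2,4}=c_{1,2}=c_{1,3}=c_{1,5}=0$ with $c_{1,1}\neq 0$ and $c_{1,4}\neq 0$. Under these vanishings the bulky formulas of the isomorphism criterion collapse dramatically: the relevant nontrivial transformations are
\[
c_{1,1}'=\frac{B_1 c_{1,1}}{A_0^6},\ \ c_{1,4}'=\frac{B_1 c_{1,4}}{A_0^4},\ \ c_{0,1}'=\frac{A_0 c_{0,1}+2A_1 c_{1,1}}{A_0^6},\ \ c_{0,0}'=\frac{A_0^2 c_{0,0}+A_0 A_1 c_{0,1}+A_1^2 c_{1,1}}{A_0^6 B_1},
\]
while $c_{1,2}',c_{1,3}',c_{2,3}',c_{2,4}',c_{3,4}'$ are all automatically zero. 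The formula for $c_{1,5}'$ is the only one whose vanishing is not immediate, and handling it is the one bookkeeping step I would need to carry out carefully.

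For the ``only if'' direction of part (1), I would establish invariance of $c_{1,4}^5(4c_{0,0}c_{1,1}-c_{0,1}^2)/c_{1,1}^5$ by two short calculations. First, dividing the expressions for $c_{1,4}'$ and $c_{1,1}'$ gives $c_{1,4}'/c_{1,1}'=A_0^{2}(c_{1,4}/c_{1,1})$, so raising to the fifth power yields a factor $A_0^{10}$. Second, a direct expansion shows that the $A_1$-terms in $4c_{0,0}'c_{1,1}'-(c_{0,1}')^2$ cancel, leaving $4c_{0,0}'c_{1,1}'-(c_{0,1}')^2=(4c_{0,0}c_{1,1}-c_{0,1}^2)/A_0^{10}$. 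Multiplying the two factors, the $A_0^{10}$'s cancel and the expression is preserved under every adapted base change that respects $U_8^{37}$.

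For the ``if'' direction and the canonical form, I would exhibit an explicit base change reducing $L(C)$ to $L(\lambda_1,0,1,0,0,1,0,0,0,0)$. Taking $A_0^2=c_{1,1}/c_{1,4}$ forces $c_{1,1}'=c_{1,4}'$; choosing $B_1=A_0^4/c_{1,4}$ normalizes both to $1$; and setting $A_1=-A_0 c_{0,1}/(2c_{1,1})$ annihilates $c_{0,1}'$. Since $c_{1,1}'=1$ and $c_{0,1}'=0$, the invariant reads simply $4c_{0,0}'$, so $c_{0,0}'$ is determined by the invariant (up to the universal normalization factor absorbed into $\lambda_1$). Part (2) is then trivial: the algebra $L(\lambda_1,0,1,0,0,1,0,0,0,0)$ itself belongs to $U_8^{37}$ and realizes the prescribed value.

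The principal obstacle is the tedious check that the long expression for $c_{1,5}'$ in Theorem 2.2 vanishes after substituting the zero structure constants of $U_8^{37}$ together with the chosen values of $A_0, A_1, B_1$; the free parameters $B_2,B_3,B_4,B_5$ can be adjusted (in fact $B_5$ alone suffices) to kill any residual terms. Because so many monomials in the $c_{1,5}'$ formula carry factors of $c_{1,2}, c_{1,3}, c_{2,3}, c_{2,4}, c_{3,4}$, most drop out automatically, so this reduces to a one-line solution for $B_5$. Once this is verified, both parts of the proposition follow at once.
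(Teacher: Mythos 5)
Your proof is correct and follows the paper's own route: specialize the isomorphism criterion (Theorem 2.2) to the vanishing structure constants of $U_8^{37}$ and normalize with $A_0^2=c_{1,1}/c_{1,4}$, $A_1=-A_0c_{0,1}/(2c_{1,1})$, $B_1=A_0^4/c_{1,4}$ --- and your $B_1$ is in fact the right one, since the paper's printed $B_1=c_{1,1}^5/c_{1,5}^6$ is evidently a typo carried over from the $U_8^{36}$ case ($c_{1,5}=0$ on $U_8^{37}$). One small slip: $B_5$ enters $c_{1,5}'$ only through the monomial $2A_0^4B_1^2B_5c_{3,4}$, which vanishes on $M_8^2$, so it cannot be used to kill residual terms; fortunately every monomial of the $c_{1,5}'$ formula carries a factor from $\{c_{1,2},c_{1,3},c_{1,5},c_{2,3},c_{2,4},c_{3,4}\}$, hence $c_{1,5}'=0$ automatically and nothing needs killing.
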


\begin{pr}\emph{}
\begin{enumerate}
\item Two algebras $L(C)$ and $L(C')$ from $U_{8}^{40}$ are
isomorphic, if and only if \ \
 $ \ds {\frac {c_{{0,1}}c_{{1,4}}^{5}}{c_{{1,5}}^{5}}}={\frac {c_{{0,1}}c_{{1,4}}^{5}}{c_{{1,5}}^{5}}} .
 $
 \item For any $\lambda_1\in \mathbb{C} $  there exists $L(C)\in
U_{8}^{40}:$ \ $ \ds{\frac
{c_{{0,1}}c_{{1,4}}^{5}}{c_{{1,5}}^{5}}}=\lambda_1.$

Then orbits in $U_{8}^{40}$ can be parameterized as
$L\left(0,1,0,0,0,\lambda_1,1,0,0,0\right),\ \ \lambda_1 \in
\mathbb{C}.$
\end{enumerate}
\begin{proof}Here,
$$\ds A_1=-{\frac {A_{{0}}c_{{0,0}}}{c_{{0,1}}}},\ {\rm{and}}\ \ds
B_1={\frac {c_{{0,1}}}{c_{{1,5}}}}.$$
\end{proof}
\end{pr}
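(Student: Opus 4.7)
The plan is to follow the same template already used for Propositions 2.14--2.28: invoke the isomorphism criterion for $TLeib_8$ (Theorem 2.2), specialize its transformation formulas to the defining constraints of $U_8^{40}$, and then read off both the invariant and the canonical representative. Recall that in $U_8^{40}$ one has $c_{3,4}=c_{1,1}=c_{1,2}=c_{1,3}=c_{2,3}=c_{2,4}=0$ (the first because $U_8^{40}\subset M_8^2$), together with $c_{0,1}\neq 0$ and $c_{1,5}\neq 0$.

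Substituting these zeros into the formulas of Theorem~2.2 immediately gives $c'_{3,4}=c'_{1,1}=c'_{1,2}=c'_{1,3}=c'_{2,3}=c'_{2,4}=0$ and reduces the nontrivial transformation laws to
$$c'_{0,0}=\frac{A_0\,c_{0,0}+A_1\,c_{0,1}}{A_0^{5}},\quad c'_{0,1}=\frac{c_{0,1}}{A_0^{5}},\quad c'_{1,4}=\frac{B_1\,c_{1,4}}{A_0^{4}},\quad c'_{1,5}=\frac{B_1\,c_{1,5}}{A_0^{5}}.$$
The main obstacle is the last of these: the expression for $c'_{1,5}$ in Theorem~2.2 is by far the longest, and one must check that each of its summands contains at least one factor from $\{c_{3,4},c_{1,2},c_{1,3},c_{2,3},c_{2,4}\}$ with the single exception of the lone $-A_0^{4}B_1^{3}c_{1,5}$ term. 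Under $c_{3,4}=0$ the prefactor $-1/\bigl(A_0^{8}B_1^{2}(A_0+A_1c_{3,4})\bigr)$ collapses to $-1/(A_0^{9}B_1^{2})$, so only that surviving term contributes and yields $c'_{1,5}=B_1c_{1,5}/A_0^{5}$. This is a purely mechanical (but long) inspection and is the only combinatorial step in the whole proof.

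With the four reduced formulas in hand, the \emph{if} direction is a one-line computation,
$$\frac{c'_{0,1}\,(c'_{1,4})^{5}}{(c'_{1,5})^{5}}=\frac{(c_{0,1}/A_0^{5})(B_1 c_{1,4}/A_0^{4})^{5}}{(B_1 c_{1,5}/A_0^{5})^{5}}=\frac{c_{0,1}\,c_{1,4}^{5}}{c_{1,5}^{5}},$$
exhibiting the claimed invariant. For the \emph{only if} direction and the normal form, fix the free parameters of the adapted base change by the recipe suggested by the reduced formulas: take $A_1=-A_0c_{0,0}/c_{0,1}$ (so that $c'_{0,0}=0$), choose $A_0$ as a fifth root of $c_{0,1}$ (so that $c'_{0,1}=1$), and put $B_1=c_{0,1}/c_{1,5}$ (so that $c'_{1,5}=1$); the auxiliary parameters $A_2,A_3,A_4,B_2,B_3,B_4,B_5$ play no role in the four surviving structure constants and may be chosen freely. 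The resulting $c'_{1,4}$ is then the appropriate fifth root of $\lambda_1=c_{0,1}c_{1,4}^{5}/c_{1,5}^{5}$, producing the representative $L(0,1,0,0,0,\lambda_1,1,0,0,0)$. Finally, existence of an algebra in $U_8^{40}$ with any prescribed $\lambda_1\in\mathbb{C}$ is immediate because this representative itself lies in $U_8^{40}$ and realises the invariant value $\lambda_1$.
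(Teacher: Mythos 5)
Your proposal is correct and follows essentially the same route as the paper: the paper's entire proof consists of the choices $A_1=-A_0c_{0,0}/c_{0,1}$ and $B_1=c_{0,1}/c_{1,5}$ in the base change of Theorem 2.2, which is exactly the recipe you arrive at (together with $A_0^5=c_{0,1}$), and your verification that only the $-A_0^4B_1^3c_{1,5}$ term of the $c'_{1,5}$ formula survives on $U_8^{40}$ is the one nontrivial check. Two harmless remarks: your reduced formula for $c'_{0,0}$ should carry a $B_1$ in the denominator (i.e. $c'_{0,0}=(A_0c_{0,0}+A_1c_{0,1})/(B_1A_0^{5})$), which does not matter since the numerator is annihilated by your choice of $A_1$; and your observation that the normal form actually has $c'_{1,4}=\lambda_1^{1/5}$ rather than $\lambda_1$ is a legitimate point about the paper's own labelling of the representative, not a flaw in your argument.
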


\begin{pr}\emph{}
\begin{enumerate}
\item Two algebras $L(C)$ and $L(C')$ from $U_{8}^{43}$ are
isomorphic, if and only if \ \
 $ \ds {\frac {c_{{1,4}}^{9}c_{{0,0}}}{c_{{1,5}}^{8}}}=\ds {\frac {c_{{1,4}}^{9}c_{{0,0}}}{c_{{1,5}}^{8}}} .
 $
 \item For any $\lambda_1\in \mathbb{C} $  there exists $L(C)\in
U_{8}^{43}:$ \ $  \ds {\frac
{c_{{1,4}}^{9}c_{{0,0}}}{c_{{1,5}}^{8}}} =\lambda_1.$

Then orbits in $U_{8}^{43}$ can be parameterized as
$L\left(\lambda_1,0,0,0,0,1,1,0,0,0\right),\ \ \lambda_1 \in
\mathbb{C}.$
\end{enumerate}
\begin{proof}Finally, put
$$\ds A_0={\frac {c_{{1,5}}}{c_{{1,4}}}},\ {\rm{and}}\ \ds B_1={\frac
{{c_{{1,5}}}^{4}}{{c_{{1,4}}}^{5}}}.$$
\end{proof}
\end{pr}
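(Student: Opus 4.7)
The plan is to follow the template of the preceding propositions by specializing the Isomorphism criterion for $TLeib_8$ to the subset $U_8^{43}$. On this subset we have $c_{2,3}=c_{2,4}=c_{1,2}=c_{1,3}=c_{1,1}=c_{0,1}=c_{3,4}=0$ and $c_{1,4},c_{1,5}\neq 0$, so only the three parameters $c_{0,0},c_{1,4},c_{1,5}$ carry information, and the bulk of the ten transformation formulas in the theorem will collapse trivially.

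Concretely, I would first substitute the vanishing structure constants into the formulas from the Isomorphism criterion and observe that $c'_{0,1}, c'_{1,1}, c'_{1,2}, c'_{1,3}, c'_{2,3}, c'_{2,4}, c'_{3,4}$ are all identically zero, leaving only
\[
c'_{0,0}=\frac{c_{0,0}}{A_0^{4}B_1}, \qquad c'_{1,4}=\frac{B_1\,c_{1,4}}{A_0^{4}}, \qquad c'_{1,5}=\frac{B_1\,c_{1,5}}{A_0^{5}}
\]
to track. From these three expressions, a direct exponent count gives $\displaystyle \frac{(c'_{1,4})^{9}c'_{0,0}}{(c'_{1,5})^{8}}=\frac{c_{1,4}^{9}c_{0,0}}{c_{1,5}^{8}}$, which proves the ``if'' direction and identifies $\lambda_1$ as the required invariant.

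For the ``only if'' direction I would solve $c'_{1,4}=c'_{1,5}=1$ simultaneously, which forces $A_0=c_{1,5}/c_{1,4}$ and $B_1=c_{1,5}^{4}/c_{1,4}^{5}$ (the values in the hint); the remaining parameters $A_1,\dots,A_4,B_2,\dots,B_5$ may be chosen freely (e.g.\ set to zero) since they do not affect $c'_{0,0},c'_{1,4},c'_{1,5}$ under the simplified formulas. Plugging in gives $c'_{0,0}=c_{0,0}\,c_{1,4}^{9}/c_{1,5}^{8}=\lambda_1$, so $L(C)$ is transported to the canonical representative $L(\lambda_1,0,0,0,0,1,1,0,0,0)$. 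Part (2) is then immediate: this very algebra realizes any prescribed $\lambda_1\in\mathbb{C}$.

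The only place where real care is needed is the long expression for $c'_{1,5}$ in the Isomorphism criterion, which must genuinely reduce to $B_1\,c_{1,5}/A_0^{5}$. I would handle this by scanning each monomial and checking that it carries a factor from $\{c_{1,2},c_{1,3},c_{2,3},c_{2,4},c_{3,4},c_{0,1},c_{1,1}\}$, all of which vanish; only the monomial $-A_0^{4}B_1^{3}c_{1,5}$ survives, which after division by $A_0^{8}B_1^{2}(A_0+c_{3,4}A_1)=A_0^{9}B_1^{2}$ produces the claimed formula. This bookkeeping is the sole obstacle; once it is dispatched, the proposition follows mechanically from the explicit transformation law.
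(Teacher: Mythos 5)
Your proposal is correct and follows the same route as the paper: specialize the isomorphism criterion for $TLeib_8$ to the vanishing constants of $U_8^{43}$, observe that only $c_{0,0},c_{1,4},c_{1,5}$ transform nontrivially (as $c'_{0,0}=c_{0,0}/(A_0^4B_1)$, $c'_{1,4}=B_1c_{1,4}/A_0^4$, $c'_{1,5}=B_1c_{1,5}/A_0^5$), and solve $c'_{1,4}=c'_{1,5}=1$ to recover exactly the values $A_0=c_{1,5}/c_{1,4}$, $B_1=c_{1,5}^4/c_{1,4}^5$ that the paper records as its entire proof. Your exponent count confirming the invariance of $c_{1,4}^9c_{0,0}/c_{1,5}^8$ and your term-by-term check that the long $c'_{1,5}$ expression collapses to $B_1c_{1,5}/A_0^5$ are both accurate.
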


\begin{pr}\emph{}

The subsets  $U_{8}^{5},\ U_{8}^{6},\ U_{8}^{7},\ U_{8}^{8},\
U_{8}^{9},\ U_{8}^{10},\ U_{8}^{17},\ U_{8}^{18},\ U_{8}^{29},\
U_{8}^{30},\ U_{8}^{34},\ U_{8}^{35},\ U_{8}^{38},\ U_{8}^{39},\
U_{8}^{41},\\ U_{8}^{42},\ U_{8}^{44},\ U_{8}^{45},\ U_{8}^{46},\
U_{8}^{47},\ U_{8}^{48}$ and $ U_{8}^{49}$\ are
single orbits with representatives \ $ L(1,0,0,0,1,0,0,0,0,1),\\
L(0,0,0,0,1,0,0,0,0,1),\ L(1,0,0,0,0,1,0,0,0,1),\
L(0,0,0,0,0,1,0,0,0,1),\ L(1,0,0,0,0,0,0,0,0,1),\\
L(0,0,0,0,0,0,0,0,0,1),\ L(1,0,0,0,0,0,0,1,0,0),\
L(0,0,0,0,0,0,0,1,0,0),\ L(1,0,0,1,0,0,0,0,0,0),\\
L(0,0,0,1,0,0,0,0,0,0),\ L(1,0,0,0,1,0,0,0,0,0),\
L(0,0,0,0,1,0,0,0,0,0),\ L(1,0,1,0,0,0,0,0,0,0),\\
L(0,0,1,0,0,0,0,0,0,0),\ L(0,1,0,0,0,1,0,0,0,0),\
L(0,1,0,0,0,0,0,0,0,0),\ L(1,0,0,0,0,1,0,0,0,0),\\
L(0,0,0,0,0,1,0,0,0,0),\ L(1,0,0,0,0,0,1,0,0,0),\
L(0,0,0,0,0,0,1,0,0,0),\ L(1,0,0,0,0,0,0,0,0,0)$\ and
$L(0,0,0,0,0,0,0,0,0,0),$ respectively.
\begin{proof}

To prove it, we give the appropriate values of
$A_0,A_1,A_2,A_3,B_1,B_2,B_3$ and $B_4$ in the base change
(\ref{bc.dim08})(as for other $A_i,\ i=4,...,7,$ and $ B_j,\
j=5,6,7$ they are any, except where specified otherwise).

\nt For $ U_{8}^{5}\ \rm{and}\ U_{8}^{6} :$ \\ $\ds B_{1}={\frac
{A_{{0}}+A_{{1}}c_{{3,4}}}{c_{{3,4}}}},\ \mathrm{and} \
B_3=\frac{-1}{2\,A_{{0}}{c_{{3,4}}}^{2} \left(
A_{{0}}+A_{{1}}c_{{3,4}} \right) }
({A_{{0}}}^{3}c_{{2,4}}+2\,{A_{{0}}}^{2}c_{{2,4}}A_{{1}}c
_{{3,4}}+A_{{0}}c_{{2,4}}{A_{{1}}}^{2}{c_{{3,4}}}^{2}-A_{{1}}c_{{1,3}}
{A_{{0}}}^{2}c_{{3,4}}-2\,{A_{{1}}}^{2}c_{{1,3}}A_{{0}}{c_{{3,4}}}^{2}
-{A_{{1}}}^{3}c_{{1,3}}{c_{{3,4}}}^{3}-A_{{0}}{B_{{2}}}^{2}{c_{{3,4}}}
^{3}) .$

\nt For $ U_{8}^{7}\ \rm{and}\ U_{8}^{8} :$  \\ $\ds B_{1}={\frac
{A_{{0}}+A_{{1}}c_{{3,4}}}{c_{{3,4}}}} \ \mathrm{and} \
B_3=\frac{-1}{2\,{c_{{3,4}}}^{2} \left( A_{{0}}+A_{{1}}c_{{3,4}}
\right) }
({A_{{0}}}^{2}c_{{2,4}}+2\,A_{{0}}c_{{2,4}}A_{{1}}c_{{3,4
}}+c_{{2,4}}{A_{{1}}}^{2}{c_{{3,4}}}^{2}-{B_{{2}}}^{2}{c_{{3,4}}}^{3})
 .$

\nt For $ U_{8}^{9}\ \rm{and}\ U_{8}^{10} :$   $$\ds B_{1}={\frac
{A_{{0}}+A_{{1}}c_{{3,4}}}{c_{{3,4}}}} \ \mathrm{and} \
B_3=-{\frac
{{A_{{0}}}^{2}c_{{2,4}}+2\,A_{{0}}c_{{2,4}}A_{{1}}c_{{3,4
}}+c_{{2,4}}{A_{{1}}}^{2}{c_{{3,4}}}^{2}-{B_{{2}}}^{2}{c_{{3,4}}}^{3}}
{2\,{c_{{3,4}}}^{2} \left( A_{{0}}+A_{{1}}c_{{3,4}} \right) }}
 .$$

\nt For $ U_{8}^{17}\ \rm{and}\ U_{8}^{18} :$   $$A_1={\frac
{A_{{0}}c_{{2,4}}}{3\,{c_{{2,3}}}^{2}}},\ \ B_{1}={\frac
{{A_{{0}}}^{2}}{c_{{2,3}}}},\ \ B_3={\frac
{{B_{{2}}}^{2}{c_{{2,3}}}^{3}+{A_{{0}}}^{4}c_{{1,4}}}{2\,{A_{
{0}}}^{2}{c_{{2,3}}}^{2}}},\ {\rm{and}} $$ $$ B_4=-{\frac
{-{B_{{2}}}^{3}{c_{{2,3}}}^{5}-2\,{A_{{0}}}^{6}c_{{1,5}}c
_{{2,3}}+2\,{A_{{0}}}^{6}c_{{2,4}}c_{{1,4}}-3\,B_{{2}}{A_{{0}}}^{4}c_{
{1,4}}{c_{{2,3}}}^{2}}{6\,{A_{{0}}}^{4}{c_{{2,3}}}^{3}}} .$$

\nt For $ U_{8}^{29}\ \rm{and}\ U_{8}^{30} :$   $$\ds A_1=-{\frac
{A_{{0}}c_{{1,3}}}{2\,{c_{{1,2}}}^{2}}},\ {\rm{and}}\ B_1={\frac
{{A_{{0}}}^{2}}{c_{{1,2}}}}.$$

\nt For $ U_{8}^{34}\ \rm{and}\ U_{8}^{35} :$   $$ A_1=-{\frac
{A_{{0}}c_{{1,5}}}{3\,{c_{{1,3}}}^{2}}},\ {\rm{and}}\ B_1={\frac
{{A_{{0}}}^{3}}{c_{{1,3}}}}.$$

\nt For $ U_{8}^{38}\ \rm{and}\ U_{8}^{39} :$   $$ A_1=-{\frac
{A_{{0}}c_{{0,1}}}{2\,c_{{1,1}}}},\ {\rm{and}}\ B_1={\frac
{{A_{{0}}}^{6}}{c_{{1,1}}}}.$$

\nt For $ U_{8}^{41}\ \rm{and}\ U_{8}^{42} :$   $$\ds A_1=-{\frac
{A_{{0}}c_{{0,0}}}{c_{{0,1}}}}.$$

\nt For $ U_{8}^{44}\ \rm{and}\ U_{8}^{45} :$   $$\ds B_1={\frac
{{A_{{0}}}^{4}}{c_{{1,4}}}}.$$

\nt For $ U_{8}^{46}\ \rm{and}\ U_{8}^{47} :$   $$\ds B_1={\frac
{{A_{{0}}}^{5}}{c_{{1,5}}}}.$$

\end{proof}
\end{pr}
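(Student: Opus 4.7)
The plan is to verify each of the listed subsets is a single orbit by exhibiting, for an arbitrary $L(C)$ in the subset, an adapted base change of the form (\ref{bc.dim08}) that carries $L(C)$ onto the stated representative. By Theorem 2.2 (isomorphism criterion for $TLeib_8$), it suffices to specify values of $A_0,A_1,A_2,A_3,A_4,B_1,B_2,B_3,B_4,B_5$ (subject to $A_0B_1(A_0+A_1c_{3,4})\neq 0$) and check that the resulting $C'$ coincides with the representative's coordinate tuple. The lists of choices given at the end of the statement handle the essential parameters; the remaining $A_i,B_j$ that are not mentioned can be taken arbitrary (or, where convenient, chosen to kill residual nonzero coefficients using the linearity of the transformation rules in the higher-indexed unknowns).

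First, I would split the twenty-two cases into two packets according to $c_{3,4}\neq 0$ or $c_{3,4}=0$, mirroring the decomposition $TLeib_8=M_8^1\cup M_8^2$. For $c_{3,4}\neq 0$ cases ($U_8^5,\ldots,U_8^{10}$), the transformation rule for $c_{3,4}$ from Theorem 2.2, namely $c_{3,4}^{\prime}=B_1c_{3,4}/(A_0+A_1c_{3,4})$, shows that the prescribed choice $B_1=(A_0+A_1c_{3,4})/c_{3,4}$ normalizes $c_{3,4}^{\prime}=1$; the choice of $B_3$ given in the statement then forces $c_{2,4}^{\prime}=0$ via the transformation formula for $c_{2,4}'$, and the hypothesis $c_{1,1}=c_{0,1}=c_{1,2}=c_{1,3}=0$ together with the formulas of Theorem 2.2 propagates zeros through $c_{0,0}^{\prime}, c_{0,1}^{\prime}, c_{1,1}^{\prime}, c_{1,2}^{\prime}, c_{1,3}^{\prime}, c_{1,4}^{\prime}, c_{2,3}^{\prime}$, leaving only the dichotomy $c_{0,0}^{\prime}\in\{0,1\}$ (or analogous dichotomies) settled by rescaling $A_0$. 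Repeating this procedure subcase by subcase yields the representatives for $U_8^5$ through $U_8^{10}$.

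For the $c_{3,4}=0$ packet ($U_8^{17},U_8^{18}$, and $U_8^{29}$ onwards), the normalization proceeds analogously: in each subset the choice of $A_1$ kills one of $c_{0,1}',c_{1,3}',c_{1,5}',c_{2,4}'$ (via the ``offset'' formulas where $A_1$ appears linearly), the choice of $B_1$ rescales the distinguished nonzero coefficient ($c_{1,1},c_{1,2},c_{1,3},c_{1,4},c_{1,5},c_{2,3},c_{2,4}$, as dictated by the chain of vanishing hypotheses) to $1$, and the choice of $B_3$ (and, for $U_8^{17},U_8^{18}$, also $B_4$) eliminates the remaining unwanted coefficient $c_{1,4}'$ or $c_{1,5}'$. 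The leftover free scalar $A_0$ is used in the last step to normalize the surviving ``top'' coefficient to $0$ or $1$, which is exactly what distinguishes pairs such as $U_8^5$ vs.\ $U_8^6$, $U_8^7$ vs.\ $U_8^8$, etc.

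The main obstacle will not be conceptual but computational: one must verify, for each of the roughly twenty-two subsets, that after substituting the proposed $A_0,A_1,B_1,B_2,B_3,B_4$ into the long expressions for $c_{0,0}',c_{0,1}',c_{1,1}',c_{1,2}',c_{1,3}',c_{1,4}',c_{1,5}',c_{2,3}',c_{2,4}',c_{3,4}'$ from Theorem 2.2, the output agrees componentwise with the tuple defining the representative. The cleanest way is to process the coefficients in the natural order $c_{3,4}',c_{2,3}',c_{2,4}',c_{1,2}',c_{1,3}',c_{1,4}',c_{1,5}',c_{1,1}',c_{0,1}',c_{0,0}'$, so that each verification uses only the previously-fixed parameters; this is precisely the order in which Theorem 2.2's formulas become successively more entangled, making each step a one-line substitution rather than a simultaneous solve. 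Finally, the admissibility condition $A_0B_1(A_0+A_1c_{3,4})\neq 0$ is automatic from the nonvanishing hypotheses defining each subset, so no separate verification of adaptedness is needed.
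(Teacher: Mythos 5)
Your proposal is correct and follows essentially the same route as the paper: for each subset one exhibits an adapted base change of the form (\ref{bc.dim08}), choosing $B_1$ to normalize the distinguished nonzero constant, $A_1$ and $B_3$ (and $B_4$ where needed) to annihilate the remaining removable constants, and $A_0$ to settle the residual $0/1$ dichotomy, exactly as in the paper's list of coefficient values. The only difference is that the paper records the explicit resulting formulas for $A_1,B_1,B_3,B_4$ in each case while you describe the procedure that produces them; the substance of the argument is the same.
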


\section{Conclusion}
\begin{enumerate}
\item In $TLeib_7$ we distinguished 26 isomorphism classes (12
parametric family and 14 concrete) of seven dimensional Leibniz
algebras and shown that they exhaust all possible cases.
 \item In
the case of $TLeib_8$ there are 49 isomorphism classes (27
parametric family and 22 concrete) and they exhaust all possible
cases.
\end{enumerate}

%
%

\end{document}